\theoremstyle{plain}
\newtheorem{theorem}{Theorem}
\newtheorem{proposition}{Proposition}
\newtheorem{corollary}{Corollary}
\begin{document}
\date{}

\title[Lifting of Automorphic Forms ]
{\bf On Certain Global Constructions of Automorphic Forms Related to
Small Representations of $F_4$}
\author{ David Ginzburg}

\dedicatory{ To the memory of S. Rallis}

\begin{abstract}
In this paper we consider some global constructions of liftings of
automorphic representations attached to some commuting pairs in the
exceptional group $F_4$. We consider two families of integrals. The
first uses the minimal representation on the double cover of
$F_4$, and in the second we consider examples of integrals of
descent type associated with unipotent orbits of $F_4$.

\end{abstract}

\thanks{ The author is partly supported by the Israel Science
Foundation grant number 259/14 }

\address{ School of Mathematical Sciences\\
Sackler Faculty of Exact Sciences\\ Tel-Aviv University, Israel
69978 } \maketitle \baselineskip=18pt

\section{\bf Introduction}

One of the important aspects of the Langlands conjectures is the
study of correspondence of automorphic representations between two
groups. Let $H$ and $G$ be two linear algebraic groups defined over a global field $F$. Given a
homomorphism between the $L$ groups of these two groups, the general
conjectures predict a functorial lifting between automorphic
representations of $H$ and $G$.

There are several ways to study lifting of automorphic
representations between two groups. Two powerful methods are the
converse Theorem and the Arthur trace formula. The strength of these
methods are their generality. On the other hand these methods are
not explicit, in the sense that they do not actually construct the
correspondence, but rather prove its existence.

A third method to construct these liftings is what we refer to as
the small representations method. The idea of this method is as
follows. Let $M$ be a reductive group. Suppose that we can embed the
groups $G$ and $H$ as a commuting pair inside $M$. By that we mean
that we can embed these two groups inside $M$ and under this
embedding the two groups commute one with the other. Let $\Theta$
denote an automorphic  representation of $M({\bf A})$. Here ${\bf
A}$ is the ring of adeles of a global field $F$. Let $\pi$ denote an
automorphic representation of $H({\bf A})$. Then one can construct
an automorphic function of $G({\bf A})$ by means of the integral
\begin{equation}\label{intro1}
f(g)=\int\limits_{H(F)\backslash H({\bf A})}
\int\limits_{V(F)\backslash V({\bf A})}\varphi_\pi(h)
\theta(v(h,g))\psi_V(v)dvdh
\end{equation}
Here $V$ is a certain unipotent subgroup of $M$ which is normalized
by the embedding of $G$ and $H$. Also $\psi_V$ is a character of $V$
which is preserved by these two groups. Finally, the function
$\varphi_\pi$ is a vector in the space of $\pi$, and $\theta$ is a
vector in the space of $\Theta$. Assuming that the above integral
converges (this will happen, for example, if $\pi$  is a cuspidal
representation), denote by $\sigma(\pi,\Theta)$ the automorphic
representation of $G({\bf A})$ generated by all functions $f(g)$
defined above. The above discussion can be easily extended to automorphic representations of
metaplectic covering groups of algebraic groups. Obviously, when considering integrals of the
type of \eqref{intro1} defined over metaplectic  covering groups, one should make sure that the
cover splits. Otherwise the integrals will not be well defined.

Given the above construction, there are several natural problems
regarding the relations between the representations $\pi$ and
$\sigma(\pi,\Theta)$. The first problem is the issue of the
cuspidality of $\sigma(\pi,\Theta)$. In other words, what are the
conditions on $\pi$ and $\Theta$, if any,  so that
$\sigma(\pi,\Theta)$ will be a cuspidal representation of $G({\bf
A})$. The second problem is to understand when $\sigma(\pi,\Theta)$
is nonzero. The third problem is to study the functoriality of the
lift. Assume, for example, that $\sigma(\pi,\Theta)$ is a direct sum of irreducible automorphic 
representations.  Then, one wants to check the relations between the
unramified constituents of $\pi$ and $\Theta$ with those of each irreducible summand of
$\sigma(\pi,\Theta)$. There are other problems one can study. For
example, when is the representation $\sigma(\pi,\Theta)$
irreducible. Another interesting problem is to try to characterize
the image of the lift by means of a period integral. However, the
above three problems are the basic ones, and should be studied prior
to anything else. The machinery for studying these issues is quite
routine. To verify cuspidality one needs to study the constant terms
along unipotent radicals of maximal parabolic subgroups of $G$. The
nonvanishing of the lift is usually done by showing that the
function $f(g)$ has a certain nonzero Fourier coefficient. The
unramified computation is done by the study of certain bilinear or
trilinear forms.

We consider a few examples. There are two extreme cases. The first
one is when the unipotent group is trivial. In this case integral
\eqref{intro1} is given by
\begin{equation}\label{intro2}
f(g)=\int\limits_{H(F)\backslash H({\bf A})} \varphi_\pi(h)
\theta((h,g))dh
\end{equation}
The most well known example of this type is when $M$ is the double cover of the
symplectic group. In this example  $H$ is an
orthogonal group and $G$ itself is a symplectic group or its double cover. The representation $\Theta$ is the minimal representation which is defined on the double cover of $M({\bf A})$. This case was studied by many authors. A reference
for this example can be found in \cite{R}. Other cases which involve
the minimal representation can be found in \cite{G-R-S4} where the
group $M$ is one of the exceptional groups of type $E_6, E_7$ and
$E_8$. We remark
that there are constructions given by \eqref{intro2} which do not
involve the minimal representation. Some cases in \cite{G2} are
such.

The other extreme case is when the group $H$ is the trivial group. Thus,
integral \eqref{intro1} is then given by
\begin{equation}\label{intro3}
f(g)= \int\limits_{V(F)\backslash V({\bf A})}\theta(vg)\psi_V(v)dv
\end{equation}
In this case, which is known as the descent method, there is an
automorphic representation $\pi$ which is built inside the
representation $\Theta$. An example of this type can be found in
\cite{G-R-S2}, \cite{G-R-S3} and \cite{G-R-S7}. 

Finally, there are also examples where both groups $H$ and $V$ are
nonzero. See for example \cite{G2}, \cite{G3} and \cite{G4}.

Prior to any computations it is natural to ask the question of how
to construct lifting using integral \eqref{intro1}. In other words,
one would like to look for systematic ways to construct such
examples. To give some heuristic of how to find such examples, it is
convenient to use the language of unipotent orbits. In \cite{G1},
one associates with a unipotent orbit of a reductive group, a set of
Fourier coefficients. This is done for the classical groups, however
it is done in a similar way for the exceptional groups. In fact, in
this paper, we work out this association in the case of the $F$ split
exceptional group $F_4$. Let $\sigma$ denote an automorphic
representation of a reductive group $L$. To this data we attach a
set of unipotent orbits which we denote by ${\mathcal O}_L(\sigma)$.
We say that ${\mathcal O}\in {\mathcal O}_L(\sigma)$ if $\sigma$ has
no nonzero Fourier coefficient associated with any unipotent orbit
${\mathcal O}'$ which is greater than ${\mathcal O}$. Also, the
representation $\sigma$ has a nonzero Fourier coefficient associated
with the unipotent orbit ${\mathcal O}$. For more details on this
set see \cite{G1}. It is not known if ${\mathcal O}_L(\sigma)$ can
contain more than one element. However, if it does contain only one
element, this means that $\sigma$ has no nonzero Fourier coefficient
associated with any unipotent orbit which is greater than or not
related to ${\mathcal O}_L(\sigma)$. Henceforth we shall assume that
for all representations in question, this set consists of one
element.  We can then define the dimension of the representation
$\sigma$ to be $\text{dim}\sigma=\frac{1}{2}\text{dim}{\mathcal
O}_L(\sigma)$. For basic properties of unipotent orbits and their
dimensions, see \cite{C-M}.

To explain our method, let $H$ and $G$ be two reductive groups such
that there is a homomorphism from $^LH$ to $^LG$. Let $\pi$ denote
an irreducible cuspidal representation of the group $H({\bf A})$.
Suppose that one can construct an automorphic representation
$\Theta$ on a group $M({\bf A})$, and assume that
$\sigma(\pi,\Theta)$, as defined by integral \eqref{intro1}, is a
functorial lift from $\pi$ corresponding to the above $L$ group homomorphism. Then, in all known cases, the
following dimension identity holds,
\begin{equation}\label{intro4}
\text{dim}\pi+\text{dim}\Theta=\text{dim}H+\text{dim}V+
\text{dim}\sigma(\pi,\Theta)
\end{equation}
It is important to emphasize  that we do not claim that for any setup
which satisfy equation \eqref{intro4}, then integral \eqref{intro1}
will give a functorial correspondence. In these notations we view
the descent method as a limit case when $H$ is the identity group,
and hence its dimension is zero, and hence $\text{dim}\pi=0$.

To make things clear, we consider a few examples. Let $H=SO_{2n}$ be
the split orthogonal group, and let $G=Sp_{2n}$. Then we have the
$L$ group homomorphism from $SO_{2n}({\bf C})$ to $SO_{2n+1}({\bf
C})$. Let $M=\widetilde{Sp}_{4n^2}$, the double cover of the
symplectic group. Let $\Theta$ denote the minimal representation of
$M({\bf A})$. Let $\pi$ denote a generic irreducible cuspidal
representation of $H({\bf A})$. Then, it follows from \cite{R}, that
integral \eqref{intro2} produces a functorial correspondence, and
one can show that $\sigma(\pi,\Theta)$ is a generic representation
as well. We verify identity \eqref{intro4} for this case. Indeed, in
this case we have $\text{dim}\pi=n^2-n$, $\text{dim}\Theta=2n^2$,
$\text{dim}H=2n^2-n$, and $\text{dim}\sigma(\pi,\Theta)=n^2$. The
dimension of these representations are derived from the general
formula for dimension of unipotent orbits as given in \cite{C-M}.
Thus, since $\pi$ is generic, then ${\mathcal
O}_{SO_{2n}}(\pi)=((2n-1)1)$. The  dimension of this orbit is
$2(n^2-n)$ and hence $\text{dim}\pi=n^2-n$. The representation
$\Theta$  is associated with the minimal orbit which is
$(21^{4n^2-2})$ and hence, it follows from \cite{C-M} that its
dimension is $2n^2$. It is now easy to verify identity
\eqref{intro4} in this case.

As another example of this type,  consider the case when $H=PGL_3$
and $G=G_2$. Here, $\Theta$ is the minimal representation of the
exceptional group $E_6({\bf A})$. It follows from \cite{G-R-S4} that
if $\pi$ is an irreducible cuspidal representation of $PGL_3({\bf
A})$, and hence generic, then integral \eqref{intro2} produces a
functorial lifting with $\sigma(\pi,\Theta)$ being generic. Since
$\text{dim}\pi=3,\ \text{dim}\Theta=11,\ \text{dim}PGL_3=8$ and
$\text{dim}\sigma(\pi,\Theta)=6$, it follows that identity
\eqref{intro4} holds. 

As an another example we consider an example of a construction which
is a descent construction, that is, uses the lifting as given by
\eqref{intro3}. Consider the case given in \cite{G-R-S2} and
\cite{G-R-S3}. In this case one obtains the descent from cuspidal
representations of $GL_{2n}({\bf A})$ to cuspidal generic
representations of $\widetilde{Sp}_{2n}({\bf A})$. Even though the
integral given for the descent in the above references involves also
the theta representation of $\widetilde{Sp}_{2n}({\bf A})$, it does
not alter the identity \eqref{intro4}. In the beginning of Section 4 we study in details these type
of constructions. In the construction of the
descent in this example, $\Theta$ is a certain residue of an Eisenstein series, and
one can show ( see \cite{G-R-S7}) that this residue is attached to the unipotent orbit
$((2n)^2)$ of $Sp_{4n}$. Thus $\text{dim}\Theta= 4n^2-n$. The
dimension of $V$ is $3n^2-n$, and since $\sigma(\pi,\Theta)$ is
generic it follows that $\text{dim}\sigma(\pi,\Theta)=n^2$. Thus
identity \eqref{intro4} holds. Strictly speaking this lift is not a functorial lift 
which corresponds to some $L$ groups homomorphism. However, one can view it as
an inverse map to the $L$ group homomorphism from $Sp_{2n}({\bf C})$ to
$GL_{2n}({\bf C})$.

In this paper we consider examples in the exceptional group $F_4$,
of global constructions as given by integrals \eqref{intro2} and
\eqref{intro3} which satisfy the dimension equation \eqref{intro4}. More specifically, in the
notations of integrals \eqref{intro2} and \eqref{intro3}, we will consider such integrals where
$M=F_4$.
Our main concern in this paper is to find conditions when such a
construction produces a cuspidal image, and under what conditions
the construction is nonzero. As follows from the beginning of
Section three, in almost all global integrals of the type of
integral \eqref{intro2}, which satisfy the dimension equation
\eqref{intro4}, the representation $\Theta$ needs to be a minimal
representation. In other words, we need ${\mathcal O}(\Theta)=A_1$.
Section two is mainly devoted to the
construction of such a representation on the double cover of $F_4$, and the study of its basic
properties. This representation is defined as a certain residue of
an Eisenstein series, essentially induced from the Borel subgroup.
In addition, in that Section we also collect information about the
structure of the Fourier coefficients of automorphic representations
of $F_4({\bf A})$ and its double cover.

In Section three we study integral \eqref{intro2} for five commuting
pairs inside $F_4$. The pairs are $(SL_3\times SL_3);\ (SL_2\times
SL_2, Sp_4);\ (SL_2, SL_4);\ (SO_3, G_2)$ and $(SL_2, Sp_6)$. In
each case we study when the lifting from one to the other is
cuspidal, and give a condition when it is nonzero. The computations
are straightforward and use the properties of the minimal
representation as were established in Section two.

In Section four we consider the descent map, that is integral
\eqref{intro3} for some unipotent orbits of $F_4$. At subsection 4.1
we list all possible unipotent orbits of $F_4$, and using the
dimension equation \eqref{dim2}, which is a variant of the dimension
equation \eqref{intro4}, we obtain conditions on the the dimension
of the automorphic representation involved in the construction. In
subsection 4.2 we fix notations and some preliminary results
concerning the nature of the answer we expect to get using the
descent map. Finally, in subsection 4.3 we consider some examples in
detail. That is, we study conditions for integral \eqref{intro3} to
define a cuspidal representation, and conditions for the
nonvanishing of the descent. The examples we choose to carry out are
chosen mainly by our belief that they are of some interest.

As can be seen the missing ingredient in this paper is the local
unramified theory. The main reason for this is that this issue is
different in nature from the issue of cuspidality and the
nonvanishing. Indeed, one of our goals in this paper is to show that
when studying cuspidality and nonvanishing, the answer can be
phrased in terms of the structure of the unipotent orbits of the
group in question. In other words, when studying these two
properties, the only ingredients we need to know about the
automorphic representation $\Theta$ is what Fourier coefficients it
supports. However, in subsection 3.6 we give a conjecture about the
functorail lifting of each of the above five commuting pairs. 

In Section five, we
construct two examples of automorphic representations which are
attached to specific unipotent orbits in $F_4$. As can be seen,
unramified considerations do enter the calculations.

\section{\bf  The Minimal Representation of $F_4$}

\subsection{\bf General Notations}

For $1\le i\le 4$, let $\alpha_i$ denote the four simple roots of
$F_4$. We label the roots of $F_4$ according to the diagram
$$\overset{\alpha_1}{0}----\overset{\alpha_2}{0} ==>==
\overset{\alpha_3}{0}----\overset{\alpha_4}{0}$$ Here
$\alpha_1,\alpha_2$ are the long simple roots and
$\alpha_3,\alpha_4$ are the short simple roots.

Given a root, positive or negative, we denote by $\{x_\alpha(r)\}$ the
one dimensional unipotent subgroup attached to the root $\alpha$.
For $1\le i\le 4$, let $h_i(t_i)$ denote the one dimensional torus
in $F_4$ which is associated to the $SL_2$ generated by
$<x_{\pm\alpha_i}(r)>$. Then
$h(t_1,t_2,t_3,t_4)=\prod_{i=1}^4h_i(t_i)$ is the maximal split  torus of
$F_4$. For $1\le i\le 4$, we shall denote by $w[i]$ the simple
reflection which corresponds to the simple root $\alpha_i$. We
shall write $w[i_1i_2\ldots i_m]$ for $w[i_1]w[i_2]\ldots w[i_m]$.

Let $F$ be a global field, and let ${\bf A}$ be its ring of adeles.
By $\psi$ we denote a nontrivial character of $F\backslash {\bf A}$. We shall denote by
$J_n$ the matrix of order $n$ which has ones on the other diagonal and zero elsewhere. The
matrix $e_{i,j}$ will denote a matrix of order $n$ which has one at the $(i,j)$ entry,
and zero elsewhere.

We denote by $\widetilde{F}_4$ the double cover of $F_4$. The
construction of this group and its basic properties follows from
\cite{M}.

Many of the computations done in this paper require the knowledge of
commutating relations and conjugations which involves one parameter
unipotent subgroups. We refer to \cite{G-S} from which all the
relevant data can be extracted.

Given an automorphic representation $\pi$ and a unipotent subgroup
$V$, we denote by $\varphi_\pi^V$ its constant term along $V$.
Here $\varphi_\pi$ is a vector in the space of $\pi$. In other
words, we denote
$$\varphi_\pi^V(g)=\int\limits_{V(F)\backslash V({\bf
A})}\varphi_\pi(vg)dv$$
In this paper we consider unipotent groups $U$ and characters $\psi_U$ which are defined on
the group $U(F)\backslash U({\bf A})$. Typically, these unipotent subgroups will be generated by one
dimensional unipotent subgroups $x_\gamma(r)$ where $\gamma$ is a positive root. For example, suppose that $U$ is the one dimensional subgroup associated with the root $\gamma$. In this case we shall write $U=\{x_\gamma(r) : r\in R\}$ where $R$ is a certain ring. When the ring $R$ is clear we shall write $U=\{x_\gamma(r)\}$ for short. 
Given roots $\gamma_1,\ldots, \gamma_l$, positive or negative, we shall denote by 
$<x_{\gamma_1}(r),\ldots, x_{\gamma_l}(r)>$ the group generated by all one dimensional unipotent
subgroups $x_{\gamma_i}(r)$.

A convenient way to describe   the character $\psi_U$ is as follows.
Let $\gamma_1,\ldots,\gamma_l$ denote $l$ positive roots of $F_4$, and assume that the one dimensional unipotent subgroup $x_{\gamma_i}(r_i)$ are all in $U$ but not in $[U,U]$. Given
$u\in U$, write $u=x_{\gamma_1}(r_1)\ldots x_{\gamma_l}(r_l)u'$ where $u'\in U$ is any element
which when written as a product of one dimensional unipotent subgroups associated with positive roots, then none of these roots are $\gamma_1,\ldots,\gamma_l$. Then define $\psi_U(u)=\psi_U(
x_{\gamma_1}(r_1)\ldots x_{\gamma_l}(r_l)u')=\psi(a_1r_1+\cdots +a_lr_l)$. Here $a_i\in F^*$.

\subsection{\bf Unipotent Orbits and Fourier Coefficients in
$F_4$}

In this subsection, let $G=F_4$. In this part we will describe how
to associate to a given unipotent orbit in $G$,  a set of Fourier
coefficients. In \cite{G1} it is explained how to construct this
correspondence for automorphic representations of the classical
groups. Another reference which studies unipotent orbits and Fourier coefficients for the group $F_4$ is \cite{G-H}.

According to the Bala-Carter classification, each unipotent orbit is
represented by a diagram of $G$ whose nodes are labelled by the
numbers zero, one and two. We shall denote these numbers by $\epsilon_i$ for 
all $1\le i\le 4$.  A list of the possible diagrams can be
found, for example, in \cite{C} page 401. As usual an unlabelled node
in the diagram corresponds to the number zero. Henceforth, we
identify the set of unipotent orbits with the set of all such
diagrams.

We associate to each diagram a set of Fourier coefficients. Let $P$
be a parabolic subgroup of $G$. We list the parabolic subgroups of
$G$ according to the unipotent elements of the form
$x_{\pm\alpha_i}(r)$ which are contained in the Levi part of the
parabolic subgroup. Thus for example, we denote by $P_{\alpha_1}$
the parabolic subgroup whose Levi part is generated by
$<x_{\pm\alpha_1}(r),T>$ where $T$ is the maximal split torus of
$G$. With these notations, the four maximal parabolic subgroups of
$G$ are $P_{\alpha_1,\alpha_2,\alpha_3},
P_{\alpha_1,\alpha_2,\alpha_4}, P_{\alpha_1,\alpha_3,\alpha_4}$ and
$P_{\alpha_2,\alpha_3,\alpha_4}$. A similar notation will be used
for the Levi part and the unipotent radical of a parabolic subgroup.
For example, $M_{\alpha_1}$ and $U_{\alpha_1}$ will denote the Levi
part and the unipotent radical of $P_{\alpha_1}$.

To each unipotent orbit we attach a parabolic subgroup defined as
follows. Suppose that $\Delta\subset \{\alpha_j : j\in \{1,2,3,4\}\}$ is
the set of simple roots in the diagram which are labeled zero. To
this unipotent orbit we associate the parabolic subgroup $P_\Delta$. We shall denote its Levi part by $M_\Delta$, and its unipotent radical by $U_\Delta$.
For example, to the unipotent orbit, which is denoted by $B_2$, and
whose diagram is given by
$$  \ \ \  \overset{2}{0}---- 0 ==>== {0}----\overset{1}{0}$$
we attach the parabolic subgroup $P_{\alpha_2,\alpha_3}$. Here $\epsilon_1=2,\ 
\epsilon_2=\epsilon_3=0$ and $\epsilon_4=1$.

It will be convenient to
confuse between a root $\alpha$ and the one parameter unipotent subgroup $\{x_\alpha(r)\}$. 
Thus, for example, if $\{ x_\alpha(r)\}\subset U$ for some unipotent subgroup $U$, 
we will say that $\alpha$ is a root in $U$. By abuse of notations we will sometimes denote
it by $\alpha\in U$.  Given
$\alpha=\sum_{i=1}^4n_i\alpha_i$ we shall also denote this root by
$(n_1n_2n_3n_4)$. Given a parabolic subgroup $P_\Delta$ as above,
the set of roots in $U_\Delta$ are  those roots $(n_1n_2n_3n_4)$
such that  $\sum_{\alpha_i\notin {\Delta}} n_i>0$. For example, the
roots in $U_{\alpha_1,\alpha_3}$ are the roots $(n_1n_2n_3n_4)$ such
that $n_2+n_4>0$. Once again, we emphasize that we are confusing a
root with the one dimensional unipotent subgroup attached to this
root.

Next, we determine a partition of all the roots in $U_\Delta$. For
any natural number $n$ we  denote  $U_\Delta'(n)=\{\alpha\in U_\Delta
: \sum_{i=1}^4\epsilon_i n_i=n\}$.  Let $U_\Delta(n)$ denote the unipotent
subgroup of $U_\Delta$ which is generated by all one parameter
subgroups $\{x_\alpha(r)\}$ such that $\alpha\in U_\Delta'(m)$ where $m\ge
n$. Notice that $U_\Delta=U_\Delta(1)$ and if in the corresponding diagram all $\epsilon_i\ne 1$, then $U_\Delta=U_\Delta(2)$.  We are mainly interested in the group
$U_\Delta(2)$. It is not hard to check that $M_\Delta$ acts on this group.

As an example consider the above diagram attached to the unipotent orbit $B_2$. In this case,
we have $\Delta=\{\alpha_2,\alpha_3 \}$. The
parabolic subgroup attached to it is $P_{\alpha_2,\alpha_3}$, and
we can identify $M_{\alpha_2,\alpha_3}$ with $GL_1^2\times Sp_4$.
We list the 20 roots in $U_\Delta$ according to the sets
$U_\Delta'(n)$.  We have
$$U_\Delta'(1)=\{ (0001);(0011);(0111);(0121)\}$$ 
$$U_\Delta'(2)=\{ (1000);(1100);(1110);(1120);(1220)\}\cup \{ (0122)\}$$
$$U_\Delta'(3)=\{ (1111);(1121);(1221);(1231)\}$$
$$U_\Delta'(4)=\{ (1122);(1222);(1232);(1242);(1342)\}\ \ \ \ \
U_\Delta'(6)=\{ (2342)\}$$  

In general, we are interested in the action of $M_\Delta$ on the group 
$U_\Delta(2)/[U_\Delta(2),U_\Delta(1)]$. It follows from the general theory that 
$M_\Delta$ preserves this group and acts as a finite direct sum of irreducible representations. 
For example, for the unipotent orbit $B_2$, it follows from the above that $M_\Delta$ 
acts as a direct sum of  a five 
dimensional irreducible representation and a one dimensional representation.
We mention that this action of $M_\Delta$ can be lifted trivially to the 
unipotent group $U_\Delta(2)$.

Fix a unipotent orbit ${\mathcal O}$, and attach to it a set $\Delta$ as described above. 
Then, defined over the complex numbers ${\bf C}$, (or any other algebraically closed field ), 
the group $M_\Delta({\bf C})$, has an open
orbit when acting on $U_\Delta(2)({\bf C})$. Denote a representative of this orbit by $u_{\mathcal O}$. Thus, we may identify $u_{\mathcal O}$ with a unipotent element in $U_\Delta(2)({\bf C})$. 
It follows from the general theory, see \cite{C}, that the connected component
of the stabilizer of $u_{\mathcal O}$ inside $M_\Delta({\bf C})$, is a
reductive group. We shall denote this reductive group by
$C(u_{\mathcal O})^0$. A list of these reductive groups is given in \cite{C} page 401.

We now explain how to associate a set of Fourier coefficients to a
unipotent orbit $\mathcal O$. Assume first that all nodes in the diagram associated with
$\mathcal O$ are zeros or twos. Let $\Delta$ be as above and let
$u_{\mathcal O}$ denote any unipotent element in $G(F)$ which lies in
$U_\Delta(2)(F)$, such that the stabilizer of $u_{\mathcal O}$ inside
$M_\Delta(F)$ is of the same type as $C(u_{\mathcal O})^0$. We consider a
few examples. Suppose that $\mathcal O$ is the unipotent orbit
labelled $B_3$. Its diagram is
$$  \ \ \  \overset{2}{0}----\overset{2} 0 ==>== {0}----{0}$$
Thus, $P_\Delta=P_{\alpha_3,\alpha_4}$ and
$M_{\alpha_3,\alpha_4}=GL_1\times GL_3$. We have
$$U_\Delta'(2)=\{ (0100);(0110);(0111);(0120);(0121);(0122)\}\cup
\{(1000)\}$$ Thus, the action of $M_{\alpha_3,\alpha_4}$ on the group 
$U_\Delta(2)/[U_\Delta(2),U_\Delta(1)]$, and hence on the group $U_\Delta(2)$,
is a sum of two irreducible representations. The first representation, is the six
dimensional irreducible representation, which up to the action of the torus, is the
symmetric square representation.
The second representation is  a one dimensional
representation. According to \cite{C} page 401, the group
$C(u_\Delta)^0$ is of type $A_1$. 

Thus, to define the corresponding Fourier coefficient, we look at all possible
non-conjugate elements $u_0\in U_\Delta(2)(F)$ such that the stabilizer 
inside $M_{\alpha_3,\alpha_4}(F)$, under its action on $U_\Delta(2)(F)$ as defined
above, is a group of type
$A_1$ defined over $F$.  Since the action is via the symmetric square representation,
one can choose the elements $u_0$ to be any element in the set
\begin{equation}\label{rep1}
\{x_{1000}(1)x_{0100}(\beta_1)x_{0112}(\beta_2)x_{0122}(\beta_3)
: \beta_j\in  F^*\}
\end{equation} 
It is not hard to check that the stabilizer is
an orthogonal group $SO_3$ which depend on the choice of $\beta_j$.
Let $\varphi$ be an automorphic function defined on $G({\bf A})$. To
a given element $u_0$ in the above set, we associate the
Fourier coefficient
\begin{equation}\label{four1}
\int\limits_{U_\Delta(F)\backslash U_\Delta({\bf
A})}\varphi(u)\psi_{U,u_0}(u)du
\end{equation}
where $\psi_{U,u_0}$ is defined as follows.  Write $u\in U_\Delta$ as
$u=x_{1000}(r)x_{0100}(r_1)x_{0112}(r_2)x_{0122}(r_3)u'$ and  define
$\psi_{U,u_0}(u)=\psi(r+\beta_1r_1+\beta_2r_2+\beta_3r_3)$. See subsection 2.1 for the precise notations.
As we vary $u_0$ in the  set of representatives given in \eqref{rep1},
we associate with the unipotent orbit labelled $B_3$ a set
of Fourier coefficients, given by integrals \eqref{four1}.

As an another example, consider the unipotent orbit labeled
$F_4(a_1)$. Its diagram is
$$  \ \ \  \overset{2}{0}----\overset{2} {0} ==>== {0}----\overset{2}{0}$$
Thus, the parabolic subgroup attached to this orbit is
$P_{\alpha_3}$ and $M_{\alpha_3}=GL_1^2\cdot GL_2$. In this case
we have
$$U_\Delta'(2)=\{ (0100);(0110);(0120)\}\cup \{ (0001);(0011)\}\cup
\{ (1000)\}$$ It follows from \cite{C} that the connected component
of the stabilizer is the identity group. Consider the set of
unipotent elements in $U_{\alpha_3}(F)$
$$\{x_{1000}(1)x_{0011}(1)x_{0100}(\beta_1)x_{0120}(\beta_2) :
\beta_j\in F^*\}$$ It is not hard to check that the connected
component of each such element, is the identity group. In a similar
way as in \eqref{four1}, we associate with the unipotent orbit
$F_4(a_1)$ a set of Fourier coefficients.

Next consider unipotent orbits where at least one of the nodes in the corresponding
diagram  is
labelled with the number one. First assume that there is exactly one
node which is labelled with one, and all other nodes are labeled with
zero. There are exactly four such unipotent orbits which are
associated with the four maximal parabolic subgroups. In this case
we consider $U_\Delta(2)$ and proceed in a similar way as we did in
the case where all nonzero nodes are labelled with twos. For example,
consider the unipotent orbit $A_1+\widetilde{A}_1$. The diagram
attached to this orbit is
$$  \ \ \  \overset{}{0}----\overset{1} {0} ==>== {0}----\overset{}{0}$$
Hence, the parabolic subgroup which corresponds to this orbit is
$P_{\alpha_1,\alpha_3,\alpha_4}$. Its Levi part is $GL_2\cdot
SL_3$. From \cite{C} we know that the connected component of the
stabilizer is a group of type $A_1\times A_1$. We have
$$U_\Delta'(2)=\{ (1220);(1221);(1222);(1231);(1232);(1242)\}$$ The
action of the Levi part on $U_\Delta(2)$ is as follows. The $GL_2$ part
acts as a power of the determinant, and the $SL_3$ part via the
symmetric square representation. As before, it is not hard to check
that the set
$$\{x_{1220}(\beta_1)x_{1222}(\beta_2)x_{1242}(\beta_3) : \beta_j\in F^*\}$$
contains a set of representatives for all the orbits such that the connected component of
the stabilizer inside $M_{\alpha_1,\alpha_3,\alpha_4}$ will be of
type $A_1\times A_1$.  As in \eqref{four1} we define
\begin{equation}\label{four2}
\int\limits_{U_\Delta(2)(F)\backslash U_\Delta(2)({\bf
A})}\varphi(u)\psi_{U_\Delta(2),u_0}(u)du
\end{equation}
where $\psi_{U_\Delta(2),u_0}$ is defined as follows. Given $\beta_j\in
F^*$, let
$u_0=x_{1220}(\beta_1)x_{1222}(\beta_2)x_{1242}(\beta_3)$. For
$u\in U_\Delta(2)$ write
$u=x_{1220}(r_1)x_{1222}(r_2)x_{1242}(r_3)u_1$ and
define $\psi_{U_\Delta(2),u_0}(u)=\psi(\beta_1 r_1+\beta_2 r_2+\beta_3
r_3)$.

Finally, we need to consider the unipotent orbits whose corresponding diagram  has  one
node labelled one and at least one more node which is labelled with a
nonzero number. There are four such orbits. The way of attaching the
Fourier coefficients to these orbits are similar to the way we did
in the other cases. To make things clear, in each of the four cases
we shall write the set of representatives of the various orbits under the action
of $M_\Delta(F)$.  Then, given an element $u_0$ in the
corresponding set, we define the corresponding set of Fourier coefficients
as in \eqref{four2}.

First consider the unipotent orbit
$B_2$. Its diagram, the parabolic subgroup attached to this orbit,
and the sets $U_\Delta'(n)$  were all described above. The connected
component of the stabilizer is $A_1\times A_1$. Consider the set
$$\{x_{1100}(1)x_{1120}(\beta_1)x_{0122}(\beta_2) : \beta_j\in F^*\}$$ Then it contains 
the set of all representatives of the various orbits under the action of $M_\Delta(F)$.

Next, we consider the unipotent orbit $\widetilde{A}_2 + A_1$. Its
diagram is
$$  \ \ \  \overset{}{0}----\overset{1} {0} ==>== {0}----\overset{1}{0}$$
The connected component of the stabilizer is a group of type
$A_1$. We have
$$U_\Delta'(2)=\{ (0111);(0121);(1111);(1121)\}\cup\{(1220)\}$$
The Levi part, which is $GL_2\times GL_2$ acts on this set as the
tensor product representation and as a one dimensional
representation. In this case, $M_\Delta(F)$ acts transitively, and the representative of the
open orbit is given by 
$x_{0121}(1)x_{1111}(1)x_{1220}(1)$.

The unipotent orbit labelled as $C_3(a_1)$ has the corresponding
diagram
$$  \ \ \  \overset{1}{0}----\overset{} {0} ==>== \overset{1}{0}----\overset{}{0}$$
The connected component of the stabilizer is a group of type
$A_1$. We have
$$U_\Delta'(2)=\{ (0120);(0121);(0122)\}\cup\{(1110);(1111)\}$$
Hence, $M_\Delta=GL_2\times GL_2$ acts as a sum of two irreducible representations. 
On the first representation, one copy of $GL_2(F)$ acts as the symmetric square 
representation and the other copy acts as a one dimensional representation. On the second
irreducible representation one copy of $GL_2(F)$ acts as the standard representation and the
other copy acts as a one dimensional representation. 
A set of unipotent representatives for the various orbits  is included in the set
$$\{x_{0120}(\beta_1)x_{0122}(\beta_2)x_{1111}(1) : \beta_j\in F^*\}$$

The last case is the unipotent orbit labelled $C_3$. Its diagram is
$$  \ \ \  \overset{1}{0}----\overset{} {0} ==>== \overset{1}{0}----\overset{2}{0}$$
The connected component of the stabilizer is a group of type
$A_1$. In this case the action is transitively, and as a representative of the open 
orbit, we can take the element $x_{0120}(1)x_{1110}(1)x_{0001}(1)$.

\subsubsection{\bf On the Fourier Coefficients Attached to the Orbits
$F_4(a_2)$ and $F_4(a_3)$}

For later reference we give some details concerning the Fourier
coefficients of these two unipotent orbits. We start with
$F_4(a_2)$. In this case $P_\Delta=P_{\alpha_1,\alpha_3}$. The roots
in $U_\Delta'(2)$ are
$$(0001);\ (0011);\ (0100);\ (1100);\ (0110);\
(1110);\ (0120);\ (1120)$$ The group of characters defined on the
group $U_\Delta(F)\backslash U_\Delta({\bf A})$ is defined as
follows. Write $u\in U_\Delta$ as $u=z(m_1,m_2)y(r_1,\ldots,r_6)u'$
where $u'\in [U_\Delta,U_\Delta]$,
$z(m_1,m_2)=x_{0001}(m_1)x_{0011}(m_2)$ and
$$y(r_1,\ldots,r_6)=x_{0100}(r_1)x_{0110}(r_2)x_{0120}(r_3)
x_{1100}(r_4)x_{1110}(r_5)x_{1120}(r_6)$$ Denote
$$\text{Mat}_{2\times 4}'=\left \{R\in \text{Mat}_{2\times 4} : R=\begin{pmatrix}
r_3&r_4&r_5&r_6\\ r_1&r_2&r_3&-r_4\end{pmatrix}\right \}$$ 
We mention that the motivation for dealing with this abelian group is from a certain matrix 
realization of the group $GSpin_7$. Embedding $GSpin_7$ inside $GSO_8$, the following described
action is derived from the action of $M_\Delta= GL_2\times GL_2$ on a unipotent radical of a maximal 
parabolic subgroup of $GSpin_7$.

Given a matrix $A$ in
$$\text{Mat}_{4\times 2}'=\left \{A\in \text{Mat}_{4\times 2} : A=\begin{pmatrix}
a_1&a_2\\ a_3&a_4\\ a_5&a_1\\ a_6&-a_3\end{pmatrix}\right \}$$ and
$\gamma_1,\gamma_2\in F$ define for $u=z(m_1,m_2)y(r_1,\ldots ,r_6)u'\in U_\Delta$ parameterized as
above
$$\psi_{U_\Delta,A,\gamma_1,\gamma_2}(u)=\psi\left (\text{tr}\left [A
\begin{pmatrix}
r_3&r_4&r_5&r_6\\ r_1&r_2&r_3&-r_4\end{pmatrix}\right ]\right
)\psi(\gamma_1 m_1+\gamma_2 m_2)$$ The action of the Levi part of
$M_{\alpha_1,\alpha_3}(F)$ on the group characters is given as
follows.

First, let $g$ be an element in  $SL_2(F)$ which is generated by $<x_{\pm 1000}(r)>$. The
action of this group is given by
$$\psi_{U_\Delta,A,\gamma_1,\gamma_2}\mapsto
\psi_{U_\Delta,B,\gamma_1,\gamma_2}\ \ \ \ \ \ \ \ B=\begin{pmatrix}
g&\\&g^{-1}\end{pmatrix}A$$ Next, let $h\in SL_2(F)$ generated by
$<x_{\pm 0010}(r)>$. Consider first the action of $x_{0010}(m)$. It is
given by
$$\psi_{U_\Delta,A,\gamma_1,\gamma_2}\mapsto
\psi_{U_\Delta,B,\gamma_1',\gamma_2'}\ \ \ \ \
B=\begin{pmatrix} 1&&m&\\ &1&&-m\\ &&1&\\
&&&1\end{pmatrix}A\begin{pmatrix} 1&m\\ &1
\end{pmatrix};\ \ \ \ \begin{pmatrix} \gamma_1'\\ \gamma_2'\end{pmatrix}=
\begin{pmatrix} 1&\\ m&1
\end{pmatrix}\begin{pmatrix} \gamma_1\\ \  \gamma_2\end{pmatrix}$$
The action of $x_{-0010}(m)$ is defined similarly by taking the
corresponding transpose matrices.

Finally, the action of the maximal torus $h(t_1,t_2,t_3,t_4)$ of
$F_4$ is given by
$$\psi_{U_\Delta,A,\gamma_1,\gamma_2}\mapsto
\psi_{U_\Delta,B,\gamma_1',\gamma_2'}\ \ \ \ \ \ B=T_1AT_2; \ \ \
\gamma_1'=t_3^{-1}t_4^2\gamma_1;\ \ \
\gamma_2'=t_2t_3^{-1}t_4^{-1}\gamma_2$$ where $T_1=\text{diag}\
(t_1^{-1}t_2t_3^{-1}t_4, t_1t_3^{-1}t_4, t_1^{-1}t_3,
t_1t_2^{-1}t_3)$ and $T_2=\text{diag}\ ( t_3t_4^{-2},
t_2t_3^{-1}t_4^{-1})$.

The Fourier coefficient \eqref{four1} corresponds to the unipotent
orbit $F_4(a_2)$ if and only if the connected component of the
stabilizer of the character $\psi_{U_\Delta,A,\gamma_1,\gamma_2}$,
is trivial.

The situation for $F_4(a_3)$ is similar. Here
$P_\Delta=P_{\alpha_1,\alpha_3,\alpha_4}$ and
$M_{\alpha_1,\alpha_3,\alpha_4}$ is generated by $SL_2\times SL_3$ and the maximal split torus $T$. The roots in $U_\Delta'(2)$ are all 12 positive roots of the form
$n_1\alpha_1+\alpha_2+n_3\alpha_3+n_4\alpha_4$ where $n_i\ge 0$. There are 6 roots
such that $n_1=0$ and 6 such that $n_1=1$. The six roots which have $n_1=0$
are $\{(0100);\ (0110);\ (0120);\ (0111);\ (0121);\ (0122)\}$. Write
an element $u\in U_\Delta$ as
$u=y(r_1,\ldots,r_6)z(m_1,\ldots,m_6)u'$ where
\begin{equation}\label{f4a3}
y(r_1,\ldots,r_6)=x_{0100}(r_1)x_{0110}(r_2)x_{0120}(r_3)x_{0111}(r_4)
x_{0121}(r_5)x_{0122}(r_6)
\end{equation}
and
$$z(m_1,\ldots,m_6)=x_{1100}(m_1)x_{1110}(m_2)x_{1120}(m_3)x_{1111}(m_4)
x_{1121}(m_5)x_{1122}(m_6)$$ Here, $u'\in [U_\Delta,U_\Delta]$. We
can relate these elements with the group $Mat_{3\times 3}^0=\{ x\in
Mat_{3\times 3}\ :\ J_3x=x^tJ_3\}$  where $J_3$ is the $3\times 3$
matrix defined in subsection 2.1. The relation is given by
$$y(r_1,\ldots,r_6)\mapsto \begin{pmatrix} r_4&r_5&r_6\\
r_2&r_3&r_5\\ r_1&r_2&r_4 \end{pmatrix}\ \ \ \ \ z(m_1,\ldots,m_6)
\mapsto \begin{pmatrix} m_4&m_5&m_6\\
m_2&m_3&m_5\\ m_1&m_2&m_4 \end{pmatrix}$$ To describe the characters
of the group $U_\Delta(F)\backslash U_\Delta({\bf A})$, let $A,B\in
Mat_{3\times 3}^0$. Then define, for an element $u\in U_\Delta$
parameterized as above
\begin{equation}\label{f4a31}
\psi_{U_\Delta,A,B}(u)=\psi\left (\text{tr}\left [ A\begin{pmatrix} r_4&r_5&r_6\\
r_2&r_3&r_5\\ r_1&r_2&r_4 \end{pmatrix}+B\begin{pmatrix} m_4&m_5&m_6\\
m_2&m_3&m_5\\ m_1&m_2&m_4 \end{pmatrix}\right ]\right )
\end{equation}
Thus we can identify the group characters of $U_\Delta(F)\backslash
U_\Delta({\bf A})$ by pairs $(A,B)$ as above. The action of
$M_{\alpha_1,\alpha_3,\alpha_4}(F)$ is as follows. First, given
$g\in SL_3(F)$ we have $g(A,B)=(gAJ_3g^tJ_3, gBJ_3g^tJ_3)$. Then,
for $h=\begin{pmatrix} a&b\\ c&d\end{pmatrix}\in SL_2(F)$, we have
$h(A,B)=(aA+bB,cA+dB)$. This action can be easily extended to an action of the group $GL_2\times
GL_3$. Doing so, we can describe the action of the torus $T$. We only need to describe the action
of $h(1,t,1,1)$. This is given by the above action using the diagonal matrix $g=\text{diag}(t,1,1)$,
and then $h=\text{diag}(1,t^{-1})$.   

The Fourier coefficient \eqref{four1} attached to the character
$\psi_{U_\Delta,A_0,B_0}$,  corresponds to the unipotent orbit
$F_4(a_3)$, if the connected component of the stabilizer of the pair
$(A_0,B_0)$ is trivial. This can be checked using the Lie algebras
of these groups, and extending the above action to $GL_2\times GL_3$. Thus, if $((h,g))(A_0,B_0)=(A_1,B_1)$ is an element in $GL_2\times GL_3$, then differentiating, we obtain the two equations
\begin{equation}\label{f4a32}
g_1A_0+A_0J_3g_1^tJ_3+a_1A_0+b_1B_0=0\ \ \ \ \
g_1B_0+B_0J_3g_1^tJ_3+c_1A_0+d_1B_0=0
\end{equation}
Here $g_1\in Mat_{3\times 3}$ and $h_1=\begin{pmatrix} a_1&b_1\\
c_1&d_1\end{pmatrix}$ is a $2\times 2$ matrix.
Clearly, the group of matrices $(h_1,g_1)=(-2tI_2,tI_3)$ with $t\in
F$, is a solution to these two equations. We refer to this
solution as the trivial solution. Indeed, on the group level this solution corresponds to the torus
element $(t^{-2}I_2,tI_3)\in GL_2\times GL_3$, but from the above realization of the action on
the unipotent matrices in $F_4$, this torus is not in $M_{\alpha_1,\alpha_3,\alpha_4}$.

Thus, if the solution to these two equations is only the trivial solution, then the Fourier coefficient \eqref{four1} attached to the character $\psi_{U_\Delta,A_0,B_0}$
corresponds to the unipotent orbit $F_4(a_3)$.

\subsubsection{\bf Root Exchange}

In the following Sections during the computations, we will carry out
several Fourier expansions. One type of this expansions will repeat
itself several times, and therefore it is convenient to state it in
generality. We shall refer to this process as root exchange. This process was described in generality in \cite{G-R-S7} subsection 7.1.  This process has a local analogous which uses the
notion of twisted Jacquet modules. In \cite{G-R-S2} subsection 2.2, the global process stated in   \cite{G-R-S7} is formulated and carried out using the local language. In this paper, the proofs
are global by nature, and therefore we prefer to use the global version. However, it should be
emphasized that a similar proof can be stated and carried out in the local situation.

In this paper we will perform the expansions on a root by root process. For that reason we
prefer to state the process of root exchange using a slightly different notations. We should also
emphasize that the computations involved do not contribute any cocycle. This is true in both
the global and the local version.

A typical integral that we start is an integral given by
\begin{equation}\label{exch1}
\int\limits_{(F\backslash {\bf A})^2}\int\limits_{U(F)\backslash
U({\bf A})}f(ux_\alpha(m)x_\beta(r))\psi(m)dudmdr
\end{equation}
Here $f$ is an automorphic function, and $\alpha$ and $\beta$ are
two roots, need not be positive roots.  Also, $U$ is a certain
unipotent group normalized by $x_\alpha(m)$ and $x_\beta(r)$. We assume that $[x_\beta(r),x_\alpha(m)]\in U$

Consider the following integral as a function of $g$,
\begin{equation}\label{exch2}\notag
L(g)=\int\limits_{F\backslash {\bf A}}\int\limits_{U(F)\backslash
U({\bf A})}f(ux_\alpha(m)g)\psi(m)dudm
\end{equation}
and assume that it is left invariant under $x_\gamma(\delta)$ for
all $\delta\in F$. That is $L(x_\gamma(\delta)g)=L(g)$. Here
$\gamma$ is any root, positive or negative, which satisfies the
commutation relation $[x_\beta(r),x_\gamma(t)]=x_\alpha(c\beta
t)u'$ with $u'\in U$. Here $c\in F^*$, a scalar which result from the structure constants in $F_4$.
With these assumptions we can expand integral $L(g)$ along $x_\gamma(t)$
where $t\in F\backslash {\bf A}$. We obtain
\begin{equation}\label{exch21}\notag
\sum_{\delta\in F}\int\limits_{(F\backslash {\bf
A})^2}\int\limits_{U(F)\backslash U({\bf
A})}f(ux_\alpha(m)x_\gamma(t)g)\psi(m+\delta t)dtdudmdr
\end{equation}
From this we deduce that integral \eqref{exch1} is equal to
\begin{equation}\label{exch3}\notag
\int\limits_{F\backslash {\bf
A}}\sum_{\delta\in F}\int\limits_{(F\backslash {\bf
A})^2}\int\limits_{U(F)\backslash U({\bf
A})}f(ux_\alpha(m)x_\gamma(t)x_\beta(r))\psi(m+\delta t)dtdudmdr
\end{equation}
Since $f$ is automorphic then for all $g$ and all $\delta\in F$ we
have $f(x_\beta(\delta)g)=f(g)$. Using that, and the above
commutation relations, the above integral is equal to
\begin{equation}\label{exch4}\notag
\int\limits_{F\backslash {\bf
A}}\sum_{\delta\in F}\int\limits_{(F\backslash {\bf
A})^2}\int\limits_{U(F)\backslash U({\bf A})}f(ux_\alpha(m+\delta
t)x_\gamma(t)x_\beta(r+\delta))\psi(m+\delta t)dtdudmdr
\end{equation}
Changing variables, and collapsing summation over $\delta$ with
integration over $r$, this integral is equal to
\begin{equation}\label{exch5}
\int\limits_{\bf A}\int\limits_{(F\backslash {\bf
A})^2}\int\limits_{U(F)\backslash U({\bf
A})}f(ux_\alpha(m)x_\gamma(t)x_\beta(r))\psi(m)dtdudmdr
\end{equation}
Arguing as in \cite{Ga-S} one can easily show that the above
integral is zero for all choice of data, if and only if the integral
\begin{equation}\label{exch6}
\int\limits_{(F\backslash {\bf A})^2}\int\limits_{U(F)\backslash
U({\bf A})}f(ux_\alpha(m)x_\gamma(t))\psi(m)dtdudm
\end{equation}
is zero for all choice of data. Hence, we deduce that integral
\eqref{exch1} is zero for all choice of data if and only if integral
\eqref{exch5} or integral \eqref{exch6} are zero for all choice of
data. Referring to this process we will say that we exchanged the
root $\beta$ by the root $\gamma$.

\subsection{\bf Eisenstein series and their Residues}

In this Section we consider certain Eisenstein series on $G=F_4$
and study some of their residues. The basic reference  for this type of construction is \cite{K-P}. We also follow the ideas
of the construction of a small representation of the double cover of
odd orthogonal groups. This was done in \cite{B-F-G1}, and we refer
to that paper for more details.

The Theta representation we construct will be a residue of an Eisenstein series associated with an induction from the Borel subgroup. We review
how this is constructed. Let $B$ denote the Borel subgroup of $G$, and let $T\subset B$ denote its maximal split torus. Let $\chi$ denote
a character of $T$. Let $\widetilde{T}$ denote the inverse image of $T$ inside $\widetilde{G}$. Let $Z(\widetilde{T})$ denote the center of
$\widetilde{T}$, and let $\widetilde{T}_0$ denote any maximal abelian subgroup of $\widetilde{T}$. The character $\chi$ defines a genuine character
of $Z(\widetilde{T})$ in the obvious way, and we extend it in any way to a character of $\widetilde{T}_0$. Inducing up to $\widetilde{T}$, extending it
trivially to $\widetilde{B}$, and then inducing to $\widetilde{G}$, we obtain a representation of $\widetilde{G}$ which we denote by $Ind_{\widetilde{B}}^{\widetilde{G}}\chi$.
It follows from  \cite{K-P} that this representation is uniquely determined by the character $\chi$ defined on $Z(\widetilde{T})$. These
statements are true both locally and globally. 

Let $\chi_{\bar{s}}$ denote the character of $T$ defined as follows. Given $h(t_1,t_2,t_3,t_4)\in T$ we define  $\chi_{\bar{s}}(h(t_1,t_2,t_3,t_4))=
|t_1|^{s_1}|t_2|^{s_2}|t_3|^{s_3}|t_4|^{s_4}$. Let $E_G^{(2)}(g,\bar{s})$ denote the Eisenstein series defined on $\widetilde{G}({\bf A})$ which is associated
with the induced representation $Ind_{\widetilde{B}({\bf A})}^{\widetilde{G}({\bf A})}\chi_{\bar{s}}\delta_B^{1/2}$. The poles of this Eisenstein series
are determined by the intertwining operators corresponding to elements $w$ of the Weyl group of $G$. The poles of these factors can be determined by using the
factors 
\begin{equation}\label{pole1}
c_w(\chi_{\bar{s}})=\prod_{\alpha>0,
w(\alpha)<0}\frac{(1-\chi_{\bar{s}}(a_\alpha)^{n(\alpha)})^{-1}}
{(1-q^{-1}\chi_{\bar{s}}(a_\alpha)^{n(\alpha)})^{-1}}
\end{equation}
where $n(\alpha)=1$ for the short roots and $n(\alpha)=2$ for the long roots. Consider first the contribution from the long Weyl element in $W$. A simple application of \eqref{pole1} implies that the poles of the corresponding intertwining operator are determined by 
\begin{equation}\label{pole2}
Z_S(\bar{s})=\frac{\zeta^S(2s_1)\zeta^S(2s_2)\zeta^S(s_3)\zeta^S(s_4)L^S(\bar{s})}{\zeta^S(2s_1+1)\zeta^S(2s_2+1)\zeta^S(s_3+1)\zeta^S(s_4+1)L^S(\bar{s}+1)}
\notag
\end{equation}
Here the four partial zeta factors are the terms contributed from the simple roots $\alpha$ in the product in \eqref{pole1}. The factor $L^S(\bar{s})$ is a product
of 20 partial zeta factors evaluated at points of the form $\sum_{i=1}^4n_i s_i$ with $n_i\ge 0$ and such that $n_1+n_2+n_3+n_4\ge 2$. The set $S$ is a finite set, such
that outside of $S$ all places are finite unramified places.  From this we deduce that $Z_S(\bar{s})$ has a simple multi pole at $s_1=s_2=\frac{1}{2}$ and 
$s_3=s_4=1$. Its not hard to prove that all other intertwining operators are holomorphic at this point. Hence, the Eisenstein series $E_G^{(2)}(g,\bar{s})$ has a multi-residue 
at that point. Denote this multi-residue representation by $\Theta_G^{(2)}$. If there is no confusion we shall denote it simply by $\Theta$. Thus, the representation
$\Theta$ is a sub-quotient of the representation $Ind_{\widetilde{B}({\bf A})}^{\widetilde{G}({\bf A})}\chi_{\bar{s}_0}\delta_B^{1/2}$ where $\chi_{\bar{s}_0}(h(t_1,t_2,t_3,t_4)=
|t_1t_2|^{1/2}|t_3t_4|$.

We will not need it, but we mention that the representation $\Theta$ is a subrepresentation of the
induced representation $Ind_{\widetilde{B}({\bf A})}^{\widetilde{G}({\bf A})}\chi_\Theta$ where
$\chi_\Theta(h(t_1,t_2,t_3,t_4))=|t_1t_2|^{1/2}$.

Let $P=MU$ denote a maximal parabolic subgroup of $G=F_4$, where $M$ is the Levi part of $P$, and $U$ is its unipotent radical. Let $M^0$ denote the subgroup of $M$ which is generated by all copies of $SL_2=<x_{\pm\alpha}(r)>$ where $\alpha$ is a positive root in $M$. There are four cases which we now list. First, if $P=P_{\alpha_1,\alpha_2,\alpha_3}$, then $M^0=Spin_7$. When $P=P_{\alpha_1,\alpha_2,\alpha_4}$ or $P_{\alpha_1,\alpha_3,\alpha_4}$ then $M^0=SL_2\times SL_3$, and when $P=P_{\alpha_2,\alpha_3,\alpha_4}$ then $M^0=Sp_6$. 

Using induction by
stages, we can write $Ind_{\widetilde{B}({\bf A})}^{\widetilde{G}({\bf A})}\chi_{\bar{s}_0}\delta_B^{1/2}$ as $Ind_{\widetilde{P}({\bf A})}^{\widetilde{G}
({\bf A})}\tau_P\delta_P^{1/2}$, where $\tau_P$ is an automorphic representation of $\widetilde{M}({\bf A})$. Thus, in the case when $P=P_{\alpha_1,\alpha_2,\alpha_3}$, then $\tau_P$ restricted to
$\widetilde{M}^0({\bf A})=Spin_7^{(2)}({\bf A})$, is a minimal representation of this group. Indeed, this follows by comparing the parameters between those of $\Theta$ and the parameters of the
minimal representation of $Spin_7^{(2)}({\bf A})$ as established in \cite{B-F-G2}. In the case when
$P=P_{\alpha_1,\alpha_2,\alpha_4}$ we obtain that $\tau_P$ restricted to $\widetilde{SL}_3({\bf A})\times SL_2({\bf A})$ is the representation $\Theta_{SL_3}\times 1$, and similarly when $P=P_{\alpha_1,\alpha_3,\alpha_4}$ then we obtain the representation $\Theta_{SL_2}\times 1$ of
$\widetilde{SL}_2({\bf A})\times SL_3({\bf A})$. These two cases are obtained by comparing with the construction of the Theta representations as done in \cite{K-P}. Finally, when $P=P_{\alpha_2,\alpha_3,\alpha_4}$ we obtain the right most residue representation of the Siegel Eisenstein series 
defined on $\widetilde{Sp}_6({\bf A})$. This can be verified using the result of \cite{I2}. 
Motivated by the above, let $\widetilde{M}_0$ denote the subgroup of $\widetilde{M}$ defined as follows. When $M$ is the Levi part of $P_{\alpha_1,\alpha_2,\alpha_3}$ or of $P_{\alpha_2,\alpha_3,\alpha_4}$, we define $\widetilde{M}_0=\widetilde{M}^0$. When $M$ is the Levi part of 
$P_{\alpha_1,\alpha_2,\alpha_4}$, define $\widetilde{M}_0=\widetilde{SL}_3\times SL_2$, and in the
last case, when $M$ is the Levi part of $P_{\alpha_1,\alpha_3,\alpha_4}$, we define $\widetilde{M}_0=\widetilde{SL}_2\times SL_3$. 
A representation of the group $\widetilde{M}_0({\bf A})$ will  said to be a minimal representation 
if the only nontrivial Fourier coefficients this representation has, corresponds to the minimal 
orbit specified as follows. In the case when $\widetilde{M}_0=Spin_7^{(2)}$ we refer to $\tau_P$ as a minimal representation if  the only nonzero Fourier coefficients of this representation 
corresponds to the unipotent orbit $(2^21^3)$. When 
$\widetilde{M}_0=\widetilde{SL}_3({\bf A})\times SL_2({\bf A})$ we refer to $\tau_P$ as a minimal
representation if the only nonzero Fourier coefficients of this representation 
corresponds to the unipotent orbit $(21)$ on $\widetilde{SL}_3$ and trivial on $SL_2$.
For $\widetilde{M}_0=\widetilde{SL}_2({\bf A})\times SL_3({\bf A})$ we refer to $\tau_P$ as a minimal representation if it is trivial on $SL_3$. Finally when $\widetilde{M}_0=
\widetilde{Sp}_6$ we refer to $\tau_P$ as a minimal
representation if the only nonzero Fourier coefficients of this representation 
corresponds to the unipotent orbit $(21^4)$. 
It is a consequence of the above mentioned references that the representation $\tau_P$
restricted to $\widetilde{M}_0({\bf A})$, is a minimal representation. The case where $M_0=Sp_6$
follows from the Siegel-Weil identity as established in \cite{I2}.

\begin{proposition}\label{eisen1}
Let $P=MU$ denote any one of the four maximal parabolic subgroup of $G=F_4$. With the above notations, the constant term $\Theta^{U}(g)$ when restricted to the group
$\widetilde{M}_0({\bf A})$ defines a minimal representation of this group.
More over, the residue representation $\Theta$ is square integrable.
\end{proposition}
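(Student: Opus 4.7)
The plan is to combine the explicit realization of $\Theta$ as a multi-residue of $E_G^{(2)}(g,\bar s)$ at $\bar s_0=(\tfrac12,\tfrac12,1,1)$ with the induction-by-stages description recalled just before the statement, and then to deduce square-integrability from the Langlands criterion on the exponents of the Jacquet modules.

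For the minimality assertion, for each of the four maximal parabolics $P=MU$ I would first compute $(E_G^{(2)})^U(g,\bar s)$ using the standard Langlands constant-term formula as a sum indexed by $W_M\backslash W$, each term being a product of the combinatorial factor $c_w(\chi_{\bar s})$ in \eqref{pole1} with an intertwining image of the inducing section, living in an induced representation on $\widetilde M(\mathbf A)$ twisted by $\delta_P^{1/2}$. Taking the multi-residue at $\bar s_0$ annihilates all summands whose $c_w$-factor is regular there, leaving only those Weyl representatives that produce the full fourfold pole coming from $\zeta^S(2s_1)\zeta^S(2s_2)\zeta^S(s_3)\zeta^S(s_4)$. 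The surviving contributions assemble into a sub-quotient of $Ind_{\widetilde P(\mathbf A)}^{\widetilde G(\mathbf A)}\tau_P\delta_P^{1/2}$ that matches the induction-by-stages decomposition, and hence $\Theta^U$ is a sub-quotient of $\tau_P\delta_P^{1/2}$.

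Next I would identify $\tau_P$ restricted to $\widetilde M_0(\mathbf A)$ as the asserted minimal representation by a case-by-case comparison of inducing data with the references: for $P=P_{\alpha_1,\alpha_2,\alpha_3}$ one matches with the minimal representation of $\widetilde{Spin}_7^{(2)}$ from \cite{B-F-G2}; for $P=P_{\alpha_1,\alpha_2,\alpha_4}$ and $P=P_{\alpha_1,\alpha_3,\alpha_4}$ one recognises tensor products of Kazhdan--Patterson theta residues on $\widetilde{SL}_3$, respectively $\widetilde{SL}_2$, with the trivial representation on the other factor using \cite{K-P}; and for $P=P_{\alpha_2,\alpha_3,\alpha_4}$ one identifies the restriction to $\widetilde{Sp}_6(\mathbf A)$ with the rightmost Siegel residue via the Siegel--Weil identity of \cite{I2}. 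In each case the minimality of the Fourier coefficients of $\tau_P$ on $\widetilde M_0$, in the precise sense defined just before the proposition, is exactly what the cited reference provides. For the square-integrability half of the statement I would invoke the Langlands criterion: a Borel-type residue is square-integrable on $\widetilde G(\mathbf A)$ if and only if, for every maximal standard parabolic $P=MU$, every exponent of $\Theta^U$, viewed in the real dual of the split component of $M$, lies strictly inside the negative open chamber. Using $\chi_{\bar s_0}(h(t_1,t_2,t_3,t_4))=|t_1t_2|^{1/2}|t_3t_4|$ together with $\delta_P^{1/2}$, each of the four exponents becomes a concrete linear combination of fundamental weights, and the required strict negativity can be checked by a direct computation in the $F_4$ root system.

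The main obstacle will be the identification step at the $\widetilde{Spin}_7$ and $\widetilde{Sp}_6$ Levis: the induced datum coming naively from the residue will typically differ from the Siegel-type parametrisation used in \cite{B-F-G2} and \cite{I2} by a Weyl element internal to the Levi, so one must reconcile the two presentations either by an explicit intertwining-operator calculation on the Levi or, in the $\widetilde{Sp}_6$ case, by an application of the Siegel--Weil identity. Once this matching is established, both the minimality statement on each Levi and the exponent verification for square-integrability reduce to finite checks within the $F_4$ root data.
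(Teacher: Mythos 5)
Your proposal follows essentially the same approach as the paper: compute the constant term, take the residue, identify the surviving term with $\tau_P|_{\widetilde M_0}$ using the cited references for each Levi, and deduce square-integrability from the exponent criterion. The one organizational difference worth noting is that the paper first passes from the Borel Eisenstein series to the one-variable maximal-parabolic series $\widetilde E_{\tau_P}(g,s)$ by induction in stages and then takes the constant term along $U$, so the unfolding runs over the five double cosets in $P(F)\backslash G(F)/P(F)$ rather than the much larger $W_M\backslash W$ you invoke; the paper then checks directly that the identity term is the section, the long Weyl element $w_0$ carries the pole, and the remaining three representatives contribute Eisenstein series on $\widetilde{Sp}_6$ that are holomorphic at $s=27/32$. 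This is cleaner bookkeeping than taking the multi-residue of the full Langlands constant-term formula, but it is the same underlying computation. Also, for the square-integrability half, the paper applies Jacquet's criterion by the shortcut of observing that $\Theta$ is a subrepresentation of $Ind_{\widetilde B}^{\widetilde G}\chi_\Theta\delta_B^{1/2}$ with $\chi_\Theta$ strictly in the negative Weyl chamber, which spares you the maximal-parabolic-by-parabolic exponent check you propose. Finally, a small inaccuracy: $\Theta^U$ is a representation of the Levi $\widetilde M(\mathbf A)$, so it should be identified (up to the modulus twist) with a subquotient of $\tau_P$ itself, not with a subquotient of the globally induced $Ind_{\widetilde P}^{\widetilde G}\tau_P\delta_P^{1/2}$.
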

\begin{proof}
We shall work out the details in the case $P=P_{\alpha_2,\alpha_3,\alpha_4}$. The other cases
are done in a similar way.  

Let $P=P_{\alpha_2,\alpha_3,\alpha_4}$. Then using induction by stages as above, we deduce that
the representation $\Theta$ is a residue at $s=27/32$ of the Eisenstein series $\widetilde{E}_{\tau_P}(g,s)$ associated with the induced representation $Ind_{\widetilde{P}({\bf A})}^{\widetilde{G}({\bf A})}\tau_P\delta_P^{s}$. To get this value of $s$, we start by noticing
that $\chi_{\bar{s}_0}\delta_B^{1/2}(h(t_1,t_2,t_3,t_4))=|t_1t_2|^{3/2}|t_3t_4|^2$. On $Sp_6$,
we have the identity  $|t_2|^{3/2}|t_3t_4|^2=(\delta_{B(GL_3)}\delta_{P(Sp_6)}^{7/8})(h(1,t_2,t_3,t_4))$. Here $P(GL_3)$ is the maximal parabolic subgroup of $Sp_6$ whose Levi part is $GL_3$, and $B(GL_3)$ is the Borel subgroup of $GL_3$. Extending this character to $T$, we obtain 
$$(\delta_{B(GL_3)}\delta_{P(Sp_6)}^{7/8})(h(t_1,t_2,t_3,t_4))=|t_1|^{-\frac{21}{4}}|t_2|^{3/2}|t_3t_4|^2$$
We have $\delta_P^s(h(t_1,t_2,t_3,t_4))=|t_1|^{8s}$. Hence, when matching the character 
$\delta_{B(GL_3)}\delta_{P(Sp_6)}^{7/8}\delta_P^s$ with $|t_1t_2|^{3/2}|t_3t_4|^2$ we get $s=27/32$.

We need to study the constant term of this Eisenstein series. We
use the method of \cite{K-R}. See also \cite{B-F-G2} and
\cite{G-R-S1} for similar cases.  Consider the constant term
along $U$. In other words, let
$$\widetilde{E}_{\tau_P}^U(g,s)=\int\limits_{U(F)\backslash
U({\bf A})}\widetilde{E}_{\tau_P}(ug,s)du$$  
Unfolding the Eisenstein series for $\text{Re}(s)$ large, we need to consider the space of double cosets $P(F)\backslash G(F)/P(F)$.  This space has five elements, and as representatives, we can choose the five Weyl elements $e,\ w[1],\ w[12321],\ w[12324321]$ and the long Weyl element in 
this space which we denote by $w_0$. Notice that all of these elements are of order two, and hence $M_w=M_{w^{-1}}$.

We start with the contribution of $w_0$. Since $\Theta$ is a residue of this Eisenstein
series, we deduce that at the point $s=27/32$, where the residue occurs, the intertwining $(M_{w_0}f_s)(m)$ operator has a simple pole.
Arguing as in \cite{G-R-S1} pages 78-81 we deduce that at the bad
places, after a suitable normalization by the local factors of the
normalizing factor of the Eisenstein series,  the local intertwining
operators are holomorphic at the above point. Thus
$(M_{w_0}f_s)(m)$ has a simple pole at $s=27/32$. As a function of $g\in Sp_6^{(2)}({\bf A})$, 
the function $(M_{w_0}f_s)(m)$ belongs to the space of $\tau_P$ restricted to the group $\widetilde{M}_0({\bf A})$. As we stated before the Proposition this representation is a minimal representation.

Next we consider the contribution from the other four
representatives. The term which corresponds to the identity is just
the section which is clearly holomorphic. The three other
representatives contributes each to the constant term an Eisenstein
series defined on $\widetilde{Sp_6}({\bf A})$.  This Eisenstein
series has the form $\widetilde{E}(m,M_wf_s,s')$ where $M_wf_s$ is
the corresponding intertwining operator and $s'$ is a certain linear
function in $s$. When $w=w[1]$ or when $w=w[12324321]$ we get the
Eisenstein series associated to the induced representation
$Ind_{Q({\bf A})}^{\widetilde{Sp}_6({\bf A})}\delta_Q^{s'}$ where
$Q$ is the maximal parabolic subgroup of $Sp_6$ whose Levi part is
$GL_3$. When $w=w[12321]$ we obtain an Eisenstein series associated
with induction from the parabolic subgroup whose Levi part is
$GL_1\times Sp_4$.

This procedure is fairly standard. See \cite{K-R}, or \cite{B-F-G2}
for an example in the covering group. As an example, consider the
case when $w=w[1]$. We have $w\alpha =\alpha$ when
$\alpha=\pm(0010)$ and $\alpha=\pm(0001)$. Also $w(0100)=(1100)$.
This means that $w$ conjugates the subgroup $P(GL_3)$ into $P$. Here
$P(GL_3)$ is the maximal parabolic subgroup of $Sp_6$ which
contains the group $GL_3$. Thus, the contribution to
$\widetilde{E}_{\tau_P}^U(g,s)$ from this Weyl element is
$$\sum_{\gamma\in P(GL_3)(F)\backslash Sp_6(F)}\int\limits_
{V(F)\backslash V({\bf A})}\int\limits_{U_w({\bf A})}f_s(vwu\gamma
g)dvdu$$ Here $V$ denotes the unipotent radical of the parabolic
subgroup $P(GL_3)$, and $U_w=<x_{1000}(r)>$. Thus, as a function of $m\in \widetilde{Sp}_6({\bf
A})$, this term is equal to $\widetilde{E}(m,M_wf_s,s')$, the
Eisenstein series associated to the induced representation
$Ind_{P(GL_3){\bf A}}^{\widetilde{GSp}_6({\bf
A})}\delta_{P(GL_3)}^{s'}$. From the above integral we obtain that
$s'=2s+\frac{5}{16}$. It is also easy to verify that the
intertwining operator $M_wf_s$ is holomorphic at $s=\frac{27}{32}$,
and hence,  we deduce that
$\widetilde{E}(m,M_wf_s,s')$ is holomorphic at $s=\frac{27}{32}$ which corresponds to the point $s'=2$.

The other two cases are similar, and in both we obtain  that they
are holomorphic at $s=27/32$. Hence, all other four Weyl elements contributes a function to the constant term $\widetilde{E}_{\tau_P}^U(g,s)$, which is holomorphic at the point $s=27/32$. From
this the Proposition follows for this maximal parabolic subgroup $P$. As mentioned above, the
other cases are similar and will be omitted.

Finally, to prove the square integrability we use Jacquet's
criterion \cite{J1}. This follows from the fact that $\Theta_G^{(2)}$ is a sub-representation of
$Ind_{\widetilde{B}({\bf A})}^{\widetilde{G}({\bf A})}\chi_\Theta\delta_B^{1/2}$ where
$\chi_\Theta(h(t_1,t_2,t_3,t_4))=|t_1t_2|^{-1/2}|t_3t_4|^{-1}$ is in the
negative Weyl chamber. 

\end{proof}

Proposition \ref{eisen1} has a local version. Let $\Theta'$ denote any irreducible summand of
$\Theta$. Let $\nu$ denote any finite place where the local representation $\Theta'_\nu$ is unramified. Then the representation $\Theta'_\nu$ is the unramified  subrepresentation of  $Ind_{\widetilde{B}({F_\nu})}^{\widetilde{G}({F_\nu})}\chi_\Theta$. One can characterize this subrepresentation as the space of all functions $f\in Ind_{\widetilde{B}({F_\nu})}^{\widetilde{G}({F_\nu})}\chi_\Theta$ such that $I_wf=0$ for all Weyl elements of $F_4$. Here $I_w$ is the 
intertwining operator corresponding to $w$. This claim is a consequence of the periodicity 
Theorem in \cite{K-P} adopted to the group $F_4$. It is all also simple to verify the claim
that $I_wf=0$ when $w$ corresponds to a simple reflection. It should be mentioned that this
intertwining operators need not converge at the point $\chi_\Theta$. In that case one views
the above statement in the sense of meromorphic continuation.

Let $P=MU$ denote any one of the four maximal parabolic subgroups of $G$. Construct the Jacquet module $J_U(\Theta'_\nu)$. A representation 
of $\widetilde{M}_0(F_\nu)$ is said to be minimal, if it has no nonzero local functionals which corresponds to any unipotent orbit which is greater than the one specified in the global situation.
As in the global case given in Proposition \ref{eisen1}, we obtain
\begin{corollary}\label{eisen11}
As a representation of $\widetilde{M}_0(F_\nu)$, the Jacquet module $J_U(\Theta'_\nu)$ is a minimal representation.
\end{corollary}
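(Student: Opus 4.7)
The plan is to mimic the proof of Proposition \ref{eisen1} locally, replacing constant terms by Jacquet modules and residues of Eisenstein series by the quotient characterization $I_w f = 0$ that cuts out $\Theta'_\nu$ inside $Ind_{\widetilde{B}(F_\nu)}^{\widetilde{G}(F_\nu)}\chi_\Theta$. First I would use induction by stages, writing
$$Ind_{\widetilde{B}(F_\nu)}^{\widetilde{G}(F_\nu)}\chi_\Theta \;=\; Ind_{\widetilde{P}(F_\nu)}^{\widetilde{G}(F_\nu)}\sigma_\nu,$$
where $\sigma_\nu$ is the induced representation of $\widetilde{M}(F_\nu)$ whose restriction to $\widetilde{M}_0(F_\nu)$ is (by comparison of Satake parameters with the references cited before Proposition \ref{eisen1}, namely \cite{B-F-G2}, \cite{K-P}, \cite{I2}) the local unramified piece of the global minimal representation on $\widetilde{M}_0$.

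Next I would apply the geometric lemma to compute $J_U$ of this induced representation. Exactly as in the global proof, the relevant double coset space $P(F_\nu)\backslash G(F_\nu)/P(F_\nu)$ has five elements, represented by the Weyl elements $e,\ w[1],\ w[12321],\ w[12324321],\ w_0$. This yields a five-step Bernstein--Zelevinsky filtration on $J_U(Ind_{\widetilde{P}}^{\widetilde{G}}\sigma_\nu)$. The top piece, corresponding to the long Weyl element $w_0$, is the $w_0$-twist of $\sigma_\nu$, and hence its restriction to $\widetilde{M}_0(F_\nu)$ is minimal in the sense defined in the paragraph preceding the Corollary.

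Then I would show that the contributions of $e,\ w[1],\ w[12321]$ and $w[12324321]$ to $J_U(\Theta'_\nu)$ vanish. Globally these Weyl elements contributed Eisenstein series on $\widetilde{Sp}_6$ (or on the appropriate $\widetilde{M}_0$) whose intertwining operators were holomorphic at $s=27/32$ and thus did not survive taking the residue. Locally, the analogous statement is that the corresponding filtration subquotients are either annihilated by the relations $I_w f = 0$ that characterize the spherical $\Theta'_\nu$ (invoked via the $F_4$-version of the periodicity theorem of \cite{K-P}), or else they produce representations of $\widetilde{M}_0(F_\nu)$ whose functionals are attached to unipotent orbits strictly smaller than the minimal one specified for $\widetilde{M}_0$, and hence are automatically compatible with minimality.

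The main obstacle will be this last vanishing step: the four intermediate subquotients are nonzero inside the full induced representation, and one has to match each one explicitly with a defining relation $I_w f = 0$ of $\Theta'_\nu$, using the $F_4$ periodicity theorem to guarantee that the necessary intertwiners are well defined (via meromorphic continuation, since convergence at $\chi_\Theta$ can fail). Once this matching is carried out for all but $w_0$, the Jacquet module collapses to the $w_0$-contribution, and minimality follows from the local version of the cited results on $\widetilde{M}_0$. The rest is routine bookkeeping parallel to Proposition \ref{eisen1}.
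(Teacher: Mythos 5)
Your proposal is essentially the argument the paper intends: Corollary~\ref{eisen11} is stated with only the remark ``As in the global case given in Proposition~\ref{eisen1}, we obtain,'' and your plan of replacing the unfolding of the constant term by the geometric lemma / Bernstein--Zelevinsky filtration over the same five double cosets, isolating the $w_0$-contribution as the minimal piece, and controlling the remaining four via the relations $I_w f=0$ (which the paper itself invokes as the local characterization of $\Theta'_\nu$ in terms of the $F_4$ periodicity theorem of \cite{K-P}), is the correct local translation. You rightly flag that the intermediate subquotients need not literally vanish in $J_U(\Theta'_\nu)$ but only need to be compatible with minimality; this matches the global picture, where they contribute terms holomorphic at $s=27/32$ that die after taking the residue.
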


Returning to the global case, to prove that $\Theta$ is indeed a minimal representation of the group $\widetilde{G}({\bf A})$, we start by considering the Fourier coefficients which the
Eisenstein series $\widetilde{E}_{\tau_P}(m,s)$ does not support.
We will do it for the case when $P=P_{\alpha_2,\alpha_3,\alpha_4}$. To emphasize the relation of $\tau_P$ to the residue representation of $\widetilde{Sp}_6$, we shall write $\Theta_6$ instead of $\tau_P$. We also refer the reader to \cite{C} page 440 for the description of the
partial order of the unipotent orbits in $F_4$. We prove
\begin{proposition}\label{eisen2}
Let ${\mathcal O}$ denote a unipotent orbit which is greater than or
equal to the unipotent orbit $\widetilde{A}_2$. Then
$\widetilde{E}_{\Theta_6}(m,s)$ has no nonzero Fourier
coefficients corresponding to ${\mathcal O}$. 
\end{proposition}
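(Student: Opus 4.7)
The plan is as follows. First, I would reduce the proposition to a single Fourier coefficient by exploiting the partial order on unipotent orbits: by the standard fact (\cite{G1}) that nonvanishing of a Fourier coefficient attached to an orbit $\mathcal O$ forces nonvanishing of those attached to every $\mathcal O' \le \mathcal O$, the contrapositive shows that vanishing along $\widetilde A_2$ propagates to vanishing along every $\mathcal O > \widetilde A_2$. Since $\widetilde A_2$ is the minimum of $\{\mathcal O : \mathcal O \ge \widetilde A_2\}$, it suffices to treat the Fourier coefficient attached to $\widetilde A_2$ itself.

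Next, I would set up this Fourier coefficient explicitly. The weighted Dynkin diagram of $\widetilde A_2$ labels $\alpha_4$ with $2$ and the remaining simple roots with $0$, so $\Delta = \{\alpha_1,\alpha_2,\alpha_3\}$, $P_\Delta = P_{\alpha_1,\alpha_2,\alpha_3}$, and $\widetilde M_0 = Spin_7^{(2)}$. Since no node is labelled $1$, one has $U_\Delta(2) = U_\Delta$, and the Fourier coefficient has the form
\[
\mathcal F(g) \;=\; \int_{U_\Delta(F)\backslash U_\Delta({\bf A})} \widetilde E_{\Theta_6}(ug,s)\,\psi_{U_\Delta, u_0}(u)\,du,
\]
with $u_0 \in U_\Delta(F)$ chosen so that its $M_\Delta(F)$-stabilizer is of type $A_2$, the reductive centralizer recorded in \cite{C}.

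The heart of the argument is then a root exchange in the sense of subsection 2.2.2. I would perform a sequence of Fourier expansions that, for each root $\beta$ of $U_\Delta$ on which $\psi_{U_\Delta, u_0}$ is trivial, trade $\beta$ against a suitable opposite root $\gamma$ along which $\widetilde E_{\Theta_6}$ is already automorphic (and whose commutator with $\beta$ yields a root in the support of the character, per the structure constants of $F_4$ from \cite{G-S}). After all exchanges, $\mathcal F(g)$ is rewritten as an integral of the constant term $\widetilde E^{U_\Delta}_{\Theta_6}$ along the full unipotent radical of $P_{\alpha_1,\alpha_2,\alpha_3}$, paired with a character on a unipotent subgroup of $M^0 = Spin_7$. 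By Proposition~\ref{eisen1} this constant term, restricted to $\widetilde M_0({\bf A}) = Spin_7^{(2)}({\bf A})$, is a minimal representation and hence supports only Fourier coefficients corresponding to the minimal orbit $(2^21^3)$. Matching the unipotent structure produced by the exchanges, the induced character on $Spin_7^{(2)}$ will correspond to a strictly larger orbit, and so the integral, and therefore $\mathcal F$, vanishes identically.

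The main obstacle I anticipate is the combinatorial bookkeeping of the exchange: one must determine, for every root of $U_\Delta$ outside the support of $\psi_{U_\Delta, u_0}$, a compatible opposite root so that the structure constants of $F_4$ make the exchange procedure close up without leaving residual obstructions, and then identify explicitly which unipotent orbit of $Spin_7$ governs the resulting inner Fourier coefficient. Choosing $u_0$ within its $M_\Delta(F)$-orbit so as to simplify this bookkeeping, and verifying that the induced $Spin_7$ orbit is indeed strictly larger than $(2^21^3)$, are the genuine computational steps of the argument.
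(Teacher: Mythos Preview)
Your reduction step (that vanishing for $\widetilde A_2$ forces vanishing for every larger orbit) is in the right spirit, and the paper does essentially this via explicit Fourier expansions once the $\widetilde A_2$ case is in hand. However, be aware that the ``standard fact'' you cite from \cite{G1} is established there for classical groups; for $F_4$ one still has to verify it, and the paper does so by exhibiting, for each orbit just above $\widetilde A_2$, a short Fourier expansion that recovers the $\widetilde A_2$ coefficient as an inner integration. So this step is fine once you are willing to do that bookkeeping.

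The genuine gap is in your treatment of the $\widetilde A_2$ coefficient itself. Two things go wrong. First, root exchange does not do what you describe: it does not strip a nontrivial character off $U_\Delta$ and leave behind the constant term $\widetilde E_{\Theta_6}^{U_\Delta}$ paired with a character living only on the Levi. The character $\psi_{U_\Delta,u_0}$ is nontrivial on $U_\Delta$ and stays that way; exchanging a root $\beta$ for a root $\gamma$ swaps one one-parameter integration for another while \emph{preserving} the character on the remaining variables. There is no mechanism here that converts $\int_{U_\Delta}E\cdot\psi$ into $\int_{U_\Delta}E\cdot 1$ times something on $Spin_7$. Second, even if you could land on a constant term, Proposition~\ref{eisen1} is a statement about the residue $\Theta$, not about the Eisenstein series $\widetilde E_{\Theta_6}(\cdot,s)$ at general $s$; the constant term of the Eisenstein series along $U_{\alpha_1,\alpha_2,\alpha_3}$ has several pieces coming from intertwining operators and is not simply a minimal representation of $Spin_7^{(2)}$.

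The paper's argument is of a quite different nature: it \emph{unfolds} the Eisenstein series. Since $\widetilde E_{\Theta_6}$ is induced from $P=P_{\alpha_2,\alpha_3,\alpha_4}$ with inducing data $\Theta_6$ on $\widetilde{Sp}_6$, unfolding the Fourier coefficient over $U_\Delta=U_{\alpha_1,\alpha_2,\alpha_3}$ produces a sum over $P(F)\backslash G(F)/U_\Delta(F)$. Using that the stabilizer of $\psi_{U_\Delta,u_0}$ is $G_2$, one reduces to a finite list of Weyl representatives. For four of them the unipotent $x_{1111}(r)$ is carried into $U_{\alpha_2,\alpha_3,\alpha_4}$, so left $U$-invariance of the section kills the term. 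For the remaining three, the inner integration becomes a Fourier coefficient of $\theta_6$ on $\widetilde{Sp}_6$ attached to the orbit $(3^2)$ or $(2^21^2)$, both strictly larger than the minimal orbit $(21^4)$, hence zero by the minimality of $\Theta_6$. Note that the relevant Levi here is $Sp_6$ (the Levi of the \emph{inducing} parabolic), not $Spin_7$ (the Levi of the parabolic attached to the orbit); your proposal conflates these two parabolics.
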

\begin{proof}
The diagram which is attached to the unipotent orbit
$\widetilde{A}_2$ is given by
$$  \ \ \  \overset{}{0}----\overset{} {0} ==>== \overset{}{0}----\overset{2}{0}$$
In the notations of the previous subsection, we have
$$U_\Delta'(2)=\{(0001);(0011);(0111);(1111);(0121);(1121);(1221);(1231)\}$$
The character $\psi_{U,u_\Delta}$ can be defined as follows. Given
$u\in U_\Delta$ write $u=x_{0121}(r_1)x_{1111}(r_2)u'$ and define
$\psi_{U,u_\Delta}(u)=\psi(r_1+r_2)$. To prove the Proposition, it is enough to prove that the integral
\begin{equation}\label{eisen1.1}
\int\limits_{U_\Delta(F)\backslash U_\Delta({\bf A})}
\widetilde{E}_{\Theta_6}(um,s)\psi_{U,u_\Delta}(u)du
\end{equation}
is zero for all choice of data. It is also clear that it is enough
to show this for $Re(s)$ large. In this proof, let
$P=P_{\alpha_2,\alpha_3,\alpha_4}$ and $U=U_\Delta=U_{\alpha_1,\alpha_2,\alpha_3}$. Unfolding the
Eisenstein series, we need to analyze the set $P(F)\backslash G(F)/U(F)$.
It is clear that a set of representatives for this set
can be chosen in the form $wu_w$ where $w$ is a Weyl element and
$u_w$ is a unipotent element inside $Spin_7(F)$. However, since the
exceptional group $G_2$ is the stabilizer of the above character, it
is in fact enough to consider representatives inside the set $wu_w$
where $wu_w$ is a representative of $P(F)\backslash G(F)/G_2(F)U(F)$. From this it is not hard to deduce that a set of representatives is contained inside the set
$$W_0=\{e, w[123], w[1234], w[123243], w[123214323], w[1232143234],
w[1232143213243]\}$$ This can be seen by first considering the set $P(F)\backslash G(F)/Spin_7(F)U(F)$ and then further study relevant double cosets of the form $R(F)\backslash Spin_7(F)
/G_2(F)$ where $R$ is a suitable maximal parabolic subgroup of $Spin_7$. We omit the details.

In other words we may choose representatives to be only Weyl elements. Thus we have
\begin{equation}\label{eisen2.2}
\int\limits_{U_\Delta(F)\backslash U_\Delta({\bf A})}
\widetilde{E}_{\Theta_6}(um,s)\psi_{U,u_\Delta}(u)du= \sum_{w\in
W_0}\int\limits_{U_\Delta^w(F)\backslash U_\Delta({\bf
A})}f_s(wum)\psi_{U,u_\Delta}(u)du
\end{equation}
Here $U_\Delta^w=w^{-1}U_\Delta w\in P$. We will now show that each
summand of the right hand side is zero. If $w\in W_0$ is such that
$wx_{1111}(r)w^{-1}\in U_{\alpha_2,\alpha_3,\alpha_4}$ then we get zero contribution form that
summand, because $f_s$ is left invariant under $U_{\alpha_2,\alpha_3,\alpha_4}({\bf A})$ and
$\psi_{U,u_\Delta}$ is not trivial on $x_{1111}(r)$. Since the Weyl
elements $e, w[123],w[123243]$ and $w[123214323]$ have this
property, they contribute zero.

As for the other three Weyl elements, we will use the minimality of $\Theta_6$. See right
before Proposition \ref{eisen1}. Consider first the
Weyl element $w[1234]$. It follows by direct conjugation that we obtain the integral
\begin{equation}\label{weil0}
\int\limits_{(F\backslash {\bf A})^7}\theta_6\left (\begin{pmatrix}
I_2&X&Y\\ &I_2&X^*\\ &&I_2
\end{pmatrix}g\right )\psi(\text{tr}(X))dxdy\notag
\end{equation} 
as an inner integration. Here $X\in Mat_{2\times 2}$, and $Y$ and $X^*$ are defined so that the above matrix is in $Sp_6$. 
This Fourier coefficient corresponds to the unipotent orbit $(3^2)$ in
$Sp_6$ ( see \cite{G1}), which is greater than the minimal orbit $(21^4)$. Hence, by the minimality of $\Theta_6$, it
is zero for all choice of data. 

Next consider the two Weyl elements $w[1232143234]$ and $w[1232143213243]$. In these two cases,
we obtain the integral
\begin{equation}\label{weil1}
\int\limits_{(F\backslash {\bf A})^5}\theta_6\left (\begin{pmatrix}
1&x&y\\ &I_4&x^*\\ &&1
\end{pmatrix}g\right )\widetilde{\psi}(x)dxdy
\end{equation}
or a conjugation of it by a Weyl element of $Sp_6$, as an inner integration.
Here $x\in Mat_{1\times 4}$ and $y\in {\bf A}$. The character
$\widetilde{\psi}$ is defined as follows. If $x=(x_{1,j})\in
Mat_{1\times 4}$, define $\widetilde{\psi}(x)=\psi(x_{1,1})$. To prove that this integral is zero we use the fact that $\Theta_6$ is a minimal representation of $\widetilde{Sp}_6({\bf A})$.  Conjugate in the above integral  by the discrete element 
$$w'=\begin{pmatrix} 1&&&\\ &&J_2&\\ &-J_2&&\\ &&&1\end{pmatrix}$$ 
where $J_2$ was defined in subsection 2.1. Then, expanding along the unipotent subgroup
$x(r)=I_4+re_{2,5}$, and using suitable conjugation, we obtain the integral
$$\int\limits_{(F\backslash {\bf A})^3}\theta_6\left (\begin{pmatrix} I_2&&X\\ &I_2&\\ &&I_2 \end{pmatrix}\right )\psi'(X)dX$$  Here $X=\begin{pmatrix} y&z\\ r&y\end{pmatrix}$ and $\psi'(X)=\psi(y)$. It is not hard to check that this Fourier coefficient corresponds to the unipotent orbit $(2^21^2)$ of $Sp_6$. See \cite{G1}. By the minimality of $\Theta_6$, this integral is zero for all choice of data. Thus integral \eqref{weil1} is zero for all choice of data.

Returning to the integral \eqref{eisen2.2}, we obtain that any summand on the right hand side
is zero, and hence the integral on the left hand side of \eqref{eisen2.2} is zero for $Re(s)$
large, and hence zero for all $s$. This proves that
$\widetilde{E}_{\Theta_6}(m,s)$ has no nonzero Fourier coefficient
with respect to the unipotent orbit $\widetilde{A}_2$.

Now we have to prove that for every unipotent orbit ${\mathcal O}$
which is greater than $\widetilde{A}_2$, the Eisenstein series has
no nonzero Fourier coefficient which correspond to this orbit. This
can be done in two ways. One way is to argue in a similar way as we
did with the orbit $\widetilde{A}_2$. For example, it easy to prove
this way that $\widetilde{E}_{\Theta_6}(m,s)$ is not generic, that
is, it has no nonzero Fourier coefficient which correspond to the
unipotent orbit whose label is $F_4$. Another way is to start with
integral \eqref{eisen1.1}, use Fourier expansions and get the other
orbits. For example, consider the orbit $\widetilde{A}_2+A_1$. Its
diagram is
$$  \ \ \  \overset{}{0}----\overset{1} {0} ==>== \overset{}{0}----\overset{1}{0}$$
and the corresponding Fourier coefficient was described in the
previous subsection. Not to confuse with the group $U_\Delta$ as was
defined in \eqref{eisen1.1}, for this proof only, we shall write
$V_\Delta$ instead of $U_\Delta$.  Thus, we need to show that the integral
$$\int\limits_{V_\Delta(F)\backslash V_\Delta({\bf A})}
\widetilde{E}_{\Theta_6}(vm,s)\psi_{V,v_\Delta}(v)dv$$ is zero for
all choice of data. Here $\psi_{V,v_\Delta}$ is defined as follows.
For $v\in V_\Delta$, write
$v=x_{0121}(r_1)x_{1111}(r_2)x_{1220}(r_3)v'$. Define
$\psi_{V,v_\Delta}(v)=\psi(r_1+r_2+\beta r_3)$ where $\beta\in F^*$.

Let $V$ denote the subgroup of $V_\Delta$ which consists of all
roots in $V_\Delta$ such that the coefficient of $\alpha_4$ is
greater than zero. Thus $\text{dim}V=13$ and it is a subgroup of
$U_\Delta$ as defined before integral \eqref{eisen1.1}. Notice that
restricted to the group $V$ we have $\psi_{V,v_\Delta}=\psi_{U,u_\Delta}$,
where the right most character is defined in integral
\eqref{eisen1.1}. Clearly it is enough to prove that the integral
over $V(F)\backslash V({\bf A})$ is zero. Starting with this Fourier
coefficient, we expand along the unipotent group
$\{x_{0001}(l_1)x_{0011}(l_2)\}$ with points in $F\backslash {\bf A}$.
We have
$$\int\limits_{V(F)\backslash V({\bf A})}
\widetilde{E}_{\Theta_6}(vm,s)\psi_{V,v_\Delta}(v)dv=$$
$$\int\limits_{V(F)\backslash V({\bf A})}
\sum_{\delta_i\in F} \int\limits_{(F\backslash {\bf A})^2}
\widetilde{E}_{\Theta_6}(x_{0001}(l_1)x_{0011}(l_2)vm,s)
\psi_{V,v_\Delta}(v)\psi(\delta_1 l_1+\delta_2 l_2)dl_idv$$
Conjugating, from left to right, by the discrete elements
$x_{0110}(-\delta_2)x_{1110}(-\delta_1)$ and changing variables, we
obtain integral \eqref{eisen1.1} as inner integration, which we
proved to be zero for all choice of data. Thus, this Eisenstein
series has no nonzero Fourier coefficients which corresponds to the
unipotent orbit $\widetilde{A}_2+A_1$. Continuing similarly, we
obtain the vanishing of all Fourier coefficients which corresponds
to any unipotent orbit which is greater than $\widetilde{A}_2$.
\end{proof}

\subsection{ A Minimal Representation of $F_4$}

In this subsection we will prove that the residue of the Eisenstein
series, constructed in the previous Sections and denoted there by
$\Theta$, is indeed a minimal representation for the double cover of
$F_4$. In other words we will prove
\begin{theorem}\label{mini1}
Let ${\mathcal O}$ denote a unipotent orbit of $F_4$. Suppose that
${\mathcal O}$ is greater than the minimal orbit which is labeled by
$A_1$. Then $\Theta$ has no nonzero Fourier coefficient which is
attached to the unipotent orbit ${\mathcal O}$.
\end{theorem}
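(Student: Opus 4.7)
Since $\Theta$ is the residue of $\widetilde{E}_{\Theta_6}(m,s)$ at $s = 27/32$, Proposition \ref{eisen2} already implies that $\Theta$ has no nonzero Fourier coefficient corresponding to any unipotent orbit ${\mathcal O} \ge \widetilde{A}_2$. Consulting the Hasse diagram of unipotent orbits in $F_4$ (see \cite{C} page 440), the orbits strictly above $A_1$ that are not covered by Proposition \ref{eisen2} are exactly $\widetilde{A}_1$, $A_1 + \widetilde{A}_1$ and $A_2$. The plan is therefore to verify vanishing individually for each of these three remaining orbits.

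For each such orbit ${\mathcal O}$ with attached parabolic $P_\Delta$, I would write down the Fourier coefficient of $\Theta$ as prescribed in subsection 2.2, and then reshape it using root exchange (subsection 2.2.2) together with Fourier expansions along suitable one-parameter root subgroups. The aim of this manipulation is to produce, as an inner integration, the constant term $\Theta^U$ along the unipotent radical of one of the four maximal parabolics $P = MU$ of $F_4$. By Proposition \ref{eisen1}, that inner integration realizes a vector in a minimal representation of $\widetilde{M}_0({\bf A})$, and the remaining outer integration becomes a Fourier coefficient on $\widetilde{M}_0({\bf A})$ attached to a unipotent orbit of $M_0$ strictly larger than the minimal one specified before Proposition \ref{eisen1}. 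The vanishing then follows from the minimality of $\Theta^U$ on $\widetilde{M}_0({\bf A})$, together with the Fourier coefficient theory on the classical groups $Spin_7^{(2)}$, $\widetilde{SL}_3 \times SL_2$, $\widetilde{SL}_2 \times SL_3$ and $\widetilde{Sp}_6$.

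For instance, for $A_1 + \widetilde{A}_1$ (diagram $0\,1\,0\,0$, with $\Delta = \{\alpha_1,\alpha_3,\alpha_4\}$), the six roots of $U_\Delta(2)$ already lie in the unipotent radical of the maximal parabolic $P_{\alpha_1,\alpha_3,\alpha_4}$, so after expanding the integral over the additional roots of that radical via Fourier analysis and reassembling, one recovers the constant term $\Theta^{U_{\alpha_1,\alpha_3,\alpha_4}}$ as an inner integration. Since this constant term is minimal on $\widetilde{SL}_2 \times SL_3$ and trivial on the $SL_3$ factor, the remaining nontrivial character on the $SL_3$ part (read off from the $\beta_j$) forces vanishing. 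The cases $\widetilde{A}_1$ and $A_2$ are treated in the same spirit, by factoring through an appropriate maximal parabolic chosen according to the zero nodes of the associated Dynkin diagram, reducing each time to a Fourier coefficient on $\widetilde{M}_0$ attached to a strictly non-minimal orbit of $M_0$.

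The main obstacle will be the combinatorial verification in each of the three cases that the Fourier coefficient appearing on $\widetilde{M}_0({\bf A})$ really corresponds to a unipotent orbit of $M_0$ which is strictly above the minimal orbit of $M_0$. This requires a careful matching of root subgroups and characters using the commutation relations from \cite{G-S}, together with the identification of $F_4$-roots living inside $M_0$ under its embedding in $F_4$. Once this matching is in place, each case reduces to a routine application of Proposition \ref{eisen1} and of the same classical-group Fourier coefficient vanishing statements already used in the proof of Proposition \ref{eisen2}.
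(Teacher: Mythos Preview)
Your enumeration of the remaining orbits is incorrect, and this is a genuine gap. Proposition~\ref{eisen2} kills all Fourier coefficients attached to orbits that are \emph{greater than or equal to} $\widetilde{A}_2$. But in the closure ordering on nilpotent orbits of $F_4$ (see \cite{C} p.~440 or \cite{C-M}), the orbits $A_2+\widetilde{A}_1$ and $B_2$ are \emph{not} comparable to $\widetilde{A}_2$: they lie above $A_2$ but not above $\widetilde{A}_2$. So after Proposition~\ref{eisen2} there are five orbits left to treat, namely $B_2$, $A_2+\widetilde{A}_1$, $A_2$, $A_1+\widetilde{A}_1$, and $\widetilde{A}_1$, not three. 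The paper in fact spends the bulk of the proof on the $B_2$ case.

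Even for the three orbits you do list, your sketch does not match what actually happens and, as written, does not work. For $A_1+\widetilde{A}_1$ the Fourier coefficient \eqref{four2} already carries a nontrivial character on $U_\Delta(2)$, so Fourier-expanding over the remaining roots of $U_{\alpha_1,\alpha_3,\alpha_4}$ cannot produce the constant term $\Theta^{U_{\alpha_1,\alpha_3,\alpha_4}}$ as an inner integration; you would instead get a sum of twisted integrals, none of which is a constant term. The paper's argument here is of a different nature: it is a bootstrap by contradiction. One assumes ${\mathcal O}(\Theta)={\mathcal O}$, uses the Heisenberg structure of $U_\Delta/Y$ together with the theta representation on $\widetilde{Sp}_{12}$ to pass to an extended Fourier coefficient, and then expands along a unipotent subgroup of the stabilizer to land on a Fourier coefficient attached to the \emph{next larger} orbit (here $A_2$), which vanishes by the inductive hypothesis. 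For $A_2$ there is a further subtlety you do not address: some character representatives have anisotropic (unitary) stabilizers with no unipotent part, so the global expansion argument is unavailable and the paper resorts to a local Jacquet-module argument at an unramified place (Corollary~\ref{eisen11}). Your proposal to reduce everything uniformly to a constant term along a maximal parabolic does not accommodate this case.
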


\begin{proof}

We first explain the idea of the proof. Denote by ${\mathcal
O}(\Theta)$ the set of all unipotent orbits of $F_4$ defined as
follows. We have ${\mathcal O}\in {\mathcal O}(\Theta)$ if and only
if the representation $\Theta$ has no nonzero Fourier coefficient
associated with any unipotent orbit which is greater than or not
related to the unipotent orbit ${\mathcal O}$. Also, we require that
$\Theta$ do have a nonzero Fourier coefficient associated with the
orbit ${\mathcal O}$. With these notations the statement of the
Theorem is that ${\mathcal O}(\Theta)$ consists of one unipotent orbit which is the orbit $A_1$.

First, we prove that $\Theta$ has a nonzero Fourier coefficient corresponding to the unipotent 
orbit $A_1$. The diagram corresponding to this orbit is
$$  \ \ \  \overset{1}{0}----\overset{} {0} ==>== \overset{}{0}----\overset{}{0}$$
and the corresponding set of Fourier coefficients is given by
$$\int\limits_{F\backslash {\bf A}}\theta(x_{2342}(r)m)\psi(r)dr$$
It is clear that any nontrivial automorphic representation has such a nonzero Fourier coefficient.
In particular it holds for the representation $\Theta$.

From this and from Proposition \ref{eisen2} it follows that ${\mathcal O}(\Theta)$ consists of one
unipotent orbit which is greater or equal than $A_1$, and which is less than or
equal to the unipotent orbit $B_2$. To prove the Theorem, we fix a
unipotent orbit ${\mathcal O}$ which is greater than $A_1$ and less
or equal to $B_2$. There are such five orbits. They are $B_2,\ A_2+\widetilde{A}_1,\ A_2,\
A_1+\widetilde{A}_1,$ and $\widetilde{A}_1$. We will assume that ${\mathcal
O}(\Theta)={\mathcal O}$, where ${\mathcal O}$ is any one of these five orbits, and we shall derive a contradiction. The way to derive the contradiction is as follows. We consider the
stabilizer of ${\mathcal O}$. It follows from \cite{C} p. 401 that
for all unipotent orbit ${\mathcal O}\ne A_2$, the stabilizer always contains a unipotent subgroup.
This is also true for some Fourier coefficients associated with the unipotent orbit $A_2$, but not for all of them. We shall not need much information on the various unipotent orbit representatives of the orbit $A_2$. However, this information is contained in \cite{I} Section 5.  Assume that we are given a certain Fourier coefficient associated with the unipotent orbit ${\mathcal O}$. Suppose that it is given by the integral
\begin{equation}\label{fc1}
\int\limits_{V(F)\backslash V({\bf A})}\theta(vg)\psi_V(v)dv
\end{equation}
and suppose also that the stabilizer of $\psi_V$ contains  an abelian unipotent subgroup $Z$.  We then consider the Fourier coefficient
\begin{equation}\label{fc2}
\int\limits_{Z(F)\backslash Z({\bf A})} \int\limits_{V(F)\backslash
V({\bf A})}\theta(vz)\psi_V(v) \psi_Z(z)dzdv
\end{equation}
Here $\psi_Z$ is any character defined on $Z(F)\backslash Z({\bf
A})$. If we show that the above integral is zero for all choice of
characters $\psi_Z$,  this will prove that integral \eqref{fc1} is
zero for all choice of data, and hence contradict the assumption
that ${\mathcal O}(\Theta)={\mathcal O}$. To show that the above
integral is zero for all characters we use Fourier expansions to
express the integral as a sum of two types of Fourier coefficients.
The first type are Fourier coefficients which corresponds to
unipotent orbits which are greater than or not related to ${\mathcal
O}$. These coefficients will be zero by our assumption that
${\mathcal O}(\Theta)={\mathcal O}$. The second type are Fourier
coefficients of the type
$$\int\limits_{Y(F)\backslash Y({\bf
A})}\theta^{U(R)}(y)\psi_Y(y)dy$$ Here $\theta^{U(R)}$ is the
constant term of the function $\theta$ along $U(R)$, where $U(R)$ is
the unipotent radical of a maximal parabolic subgroup $R$ of $F_4$.
The group $Y$ is a unipotent subgroup of $M(R)$, the Levi part of
$R$. We then show that the character $\psi_Y$ is a character which corresponds to a unipotent orbit
of $M(R)$ which is not the minimal orbit. Then using Proposition \ref{eisen1} we deduce that this
integral is zero.

We should mention that the proof is local by nature. Indeed, all the
above ideas can be expressed by means of twisted Jacquet modules for
a local constituent of an irreducible summand of the global
representation $\Theta$. We shall use this fact below. However,
mainly because of the Fourier expansions that we perform, it is
convenient to use a global local argument.

We start with the unipotent orbit $B_2$. In other words, we shall
assume that ${\mathcal O}(\Theta)=B_2$ and derive a contradiction.
This unipotent orbit was described in subsection 2.2. A Fourier
coefficient attached to this orbit is given by integral \eqref{fc1}
where the roots in $V$ are given in the beginning of subsection 2.2.
The roots in $V$ contains all 15 roots  of the form
$\alpha=(n_1n_2n_3n_4)$ with $n_1\ge 1$, and the root $(0122)$. 
Up to the action of $M(B_2)=T\cdot Sp_4$, a general character of the group
$V$ is defined as follows. Write
$v=x_{1100}(r_1)x_{1120}(r_2)x_{0122}(r_3)v'$ where $v\in V$ and
define $\psi_{V,\beta}(v)=\psi(r_1+\beta r_2+r_3)$. Here $\beta\in F^*$. From
\cite{C} we deduce that the stabilizer is a group of type $A_1\times
A_1$. In fact, when $\beta$ is a square, then the stabilizer is the
group $Spin_4=SL_2\times SL_2$ and when $\beta$ is not a square we
obtain the group $Spin(1,3)$ which depends on $\beta$. In both cases the stabilizer contains the
unipotent subgroup generated by $\{x_{0100}(r_1)x_{0120}(-\beta r_1)\}$ and $\{x_{0110}(r_1)\}$.
When $\beta$ is a square,  then after a suitable
conjugation, we may choose $\psi_{V,\beta}$ as follows. Write as above
$v=x_{1110}(r_1)x_{0122}(r_2)v'$ and define $\psi_{V,\beta}(v)=\psi(r_1+r_2)$. We shall omit $\beta$
from the notations and write $\psi_V$.
With this choice the stabilizer contains the unipotent group
$\{ x_{0100}(m_1)x_{0120}(m_2)\}$. For simplicity we shall carry out the details when $\beta$ is a square. The other case is similar. 

We start by enlarging the group $V$ to a group $V_1$ whose dimension is 18. To do so, consider the two roots $(0111);\ (0121)$. Define the group $V_1$ to be the group generated by $V$ and by 
$\{ x_{0111}(r_1)x_{0121}(r_2)\}$. Then it follows from \cite{G-R-S3} Lemma 1.1 that integral \eqref{fc1}
is zero for all choice of data if and only if the integral 
\begin{equation}\label{fc20}
\int\limits_{V_1(F)\backslash V_1({\bf A})}\theta(v)\psi_V(v)dv
\end{equation}
is zero for all choice of data. Here we view the character $\psi_V$ as a character of $V_1$ by extending it trivially. This is well defined from the commutation relations in $F_4$. We also mention that the unipotent group $\{ x_{0100}(m_1)x_{0120}(m_2)\}$ stabilizes the group $V_1$.

Choose $Z$ to be the unipotent subgroup $\{x_{0120}(m_2)\}$. Our goal is to prove that integral
\eqref{fc2}, with $V_1$ replacing $V$,  is zero for all characters of $Z$. In other words, we
show that the integral
\begin{equation}\label{fc21}
\int\limits_{F\backslash {\bf A}} \int\limits_{V_1(F)\backslash V_1({\bf
A})}\theta(vx_{0120}(m))\psi_V(v)\psi(am)dmdv
\end{equation}
is zero for all $a\in F$. Assume first that $a\ne 0$. In this case
the above integral is a Fourier coefficient which corresponds to the
unipotent orbit $C_3(a_1)$. Indeed, this Fourier coefficient was
described in subsection 2.2. Using the left invariant properties of
the function $\theta$, we have $\theta(g)=\theta(w[4]g)$.
Conjugating by $w[4]$ from left to right, we obtain exactly the
Fourier coefficient described in subsection 2.2. By our assumption
on ${\mathcal O}(\Theta)$ this integral is zero. Next we consider
the case when $a=0$. We further expand along the unipotent group
$\{x_{0100}(m_1)\}$. Consider first the contribution from the
nontrivial orbit. Conjugating by $w[3]$ we obtain
$w[3]x_{0100}(m_1)w[3]^{-1}=x_{0120}(m_1)$. Hence, when we consider
the nontrivial character, we obtain integral \eqref{fc21}, with a
suitable $a\in F^*$, as inner integration. Hence we get zero.

We are left with the contribution of the trivial orbit.  Therefore,
it is enough to  prove that the integral
\begin{equation}\label{fc3}
\int\limits_{(F\backslash {\bf A})^2} \int\limits_{V_1(F)\backslash
V_1({\bf A})}\theta(vx_{0100}(m_1)x_{0120}(m_2))\psi_V(v)dm_1dm_2dv
\end{equation}
is zero for all choice of data. Expand integral \eqref{fc3} along
the unipotent abelian group $\{x_{0111}(r_1)x_{0121}(r_2)\}$. Thus, integral
\eqref{fc3} is equal to
$$
\sum_{\gamma_i\in F}\int\limits_{(F\backslash {\bf A})^2}
\int\limits_{(F\backslash {\bf A})^2} \int\limits_{V_1(F)\backslash
V_1({\bf A})} $$
$$\theta(x_{0111}(r_1)x_{0121}(r_2)vx_{0100}(m_1)x_{0120}(m_2))
\psi_V(v)\psi(\gamma_1 r_1+\gamma_2 r_2)dr_1dr_2dm_1dm_2dv$$ For all
$\gamma_i\in F$ we have
$\theta(g)=\theta(x_{0001}(-\gamma_2)x_{0011}(-\gamma_1)g)$.
Plugging this into the above integral and changing variables, we
obtain
$$
\sum_{\gamma_i\in F}\int\limits_{(F\backslash {\bf A})^2}
\int\limits_{(F\backslash {\bf A})^2} \int\limits_{V_1(F)\backslash
V_1({\bf A})}
$$
$$\theta(x_{0111}(r_1)x_{0121}(r_2)vx_{0100}(m_1)x_{0120}(m_2)
x_{0001}(-\gamma_2)x_{0011}(-\gamma_1))
\psi_V(v)dr_1dr_2dm_1dm_2dv$$ Hence, to prove that integral
\eqref{fc3} is zero for all choice of data, it is enough to prove
that the integral
\begin{equation}\label{fc311}
\int\limits_{(F\backslash {\bf A})^4} \int\limits_{V_1(F)\backslash
V_1({\bf
A})}\theta(x_{0111}(r_1)x_{0121}(r_2)vx_{0100}(m_1)x_{0120}(m_2))
\psi_V(v)dr_1dr_2dm_1dm_2dv
\end{equation}
is zero for all choice of data.

Let $V_2$ denote the unipotent group generated by the group $V_1$ and
the abelian group $\{
x_{0111}(r_1)x_{0121}(r_2)x_{0100}(m_1)x_{0120}(m_2)\}$. Thus the
dimension of $V_2$ is 20. Conjugating by the Weyl element $w[2134]$,
integral \eqref{fc311} is equal to
\begin{equation}\label{fc4}
\int\theta(x_{1000}(r_1)x_{0121}(r_2)v'x_{-1100}(m_1)x_{-1000}(m_2)w[2134])
\psi(r_1+r_2)dr_1dr_2dm_1dm_2dv'
\end{equation}
Here $v'$ is a product over all other 16 one dimensional unipotent
subgroups corresponding to roots in $w[2134]V_2w[2134]^{-1}$. All
variables are integrated over $F\backslash {\bf A}$. We now apply
Fourier expansion to integral \eqref{fc4}. Expand this integral
along the unipotent subgroup $\{ x_{1221}(t)\}$. Thus, integral
\eqref{fc4} is equal to
\begin{equation}\label{fc5}
\int\sum_{\gamma\in
F}\int\theta(x_{1221}(t)x_{1000}(r_1)x_{0121}(r_2)v'x_{-1000}(m_1)\times
\end{equation}
$$x_{-1100}(m_2)w[2134])
\psi(r_1+r_2+\gamma t)dtdr_1dr_2dm_1dm_2dv'$$ We have
$$x_{-(1100)}(-\gamma)x_{1221}(t)=x_{0121}(-\gamma
t)x_{1342}(t\gamma^2)x_{1221}(t)x_{-(1100)}(-\gamma)$$ The function
$\theta$ is left invariant under $x_{-(1100)}(-\gamma)$. Performing
the above conjugation in \eqref{fc5}, changing variables and
collapsing summation with integration, we obtain
\begin{equation}\label{fc6}
\int\limits_{\bf
A}\int\theta(x_{1221}(t)x_{1000}(r_1)x_{0121}(r_2)v'x_{-1000}(m_1)\times
\end{equation}
$$x_{-1100}(m_2)w[2134])
\psi(r_1+r_2)dtdr_1dr_2dm_1dm_2dv'$$ where the adelic integration is
over the variable $m_2$. This is the process of root exchange we
refer to in subsection 2.2.2. Indeed, in the notations of that
subsection, let $\alpha=(0121);\ \beta=-(1100)$ and $\gamma=(1221)$.
Thus we exchange the root $-(1100)$ by $(1221)$. Next we repeat the
same process, and we exchange the root $-(1000)$ by $(1100)$. It
follows that integral \eqref{fc6} is zero provided we can show that
the integral
\begin{equation}\label{fc7}
\int\limits_{Y(F)\backslash Y({\bf
A})}\theta^{U(R),\psi}(y)\psi_Y(y)dy
\end{equation}
is zero. Here, $R=P_{\alpha_2,\alpha_3,\alpha_4}$ is the maximal parabolic subgroup of $F_4$ whose
Levi part is $GSp_6$, and $U(R)$ is its unipotent radical. Also,
$$\theta^{U(R),\psi}(g)=\int\limits_{U(R)(F)\backslash U(R)({\bf
A})}\theta(ug)\psi_{U(R)}(u)du$$ where $\psi_{U(R)}$ is defined as
follows. Write $u\in U(R)$ as $u=x_{1000}(r)u'$. Then
$\psi_{U(R)}(u)=\psi_{U(R)}(x_{1000}(r)u')=\psi(r)$. Finally, the
group $Y$ consists of all roots $\{ (0010); (0011); (0120);$ $ (0121);
(0122)\}$. The character $\psi_Y$ is defined by
$\psi_Y(y)=\psi_Y(x_{0121}(m_1)y')=\psi(m_1)$. We now do two more
exchange of roots. First we exchange the root $(0110)$ by $(0011)$,
and then exchange $(0111)$ by $(0010)$. Then, conjugating by the
Weyl element $w[43]$, integral \eqref{fc7} is zero for all choice of data if and only if
the integral
\begin{equation}\label{fc71}
\int\limits_{Y_1(F)\backslash Y_1({\bf
A})}\theta^{U(R),\psi}(y_1w[43])\psi_{Y_1}(y_1)dy_1
\end{equation}
is zero for all choice of data. Here $Y_1$ is the unipotent subgroup which consists of the roots
$\{(0110);(0111);(0120);(0121);(0122)\}$, and
$\psi_{Y_1}(y_1)=\psi_{Y_1}(x_{0110}(r)y'_1)=\psi(r)$.

Next, we expand along the group $x_{0100}(t)$.  Thus, integral
\eqref{fc71} is a sum of integrals of the form
\begin{equation}\label{fc8}
\int\limits_{F\backslash {\bf A}}\int\limits_{Y_1(F)\backslash
Y_1({\bf
A})}\theta^{U(R),\psi}(x_{0100}(r)y_1)\psi_{Y_1}(y_1)\psi(\gamma
r)drdy_1
\end{equation}
where $\gamma\in F$.

Conjugating by the element  $x_{0010}(-\gamma)$, and changing
variables we obtain that the integral \eqref{fc8} is zero provided
the integral
\begin{equation}\label{fc9}
\int\limits_{F\backslash {\bf A}}\int\limits_{Y_1(F)\backslash
Y_1({\bf A})}\theta^{U(R),\psi}(x_{0100}(r)y_1)\psi_{Y_1}(y_1)drdy_1
\end{equation}
is zero for all choice of data.  Thus integral \eqref{fc7} is zero
for all choice of data  if integral \eqref{fc9} is zero for all
choice of data. Expand integral \eqref{fc9} along the unipotent
group $x_{0001}(m_1)x_{0011}(m_2)$. The contribution from the
nontrivial orbit is zero. Indeed, in this case we obtain
\begin{equation}\label{fc10}
\int\limits_{Y_1(F)\backslash Y_1({\bf A})}\int\limits_{(F\backslash
{\bf A})^2}\theta^{U(R),\psi}(x_{0001}(m_1)x_{0011}(m_2)y_1)
\psi_{Y_1}'(y_1)\psi(\gamma_1m_1+\gamma_2m_2)dm_1dm_2dy_1
\end{equation}
where $\gamma_1,\gamma_2\in F$ are not both zero. As follows from
subsection 2.2 this Fourier coefficient is associated with the
unipotent orbit $F_4(a_1)$, and hence zero for all choice of data.
Thus we are left with integral \eqref{fc10} where
$\gamma_1=\gamma_2=0$. In this case we can write integral
\eqref{fc10} as
\begin{equation}\label{fc11}\notag
\int\limits_{Y_2(F)\backslash Y_2({\bf
A})}\theta^{V(L)}(y_2)\psi_{Y_2}(y_2)dy_2
\end{equation}
Here $L=P_{\alpha_1,\alpha_2,\alpha_3}$ is the maximal parabolic subgroup of $F_4$ whose Levi part
is $GSpin_7$. We denote its unipotent radical by $V(L)$, and
$\theta^{V(L)}$ is the constant term along $V(L)$. The group $Y_2$
is a unipotent subgroup of $GSpin_7$. It consists of all positive
roots in that group except $(0010)$. Thus its dimension is eight.
The character $\psi_{Y_2}$ is defined as follows
$\psi_{Y_2}(y_2)=\psi_{Y_2}(x_{1000}(t_1)x_{0110}(t_2)y_2')=\psi(t_1+t_2)$.
This Fourier coefficient is associated with the unipotent orbit
$(51^2)$ of $Spin_7$. Applying Proposition \ref{eisen1} this
integral is zero. This completes the case of the unipotent orbit
$B_2$, when $\beta$ as defined before integral \eqref{fc21} is a
square. As mentioned above, the case when $\beta$ is not a square is similar and will be omitted.

Next we assume that ${\mathcal O}(\Theta)=A_2+\widetilde{A}_1$. The corresponding Fourier coefficient was not described explicitly, and we do it now. In this case the set $U_\Delta'(2)$
consists of all nine roots of the form $\sum n_i\alpha_i$ where $n_3=2$. Thus $\text{dim}\ U_\Delta(2)=14$ and write $V=U_\Delta(2)$. Then the corresponding Fourier coefficient is given by integral \eqref{fc1} where $\psi_V$ is defined as follows. Write $v=x_{1220}(r_1)x_{0122}(r_2)
x_{1121}(r_3)v'$. Then $\psi_V(v)=\psi(r_1+r_2+r_3)$. As stated in \cite{C}, the stabilizer of this character is a group of type $A_1$, and it can be identified with the split
orthogonal group $SO_3$. Hence it contains a unipotent subgroup. This unipotent subgroup is generated by $\{x_{1000}(r)x_{0100}(-r)x_{1100}(ar^2)x_{0001}(r)\}$ where $a\in F^*$.

For the unipotent orbit $A_2$ the situation is different. In this case the group $V=U_{\alpha_2, 
\alpha_3,\alpha_4}$, and the stabilizer of this orbit is a group of type $A_2$. It follows from \cite{I} that over the rational points there is a choice of a character $\psi_V$ such that the stabilizer is the group $SL_3(F)$. But there is also a choice of characters such that the stabilizer 
is various unitary groups. The character $\psi_V$ whose stabilizer is $SL_3$, is given as follows.
Let $v=x_{1000}(r_1)x_{1342}(r_2)v'$. Then $\psi_V(v)=\psi(r_1+r_2)$. A unipotent subgroup which is contained in
the stabilizer is, for example, $\{x_{0010}(m_1)x_{0001}(m_2)x_{0011}(m_3)\}$. We shall refer to this Fourier coefficient as to the split Fourier coefficient associated with the unipotent orbit $A_2$.

To prove that ${\mathcal O}(\Theta)$ is not $A_2+\widetilde{A}_1$, or to prove that $\Theta$ has
no nozero split Fourier coefficient associated with the unipotent orbit $A_2$, we apply the same 
ideas as we did in the case of the orbit $B_2$. We omit the details.

However, we still have to
consider the Fourier coefficients associated with the other representatives of the unipotent
orbit $A_2$. Here we give a local argument. In details, let $\Theta'$ denote any irreducible 
summand of $\Theta$. Let $\nu$ be a finite unramified place.
As mentioned in the beginning of the proof, the above arguments for the unipotent orbits $B_2,\ A_2+\widetilde{A}_1$ and for the split Fourier coefficient corresponding
to the unipotent orbit $A_2$, all work in a similar way for the representation $\Theta'_\nu$. In
other words we may assume that ${\mathcal O}(\Theta'_\nu)$ is the unipotent orbit  $A_2$ for any unramified place $\nu$.
Given a Fourier coefficient of $\Theta$ associated to the unipotent orbit $A_2$, we may choose a
place $\nu$ such that the stabilizer of the corresponding Jacquet module will be the group $SL_3$.
Arguing as in the global case, using corollary \ref{eisen11}, we know that this Jacquet module
is zero. Hence, we can deduce that the corresponding Fourier coefficient is zero for all choice
of data, and for all representative associated with the unipotent orbit $A_2$. Thus we may assume that ${\mathcal O}(\Theta)$ is at most $A_1+\widetilde{A}_1$.

Assume that ${\mathcal O}(\Theta)=A_1+\widetilde{A}_1$. The set of
Fourier coefficients associated with this orbit is described in
subsection 2.2. We shall view these Fourier coefficients in an
extended way. More precisely, in the notations of subsection 2.2,
consider the set of roots $U_\Delta'(1)$. This set consists of 12
roots which are
$$U_\Delta'(1)=\{ (0100); (1100); (0110); (1110);
(0111); (0120); (0121); (1111);$$  $$(1120); (0122); (1121);
(1122)\}$$ The center of the group $U_\Delta$ is given by the group
$Y=\{ x_{1342}(m_1)x_{2342}(m_2)\}$. As can be checked, the quotient
$U_\Delta/Y$ has a structure of a generalized Heisenberg group. Let
${\mathcal H}_{13}$ denote the Heisenberg group with 13 variables.
We view this group as all 13 tuples $(r_1,\ldots,r_6,t_1,\ldots,
t_6,z)$ where the product is given as in \cite{I1}.  Recall from
subsection 2.2 that the set of Fourier coefficients associated to
the unipotent orbit $A_1+\widetilde{A}_1$ are parameterized by a subset of triples
$\beta_1,\beta_2,\beta_3\in F^*$. For fixed $\beta_i$, the Fourier
coefficient is given by integral \eqref{four2}. Define a
homomorphism $l$ from $U_\Delta/Y$ onto ${\mathcal H}_{13}$ as
follows.
$$l(x_{0100}(r_1)x_{0110}(r_2)x_{0111}(r_3)x_{0120}(r_4)
x_{0121}(r_5)x_{0122}(r_6))=(r_1,\ldots,r_6,0,\ldots,0)$$
$$l(x_{1100}(t_1)x_{1110}(t_2)x_{1111}(t_3)x_{1120}(t_4)
x_{1121}(t_5)x_{1122}(t_6))=(0,\ldots,0,t_1,\ldots,t_6,0)$$
$$l(x_{1220}(z_1)x_{1221}(z_2)x_{1222}(z_3)x_{1231}(z_4)
x_{1232}(z_5)x_{1242}(z_6))=(0,\ldots,0,\beta_1 z_1+\beta_2
z_3+\beta_3 z_6)$$ We extend $l$ trivially from $U_\Delta/Y$ to
$U_\Delta$ by $l(Y)=0$. Consider the integral
\begin{equation}\label{fc31}
\int\limits_{U_\Delta(F)\backslash U_\Delta({\bf
A})}\widetilde{\theta}^{\psi}_{\phi}(l(u)g)\theta(ug)du
\end{equation}
Here $\widetilde{\theta}^{\psi}_{\phi}$ is a vector in the theta
representation of the group ${\mathcal H}_{13}({\bf
A})\cdot\widetilde{Sp}_{12}({\bf A})$. The function $\phi$ is a
Schwartz function of ${\bf A}^6$. Arguing as in Lemma 1.1 in
\cite{G-R-S3}, we deduce that integral \eqref{four2} is zero for all
choice of data if and only if integral \eqref{fc31} is zero for all
choice of data. Consider the $SL_2$ generated by $\{x_{\pm 1000}(r)\}$.
One can check that this group is inside the stabilizer of the
character as defined in integral \eqref{four2}. Hence, if we take $g\in SL_2$, then
integral \eqref{fc31} defines an automorphic function in the
of this group. It is not hard to check
that this copy of $SL_2$ splits under the double cover when embedded
inside $\widetilde{Sp}_{12}$. Indeed, after a suitable conjugation
we can embed it inside $Sp_{12}$ as $g\to
\text{diag}(g,g,g,g^*,g^*,g^*)$. However, this copy of $SL_2$ does
not split under the double cover of $F_4$. Therefore, as a function
of $g$, integral \eqref{fc31} defines a genuine automorphic function
of $\widetilde{SL}_{2}({\bf A})$.  Our goal is to prove that integral
\eqref{fc31} is zero for all choice of data. Since the identity
function is not genuine, it follows that integral \eqref{fc31} is
zero for all choice of data if and only if, for all $a\in F^*$ the
integral
\begin{equation}\label{fc32}
\int\limits_{F\backslash {\bf A}} \int\limits_{U_\Delta(F)\backslash
U_\Delta({\bf A})}\widetilde{\theta}^{\psi}_{\phi}(l(u)x_{1000}(r))
\theta(ux_{1000}(r))\psi(ar)drdu
\end{equation}
is zero for all choice of data. Arguing as in Lemma 1.1 in
\cite{G-R-S3}, integral \eqref{fc32} is zero for all choice of data
if and only if the integral
\begin{equation}\label{fc33}
\int\limits_{F\backslash {\bf A}}
\int\limits_{U_\Delta(2)(F)\backslash U_\Delta(2)({\bf
A})}\int\limits_{V(F)\backslash V({\bf A})}
\theta(vux_{1000}(r))\psi_{U_\Delta(2),u_0}(u)\psi(ar)dvdrdu
\end{equation}
is zero for all choice of data. Here $U_\Delta(2)$ and $\psi_{U_\Delta(2),u_0}$
are as defined in integral \eqref{four2}. Also, the group $V$ is the
unipotent subgroup of $F_4$ defined by
$$V=\{x_{1100}(t_1)x_{1110}(t_2)x_{1111}(t_3)x_{1120}(t_4)
x_{1121}(t_5)x_{1122}(t_6)\}$$ Let $R=P_{\alpha_2,\alpha_3,\alpha_4}$ denote the maximal parabolic
subgroup of $F_4$ whose levi part is $GSp_6$. Denote its unipotent
radical by $U(R)$. Then integral \eqref{fc33} is equal to
\begin{equation}\label{fc34}
\int\limits_{U(R)(F)\backslash U(R)({\bf A})}
\theta(u)\psi_{U(R)}(u)du
\end{equation}
where $\psi_{U(R)}$ is defined as follows. We have
$$\psi_{U(R)}(u)=\psi_{U(R)}(x_{1000}(r_1)x_{1220}(r_2)x_{1222}(r_3)
x_{1242}(r_4)u')=\psi(ar_1+\beta_1 r_2 +\beta_2 r_3 +\beta_3 r_4)$$
It follows from the description given in \cite{I} Section 5 that the above
Fourier coefficient is associated with the unipotent orbit $A_2$.
Therefore, from the assumption ${\mathcal
O}(\Theta)=A_1+\widetilde{A}_1$, it follows that the integral
\eqref{fc34} is zero for all choice of data. Thus, integral
\eqref{fc31} is zero for all choice of data and we derived a
contradiction. Hence ${\mathcal O}(\Theta)$ is less than the orbit
$A_1+\widetilde{A}_1$.

Finally we consider the case ${\mathcal O}(\Theta)=\widetilde{A}_1$.
The set of Fourier coefficients attached to this orbits can be
described as follows. Let $U_\Delta'$ denote the unipotent group
defined by
$$U_\Delta'=\{  (0122);
(1122); 1222); (1232); (1242); (1342); (2342)\}$$ As before we confuse
between a root $\alpha$ and its corresponding one dimensional
unipotent group $x_\alpha(r)$. For $\beta\in (F^*)^2\backslash F^*$ we define a character
$\psi_{U_\Delta',\beta}$ of this group as follows. Given $u\in
U_\Delta'$ let
$\psi_{U_\Delta',\beta}(x_{1222}(r_1)x_{1242}(r_2))=\psi(r_1+\beta
r_2)$. Then, the Fourier coefficients associated with this unipotent orbit, are given by 
\begin{equation}\label{fc41}
\int\limits_{U_\Delta'(F)\backslash U_\Delta'({\bf A})}
\theta(u)\psi_{U_\Delta',\beta}(u)du
\end{equation}
The stabilizer inside $Spin_7$ of $\psi_{U_\Delta',\beta}$ contains a unipotent subgroup, for example the group generated by $\{x_{1000}(r)\}$. 
As in the case of $B_2$, it is convenient to separate into two cases. First when $\beta$ is a square, and the second case is when it is not a square. We will consider the first case, and omit
the details in the second one.

When $\beta$ is a square we can conjugate by a suitable
element, and integral \eqref{fc41} is zero for all choice of data if
and only if the integral
\begin{equation}\label{fc42}
\int\limits_{U_\Delta'(F)\backslash U_\Delta'({\bf A})}
\theta(u)\psi_{U_\Delta'}(u)du
\end{equation}
is zero for all choice of data, where now
$\psi_{U_\Delta'}(u)=\psi_{U_\Delta'}(x_{1232}(r)u')=\psi(r)$.
Arguing in a similar way as in the proof of Lemma 1.1 in
\cite{G-R-S3}, see also a similar case right before \eqref{fc311},
implies that we may consider the integral
\begin{equation}\label{fc43}\notag
\int\limits_{U'(F)\backslash U'({\bf A})}
\int\limits_{U_\Delta'(F)\backslash U_\Delta'({\bf A})}
\theta(u'u)\psi_{U_\Delta'}(u)du'du
\end{equation}
In other words, integral \eqref{fc42} is zero for all choice of
data, provided the above integral is zero for all choice of data.
Here $U'$ is the unipotent group which is defined by $U'=\{(0111);
(1111); (1221); (1231)\}$. Let $V=U'U_\Delta'$ and define $\psi_V$
to equal $\psi_{U_\Delta'}$ on $U_\Delta'$ extended trivially to
$V$. It follows from \cite{C} that the stabilizer of $\psi_V$ is a
group of type $A_3$. It is not hard to check that it is the group
$SL_4$ which contains the abelian unipotent group $Z=\{
x_{0120}(m_1)x_{1120}(m_2)x_{1220}(m_3)\}$. Consider the automorphic
function of $\widetilde{SL}_4({\bf A})$ defined by
\begin{equation}\label{fc44}
f(g)=\int\limits_{V(F)\backslash V({\bf A})} \theta(vg)\psi_{V}(u)du
\end{equation}
Since the above group $SL_4$ does not split under the double cover
of $F_4$, then $f(g)$ is a genuine function. Expand this function
along the group $Z$. The group $SL_3(F)$ embedded in $SL_4(F)$ in
the obvious way, acts on this expansion, and we obtain two orbits
under this action. Arguing as in the case when ${\mathcal
O}(\Theta)=A_1+\widetilde{A}_1$ we deduce that to prove that
integral \eqref{fc44} is zero for all choice of data, it is enough
to prove that the integral
\begin{equation}\label{fc45}
\int\limits_{(F\backslash {\bf A})^3} \int\limits_{V(F)\backslash
V({\bf A})}
\theta(vx_{0120}(m_1)x_{1120}(m_2)x_{1220}(m_3))\psi_{V}(u)\psi(m_1)dm_idu
\end{equation}
is zero for all choice of data. Indeed, if the above integral is
zero for all choice of data, then $f(g)$ is equal to its constant
term corresponding to a unipotent radical of a maximal parabolic
subgroup. This is true only if $f(g)$ is the identity function which
is not the case.  Using the left invariant property of $\theta$, we
have $\theta(h)=\theta(w[214]h)$. Conjugating $w[214]$ in integral
\eqref{fc45} from left to right, and exchanging the root $(0010)$ by
$(1221)$, we obtain that integral \eqref{fc45} defines a Fourier
coefficient associated with the unipotent orbit
$A_1+\widetilde{A}_1$ which is greater than ${\mathcal
O}(\Theta)=\widetilde{A}_1$. Hence it is zero, and hence integral
\eqref{fc42} is zero for all choice of data. Once again we derived a
contradiction.

It follows that $\Theta$ has no nonzero Fourier coefficients which
corresponds to any unipotent orbit which is greater or equal to
$\widetilde{A}_1$. This completes the proof of the Theorem.

\end{proof}

\subsection{\bf  Properties of the Minimal Representation}

In this subsection we shall derive basic properties of the
representation $\Theta$. These properties are all a consequence of
the smallness properties of this representation.

From Theorem \ref{mini1} we deduce two important properties of the
representation $\Theta$. Let $U$ denote the Heisenberg unipotent
radical of $F_4$. In other words, let $U=U_{\alpha_2,\alpha_3,\alpha_4}$.
Let $Z=\{x_{2342}(r)\}$ denote the one dimensional unipotent group
attached to the highest root of $F_4$. Thus, the group $Z$ is the
center of $U$.  Define a character $\psi_U$ of $U(F)\backslash
U({\bf A})$ as follows. For $u\in U$, write $u=x_{1000}(r)u'$. Define $\psi_U(u)=\psi(r)$.
( See subsection 2.1) For
any $g\in F_4({\bf A})$, denote
$$\theta^{U,\psi}(g)=\int\limits_{U(F)\backslash U({\bf A})}
\theta(ug)\psi_U(u)du$$ Similarly, we denote
$$\theta^{U}(g)=\int\limits_{U(F)\backslash U({\bf A})}
\theta(ug)du$$

From Theorem \ref{mini1} we deduce
\begin{proposition}\label{property2}
With the above notations, we have the following expansion
\begin{equation}\label{exp1}
\int\limits_{Z(F)\backslash Z({\bf A})}\theta(zg)dz=\theta^{U}(g)+
\sum_{\gamma\in Q(F)\backslash Sp_6(F)}\sum_{\epsilon\in F^*}
\theta^{U,\psi}(h_2(\epsilon)\gamma g)
\end{equation}
Here $Q$ is the maximal parabolic subgroup of $Sp_6$, whose Levi
part is the group $GL_3$.
\end{proposition}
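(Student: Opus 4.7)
The approach is a standard Fourier expansion along the abelian quotient $U/Z$, combined with the minimality of $\Theta$ established in Theorem \ref{mini1}. I would first observe that, as a function of $u \in U(\mathbf{A})$, the expression $\int_{Z(F)\backslash Z(\mathbf{A})} \theta(zug) \, dz$ is left-invariant under $U(F)$ (since $\theta$ is automorphic under $U(F)\supset Z(F)$) and left-invariant under $Z(\mathbf{A})$ (by construction). Because $U$ is the Heisenberg unipotent radical with $[U,U]=Z$, the quotient $U/Z$ is abelian, so I can Fourier expand in the variable $u$:
\[
\int_{Z(F)\backslash Z(\mathbf{A})}\theta(zg)\,dz \;=\; \sum_{\chi}\int_{U(F)\backslash U(\mathbf{A})}\theta(ug)\,\chi^{-1}(u)\,du,
\]
where $\chi$ runs over the characters of $(U/Z)(F)\backslash (U/Z)(\mathbf{A})$. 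The trivial character produces $\theta^{U}(g)$.

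Next I would organize the remaining nontrivial characters into $M^{0}(F)=Sp_6(F)$-orbits, where $M^0$ is the semisimple part of the Levi $M_{\alpha_2,\alpha_3,\alpha_4}$ acting on the $14$-dimensional module $U/Z$ via the third fundamental representation of $Sp_6$. Each such orbit of nontrivial characters corresponds to an $F_4$-unipotent orbit (via the nilpotent element of $F_4$ dual to the character). By Theorem \ref{mini1}, $\theta$ has no nonzero Fourier coefficient attached to any $F_4$-orbit strictly greater than $A_1$. Therefore, I would need to check that every non-minimal $Sp_6(F)$-orbit on the $14$-dimensional character space corresponds to an $F_4$-orbit larger than $A_1$, which then forces the corresponding Fourier coefficient to vanish; only the $Sp_6(F)$-orbit of the character $\psi_U$ (whose dual element is the long root vector $x_{1000}$) survives.

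Finally, I would parametrize the surviving orbit: since $x_{1000}$ is a highest weight vector for the $14$-dimensional representation of $Sp_6$, the stabilizer of the line $F\cdot x_{1000}$ inside $Sp_6$ is exactly the Siegel parabolic $Q$ with Levi $GL_3$, so lines in the orbit are indexed by $Q(F)\backslash Sp_6(F)$. The scaling along a fixed line is produced by the torus element $h_2(\epsilon)$, which conjugates $x_{1000}(r)$ to $x_{1000}(\epsilon^{-1}r)$ and hence hits every scalar in $F^{*}$. Using the automorphy of $\theta$ under the rational element $\gamma h_2(\epsilon)$ to move it into the argument, the Fourier coefficient attached to the character indexed by $(\gamma,\epsilon)$ becomes $\theta^{U,\psi}(h_2(\epsilon)\gamma g)$, yielding the stated formula. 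The main obstacle is the bookkeeping in the second paragraph: one must verify explicitly that each non-minimal $Sp_6(F)$-orbit on $U/Z$ produces a character of $U$ conjugate under $F_4(F)$ to a character attached to a unipotent orbit strictly greater than $A_1$, so that Theorem \ref{mini1} kills its contribution.
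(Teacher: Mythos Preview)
Your proposal is correct and follows essentially the same approach as the paper's own proof: Fourier expand along the abelian quotient $U/Z$, use the $Sp_6(F)$-action via the third fundamental representation to organize the characters into three types of orbits, invoke Theorem \ref{mini1} to kill the non-minimal orbits, and parametrize the surviving minimal orbit by $Q(F)\backslash Sp_6(F)\times F^*$. Your account is in fact slightly more explicit than the paper's about why the stabilizer of the line through the highest-weight vector $x_{1000}$ is the Siegel parabolic $Q$ and how $h_2(\epsilon)$ produces the scaling, while the paper, like you, leaves the verification that the remaining orbits correspond to $F_4$-orbits strictly larger than $A_1$ as a routine check.
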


\begin{proof}
The group $Z\backslash U$ is abelian. Hence, we have the following
Fourier expansion
$$\int\limits_{Z(F)\backslash Z({\bf A})}\theta(zg)dz=
\sum_{\gamma\in L(F)}\int\limits_{U(F)\backslash U({\bf
A})}\theta(ug)\psi_\gamma(u)du$$ where $L(F)$ runs over all
characters of $Z({\bf A})U(F)\backslash U({\bf A})$. We can identify
the group $L(F)$ with $F^{14}\simeq U(F)/Z(F)$. The group $Sp_6(F)$
acts on $L(F)$ as the third fundamental representation of $Sp_6$. We
have three type of orbits. First, we have the orbit corresponding to
the zero vector. Then, we have the orbit generated by the group
$\{x_{1000}\}$. The third type of orbits, are all the other ones not
included in the first two. It is not hard to show that the Fourier
coefficients which corresponds to an orbit of the third type
correspond to a unipotent orbit which is greater than the unipotent
orbit $A_1$. By Theorem \ref{mini1} they contribute zero to the
above expansion. Thus we are left with the first two type of orbits.
The trivial orbit corresponds to the constant term, and the second
one corresponds to the Fourier coefficient $\theta^{U,\psi}$. From
this expansion \eqref{exp1} follows.
\end{proof}

Another result which can be derived from Theorem \ref{mini1} is the
following. Let $U_Q$ denote the unipotent radical of $Q$ where $Q$
is the parabolic subgroup of $Sp_6$ which was defined in Proposition
\ref{property2}. Let $Q^0$ denote the subgroup of $Q$ defined by
$Q^0=SL_3\cdot U_Q$. We have
\begin{proposition}\label{property3}
For all $q\in Q^0({\bf A})$, we have
\begin{equation}\label{exp2}
\theta^{U,\psi}(qg)=\theta^{U,\psi}(g).
\end{equation}
\end{proposition}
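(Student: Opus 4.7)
The plan is to reduce $Q^0({\bf A})$-invariance of $\theta^{U,\psi}$ to the vanishing of Fourier coefficients of $\theta$ attached to unipotent orbits strictly larger than $A_1$, and then invoke Theorem~\ref{mini1}.

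First, one checks that $Q^0$ normalizes $U$ (since $Q^0 \subset M$) and preserves the character $\psi_U$. Indeed, the abelianization $U/[U,U]$ is the $14$-dimensional fundamental representation of $Sp_6 \subset M$ on which the line through $x_{1000}$ is a highest-weight line; its stabilizer is exactly the Siegel parabolic $Q$, so $Q^0 \subset Q$ fixes $\psi_U$ (the $SL_3$ factor acts trivially on the line, and $U_Q$ acts unipotently hence trivially on the $1$-dimensional line).

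Since $Q^0 = SL_3 \cdot U_Q$ is generated by its root subgroups, it suffices to prove $\theta^{U,\psi}(x_\beta(r) g) = \theta^{U,\psi}(g)$ for each root $\beta$ of $SL_3$ or $U_Q$ and each $r \in {\bf A}$. Because $\theta$ is automorphic and $x_\beta(\xi) \in F_4(F)$ for $\xi \in F$, the function $r \mapsto \theta^{U,\psi}(x_\beta(r) g)$ is invariant under $F$, so it admits a Fourier expansion
$$\theta^{U,\psi}(x_\beta(r) g) = \sum_{\lambda \in F} c_\beta(\lambda, g)\, \psi(\lambda r).$$
Setting $r = 0$ gives $\theta^{U,\psi}(g) = \sum_\lambda c_\beta(\lambda, g)$; hence it is enough to show $c_\beta(\lambda, g) = 0$ for every $\lambda \in F^*$.

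For $\lambda \ne 0$, one rewrites
$$c_\beta(\lambda, g) = \int_{F\backslash {\bf A}} \int_{U(F)\backslash U({\bf A})} \theta(u\, x_\beta(r)\, g)\, \psi_U(u)\, \psi(-\lambda r)\, du\, dr$$
as a Fourier coefficient of $\theta$ on the enlarged unipotent subgroup $V_\beta$ generated by $U$ and $\{x_\beta(r)\}$, using that $x_\beta$ normalizes $U$ and, if needed, one or two applications of the root-exchange process of subsection~2.2.2 to absorb commutator contributions of $x_\beta$ with roots in $U$. The resulting character $\psi_{V_\beta,\lambda}$, viewed as a nilpotent element of $\mathrm{Lie}(F_4)$, corresponds to $X_{1000} + \lambda X_\beta$ (or to the element obtained after the root exchange, which lies in the same nilpotent orbit). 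The main obstacle is then an orbit identification: one must verify, case by case as $\beta$ runs over the positive roots of $SL_3$ and $U_Q$, that for every $\lambda \in F^*$ this element lies in a unipotent orbit of $F_4$ strictly greater than $A_1$ in the partial ordering. This follows from the Bala--Carter data recalled in subsection~2.2, since $X_{1000}$ by itself represents the minimal orbit $A_1$ and any nonzero addition from a root subgroup of $Q^0$ forces the element into a strictly larger orbit (e.g.\ $\widetilde{A}_1$, $A_1+\widetilde{A}_1$, or $A_2$, depending on $\beta$). Granted this, Theorem~\ref{mini1} gives $c_\beta(\lambda, g) = 0$ for $\lambda \ne 0$, so $\theta^{U,\psi}(x_\beta(r) g)$ is independent of $r$ and equals $\theta^{U,\psi}(g)$, completing the proof.
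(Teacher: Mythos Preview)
Your overall strategy—reduce $Q^0({\bf A})$-invariance to the vanishing of nontrivial Fourier coefficients along one-parameter subgroups, then appeal to the minimality of $\Theta$—is the same as the paper's. There is, however, a genuine gap in the final step.

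You claim that for each root $\beta$ and each $\lambda\ne 0$ the integral $c_\beta(\lambda,g)$ is a Fourier coefficient attached to a unipotent orbit strictly greater than $A_1$, so that Theorem~\ref{mini1} applies. But Theorem~\ref{mini1} concerns the specific coefficients \eqref{four1}, \eqref{four2} taken over the groups $U_\Delta$ or $U_\Delta(2)$ attached to the orbit; your $c_\beta(\lambda,g)$ is an integral over the $16$-dimensional group $V_\beta=U\cdot\{x_\beta(r)\}$, which is not of that shape. Knowing that the nilpotent element $X_{1000}+\lambda X_\beta$ lies in an orbit larger than $A_1$ is not the same as knowing that $c_\beta(\lambda,g)$ equals one of the standard coefficients for that orbit. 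One must still perform further Fourier expansions and root exchanges to bring $c_\beta(\lambda,g)$ into a sum of recognizable pieces. The paper carries this out and finds that \emph{two} kinds of terms appear: Fourier coefficients attached to orbits $>A_1$ (handled by Theorem~\ref{mini1}) and integrals of the form
\[
\int_{Y(F)\backslash Y({\bf A})}\theta^{U(R)}(y)\,\psi_Y(y)\,dy,
\]
where $U(R)$ is the unipotent radical of a maximal parabolic $R$ and $\psi_Y$ corresponds to a non-minimal orbit of the Levi $M(R)$. These constant-term integrals are killed not by Theorem~\ref{mini1} but by Proposition~\ref{eisen1}, which your argument never invokes. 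So the sentence ``Granted this, Theorem~\ref{mini1} gives $c_\beta(\lambda,g)=0$'' is where the proof breaks: Theorem~\ref{mini1} alone does not suffice.

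A secondary point: you first say it suffices to treat ``each root $\beta$ of $SL_3$'' but then restrict the orbit verification to positive roots. The paper sidesteps negative roots entirely by noting that $Q^0({\bf A})$ is generated by $U_{Sp_6}({\bf A})$ together with the simple reflections $w[3],w[4]$; since these are $F$-rational and preserve $(U,\psi_U)$, their invariance is immediate, and one is left with $U_{Sp_6}({\bf A})$, which the paper then handles in two stages (first $U_Q$, then $U_{Sp_6}/U_Q$), using Proposition~\ref{eisen1} in both.
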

\begin{proof}
Let $U_{Sp_6}$ denote the maximal unipotent subgroup of $Sp_6$.
The group $Q^0({\bf A})$ is generated by $U_{Sp_6}({\bf A})$ and
the two simple reflections $w[3]$ and $w[4]$. Clearly \eqref{exp2}
holds for the above two simple reflections. Thus its enough to
prove \eqref{exp2} for $q\in U_{Sp_6}({\bf A})$. The group $U_Q$
is abelian. Hence we can consider the Fourier expansion of
$\theta^{U,\psi}$ along this group. We have
$$\theta^{U,\psi}(g)=\sum_{\gamma}\int\limits_{U_Q(F)\backslash
U_Q({\bf A})}\theta^{U,\psi}(vg)\psi_\gamma(v)dv$$ where we sum over
all characters of the group $U_Q(F)\backslash U_Q({\bf A})$. We
claim that for all nontrivial characters, the Fourier coefficient
$$\int\limits_{U_Q(F)\backslash
U_Q({\bf A})}\theta^{U,\psi}(vg)\psi_\gamma(v)dv$$ is zero for all
choice of data. This follows from the same type of arguments as in
the proof of Theorem \ref{mini1}. Indeed, when considering suitable
Fourier expansions of the above integral we obtain two types of
integrals. The first type are Fourier coefficients which are
associated with unipotent orbits which are greater than $A_1$.
Hence, by Theorem \ref{mini1} they are zero. The second type is an
integral of the form
$$\int\limits_{Y(F)\backslash Y({\bf A})}\theta^{U(R)}(y)
\psi_Y(y)dy$$ Here $R$ is a certain maximal parabolic subgroup of $F_4$ and
$U(R)$ is its unipotent radical. The group $Y$ is a subgroup of $M(R)$,
the Levi part of $R$. Finally, the character $\psi_Y$ is associated
with a unipotent orbit which is greater than the minimal orbit of
$M(R)$. Thus, from Proposition \ref{eisen1} this integral is zero
for all choice of data.

Hence, only the constant term remains, and we proved \eqref{exp2}
for all $q\in U_{Q}({\bf A})$. In a similar way, using again
Proposition \ref{eisen1}, we obtain the invariance property of
$\theta^{U,\psi}$ along the adelic points of $U_{Sp_6}/U_Q$.
\end{proof}

The next  Proposition relate the minimal representation of
$\widetilde{F}_4$ to the theta representation defined on the
symplectic group $\widetilde{Sp}_{14}$. Consider the Fourier
coefficient corresponding to the unipotent orbit $A_1$. In other
words, consider the integral
$$\theta^{Z,\psi_\beta}(g)=\int\limits_{F\backslash {\bf A}}
\theta(x_{2342}(r)g)\psi(\beta r)dr$$ Here $\beta\in F^*$. This
Fourier coefficient defines an automorphic representation of
$\widetilde{Sp}_6({\bf A})$. Let ${\mathcal H}_{15}$ denote the Heisenberg group
with 15 variables. The group $U$ is isomorphic to ${\mathcal
H}_{15}$. We shall denote this isomorphism by $\iota$. We have

\begin{proposition}\label{property6}
With the above notations, the space of functions
$$\theta_{Sp_{14}}^{\phi,\psi_\beta}(\iota(u)\varpi_3(g))$$ is a dense subspace
in the space of functions $\theta^{Z,\psi_\beta}(ug)$. Here $g\in
\widetilde{Sp}_6({\bf A})$, $u\in U({\bf A})$ and
$\theta_{Sp_{14}}^{\phi,\psi_\beta}\in
\Theta_{Sp_{14}}^{\phi,\psi_\beta}$ is the theta representation of
${\mathcal H}_{15}({\bf A})\cdot \widetilde{Sp}_{14}({\bf A})$
attached to the character $\psi_\beta$. Also, we denote by
$\varpi_3$ the third fundamental representation of $Sp_6$.
\end{proposition}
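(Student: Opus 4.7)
The plan is to realize $\theta^{Z,\psi_\beta}$, viewed as a function on $U({\bf A})\cdot\widetilde{Sp}_6({\bf A})$, as an automorphic vector in the oscillator representation attached to $\psi_\beta$, and then invoke a global Howe-uniqueness argument to match it with the classical theta series on $\widetilde{Sp}_{14}$ restricted to $\widetilde{Sp}_6$.

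First I would identify the abelian quotient $U/Z$ with a $14$-dimensional symplectic $F$-vector space, the symplectic form being the commutator pairing induced from $U$. The Levi of the Heisenberg parabolic $P_{\alpha_2,\alpha_3,\alpha_4}$ preserves $Z$, and the induced action of $Sp_6$ on $U/Z$ is, by inspection of the roots in $U\setminus Z$, the third fundamental representation $\varpi_3$; since this action preserves the symplectic form it yields the embedding $Sp_6\hookrightarrow Sp_{14}$ appearing in the statement, and this lifts to $\widetilde{Sp}_6\hookrightarrow\widetilde{Sp}_{14}$ by pull-back of the metaplectic cocycle. Via $\iota$, the group $U$ becomes the Heisenberg group ${\mathcal H}_{15}$, and $\widetilde{Sp}_6$ acts on it as a subgroup of $\widetilde{Sp}_{14}$ in the standard way.

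Next I would study $F_g(u):=\theta^{Z,\psi_\beta}(ug)$ as a function of $u\in U({\bf A})$. By construction $F_g(zu)=\psi_\beta(z)F_g(u)$ for $z\in Z({\bf A})$, and automorphy of $\theta$ gives $F_g(\gamma u)=F_g(u)$ for $\gamma\in U(F)$. The key step is to prove that the representation of ${\mathcal H}_{15}({\bf A})=U({\bf A})$ generated by the $F_g$'s is (a direct sum of copies of) the oscillator representation with central character $\psi_\beta$. Stone--von Neumann guarantees that the only irreducible smooth representation with that central character is the Weil representation, so what I must rule out is the appearance of any degenerate component on which $Z({\bf A})$ acts trivially. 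This is exactly what Theorem \ref{mini1} delivers: a non-oscillator component in the expansion of $F_g$ along an abelian subgroup of $U/Z$ would produce a Fourier coefficient of $\theta$ supported on an orbit strictly greater than $A_1$, contradicting Theorem \ref{mini1}.

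Having identified $F_g$ as an automorphic realization of the global oscillator representation of ${\mathcal H}_{15}({\bf A})\cdot\widetilde{Sp}_{14}({\bf A})$ pulled back to $\widetilde{Sp}_6({\bf A})$ via $\varpi_3$, the theta series $\theta_{Sp_{14}}^{\phi,\psi_\beta}(\iota(u)\varpi_3(g))$ gives the standard such realization as $\phi$ varies over Schwartz functions on ${\bf A}^7$. The density statement then follows from the fact that theta series span a dense subspace of the automorphic realization of the Weil representation, combined with an argument of the type used in Lemma 1.1 of \cite{G-R-S3} to swap integration and the theta kernel. The main obstacle I anticipate is the second step: pinning down the representation of $U({\bf A})$ generated by the $F_g$ as the \emph{irreducible} oscillator representation rather than a reducible or proper subquotient. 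This reduces, via Corollary \ref{eisen11} and the invariance statement of Proposition \ref{property3}, to a local verification at each place, and the careful bookkeeping of the $Sp_6$-orbits on $U/Z$ under $\varpi_3$ is where the technical weight of the argument lies.
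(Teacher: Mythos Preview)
Your overall architecture is sound and matches the paper's: identify $U\cong{\mathcal H}_{15}$ with $Sp_6$ acting through $\varpi_3$, invoke the global Stone--von Neumann/Ikeda uniqueness to write $\theta^{Z,\psi_\beta}(ug)$ as a product of a classical theta series in $u$ and a ``mixing'' factor in $g$, and then kill the unwanted piece using Theorem~\ref{mini1}. Where you diverge from the paper is in the last step, and this is where your proposal has a genuine gap.

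The paper does not argue abstractly that the Heisenberg representation generated by the $F_g$ is irreducible. Instead, after citing \cite{I1} it obtains that $\theta^{Z,\psi_\beta}(ug)$ is densely spanned by products
\[
\theta_{Sp_{14}}^{\phi,\psi_\beta}(\iota(u)\varpi_3(g))\cdot I_{\phi'}(g),\qquad
I_{\phi'}(g)=\int_{U(F)\backslash U({\bf A})}\theta_{Sp_{14}}^{\phi',\psi_\beta}(\iota(v)\varpi_3(g))\,\theta(vg)\,dv,
\]
and the whole proposition reduces to showing $I_{\phi'}(g)$ is \emph{constant} as a function of $g\in\widetilde{Sp}_6({\bf A})$. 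The paper's mechanism for this is the point you miss: the embedding $Sp_6\hookrightarrow Sp_{14}$ via $\varpi_3$ does \emph{not} split under the metaplectic cover, and neither does $Sp_6\hookrightarrow\widetilde F_4$, so $I_{\phi'}$ is a non-genuine automorphic function on $\widetilde{Sp}_6({\bf A})$. Hence it is constant iff all of its nontrivial Fourier coefficients along $\{x_{0122}(r)\}$ vanish; unfolding the theta function then produces exactly a Fourier coefficient of $\theta$ attached to the orbit $\widetilde A_1$, which is zero by Theorem~\ref{mini1}.

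Your sketch tries instead to rule out ``degenerate components'' directly and then appeals to Proposition~\ref{property3}, Corollary~\ref{eisen11}, and a Lemma~1.1 of \cite{G-R-S3} style argument. That does not close the loop: the density you want is not only about the $u$-variable but about the joint $(u,g)$-dependence, and the bridge is precisely the constancy of $I_{\phi'}(g)$. Without the non-genuineness observation you have no reason for $I_{\phi'}$ to be anything simpler than an arbitrary automorphic function on $\widetilde{Sp}_6({\bf A})$, and your reference to a ``local verification at each place'' does not substitute for it. Once you insert this step, your argument becomes essentially the paper's.
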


\begin{proof}
It follows from \cite{I1} that the space of functions
$$\theta_{Sp_{14}}^{\phi,\psi_\beta}(\iota(u)\varpi_3(g))
\int\limits_{U(F)\backslash U({\bf A})}
\theta_{Sp_{14}}^{\phi',\psi_\beta}(\iota(v)\varpi_3(g))
\theta(vg)dv$$ is a dense subspace in the space of functions
$\theta^{Z,\psi_\beta}(ug)$. The result will follows once we prove
that as a function of $g\in Sp_6({\bf A})$, the integral
$$\int\limits_{U(F)\backslash U({\bf A})}
\theta_{Sp_{14}}^{\phi',\psi_\beta}(\iota(v)\varpi_3(g))
\theta(vg)dv$$ is the identity function. Since the embedding of
$Sp_6$ in both $Sp_{14}$ via the third fundamental representation
does not split under the double cover, we deduce that the above
integral is not a genuine function. Hence, to obtain the result, it
is enough to prove that for all $a\in F^*$ the integral
$$\int\limits_{U(F)\backslash U({\bf A})}
\int\limits_{F\backslash {\bf A}}
\theta_{Sp_{14}}^{\phi',\psi_\beta}(\iota(v)\varpi_3(x_{0122}(r)))
\theta(vx_{0122}(r))\psi(ar)drdv$$ is zero for all choice of data.
Unfolding the theta function, we obtain as an inner integration the
integral
$$\int\limits_{V(F)\backslash V({\bf A})}\theta(v)\psi_V(v)dv$$
Here, the group $V$ is the unipotent subgroup of $F_4$ which is
associated with the seven positive roots of $F_4$ of the form
$(n_1n_2n_3n_4)$ with $n_4=2$. The character $\psi_V$ is defined as
$\psi_V(v)=\psi_V(x_{0122}(r_1)x_{2342}(r_2)v')=\psi(ar_1+\beta
r_2)$. Thus, the above integral is a Fourier coefficient which is
associated with the unipotent orbit $\widetilde{A}_1$. From Theorem
\ref{mini1} it is zero for all choice of data.

\end{proof}

\subsection{ On Minimal Representations of the Group $\widetilde{Sp}_6({\bf A})$} 

Let $\Theta_{Sp_6}^{(2)}$ denote a minimal representation of $\widetilde{Sp}_6({\bf A})$.
By definition this means that given any unipotent orbit of $Sp_6$ which is greater than $(21^4)$, then all Fourier coefficients of $\Theta_{Sp_6}^{(2)}$ which are associated with this orbit (see \cite{G1}) are zero for all choice of data.
In the computations we shall perform we will need for the representation $\Theta_{Sp_6}^{(2)}$, similar properties to the ones we stated and proved in subsection 2.5. More
precisely, we will need analogous results to those which are stated in Propositions 
\ref{eisen1}, \ref{property2}, \ref{property3} and \ref{property6}.

Recall that $Sp_6$ has three maximal parabolic subgroups. Let $P(GL_3)$ denote the maximal parabolic subgroup of $Sp_6$ whose Levi part is $GL_3$. Similarly, we shall denote the other two maximal parabolic subgroups by $P(GL_2\times SL_2)$ and $P(GL_1\times Sp_4)$. We denote by $U(GL_3)$ the
unipotent radical of $P(GL_3)$, and use similar notations for the other two maximal parabolic subgroups.  We remark
that the group $GL_3$ embedded in $Sp_6$ as the Levi part of $P(GL_3)$, splits under the
double cover of $Sp_6$.
To prove the analogous 
Proposition to Proposition \ref{eisen1}, we define the group $\widetilde{M}_0$ for each maximal parabolic subgroup $P$. When $P=P(GL_3)$ we denote $\widetilde{M}_0=GL_3$.
When $P=P(GL_2\times SL_2)$ we define $\widetilde{M}_0=GL_2\times \widetilde{SL}_2$, and when $P=P(GL_1\times Sp_4)$ we denote $\widetilde{M}_0=\widetilde{Sp}_4$. When 
$\widetilde{M}_0=GL_3$, a representation  of $\widetilde{M}_0({\bf A})$ is said to be minimal if it is one dimensional. When $\widetilde{M}_0=GL_2\times \widetilde{SL}_2$, 
a representation  of $\widetilde{M}_0({\bf A})$ is said to be minimal if it is one dimensional on $GL_2$. Finally, when $\widetilde{M}_0=\widetilde{Sp}_4$, a representation  of $\widetilde{M}_0({\bf A})$ is said to be minimal if it is a minimal representation of
$\widetilde{Sp}_4$, that is its only nonzero Fourier coefficients are associated with the
unipotent orbit $(21^2)$ of $Sp_4$. We start with
\begin{proposition}\label{propertysp1}
Let $U$ denote any unipotent radical of a maximal parabolic subgroup of $Sp_6$. Then, as a
representation of $\widetilde{M}_0({\bf A})$, the constant term $\Theta_{Sp_6}^{(2),U}$ is
a minimal representation.
\end{proposition}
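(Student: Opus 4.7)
My plan is to treat the three maximal parabolics separately, in each case reducing minimality of the constant term on $\widetilde{M}_0({\bf A})$ to the minimality assumption on $\Theta_{Sp_6}^{(2)}$ itself. The underlying principle is the same in all three cases: any non-minimal Fourier coefficient on $\widetilde{M}_0$, when combined with the constant-term integration over $U$, can be repackaged (possibly after one or two applications of root exchange as in subsection 2.2.2) as a Fourier coefficient of $\Theta_{Sp_6}^{(2)}$ along a unipotent subgroup $V \supset U$ whose associated character corresponds to a unipotent orbit of $Sp_6$ strictly greater than $(21^4)$. By the hypothesis on $\Theta_{Sp_6}^{(2)}$, this integral vanishes identically.

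For $P = P(GL_3)$, the unipotent radical $U(GL_3)$ is abelian of dimension $6$, and $\widetilde{M}_0 = GL_3$. Minimality here means one-dimensionality, so I would expand $\Theta_{Sp_6}^{(2),U(GL_3)}$ along the maximal unipotent $N$ of $GL_3$ and show that every nontrivial character contributes zero. Conjugating such a character back through $U(GL_3)$ produces a character on the unipotent $N \cdot U(GL_3)$ whose stabilizer is visibly strictly smaller than that of the character giving $(21^4)$; a short check identifies the associated orbit as $(2^21^2)$ or higher, and minimality of $\Theta_{Sp_6}^{(2)}$ finishes this case. For $P = P(GL_2\times SL_2)$, $\widetilde{M}_0 = GL_2 \times \widetilde{SL}_2$ and I only need one-dimensionality on the $GL_2$ factor, so the same argument is applied to the one-dimensional unipotent subgroup of $GL_2$.

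The case $P = P(GL_1 \times Sp_4)$ is the one that demands the most care: here $\widetilde{M}_0 = \widetilde{Sp}_4$ and I need the constant term to be a minimal representation of $\widetilde{Sp}_4({\bf A})$, i.e.\ to have nonzero Fourier coefficients only along the orbit $(21^2)$ of $Sp_4$. So I would assume a nonzero Fourier coefficient of $\Theta_{Sp_6}^{(2),U(GL_1 \times Sp_4)}$ attached to the orbits $(2^2)$ or $(4)$ of $Sp_4$, and show directly that the composed integral
\begin{equation*}
\int\limits_{V(F)\backslash V({\bf A})} \theta_{Sp_6}^{(2)}(vg)\,\psi_V(v)\,dv,
\end{equation*}
with $V$ generated by $U(GL_1 \times Sp_4)$ and the relevant unipotent of $Sp_4$, is a Fourier coefficient of $\Theta_{Sp_6}^{(2)}$ attached to an orbit of $Sp_6$ strictly greater than $(21^4)$; inspection of the standard orbit diagrams for $Sp_6$ identifies this as $(2^21^2)$ or larger, and the minimality hypothesis gives the vanishing.

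The main obstacle will be the bookkeeping in this last case: ensuring that the character $\psi_V$ obtained by gluing the Fourier datum on $Sp_4$ to the trivial character on $U(GL_1\times Sp_4)$ really defines a coefficient attached to a non-minimal orbit of $Sp_6$, rather than a degenerate coefficient that still lies over $(21^4)$. I expect to handle this by matching the Jordan block structure of the resulting nilpotent element in $\mathfrak{sp}_6$ against the partition ordering, using the same methods as in subsection 2.2 for $F_4$, and in the ambiguous cases applying root exchange to bring the integral into a standard form before reading off the attached orbit.
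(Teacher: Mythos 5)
Your overall strategy --- reducing minimality of each constant term $\Theta_{Sp_6}^{(2),U}$ to the vanishing of non-minimal Fourier coefficients of $\Theta_{Sp_6}^{(2)}$ itself --- is exactly the paper's, and for $P(GL_1\times Sp_4)$ and $P(GL_2\times SL_2)$ the paper only says they are ``treated in the same way,'' so your somewhat more explicit sketch of those cases is consistent with it. The route for $P(GL_3)$, however, differs in a way worth noting. The paper does not expand along the full maximal unipotent $N$ of $GL_3$; it expands along the single one-parameter subgroup $N'=\{x(r)=I_6+r(e_{1,3}-e_{4,6})\}$ (the highest-root subgroup of $GL_3$), shows that the Fourier coefficient of $\theta_{Sp_6}^{(2),U}$ along $N'$ with a nontrivial character contains as inner integration a Fourier coefficient of $\theta_{Sp_6}^{(2)}$ attached to the orbit $(2^21^2)$, and then deduces from the resulting $N'({\bf A})$-invariance that the constant term is invariant under the corresponding copy of $SL_2({\bf A})$, hence one-dimensional on $GL_3({\bf A})$. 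Your plan of expanding along all of $N$ is not wrong, but it is more than is needed and --- this is the real gap --- it does not by itself close the argument: knowing that every nontrivial character of $N(F)\backslash N({\bf A})$ contributes zero tells you the restriction to $GL_3({\bf A})$ is invariant under $N({\bf A})$, i.e.\ degenerate, but not that the representation is literally a character; you still need a separate step (either the $SL_2({\bf A})$-invariance trick the paper uses or an appeal to wavefront-set/degenerate-Whittaker theory on $GL_n$) to pass from ``no nontrivial Fourier coefficient along $N$'' to ``one-dimensional.'' The same issue recurs for the $GL_2$ factor in the $P(GL_2\times SL_2)$ case. You should spell that step out; the rest of the plan, including the Jordan-block bookkeeping you flag for $P(GL_1\times Sp_4)$, is sound.
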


\begin{proof}
Consider the case when $U$ us the unipotent radical of $P(GL_3)$. In this case, consider
the one dimensional unipotent subgroup $N=\{x(r)=I_6+r(e_{1,3}-e_{4,6})\}$. Here $e_{i,j}$ is the
matrix of size six which has a one at the $(i,j)$ entry and zero otherwise. Expand the
constant term $\Theta_{Sp_6}^{(2),U}$ along the group $N(F)\backslash N({\bf A})$. We claim that for all $a\in F^*$, the integral
$$\int\limits_{F\backslash {\bf A}}\theta_{Sp_6}^{(2),U}(x(r))\psi(ar)dr$$ is zero for all
choice of data. Here $\theta_{Sp_6}^{(2)}$ is a vector in the space of 
$\Theta_{Sp_6}^{(2)}$. Indeed, in this case the above integral contains as an inner integration a Fourier coefficient which corresponds to the unipotent orbit $(2^21^2)$. Since  
$\Theta_{Sp_6}^{(2)}$ is a minimal representation, these Fourier coefficients are all zero. 
This means that as a function of $GL_3({\bf A})$, the constant term $\Theta_{Sp_6}^{(2),U}$
is invariant under a copy of $SL_2({\bf A})$. Thus, as a function of $GL_3({\bf A})$, the constant term $\Theta_{Sp_6}^{(2),U}$ is a one dimensional representation. 

The other two maximal parabolic subgroups are treated in the same way.

\end{proof}

The next Proposition is the $Sp_6$ version of Propositions \ref{property2} and \ref{property3}. Let $U$ denote the unipotent radical of the parabolic subgroup $P(GL_3)$.
In terms of matrices we can identify $U$ with all matrices of the form $\begin{pmatrix}
I&X\\ &I\end{pmatrix}$ where $I=I_3$ and $X\in Mat_3^0=\{ X\in Mat_3 : X=J_3 X^tJ_3\}$. 
Let $\psi_U$ be defied as 
$$\psi_U(u)=\psi_U\left (\begin{pmatrix} I_3&X\\ &I_3\end{pmatrix}\right )=\psi(x_{3,1})$$ and denote
$$\theta_{Sp_6}^{(2),U,\psi}(g)=\int\limits_{U(F)\backslash U({\bf A})}
\theta_{Sp_6}^{(2)}(ug)\psi_U(u)du$$
If we embed the group $GL_3$ inside $Sp_6$ as $g\mapsto \text{diag}(g,g^*)$, then the stabilizer of $\psi_U$ inside $GL_3$ is the group of all matrices of the form 
$$L_0(GL_3)=\left \{\begin{pmatrix} h&y\\ &1\end{pmatrix}\ \ \ h\in GL_2, \ \ y\in Mat_{2\times 1}\right \}$$ Let $L(GL_3)$ denote the maximal parabolic subgroup of $GL_3$ which contains $L_0(GL_3)$. Finally, let $L^0(GL_3)$ denote the subgroup of $L_0(GL_3)$
such that $h\in SL_2$. With these notations we prove
\begin{proposition}\label{propertysp2}
We have the following expansion,
\begin{equation}\label{minexp1}
\theta_{Sp_6}^{(2)}(g)=\theta_{Sp_6}^{(2),U}(g)+\sum_{\gamma\in L(GL_3)(F)\backslash GL_3(F)} \sum_{\epsilon\in \{\pm 1\}\backslash F^*}\theta_{Sp_6}^{(2),U,\psi}(h(\epsilon)\gamma g)
\end{equation}
Here $h(\epsilon)=\text{diag}(I_2,\epsilon,\epsilon^{-1},I_2)$. Moreover we have
\begin{equation}\label{minexp2}
\theta_{Sp_6}^{(2),U,\psi}(qg)=\theta_{Sp_6}^{(2),U,\psi}(g)
\end{equation}
for all $q\in L^0(GL_3)({\bf A})$.
\end{proposition}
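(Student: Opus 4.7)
The plan is to prove the two assertions in parallel with Propositions \ref{property2} and \ref{property3}, replacing the role of $F_4$ and its Heisenberg parabolic by $Sp_6$ and the Siegel parabolic $P(GL_3)$, and using the minimality of $\Theta_{Sp_6}^{(2)}$ together with Proposition \ref{propertysp1} in place of Theorem \ref{mini1} and Proposition \ref{eisen1}.

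For the expansion \eqref{minexp1}, the key point is that $U$ is abelian, so that
\[
\theta_{Sp_6}^{(2)}(g)=\sum_{\psi'}\int_{U(F)\backslash U({\bf A})}\theta_{Sp_6}^{(2)}(ug)\psi'(u)du,
\]
where $\psi'$ ranges over the characters of $U(F)\backslash U({\bf A})$. These characters are parametrized by $Mat_3^0(F)$, and the Levi $GL_3(F)$ acts on them by $X\mapsto hXJ_3h^tJ_3$. The orbits under this action are classified by the rank of $X$ (rank $0,1,2,3$). A rank-$0$ character gives the constant term $\theta_{Sp_6}^{(2),U}$; a rank-$1$ character may be taken as $\psi_U$. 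A standard identification (see \cite{G1}) shows that a Fourier coefficient coming from a rank-$2$ or rank-$3$ character agrees, after suitable conjugation, with a Fourier coefficient attached to the unipotent orbit $(2^21^2)$ or $(2^3)$ of $Sp_6$, both strictly greater than $(21^4)$; hence all such coefficients vanish by the minimality of $\Theta_{Sp_6}^{(2)}$. Finally, one identifies $L(GL_3)$ as the stabilizer in $GL_3$ of the line through $\psi_U$ and checks that as $(\gamma,\epsilon)$ varies over $L(GL_3)(F)\backslash GL_3(F)\times\{\pm 1\}\backslash F^*$ the translates $\psi_U\circ\text{Ad}(h(\epsilon)\gamma)$ exhaust the rank-$1$ characters, which gives the second term on the right-hand side.

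For the invariance \eqref{minexp2}, the group $L^0(GL_3)$ is the semidirect product of $SL_2$ (acting on the first two coordinates) and the two-dimensional unipotent subgroup $Y=\{y\in\text{Mat}_{2\times 1}\}$, and $L^0(GL_3)(F)$ is visibly contained in the stabilizer of $\psi_U$, so the equality on $F$-points is automatic. To upgrade this to an adelic equality, I would Fourier expand $\theta_{Sp_6}^{(2),U,\psi}$ along $Y$ and then along the upper-triangular unipotent radical of the embedded $SL_2$. Each nontrivial character in either expansion is to be shown to give zero: either the combined character with $\psi_U$ is, after a root-exchange of the type described in subsection 2.2.2, a Fourier coefficient attached to a unipotent orbit of $Sp_6$ strictly greater than $(21^4)$ (hence zero by minimality), or it folds into a Fourier coefficient of the form $\int_{Y'(F)\backslash Y'({\bf A})}\theta_{Sp_6}^{(2),U'}(y'g)\psi_{Y'}(y')dy'$ for another maximal parabolic $P'=M'U'$ of $Sp_6$, with $\psi_{Y'}$ a nontrivial character of a unipotent subgroup of the Levi $\widetilde{M}_0'$. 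Proposition \ref{propertysp1} then says $\theta_{Sp_6}^{(2),U'}$ is minimal on $\widetilde{M}_0'({\bf A})$, and in each case $\psi_{Y'}$ corresponds to a non-minimal orbit on $\widetilde{M}_0'$, so the integral vanishes. Only the trivial character survives, and one concludes adelic invariance under $Y({\bf A})$ and under the Borel of $SL_2({\bf A})$; combining with the (rational) longest Weyl element of $SL_2$ gives invariance under all of $L^0(GL_3)({\bf A})$.

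The main obstacle is the bookkeeping in Stage 2: one must verify carefully, via root exchanges and conjugations by well-chosen Weyl elements, that each of the nontrivial Fourier terms either matches (possibly after enlarging the integration domain) a Fourier coefficient attached to one of $(2^21^2)$ or $(2^3)$, or else reduces to the constant term along a different maximal parabolic twisted by a character that corresponds to a non-minimal orbit on the corresponding Levi. Once that translation is in place, everything is killed by minimality of $\Theta_{Sp_6}^{(2)}$ and by Proposition \ref{propertysp1}, exactly as in the proofs of Propositions \ref{property2} and \ref{property3}.
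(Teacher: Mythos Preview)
Your proposal is correct and follows essentially the same approach as the paper: Fourier-expand along the abelian $U$, use minimality of $\Theta_{Sp_6}^{(2)}$ to kill all but the rank-$0$ and rank-$1$ orbits for \eqref{minexp1}, and for \eqref{minexp2} expand along a unipotent subgroup of $L^0(GL_3)$, kill nontrivial terms by minimality, then use a Weyl element to generate the rest. The paper's execution of \eqref{minexp2} is slightly more streamlined than yours: it expands in a single step along $N=\{I_6+r_1(e_{1,2}-e_{5,6})+r_2(e_{1,3}-e_{4,6})\}$ (noting that $N({\bf A})$ together with the Weyl element $\mathrm{diag}(J_2,I_2,J_2)$ already generates $L^0(GL_3)({\bf A})$), and the nontrivial terms vanish directly by minimality without any root exchanges or appeal to Proposition~\ref{propertysp1}.
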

\begin{proof}
The proof is similar to the proof of Propositions \ref{property2} and \ref{property3}.
Notice that $U$ is an abelian group. Therefore, we can expand $\theta_{Sp_6}^{(2)}(g)$
along $U(F)\backslash U({\bf A})$. The group $GL_3(F)$ acts on the character group of
$U(F)\backslash U({\bf A})$, and all characters except the trivial one and any character
that is in the same orbit of $\psi_U$, contribute zero to the expansion. This follows
from the fact that any other character produces a Fourier coefficient which is associated
with a unipotent orbit which is greater than $(21^4)$. From this, identity \eqref{minexp1}
follows.

As for identity \eqref{minexp2}, it follows from similar arguments. Indeed, let 
$N=\{ I_6+r_1(e_{1,2}-e_{5,6})+r_2(e_{1,3}-e_{4,6})\}$. Expanding $\theta_{Sp_6}^{(2),U,\psi}(g)$ along $N(F)\backslash N({\bf A})$, it follows from the fact that $\Theta_{Sp_6}^{(2)}$ is a minimal representation, that nontrivial characters of $N(F)\backslash N({\bf A})$ contributes zero to the expansion. Thus $\theta_{Sp_6}^{(2),U,\psi}(g)=
\theta_{Sp_6}^{(2),UN,\psi}(g)$. Since $L^0(GL_3)({\bf A})$ is generated by $N({\bf A})$
and the Weyl element $\text{diag}(J_2,I_2,J_2)$, identity \eqref{minexp2} follows.

\end{proof}

Finally, we prove the analogous to Proposition \ref{property6}. To do that, let $Z$ denote the unipotent subgroup  defined by $Z=\{x(r)= I_6+re_{1,6}\}$. For $\beta\in F^*$, denote
$$ \theta_{Sp_6}^{(2),Z,\psi_\beta}(g)=\int\limits_{F\backslash {\bf A}}
\theta_{Sp_6}^{(2)}(x(r)g)\psi(\beta r)dr$$
Let $U$ denote the unipotent radical of the maximal parabolic subgroup 
$P(GL_1\times Sp_4)$. Then $U$ can be identified with the Heisenberg group ${\mathcal H}_5$.
As in Proposition \ref{property6} we have
\begin{proposition}\label{propertysp3}
With the above notations, the space of functions
$$\theta_{Sp_{4}}^{\phi,\psi_\beta}(ug)$$ is a dense subspace
in the space of functions $\theta_{Sp_6}^{(2),Z,\psi_\beta}(ug)$. Here $g\in
\widetilde{Sp}_4({\bf A})$, $u\in U({\bf A})$ and
$\theta_{Sp_{4}}^{\phi,\psi_\beta}\in
\Theta_{Sp_{4}}^{\phi,\psi_\beta}$ is the theta representation of
${\mathcal H}_{5}({\bf A})\cdot \widetilde{Sp}_{4}({\bf A})$
attached to the character $\psi_\beta$. 
\end{proposition}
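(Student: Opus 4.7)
The plan is to mirror the proof of Proposition \ref{property6}, exchanging the pair $(F_4, Sp_{14})$ for $(Sp_6, Sp_4)$. An application of the Howe-type identity from \cite{I1} should give that the space of functions
\begin{equation*}
(u,g) \mapsto \theta_{Sp_4}^{\phi,\psi_\beta}(ug)\, I(g), \qquad I(g) := \int\limits_{U(F)\backslash U(\mathbf{A})} \theta_{Sp_4}^{\phi',\psi_\beta}(vg)\, \theta_{Sp_6}^{(2)}(vg)\, dv,
\end{equation*}
is dense in the space of functions $(u,g) \mapsto \theta_{Sp_6}^{(2),Z,\psi_\beta}(ug)$ on $U(\mathbf{A}) \times \widetilde{Sp}_4(\mathbf{A})$. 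The proposition therefore reduces to showing that $I(g) \equiv 1$.

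The embedding of $\widetilde{Sp}_4$ into $\widetilde{Sp}_6$ coming from the Levi of $P(GL_1 \times Sp_4)$ is a genuine cover embedding, so the product of the two genuine theta kernels appearing in the integrand of $I(g)$ transforms trivially under the center of the cover. Consequently $I$ descends to a function on $Sp_4(\mathbf{A})$, and since it cannot be a nonzero genuine automorphic function, it suffices to show that for every $a \in F^*$ and for a simple root one-parameter subgroup $\{x(r)\}$ of $Sp_4$ one has
\begin{equation*}
\int\limits_{F\backslash \mathbf{A}} I(x(r)g)\, \psi(ar)\, dr = 0.
\end{equation*}
Unfolding the $\widetilde{Sp}_4$-theta function against its Schr\"odinger model and conjugating the variable $r$ into $U$ converts this into an inner Fourier coefficient of $\theta_{Sp_6}^{(2)}$ taken along a unipotent subgroup strictly containing $Z$, whose character is attached to a unipotent orbit of $Sp_6$ strictly greater than the minimal orbit $(21^4)$ — the most likely candidates being $(31^3)$ or $(2^2 1^2)$. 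The assumed minimality of $\Theta_{Sp_6}^{(2)}$ then forces this coefficient to vanish, which finishes the argument.

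The main obstacle will be the coordinate bookkeeping in the last step: one must organize the unfolding so that the extra variable $r$ combines with a specific generator of $U$ into a single Fourier character, and then identify the resulting enlarged unipotent-character pair as being of non-minimal Bala--Carter type. This step is entirely parallel to the root computation carried out in the proof of Proposition \ref{property6}, and should go through by the same method, with the simplification that here the Heisenberg radical has only $5$ variables rather than $15$.
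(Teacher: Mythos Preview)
Your proposal is correct and matches the paper's approach: the paper does not spell out a proof of Proposition \ref{propertysp3}, introducing it only as the $Sp_6$-analog of Proposition \ref{property6}, and your plan is precisely that adaptation. The only slip is that $(31^3)$ is not a symplectic partition (odd parts must occur with even multiplicity for $Sp_{2n}$), so the non-minimal orbit that actually appears after unfolding is $(2^21^2)$, as you also guessed.
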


\section{\bf  Commuting Pairs in $F_4$}

Let $(H,G)$ be a commuting pair in the group $F_4$. By that we mean
that the two groups commute one with the other, but they need not be
a dual pair. Let $\mathcal{E}$ denote an automorphic representation
of the group $F_4({\bf A})$. Let $\pi$ denote an irreducible
cuspidal representation of $H({\bf A})$, and let
\begin{equation}\label{com1}
f(g)=\int\limits_{H(F)\backslash H({\bf A})}\varphi_\pi(h)E((h,g))dh
\end{equation}
Here $E$ is a vector in the space of $\mathcal{E}$, and
$\varphi_\pi$ is a vector in the space of $\pi$. Denote by
$\sigma(\pi,\mathcal{E})$ the automorphic representation of $G({\bf
A})$ generated by all the functions $f(g)$  defined above.

As explained in the introduction we are looking for those cases
which satisfy equation \eqref{intro4}. In this case, since $V$ is
trivial, equation \eqref{intro4} is given by
\begin{equation}\label{com2}
\text{dim}\ \pi+ \text{dim}\  \mathcal{E}=\text{dim}\ H+ \text{dim}
\ \sigma(\pi,\mathcal{E})
\end{equation}

We will consider the following commuting pairs:\\
{\bf 1)}\ $(H,G)=(SL_3,SL_3)$.\\
{\bf 2)}\ $(H,G)=(SL_2\times SL_2, Sp_4)$.\\
{\bf 3)}\ $(H,G)=(SL_2, SL_4)$.\\
{\bf 4)}\ $(H,G)=(SO_3, G_2)$.\\
{\bf 5)}\ $(H,G)=(SL_2, Sp_6)$.

The way these groups are embedded inside $F_4$ will be discussed
below. In each of the above cases we check the conditions such that
equation \eqref{com2} holds. Notice that in integral \eqref{com1},
there is a symmetry between $H$ and $G$. In other words, given an
irreducible cuspidal representation $\sigma$ of the group $G({\bf
A})$, we can consider the representation of $H({\bf A})$ generated
by the space of functions
\begin{equation}\label{com3}
\int\limits_{G(F)\backslash G({\bf A})}\varphi_\sigma(h)E((h,g))dg
\end{equation}
The corresponding equation for this case is
\begin{equation}\label{com4}
\text{dim}\ \sigma+ \text{dim}\  \mathcal{E}=\text{dim}\ G+
\text{dim} \ \pi(\sigma,\mathcal{E})
\end{equation}
Thus, in each of the above cases we should check both options. The
representation $\mathcal{E}$ is defined on $F_4$, and hence its
dimension should be a half of the dimension of some unipotent orbit
of $F_4$. For a list of the unipotent orbits, and their dimensions, 
we refer the reader to \cite{C-M} page 128.
It follows from that list that the minimal representation, the one
constructed in the previous Section, is of dimension 8. The one
above it is of dimension 11, and so on.  We have

{\bf 1)}\ $(H,G)=(SL_3,SL_3)$. Since $\pi$ is cuspidal, then it is
generic, and hence $\text{dim}\ \pi=3$. We have $\text{dim}\
SL_3=8$. Hence, equation \eqref{com2} is $\text{dim}\
\mathcal{E}-\text{dim}\ \sigma(\pi,\mathcal{E})= 5$. Since
$\sigma(\pi,\mathcal{E})$ is an automorphic representation of
$SL_3$, its dimension is at most 3, and hence the only option is
that $\text{dim}\ \mathcal{E}=8$ and $\text{dim}\
\sigma(\pi,\mathcal{E})=3$.

{\bf 2)}\ $(H,G)=(SL_2\times SL_2, Sp_4)$. Here $\text{dim}\ H=6$,
and $\text{dim}\ \pi=2$.  Hence we have $\text{dim}\
\mathcal{E}-\text{dim}\ \sigma(\pi,\mathcal{E})= 4$. The
representation $\sigma(\pi,\mathcal{E})$ is an automorphic
representation of $Sp_4$, hence its dimension is 2,3 or 4. Thus, the
only option is $\text{dim}\ \mathcal{E}=8$ and $\text{dim}\
\sigma(\pi,\mathcal{E})=4$. Thus we expect $\sigma(\pi,\mathcal{E})$
to be generic.

To consider the options for integral \eqref{com3} we notice that
$\text{dim}\ G=\text{dim}\ Sp_4=10$, and  since
$\pi(\sigma,\mathcal{E})$ is an automorphic representation on
$SL_2({\bf A})\times SL_2({\bf A})$, then $\text{dim}\
\pi(\sigma,\mathcal{E})=1,2$. Thus, we have two options, first
$12=\text{dim}\ \mathcal{E}+\text{dim}\ \sigma$ and the second is
$11=\text{dim}\ \mathcal{E}+\text{dim}\ \sigma$. The representation
$\sigma$ is a cuspidal representation on $Sp_4$, and hence its
dimension is at most 4. Thus in both cases we have $\text{dim}\
\mathcal{E}=8$. In the first case we get $\text{dim}\ \sigma=4$ and
in the second $\text{dim}\ \sigma=3$.

{\bf 3)}\ $(H,G)=(SL_2, SL_4)$. Since $\text{dim}\ H=3$ and
$\text{dim}\ \pi=1$, we obtain $\text{dim}\ \mathcal{E}-\text{dim}\
\sigma(\pi,\mathcal{E})= 2$. Thus, the only option is $\text{dim}\
\mathcal{E}=8$ and $\text{dim}\ \sigma(\pi,\mathcal{E})=6$.

In the other direction, we have $\text{dim}\ G=\text{dim}\ SL_4=15$.
Also, since $\sigma$ is cuspidal, it must be generic, and hence
$\text{dim}\ \sigma=6$. The group $H=SL_2$, and hence $\text{dim}\
\pi(\sigma,\mathcal{E})=1$. Thus we obtain $15+1= \text{dim}\
\mathcal{E}+6$, or $\text{dim}\ \mathcal{E}=10$. From \cite{C-M} it
follows that there is no unipotent orbit whose dimension is 20, and
hence we dont expect a representation of $F_4$ whose dimension is
10.

{\bf 4)}\ $(H,G)=(SO_3, G_2)$. As in the previous case we obtain
$\text{dim}\ \mathcal{E}-\text{dim}\ \sigma(\pi,\mathcal{E})= 2$.
Thus, the only option is $\text{dim}\ \mathcal{E}=8$ and
$\text{dim}\ \sigma(\pi,\mathcal{E})=6$. Hence, we expect the image
of this lift to be a generic representation of $G_2$.

In the other direction we have $\text{dim}\ G=\text{dim}\ G_2=14$.
Since $H=SO_3$, then $\text{dim}\ \pi(\sigma,\mathcal{E})=1$. Also,
$\sigma$ is a cuspidal representation of $G_2$, and hence
$\text{dim}\ \sigma=5,6$. This implies that $14+1=\text{dim}\
\mathcal{E}+ \text{dim}\ \sigma$, and hence $\text{dim}\
\mathcal{E}= 9,10$. By \cite{C-M} we dont expect a representation
which such dimensions.

{\bf 5)}\ $(H,G)=(SL_2, Sp_6)$. Here $H$ is of the same type as the
previous two cases, and hence we get the identity $\text{dim}\
\mathcal{E}-\text{dim}\ \sigma(\pi,\mathcal{E})= 2$. Since
$\sigma(\pi,\mathcal{E})$ is a representation of $Sp_6$, its
dimension is at most 9. Thus $\text{dim}\ \mathcal{E}$ is at most
11, and there are two cases. First, when $\text{dim}\
\mathcal{E}=11$ and then $\text{dim}\ \sigma(\pi,\mathcal{E})=9$. In
this case $\sigma(\pi,\mathcal{E})$ is a generic representation. The
second case is when $\text{dim}\ \mathcal{E}=8$ and $\text{dim}\
\sigma(\pi,\mathcal{E})=6$.

In the other direction, since $\text{dim}\ G=\text{dim}\ Sp_6=21$,
and $H=SL_2$ then $\text{dim}\ \pi(\sigma,\mathcal{E})=1$,  and
hence $\text{dim}\ \mathcal{E}+ \text{dim}\ \sigma= 22$. The
representation $\sigma$ is cuspidal, and hence $\text{dim}\
\sigma=6, 8,9$. From this we obtain that $\text{dim}\ \mathcal{E}=
15, 14, 13$. From \cite{C-M} we deduce that the last case is
impossible, but it is possible that $\text{dim}\ \mathcal{E}= 15,
14$.

As can be seen from the above in all cases, except case number {\bf
5)}, the only representation $\mathcal{E}$ of $F_4$ which satisfies
the dimension equations \eqref{com2} or \eqref{com4} is the minimal
representation $\Theta$. In the following subsections we shall
consider the above cases. In each case we will determine when the
image of the lift is cuspidal and when it is nonzero. We will
consider both liftings given by integrals \eqref{com1} and
\eqref{com3} even though the dimension formula may not work in both
directions. We do that since studying the other direction as well
may give us some information of how to characterize the image of the
lift. In this paper we only consider  the case when
$\mathcal{E}=\Theta$, the minimal representation of the double cover
of $F_4$. This implies that some of the representations are defined
on the double cover of $H$ or $G$.

\subsection{ \bf The Commuting  Pair $(SL_3, SL_3)$}

In this subsection we will study the lifting from the double cover
of $GL_3$ to the linear group $SL_3$, and the lift from $GL_3$ to
the double cover of $SL_3$. We shall denote by $\widetilde{SL}_3$
the double cover of $SL_3$, and similarly for $GL_3$.

\subsubsection{\bf From $\widetilde{GL}_3$ to $SL_3$}

To construct this lifting, we first embed the commuting pair $(SL_3,
SL_3)$ inside $F_4$ as follows. The first copy of $SL_3$ is generated by
$<x_{\pm(1000)}(r_1)$, $ x_{\pm(0100)}(r_2),x_{\pm(1100)}(r_3)>$ and
the other copy is generated by $<x_{\pm(0001)}(r_1),
x_{\pm(1231)}(r_2)$, $x_{\pm(1232)}(r_3)>$. Notice that the first copy
is generated by unipotent elements which corresponds to long roots,
and the second copy by unipotent elements corresponding to short
roots. This means that the first copy of $SL_3$, when embedded as
above inside $F_4$, does not split under the covering, but the
second copy does.

Let $\widetilde{\pi}$ denote a cuspidal representation of the
group $\widetilde{GL}_3({\bf A})$. We consider the integral
\begin{equation}\label{sl31}
f(h)=\int\limits_{SL_3(F)\backslash SL_3({\bf
A})}\widetilde{\varphi}(g)\theta((h,g))dg
\end{equation}
Here $\widetilde{\varphi}$ is a vector in the space of
$\widetilde{\pi}$ and $(h,g)\in (SL_3({\bf A}), SL_3({\bf A}))$
embedded in $\widetilde{F}_4({\bf A})$ as above. In other words, the
first copy of $SL_3$ is the one which is generated by short roots in
$F_4$, and the second copy is equal to $<x_{\pm 1000}(r),
x_{\pm 0100}(r)>$. The function $f(h)$ defines an automorphic function
of $SL_3({\bf A})$. As we vary the data in integral \eqref{sl31}, we
obtain an automorphic representation of $SL_3({\bf A})$ which we
shall denote by $\sigma(\widetilde{\pi})$. Our first result is
\begin{proposition}\label{sl31.1}
The representation $\sigma(\widetilde{\pi})$ is a nonzero cuspidal
representation of $SL_3({\bf A})$.
\end{proposition}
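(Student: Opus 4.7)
The plan is to prove cuspidality first, then nonvanishing, by exploiting the fact that the two copies of $SL_3$ commute, so that unipotent integrations on the $h$-side pass through the $g$-integration, and then using the smallness of $\Theta$ (Theorem \ref{mini1}) to collapse the resulting Fourier coefficients.

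For cuspidality, let $V$ be the unipotent radical of a maximal parabolic subgroup of the $h$-copy of $SL_3$. Since the $h$-copy is generated by long roots from $\{x_{\pm(1000)},x_{\pm(0100)},x_{\pm(1100)}\}$ and the $g$-copy is generated by short roots from $\{x_{\pm(0001)},x_{\pm(1231)},x_{\pm(1232)}\}$, the group $V$ commutes with the $g$-copy inside $F_4$, so I would interchange integrations to get
$$f^V(h)=\int_{SL_3(F)\backslash SL_3({\bf A})}\widetilde{\varphi}(g)\,\theta^V((h,g))\,dg,$$
where $\theta^V$ denotes the constant term of $\theta$ along the two-dimensional unipotent subgroup $V$ of $F_4$. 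The plan is then to apply a sequence of Fourier expansions to $\theta^V$ along further one-parameter subgroups of $F_4$, exchanging roots in the sense of subsection 2.2.2 as needed. At each step any nontrivial character that arises yields either a Fourier coefficient of $\theta$ attached to a unipotent orbit strictly greater than $A_1$ — which vanishes by Theorem \ref{mini1} — or a constant term of $\theta$ along the unipotent radical of a maximal parabolic of $F_4$, which by Proposition \ref{eisen1} sits inside a minimal representation of the Levi and so is sufficiently small. After the smoke clears, the surviving contributions realize $f^V(h)$ as an integral in which $\widetilde{\varphi}$ appears paired against its constant term along the unipotent radical of a maximal parabolic of $GL_3$. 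Cuspidality of $\widetilde{\pi}$ then kills everything.

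For nonvanishing, I would compute the Whittaker coefficient
$$W(f)(h)=\int_{N(F)\backslash N({\bf A})}f(nh)\psi_N^{-1}(n)\,dn,$$
where $N$ is the maximal unipotent subgroup of the long-root $SL_3$ and $\psi_N$ is a generic character. Interchanging the two integrations as before, the inner integral is a Fourier coefficient of $\theta$ along a three-dimensional Heisenberg subgroup of $F_4$ sitting over the root $(1100)$ and with generic character on $x_{(1000)}$ and $x_{(0100)}$. Using Theorem \ref{mini1} together with the Heisenberg model provided by Proposition \ref{property6} (which rewrites $\theta^{Z,\psi_\beta}$ in terms of the Weil representation of $\widetilde{Sp}_{14}({\bf A})$ restricted via $\varpi_3$), this inner Fourier coefficient can be reduced to an explicit kernel on the $g$-copy of $SL_3$. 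A second unfolding then recognises the pairing against $\widetilde{\varphi}$ as a Whittaker coefficient of $\widetilde{\pi}$, which is nonzero for generic cuspidal $\widetilde{\pi}$ on $\widetilde{GL}_3({\bf A})$.

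The main obstacle is the bookkeeping of the Fourier expansion of $\theta^V$ in the cuspidality step: one must iterate root exchange carefully to arrange the characters that emerge so that the orbit classification of subsection 2.2 applies cleanly, and must keep track of which of the remaining terms pair against cuspidal data on $\widetilde{GL}_3$ versus which descend to constant terms killed by Proposition \ref{eisen1}. For the nonvanishing direction, the delicate point is verifying that the Heisenberg reduction via Proposition \ref{property6} is compatible with the generic character $\psi_N$ chosen on the long-root side, so that the resulting $\widetilde{GL}_3$-period is precisely a Whittaker integral rather than a degenerate one.
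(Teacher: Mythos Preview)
You have the roles of $h$ and $g$ reversed. In the paper's setup for integral \eqref{sl31}, the integration variable $g$ lies in the long-root copy $\langle x_{\pm(1000)}, x_{\pm(0100)}\rangle$ (which does not split, carrying the genuine cuspidal $\widetilde{\varphi}$), while $f(h)$ is an automorphic function on the short-root copy $\langle x_{\pm(0001)}, x_{\pm(1231)}, x_{\pm(1232)}\rangle$ (which splits). So the constant terms you must kill are along $V_1=\{x_{0001}x_{1232}\}$ and $V_2=\{x_{1231}x_{1232}\}$, not along unipotent subgroups built from $(1000),(0100),(1100)$; and the Whittaker coefficient of $f$ is over the short-root maximal unipotent, not over a Heisenberg group with center $x_{1100}$. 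Every concrete root you name in your proposal is on the wrong side.

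Beyond the setup error, your outline for cuspidality is too coarse to see the actual structure of the paper's argument. After reducing to $V=V_2$, the paper expands along a six-dimensional abelian subgroup $U_1$ on which $Spin_6(F)$ acts; minimality of $\Theta$ kills the orbit of nonzero-length vectors, leaving a constant-term piece $I'$ and a null-vector piece $I''$. The piece $I''$ is handled by double coset decompositions of $S(F)\backslash Spin_6(F)/L(F)$ and then $Q(F)\backslash Sp_6(F)/P(F)$, using Propositions \ref{property2} and \ref{property3} repeatedly together with cuspidality of $\widetilde{\pi}$. The piece $I'$ requires an additional trick you do not mention: one inserts an auxiliary Eisenstein series on $GL_3$ (see \eqref{SL311}) to unfold and then apply Proposition \ref{eisen1}. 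A vague ``after the smoke clears'' does not capture this.

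For nonvanishing, the paper does not use Proposition \ref{property6} at all. Instead it computes the Whittaker coefficient of $f$ along the short-root maximal unipotent, follows the same expansion as in the cuspidality proof, and isolates the single surviving term. That term is then analyzed via the expansion \eqref{minexp1} for $\Theta_6$ (Proposition \ref{propertysp2}) and a cell decomposition \eqref{cell} of $L(GL_3)(F)\backslash GL_3(F)$; only the big cell survives, pinning down $\epsilon=1$ and $\delta_1=\delta_2=1$, and the result is an integral of $W_{\widetilde{\varphi}}$ against an explicit kernel. Your proposed route through $\theta^{Z,\psi_\beta}$ and $\varpi_3$ is not obviously applicable here, since the center $Z=\{x_{2342}\}$ does not appear in the short-root Whittaker integration.
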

\begin{proof}
To prove cuspidality, we have to show that the integrals
$$I=\int\limits_{V(F)\backslash V({\bf A})}f(vh)dv$$ is zero for all
choice of data, where $V$ is any unipotent radical of a maximal
parabolic  subgroup of $SL_3$. Up to conjugation there are two such
unipotent radicals. They are given by
$V_1=\{x_{0001}(r_1)x_{1232}(r_2)\}$ and
$V_2=\{x_{1231}(r_1)x_{1232}(r_2)\}$. It is easy to see that the
Weyl element $w[321323]$ conjugates $V_1$ to $V_2$ and fixes the
group $SL_3=<x_{\pm(1000)}(r_1), x_{\pm(0100)}(r_2)>$. Hence, to prove the
cuspidality of $\sigma(\widetilde{\pi})$, it is enough to show that
the constant term of $f(h)$ along $V=V_2$, is zero for all choice of
data.

Let $U_1$ denote the unipotent subgroup of $F_4$ generated by all
$<x_\alpha(r)>$ where $\alpha\in\{ 0122; 1122; 1222; 1242; 1342;
2342\}$. Let $U_2=<U_1, x_{1232}(r)>$. We expand $I$ along the group
$U_1(F)\backslash U_1({\bf A})$. The group $Spin_6(F)$ generated by
$<x_{\pm(1000)}(r); x_{\pm(0100)}(r);x_{\pm(0120)}(r)>$ acts on this
expansion with three type of orbits. The first type of orbit
correspond to the set of all vectors in $F^6$ which have nonzero
length. Combining the integration over $U_1(F)\backslash U_1({\bf
A})$ with the integration over $x_{1232}(r)$ we obtain the integral
$$\int\limits_{U_2(F)\backslash U_2({\bf
A})}\theta(u_2m)\psi(\gamma\cdot u_2)du_2$$ as an inner integration to the expansion.
Here $\gamma\in F^7$ is a vector with a nonzero length. However,
this Fourier coefficient corresponds to the unipotent orbit
$\widetilde{A}_1$. By the minimality of $\Theta$ it is zero. Hence
we are left with the two orbits which corresponds to the zero vector
and to all nonzero vectors with zero length. Thus $I$ is equal to
$$\int\limits_{SL_3(F)\backslash SL_3({\bf
A})}\int\limits_{F\backslash {\bf A}}\int\limits_{U_2(F)\backslash
U_2({\bf
A})}\widetilde{\varphi}(g)\theta(u_2(x_{1231}(r_1),g))du_2dr_1dg+$$
$$\int\limits_{SL_3(F)\backslash SL_3({\bf
A})}\widetilde{\varphi}(g)\int\limits_{F\backslash {\bf
A}}\int\limits_{U_2(F)\backslash U_2({\bf A})} \sum_{\gamma\in
S(F)\backslash Spin_6(F)} \theta( u_2\gamma
(x_{1231}(r_1),g))\psi_{U_2}(u_2)du_2dr_1dg$$ where $\psi_{U_2}$ is
defined as follows. If $u_2=x_{0122}(r_1)u_2'$, then define
$\psi_{U_2}(u_2)=\psi(r_1)$. ( See subsection 2.1 for notations). Also, the group $S$ is the stabilizer of $\psi_{U_2}$ inside $Spin_6$. Thus
$$S=<x_{\pm(0100)}(r);x_{\pm(0120)}(r);x_{1000}(r);x_{1100}(r)
;x_{1120}(r);x_{1220}(r)>$$ Denote the first summand by $I'$ and the
second one by $I''$.

We start with $I''$. Let $L$ denote the maximal parabolic subgroup
of $Spin_6$ which contains the copy of $SL_3$ generated by
$<x_{\pm(1000)}(r_1), x_{\pm(0100)}(r_2)>$. The space
$S(F)\backslash Spin_6(F)/L(F)$ contains two representatives which
can be chosen to be $e$ and $w[1323]$. Thus, $I''$ is equal to
$$\int\limits_{S(2)(F)\backslash SL_3({\bf
A})}\int\limits_{F\backslash {\bf A}}\int\limits_{U_2(F)\backslash
U_2({\bf
A})}\widetilde{\varphi}(g)\theta(u_2(x_{1231}(r_1),g))\psi_{U_2}(u_2)du_2dr_1dg+$$
$$\int\limits_{S(1)(F)\backslash SL_3({\bf
A})}\widetilde{\varphi}(g)\int \sum_{\delta_i\in F} \theta(
u_2w[123]x_{0120}(\delta_1)x_{1120}(\delta_2)
(x_{1231}(r_1),g))\psi_{U_2}(u_2)du_2dr_1dg$$ where we used the left
invariant of $\theta$ under rational points to replace the Weyl
element $w[1323]$ by $w[123]$. Here, the group $S(1)$ denotes the
maximal parabolic subgroup of $SL_3$ which contains the group
$\{x_{\pm 1000}\}$. Similarly we define $S(2)$.  Also, in the second
summand, the variables $r_1$ and $u_2$ are integrated as in the
first summand. Denote the first summand by $I''_1$ and the second
one by $I''_2$. We start with $I''_1$. Expand it along the group
$U/Z$ with points in $F\backslash {\bf A}$. Here $U=U_{\alpha_2,\alpha_3,\alpha_4}$ is the
unipotent radical of the maximal parabolic subgroup of $F_4$ whose
Levi part is $GSp_6$, and $Z=\{ x_{2342}(m)\}$ is its center. Using
Proposition \ref{property2}, this expansion contains two summands.
The constant term in the expansion of $I''_1$ contributes zero to
the integral. Indeed, it is equal to
$$\int\limits_{S(2)(F)\backslash SL_3({\bf
A})}\int\limits_{F\backslash {\bf A}}\int\limits_{Z({\bf
A})U_2(F)\backslash U_2({\bf
A})}\widetilde{\varphi}(g)\theta^U(u_2(x_{1231}(r_1),g))\psi_{U_2}(u_2)du_2dr_1dg$$
The unipotent radical of $S(2)$ is the unipotent group $L=\{
x_{1000}(m_1)x_{1100}(m_2)\}$. Notice that $L$ is a subgroup of $U$.
Hence, as a function of $g$, the integral
$$\int\limits_{F\backslash {\bf A}}\int\limits_{Z({\bf
A})U_2(F)\backslash U_2({\bf
A})}\theta^U(u_2(x_{1231}(r_1),g))\psi_{U_2}(u_2)du_2dr_1$$ is left
invariant under $l\in L({\bf A})$. Hence, we get the integral
$\int\limits_{L(F)\backslash L({\bf A})}\widetilde{\varphi}(lg)dl$
as inner integration. This integral is zero by the cuspidality of
$\widetilde{\pi}$.

Thus $I''_1$ is equal to
$$\int\limits_{S(2)(F)\backslash SL_3({\bf
A})}\widetilde{\varphi}(g)\int\sum_{\gamma\in Q(F)\backslash
Sp_6(F)}\sum_{\epsilon\in F^*} \theta^{U,\psi}(h_2(\epsilon)\gamma
u_2(x_{1231}(r_1),g)) \psi_{U_2}(u_2)du_2dr_1dg$$ where $r_1$ is
integrated as before and $u_2$ is integrated over ${Z({\bf A})
U_2(F)\backslash U_2({\bf A})}$. Let $P$ denote the maximal
parabolic subgroup of $Sp_6$ whose Levi part contains $Sp_4$. The
space $Q(F)\backslash Sp_6(F)/P(F)$ consists of two elements and as
representatives we choose $e$ and $w[234]$. Hence, $I''_1$ is
equal to
$$\int\limits_{S(2)(F)\backslash SL_3({\bf
A})}\widetilde{\varphi}(g)\int\sum_{\gamma\in S(3)(F)\backslash
Sp_4(F)}\sum_{\epsilon\in F^*} \theta^{U,\psi}(h_2(\epsilon)\gamma
u_2(x_{1231}(r_1),g)) \psi_{U_2}(u_2)du_2dr_1dg+$$
$$\int\widetilde{\varphi}(g)\sum_{\gamma\in S(3)(F)\backslash
Sp_4(F)}\sum_{\delta_i\in F;\ \epsilon\in F^*}
\theta^{U,\psi}(h_2(\epsilon)w[234]y(\delta_1,\delta_2,\delta_3)\gamma
u_2(x_{1231}(r_1),g)) \psi_{U_2}(u_2)du_2dr_1dg$$ where all
variables in the second summand are integrated as in the first
summand. Also, we have
$y(\delta_1,\delta_2,\delta_3)=x_{0001}(\delta_1)x_{0011}
(\delta_2)x_{0122}(\delta_3)$. Notice that $x_{0122}(r)$ commutes
with $\gamma\in Sp_4$ and that this group actually normalizes the
group $U_2$ . Hence, in the first summand, we can conjugate this
unipotent element to the left, and using Proposition
\ref{property3}, we deduce that
$$g\mapsto \theta^{U,\psi}(h_2(\epsilon)\gamma
x_{0122}(r)u_2(x_{1231}(r_1),g))$$ is left invariant by
$x_{0122}(r)$ for all $r\in {\bf A}$. Since $\psi_{U_2}$ is
nontrivial on $x_{0122}(r)$, the first summand is zero. In the
second summand, after conjugating $u_2$ across $\gamma$, we
conjugate the unipotent element $x_{1122}(r)$ to the left. We have
$$h_2(\epsilon)w[234]y(\delta_1,\delta_2,\delta_3)x_{1122}(r)=
x_{1000}(\epsilon^{-1}r)h_2(\epsilon)w[234]y(\delta_1,\delta_2,\delta_3)$$
Changing variables, we obtain $\int\limits_{F\backslash {\bf
A}}\psi(\epsilon^{-1}r)dr$ as inner integration. This integral is
clearly zero, and hence  $I''_1=0$.

Next we consider $I''_2$. Expanding along $U/Z$, using Proposition
\ref{property2}, the nontrivial orbit contributes
$$\int\limits_{S(1)(F)\backslash SL_3({\bf
A})}\int\limits_{F\backslash {\bf A}}\int\limits_{Z({\bf A})
U_2(F)\backslash U_2({\bf A})}\widetilde{\varphi}(g)\times$$
$$ \sum_{\gamma\in
Q(F)\backslash Sp_6(F)}\sum_{\delta_i\in F,\epsilon\in F^*}
\theta^{U,\psi}(h_2(\epsilon)\gamma
u_2w[123]x_{0120}(\delta_1)x_{1120}(\delta_2)
(x_{1231}(r_1),g))\psi_{U_2}(u_2)du_2dr_1dg$$ We consider the space
$Q(F)\backslash Sp_6(F)/P(F)$. Arguing as in the computation of $I''_1$ we
obtain that this integral is zero. Thus we are left with the
contribution from the constant term
$$\int\limits_{S(1)(F)\backslash SL_3({\bf
A})}\int\limits_{(F\backslash {\bf
A})^2}\widetilde{\varphi}(g)\sum_{\delta_i\in F} \theta^{U}(
x_{0122}(r)w[123]x_{0120}(\delta_1)x_{1120}(\delta_2)
(x_{1231}(r_1),g))\psi(r)drdr_1dg$$ Conjugate $x_{1231}(r_1)$ to the
left. We obtain the integral
\begin{equation}\label{sl35}
\int\limits_{(F\backslash {\bf A})^2}\sum_{\delta_i\in F}
\theta^{U}(
x_{0121}(r_1)x_{0122}(r)w[123]x_{0120}(\delta_1)x_{1120}(\delta_2)
(1,g))\psi(r)drdr_1
\end{equation}
as inner integration. Expand this integral along the unipotent
element $x_{0120}(r_2)$. We claim that the nontrivial coefficients
contribute zero to the integral. Indeed, to show that, it is enough
to prove that the integral
$$\int\limits_{(F\backslash {\bf A})^3}\theta^{U}(
x_{0120}(r_2)x_{0121}(r_1)x_{0122}(r))\psi(\beta r_2+r)dr_1dr_2dr$$
is zero for all $\beta\in F^*$. It follows from Proposition
\ref{eisen1}, that this integral is zero if the integral
$$\int\limits_{(F\backslash {\bf A})^3}\theta_6
\left [\begin{pmatrix} 1&&&r_1&r\\ &1&&r_2&r_1\\ &&I_2&&\\ &&&1&\\
&&&&1\end{pmatrix}\right ]\psi(\beta r_2+r)dr_1dr_2dr$$ is zero for
all choice of data. Here, $\theta_6$ is a vector in the space of $\Theta_6$. This representation
was introduced right before Proposition \ref{eisen2}, and it follows from Proposition \ref{eisen1}
that it is a minimal representation for $\widetilde{Sp}_6({\bf A})$. It follows from \cite{G1}
that the above Fourier coefficient is associated with the unipotent orbit $(2^21^2)$. Hence it is
zero for all choice of data.

Thus \eqref{sl35} is equal to
\begin{equation}
\int\limits_{(F\backslash {\bf A})^3}\sum_{\delta_i\in F}
\theta^{U}(x_{0120}(r_2)
x_{0121}(r_1)x_{0122}(r)w[123]x_{0120}(\delta_1)x_{1120}(\delta_2)
(1,g))\psi(r)drdr_1dr_2\notag
\end{equation}
Using commutation relations and Proposition \ref{property3}, one can
check that as a function of $g$, this integral is left invariant
under $x_{0100}(m_1)x_{1100}(m_2)$ for all $m_i\in {\bf A}$. Thus we
get zero by the cuspidality of $\widetilde{\pi}$. From this we
deduce that $I''=0$.

Next we consider $I'$. Expand the integral along $ U(B_3)/U_2$ with
points in $F\backslash {\bf A}$. Here $U(B_3)=U_{\alpha_1,\alpha_2,\alpha_3}$ is the unipotent
radical of the maximal parabolic subgroup of $F_4$ whose Levi part
is $GSpin_7$. If $x_{\alpha}(r)\in U(B_3)$ but not in $U_2$ then
$\alpha$ is a short root. This means that if we consider a nonzero
Fourier coefficient in this expansion, we get as inner integration,
the Fourier coefficient which corresponds to the unipotent orbit
$\widetilde{A_1}$. This Fourier coefficient is zero by the
minimality of $\Theta$. Thus we are left with the constant term.
That is, $I'$ is equal to
$$\int\limits_{SL_3(F)\backslash SL_3({\bf
A})}\widetilde{\varphi}(g)\theta^{U(B_3)}((1,g))dg$$  Let $L_1$
denote the unipotent subgroup of $Spin_7$ generated by
$<x_{0120}(r);x_{1120}(r);x_{1220}(r)>$. We expand the above
integral along the group $L_1(F)\backslash L_1({\bf A})$. The group
$SL_3(F)$, embedded as above, acts on this expansion with two
orbits. Thus $I'$ is equal to
\begin{equation}\label{sl36}
\int\limits_{SL_3(F)\backslash SL_3({\bf
A})}\widetilde{\varphi}(g)\theta^{U(B_3)L_1}((1,g))dg+
\int\limits_{S(2)(F)\backslash SL_3({\bf
A})}\widetilde{\varphi}(g)\theta^{U(B_3)L_1,\psi}((1,g))dg
\end{equation}
where
$$\theta^{U(B_3)L_1,\psi}((1,g))=\int\limits_{(F\backslash {\bf A})^3}
\theta^{U(B_3)}(x_{0120}(r_1)x_{1120}(r_2)x_{1220}(r_3)(1,g))\psi(r_1)dr_i$$
Let $L_2$ denote the group generated by
$<L_1,x_{0010}(r);x_{0110}(r);x_{1110}(r)>$. In the first summand of
\eqref{sl36} we expand the integral along $L_2/ L_1$ with point in
$F\backslash {\bf A}$. The group $SL_3(F)$ acts on this expansion
with two orbit. The nontrivial orbit contributes the integral
$$\int\limits_{S(1)(F)\backslash SL_3({\bf
A})}\int\limits_{(F\backslash {\bf
A})^3}\widetilde{\varphi}(g)\theta^{U(B_3)L_1}
(x_{0010}(r_1)x_{0110}(r_2)x_{1110}(r_3)(1,g))\psi(r_1)dr_idg$$
Since $(0010)$ is a short root, then after a suitable conjugation,
we obtain as inner integration, a Fourier coefficient which
corresponds to the unipotent orbit $\widetilde{A}_1$. Thus we get
zero by the minimality of $\Theta$. The contribution of the constant
term is the integral
$$\int\limits_{SL_3(F)\backslash SL_3({\bf
A})}\widetilde{\varphi}(g)\theta^{U(B_3)L_2}((1,g))dg$$ To show that
it is zero, let $E(g,s)$ denote the Eisenstein  series of $GL_3({\bf
A})$ associated with the induced representation $Ind_{L({\bf
A})}^{GL_3({\bf A})}\delta_L^s$. Here $L$ is the maximal parabolic
subgroup of $GL_3$ whose Levi part is $GL_2\times GL_1$. Since the
identity is the residue of this Eisenstein series, then to prove
that the above integral is nonzero, it is enough to prove that the
integral
\begin{equation}\label{SL311}
\int\limits_{SL_3(F)\backslash SL_3({\bf
A})}\widetilde{\varphi}(g)\theta^{U(B_3)L_2}((1,g))E(g,s)dg
\end{equation}
is zero for $\text{Re}(s)$ large. Unfolding the Eisenstein series we
obtain
$$\int\limits_{S(1)(F)\backslash SL_3({\bf
A})}\widetilde{\varphi}(g)\theta^{U(B_3)L_2}((1,g))f(g,s)dg$$ Expand
along the unipotent group $\{x_{0100}(m_2)x_{1100}(m_3)\}$. Notice
that this group is the unipotent radical of $S(1)$. The group
$GL_2$, which is the Levi part of $S(1)$ acts on this unipotent
group with two orbits. The trivial one contributes zero by the
cuspidality of $\widetilde{\pi}$. Thus we obtain
$$\int\limits_{T(F)N(F)\backslash SL_3({\bf
A})}\int\limits_{(F\backslash {\bf
A})^2}\widetilde{\varphi}(g)\theta^{U(B_3)L_2}(x_{0100}(m_2)x_{1100}(m_3)
(1,g))\psi(m_2)f(g,s)dm_idg$$  Here $N$ is the maximal unipotent
subgroup of $SL_3$, and $T$ is a one dimensional torus. We further
expand along $\{x_{1000}(m_1)\}$. The trivial orbit contributes zero
by cuspidality of $\widetilde{\pi}$. The nontrivial orbit contributes the
integral
$$\int\limits_{(F\backslash {\bf
A})^3}\theta^{U(B_3)L_2}(x_{1000}(m_1)x_{0100}(m_2)x_{1100}(m_3)(1,g))
\psi(\gamma m_1+m_2)dm_i$$ as inner integration. Here $\gamma\in
F^*$. Applying Proposition \ref{eisen1} with
$R=P_{\alpha_1,\alpha_2,\alpha_4}$ this integral is zero.

As for the second summand of \eqref{sl36}, we expand the integral
along the unipotent group $\{x_{1000}(r)x_{1100}(r)\}$. The group
$GL_2(F)$ in $S(2)(F)$ acts on this expansion with two orbits. The
orbit which corresponds to the trivial character contributes zero by
the cuspidality of $\widetilde{\varphi}$. The nontrivial orbit
contributes zero using Proposition \ref{eisen1} with
$R=P_{\alpha_1,\alpha_2,\alpha_3}$. Thus $I'=0$. This completes the
proof of the cuspidality of the lift.

To show that the lift is always nonzero, we shall compute the
Whittaker model of the lift. In other words, we shall compute the
integral
$$W_f(h)=\int\limits_{(F\backslash {\bf A})^3}f(x_{0001}(r_1)x_{1231}(r_2)
x_{1232}(r_3)h)\psi(r_1+r_2)dr_i$$ We shall denote this unipotent
group by $V$, and the above character by $\psi_V$. Thus we need to
compute the integral
$$\int\limits_{SL_3(F)\backslash SL_3({\bf
A})}\int\limits_{V(F)\backslash V({\bf A})}\widetilde{\varphi}(g)
\theta((vh,g))\psi_V(v)dvdg$$ Following the same expansions as in
the proof of the cuspidality, we obtain that all terms contribute
zero except the integral
$$\int\limits_{S(1)(F)\backslash SL_3({\bf
A})}\int\limits_{V(F)\backslash V({\bf A})}\widetilde{\varphi}(g)
\sum_{\delta_i\in
F}\theta^{U_2,\psi}(w[123]x_{0120}(\delta_1)x_{1120}(\delta_2)(vh,g))\psi_V(v)dvdg$$
where
$$\theta^{U_2,\psi}(m)=\int\limits_{U_2(F)\backslash U_2({\bf A})}
\theta(u_2m)\psi_{U_2}(u_2)du_2$$ The group $U_2$ and the character
$\psi_{U_2}$ were defined in the beginning of the proof of the
Proposition.

The group $SL_2(F)$ generated by $<x_{\pm(1000)}(r)>$ acts on the
set $\{x_{0120}(\delta_1)x_{1120}(\delta_2) : \delta_i\in F\}$ with
two orbits. First, we claim that the contribution from the trivial
orbit is zero. Indeed, as explained in the proof of the cuspidality,
we have
$$\theta^{U_2,\psi}(m)=\int\limits_{F\backslash A}
\theta^{U_2,\psi}(x_{1111}(r)m)dr$$ This follows from the fact that
$(1111)$ is a short root, and if we expand the integral along the
unipotent group $\{x_{1111}(r)\}$, then by Theorem \ref{mini1}, all
the nontrivial Fourier coefficients will contribute zero. This means
that the function $h\mapsto \theta^{U_2,\psi}(w[123](h,g))$ is left
invariant by $x_{0001}(r)$ for all $r\in {\bf A}$. Since $\psi_V$ is
nontrivial on $x_{0001}(r)$ we get zero contribution. Thus we are
left with the nontrivial orbit. Hence, we obtain
$$W_f(h)=\int\limits_{N(F)\backslash SL_3({\bf
A})}\int\limits_{V(F)\backslash V({\bf A})}\widetilde{\varphi}(g)
\theta^{U_2,\psi}(w[123]x_{1120}(1)(vh,g))\psi_V(v)dvdg$$ Here $N$
is the maximal unipotent subgroup of $SL_3$.

Next, as in the proof of the cuspidality, we expand the above
integral along the group $U/Z$ with points in $F\backslash {\bf A}$.
As in the cuspidality part, the nontrivial orbit contributes zero.
Thus only the constant term contributes. Conjugating $v$ to the
left, $W_f(h)$ is equal to
\begin{equation}\label{whit1}
\int\limits_{N(F)\backslash SL_3({\bf A})}\widetilde{\varphi}(g)
\int\limits_{(F\backslash {\bf A})^3}
\theta^{U}(l(r_1,r_2,r)w[123]x_{1120}(1)(h,g))
\psi(r_1+r_2+r)dr_idrdvdg
\end{equation}
where $l(r_1,r_2,r)=x_{0111}(r_1)x_{0121}(r_2)x_{0122}(r)$. Denote
$$L(g)=\int\limits_{(F\backslash {\bf A})^3}
\theta^{U}(l(r_1,r_2,r)w[123]x_{1120}(1)(h,g))
\psi(r_1+r_2+r)drdr_i$$ Then, conjugating from left to right, and
changing variables, we obtain
$$L(x_{1000}(m_1)x_{0100}(m_2)x_{1100}(m_3)g)=$$
$$=\int\limits_{(F\backslash {\bf A})^3}
\theta^{U}(l(r_1,r_2,r)x_{0100}(m_1)x_{0120}(m_2)w[123]x_{1120}(1)(h,g))
\psi(r_1+r_2+r)dr_idr$$ From Proposition \ref{eisen1}, it follows
that the function $\theta^U(m)$, when restricted to
$\widetilde{Sp}_6$, is the the minimal representation $\Theta_6$. ( See before Proposition 
\ref{eisen2}). Consider the integral
$$\int\limits_{(F\backslash {\bf A})^3}
\theta^{U}(l(r_1,r_2,r)x_{0100}(m_1)x_{0120}(m_2))
\psi(r_1+r_2+r)drdv$$
Notice that $l(r_1,r_2,r)x_{0100}(m_1)x_{0120}(m_2)$ is in $Sp_6$. Therefore, we can use Proposition
\ref{propertysp2}. More precisely, we use the expansion \eqref{minexp1}, where to avoid confusion 
we shall write $U(GL_3)$ in expansion \eqref{minexp1} instead of $U$. The first summand in the 
expansion is the constant term along $U(GL_3)$. When plugging it into the above integral we get
zero because of the character $\psi(r)$. The second summand in \eqref{minexp1} contributes
$$\int\limits_{(F\backslash {\bf A})^3}
\sum_{\gamma\in L(GL_3)(F)\backslash GL_3(F)} \sum_{\epsilon\in \{\pm 1\}\backslash F^*}
\theta^{UU(GL_3),\psi}(h(\epsilon)\gamma l(r_1,r_2,r)m)
\psi(r_1+r_2+r)drdv$$ where we denoted $m=x_{0100}(m_1)x_{0120}(m_2)$. We also view the matrices $h(\epsilon)$ and $\gamma$ as elements in $F_4$ via of the embedding of $Sp_6$ inside $F_4$. The quotient  $L(GL_3)(F)\backslash GL_3(F)$ is the union of the three cells
\begin{equation}\label{cell}
 e;\ \ \begin{pmatrix} 1&&\\ &&1\\ &1&\end{pmatrix}\begin{pmatrix} 1&&\\ &1&\delta_1 \\ &&1
\end{pmatrix};\ \ \  \begin{pmatrix} &1&\\ &&1\\ 1&&\end{pmatrix}\begin{pmatrix} 1&\delta_1&
\delta_2\\ &1& \\ &&1 \end{pmatrix}
\end{equation}
Here $\delta_1,\delta_2\in F$. It is not hard to check that
the first two cells contribute zero. Indeed, this follows from the conjugation of $l(r_1,r_2,r)$
to the left across $h(\epsilon)\gamma$. As for the big cell, conjugating $l(r_1,r_2,r)$  to the left
we obtain $\int \psi((\epsilon^2-1)r)dr$, $\int \psi((\delta_2-1)r_1)dr_1$ and $\int \psi(
\delta_1-1)r_2)dr_2$ as inner integrations. Here all variables are integrated over $F\backslash
{\bf A}$. Hence, the above integral is equal to
$$\theta^{UU(GL_3),\psi}(w[34]x_{0001}(1)x_{0011}(1)x_{0100}(m_1)x_{0120}(m_2))=$$
$$=\psi(\frac{1}{2}(m_1+m_2))\theta^{UU(GL_3),\psi}(w[34]x_{0001}(1)x_{0011}(1))$$
where the last equality follows from the conjugation of the $m$ to the left, taking into an 
account the commutation relations in $F_4$.

Returning to integral \eqref{whit1}, factoring the integration over
$N$, we obtain
$$W_f(h)=\int\limits_{N({\bf A})\backslash SL_3({\bf
A})}W_{\widetilde{\varphi}}(g) 
\theta^{UU(GL_3),\psi}(w[34]x_{0001}(1)x_{0011}(1)w[123]x_{1120}(1)(h,g))dg$$ where $W_{\widetilde{\varphi}}(g)$ is the
Whittaker coefficient of the function $\widetilde{\varphi}(g)$.
Using a similar argument as in \cite{Ga-S}, we deduce that $W_f(h)$
is nonzero for some choice of data, if and only if
$W_{\widetilde{\varphi}}(g)$ is nonzero for some choice of data.
Thus the lift is always nonzero. This completes the proof of the
Proposition.
\end{proof}

\subsubsection{\bf From $GL_3$ to $\widetilde{SL}_3$}

For this lifting we consider the following embedding of
$(SL_3,SL_3)$ inside $F_4$. The first copy is generated by
$<x_{\pm(0001)}(r_1), x_{\pm(0010)}(r_2),x_{\pm(0011)}(r_3)>$ and
the second copy is  generated by $<x_{\pm(1000)}(r_1),
x_{\pm(1342)}(r_2),x_{\pm(2342)}(r_3)>$. As in the previous
subsection, the first copy of $SL_3$ splits under the cover of
$F_4$, and the second one does not.

Let $\pi$ denote a cuspidal representation of $GL_3({\bf A})$. We
consider the space of functions
\begin{equation}\label{2sl31}
\widetilde{f}(h)=\int\limits_{SL_3(F)\backslash SL_3({\bf
A})}{\varphi}(g)\theta((h,g))dg
\end{equation}
Here ${\varphi}$ is a vector in the space of ${\pi}$ and $(h,g)\in
(\widetilde{SL}_3({\bf A}), SL_3({\bf A}))$ embedded in
$\widetilde{F}_4({\bf A})$ as above. The function $\widetilde{f}(h)$
defines an automorphic function of $\widetilde{SL}_3({\bf A})$. As
we vary the data in integral \eqref{2sl31}, we obtain an automorphic
representation of $\widetilde{SL}_3({\bf A})$ which we shall denote
by $\widetilde{\sigma}({\pi})$. First we prove
\begin{proposition}\label{sl32.2}
The representation $\widetilde{\sigma}({\pi})$ is a cuspidal
representation of $\widetilde{SL}_3({\bf A})$.
\end{proposition}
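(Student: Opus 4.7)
The plan is to mirror the proof of Proposition \ref{sl31.1}, with the roles of the two copies of $SL_3$ interchanged. The $SL_3$ whose cover we lift to is generated by $\langle x_{\pm(1000)}, x_{\pm(1342)}, x_{\pm(2342)}\rangle$, with $(2342)=(1000)+(1342)$. Its two maximal parabolic unipotent radicals are therefore $V_1=\{x_{1000}(r_1)x_{2342}(r_2)\}$ and $V_2=\{x_{1342}(r_1)x_{2342}(r_2)\}$. A Weyl element of the ambient $F_4$ that represents the nontrivial simple reflection of this $SL_3$ exchanges them while fixing the other copy $\langle x_{\pm(0001)},x_{\pm(0010)},x_{\pm(0011)}\rangle$, so it is enough to prove vanishing of the constant term along $V=V_1$.

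Write
\begin{equation*}
I(h)=\int_{V(F)\backslash V(\mathbf{A})}\widetilde f(vh)\,dv=\int_{SL_3(F)\backslash SL_3(\mathbf{A})}\varphi(g)\!\!\int_{V(F)\backslash V(\mathbf{A})}\!\!\theta((vh,g))\,dv\,dg.
\end{equation*}
Since $V$ contains the center $Z=\{x_{2342}(r)\}$ of the Heisenberg $U=U_{\alpha_2,\alpha_3,\alpha_4}$, I would first apply Proposition \ref{property2} to expand the inner $\theta$-integral along $Z\backslash U$. The constant-term piece yields an integrand that, after using Proposition \ref{eisen1} together with the cuspidality of $\pi$, can be handled by Fourier expansion along the unipotent radical of a maximal parabolic of the linear $GL_3$ (the other copy), producing an inner integration $\int_{L(F)\backslash L(\mathbf{A})}\varphi(lg)\,dl$ that vanishes by cuspidality of $\pi$. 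The non-constant-term piece unfolds to a sum indexed by $Q(F)\backslash Sp_6(F)$ of integrals involving $\theta^{U,\psi}$; using Proposition \ref{property3} ($\theta^{U,\psi}$ is left invariant under $Q^0(\mathbf{A})$) and the cuspidality of $\pi$, each surviving double coset reduces either to a Fourier coefficient attached to an orbit larger than $A_1$, hence killed by Theorem \ref{mini1}, or to a constant term along a parabolic radical of $F_4$ that contains a Fourier coefficient of $\theta^{U(R)}$ attached to a non-minimal orbit of the Levi, which vanishes by Proposition \ref{eisen1}.

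The intermediate reductions will require several root exchanges of the type formalized in subsection 2.2.2, in order to convert Fourier coefficients along non-abelian pieces of $V$ and of the ambient Heisenberg group into pure constant terms that cuspidality of $\pi$ can digest. At each stage the acting Levi factor (a $GL_2$ or $Sp_4$) splits the character group of the unipotent subgroup along which we expand into two orbit types: one forces the appearance of a Fourier coefficient attached to an orbit greater than $A_1$ (hence vanishes by Theorem \ref{mini1}), and the other reduces to a constant term that is ultimately absorbed by the inner $\varphi$-integration.

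The main obstacle is the combinatorial bookkeeping: the double cosets $P(F)\backslash F_4(F)/V(F)H(F)$ and the iterated Levi-orbit analysis produce a handful of surviving terms whose vanishing each requires a careful root exchange followed by a precise identification of the resulting Fourier coefficient as one associated to a unipotent orbit beyond $A_1$ or beyond the minimal orbit of a Levi factor. Unlike Proposition \ref{sl31.1}, here the nontrivial cover sits on the $SL_3$ we are lifting \emph{to} rather than the one we are integrating \emph{over}, so the cuspidality input on the linear $GL_3$-integration is classical, and the genuineness of $\widetilde f$ is only used to ensure automorphy on $\widetilde{SL}_3(\mathbf{A})$; the structural argument for vanishing of constant terms is otherwise identical to that of Proposition \ref{sl31.1}.
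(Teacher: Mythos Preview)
Your high-level plan is sound --- apply Proposition \ref{property2} to split off the constant term along $U$, handle $I'$ via Proposition \ref{eisen1} and cuspidality of $\pi$, and treat $I''$ by a double-coset decomposition combined with Proposition \ref{property3}. This is exactly the skeleton of the paper's argument. However, what you have written is a strategy outline rather than a proof, and it anticipates machinery (root exchanges, appeals to Theorem \ref{mini1} for orbits beyond $A_1$, Fourier coefficients on Levi factors beyond the minimal orbit) that the actual computation never needs. You are modeling the argument on Proposition \ref{sl31.1}, but the present situation is structurally much simpler, and the paper's proof is correspondingly about a quarter of the length.

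Two concrete points of divergence. First, the paper works with $V=V_2$, not $V_1$; the Weyl element $w[234232]$ (not a simple reflection of the target $SL_3$) conjugates $V_1$ to $V_2$ while fixing the source $SL_3=\langle x_{\pm(0001)},x_{\pm(0010)}\rangle$. Second, and this is the key simplification you are missing: the $SL_3$ you integrate over is exactly the $SL_3$ sitting in the Levi $GL_3$ of $Q\subset Sp_6$. Hence after Proposition \ref{property2} the summation over $Q(F)\backslash Sp_6(F)$ collapses immediately against the $SL_3(F)$-integration into the four double cosets $Q(F)\backslash Sp_6(F)/Q(F)$, with representatives $e,\,w[2],\,w[232],\,w[232432]$. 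For $I_1$, Proposition \ref{property3} makes $\theta^{U,\psi}$ left-invariant under the whole $SL_3({\bf A})$, giving $\int\varphi=0$. For $I_2$ and $I_3$, the same proposition produces invariance under the unipotent radical of a maximal parabolic of the source $SL_3$, again killed by cuspidality. For $I_4$, conjugating $x_{1342}(r)\in V_2/Z$ through $h_2(\epsilon)w[232432]$ lands on $x_{1000}(\epsilon r)$, and the character $\psi_U$ then yields $\int_{F\backslash{\bf A}}\psi(\epsilon r)\,dr=0$. The constant term $I'$ is handled by Propositions \ref{propertysp1} and \ref{propertysp2} on $\Theta_6$ together with cuspidality; no root exchanges enter at any stage.
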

\begin{proof}
To prove cuspidality, we have to show that the integrals
$$I=\int\limits_{V(F)\backslash V({\bf A})}\widetilde{f}(vh)dv$$
are zero for all choice of data, where $V$ is any maximal unipotent
subgroup of $SL_3$. Up to conjugation there are two such unipotent
radicals. They are given by $V_1=\{x_{1000}(r_1)x_{2342}(r_2)\}$ and
$V_2=\{x_{1342}(r_1)x_{2342}(r_2)\}$. The Weyl element $w[234232]$
conjugates $V_1$ to $V_2$ and fixes the group
$SL_3=<x_{\pm(0001)}(r_1), x_{\pm(0010)}(r_2),x_{\pm(0011)}(r_3)>$.
Hence, to prove the cuspidality of $\widetilde{\sigma}({\pi})$, it
is enough to show that the constant term of $\widetilde{f}(h)$ along
$V=V_2$, is zero for all choice of data.

Let $U=U_{\alpha_2,\alpha_3,\alpha_4}$. It center was denoted by $Z$. Thus $Z=\{x_{2342}(r)\}\subset V$. We have
$$I=\int\limits_{SL_3(F)\backslash SL_3({\bf
A})}\int\limits_{Z({\bf A})V(F)\backslash V({\bf
A})}{\varphi}(g)\theta^Z((h,g))dg$$ It follows from Proposition
\ref{property2}, that $I$ is equal to
$$\int\limits_{SL_3(F)\backslash SL_3({\bf
A})}\varphi(g)\theta^{U}(g)dg+$$  $$ \int\limits_{SL_3(F)\backslash
SL_3({\bf A})}\varphi(g)\int\limits_{Z({\bf A})V(F)\backslash V({\bf
A})} \sum_{\gamma\in Q(F)\backslash Sp_6(F)}\sum_{\epsilon\in F^*}
\theta^{U,\psi}(h_2(\epsilon)\gamma (v,g))dvdg$$ where $Q$ was
defined in Proposition \ref{property2}.  Denote the first summand by
$I'$ and the second summand by $I''$. From Proposition
\ref{eisen1}, it follows that $I'$ is equal to
$$\int\limits_{SL_3(F)\backslash SL_3({\bf
A})}\varphi(g)\theta_6(g)dg$$ Here $SL_3$ is
embedded in $Sp_6$ in the Levi part of the $GL_3$ parabolic
subgroup, and $\theta_6$ is a vector in the space of the representation $\Theta_6$. (See before
Proposition \ref{eisen2}). To the above integral we apply the expansion \eqref{minexp1} where
we write $U(GL_3)$ instead of $U$. The first term of the expansion contributes zero to $I'$. Indeed, 
it follows from Proposition \ref{propertysp1} that as a function of $GL_3({\bf A})$, the function
$\theta_6^{U(GL_3)}(g)$ is one dimensional. Hence, we obtain the integral $\int \varphi(g)dg$ as
inner integration. Here $g$ is integrated over $SL_3(F)\backslash SL_3({\bf A})$. By the cuspidality
of $\pi$ we get zero. The second term in \eqref{minexp1} contributes
$$\int\limits_{L'(GL_3)(F)\backslash SL_3({\bf
A})}\varphi(g)\theta_6^{U(GL_3),\psi}(g)dg$$ where $L'(GL_3)=L(GL_3)\cap GL_3$. Notice that 
$L'(GL_3)$ contains a unipotent radical of the group $SL_3$. Factoring this unipotent radical, and using \eqref{minexp2}, we obtain zero by the cuspidality of $\pi$. Thus $I'=0$.

To compute $I''$ we consider the double coset space $Q(F)\backslash
Sp_6(F)/Q(F)$. This space contains four representatives which we can
choose as $e,w[2],w[232],w[232432]$. For $1\le i\le 4$, we denote by
$I_i$ the contribution to $I''$ from each of the above four
representatives. We start with $I_1$. It is equal to
$$\int\limits_{SL_3(F)\backslash SL_3({\bf
A})}\varphi(g)\int\limits_{Z({\bf A})V(F)\backslash V({\bf
A})}\sum_{\epsilon\in F^*} \theta^{U,\psi}(h_2(\epsilon)
(v,g))dvdg$$ From Proposition \ref{property3} it follows that for
all $g\in SL_3({\bf A})$ we have
$$\theta^{U,\psi}(h_2(\epsilon)
(v,g))=\theta^{U,\psi}(h_2(\epsilon) (v,1))$$ Thus we obtain the
integral $\int\limits_{SL_3(F)\backslash SL_3({\bf
A})}\varphi(g)dg$ as inner integration. This is clearly zero, and
hence $I_1=0$. Next, the integral $I_2$ is equal to
$$\int\limits_{SL_3(F)\backslash SL_3({\bf
A})}\varphi(g)\int\limits_{Z({\bf A})V(F)\backslash V({\bf A})}
\sum_{\gamma\in S(4)(F)\backslash SL_3(F)}\sum_{\delta\in F,
\epsilon\in F^*}
\theta^{U,\psi}(h_2(\epsilon)w[2]x_{0100}(\delta)\gamma (v,g))dvdg$$
Here $S(4)$ is the maximal parabolic subgroup of $SL_3$ whose Levi
part is $GL_2$ which contains the group $SL_2=<\pm(0001)>$. This
integral is equal to
$$\int\limits_{S(4)(F)\backslash SL_3({\bf
A})}\varphi(g)\int\limits_{Z({\bf A})V(F)\backslash V({\bf A})}
\sum_{\delta\in F, \epsilon\in F^*}
\theta^{U,\psi}(h_2(\epsilon)w[2]x_{0100}(\delta)(v,g))dvdg$$ Let
$L=\{x_{0010}(l_1)x_{0011}(l_2)\}$ denote the unipotent radical of
$S(4)$. Conjugating $l\in L$ to the left, using Proposition
\ref{property3}, we have
$\theta^{U,\psi}(h_2(\epsilon)w[2]x_{0100}(\delta)(v,lg))=
\theta^{U,\psi}(h_2(\epsilon)w[2]x_{0100}(\delta)(v,g))$ for all
$l\in L({\bf A})$. Thus we obtain the integral
$\int\limits_{L(F)\backslash L({\bf A})}\varphi(lg)dl$ as inner
integration. By the cuspidality of $\pi$ this integral is zero.
Hence $I_2=0$. For $I_3$ we obtain
$$\int\limits_{S(3)(F)\backslash SL_3({\bf
A})}\varphi(g)\int\limits_{Z({\bf A})V(F)\backslash V({\bf A})}
\sum_{\delta_i\in F, \epsilon\in F^*}
\theta^{U,\psi}(h_2(\epsilon)w[232]x_{0100}(\delta_1)
x_{0110}(\delta_2)x_{0120}(\delta_3)(v,g))dvdg$$ where $S(3)$ is the
maximal parabolic subgroup of $SL_3$ whose Levi part contains the
group $SL_2=<x_{\pm(0010)}(r)>$. Denote by $L$ its unipotent
radical. Thus $L=\{x_{0001}(l_1)x_{0011}(l_2)\}$. Arguing as in the
case of $I_2$, we get zero by the cuspidality of $\pi$. Finally,
$I_4$ is equal to
\begin{equation}\label{2sl32}
\int\limits_{SL_3(F)\backslash SL_3({\bf
A})}\varphi(g)\int\limits_{Z({\bf A})V(F)\backslash V({\bf A})}
\sum_{\delta_i\in F}\sum_{ \epsilon\in F^*}
\theta^{U,\psi}(h_2(\epsilon)w[232432]y(\delta_1,\ldots,\delta_6)
(v,g))dvdg
\end{equation}
Here
$$y(\delta_1,\ldots,\delta_6)=x_{0100}(\delta_1)x_{0110}(\delta_2)
x_{0111}(\delta_3)x_{0120}(\delta_4)x_{0121}(\delta_5)x_{0122}(\delta_6)$$
We have $V/Z=\{x_{1342}(r)\}$. Hence, using commutation relations
$$h_2(\epsilon)w[232432]y(\delta_1,\ldots,\delta_6)x_{1342}(r)=
x_{1000}(\epsilon
r)h_2(\epsilon)w[232432]y(\delta_1,\ldots,\delta_6)$$ Changing
variables in $U$, we obtain $\int\limits_{F\backslash {\bf
A}}\psi(\epsilon r)dr$ as inner integration. Since $\epsilon\in
F^*$, this integral, and hence $I_4$, are both zero. This
completes the proof of the Proposition.
\end{proof}

\subsubsection {\bf On the Nonvanishing of the Lift}

It follows from Proposition \ref{sl31.1}, that the lift from
$\widetilde{GL}_3({\bf A})$ to $SL_3({\bf A})$ is always nonzero.
In this subsection we will determine a condition on a cuspidal
representation $\pi$ defined on $GL_3({\bf A})$ so that the lift
to a cuspidal representation of $\widetilde{SL}_3({\bf A})$ is
nonzero. In other words, we want to find a condition on $\pi$ such
that the representation $\widetilde{\sigma}({\pi})$ is nonzero.
This is equivalent to find a condition on $\pi$ such that integral
\eqref{2sl31} is nonzero for some choice of data. From Proposition
\ref{sl32.2} it follows that $\widetilde{\sigma}({\pi})$ is a
cuspidal representation. This means that
$\widetilde{\sigma}({\pi})$ is nonzero if and only if it is
generic. Thus we need to prove that there is a $\beta\in
(F^*)^3\backslash F^*$ such that the integral
$$W_{\widetilde{f},\beta}(h)=\int\limits_{(F\backslash {\bf
A})^3}\widetilde{f}(x_{1000}(r_1)x_{1342}(r_2)x_{2342}(r_3)h)\psi(\beta
r_1+r_2)dr_i $$ is not zero for some choice of data.

Let $\beta\in (F^*)^3\backslash F^*$. For $\mu_1,\mu_2,\mu_3\in
(F^*)^2\backslash F^*$ such that $\mu_1\mu_2\mu_3=\beta$, consider
the matrix
$$J(\mu_1,\mu_2,\mu_3)=\begin{pmatrix} &&\mu_1\\ &\mu_2&\\
\mu_3&& \end{pmatrix} $$ We shall denote by
$SO_3^{\mu_1,\mu_2,\mu_3}$ the orthogonal group which preserves
the form given by $J(\mu_1,\mu_2,\mu_3)$.

Our result is
\begin{proposition}\label{sl33.3}
Suppose that the representation $\widetilde{\sigma}({\pi})$ is
nonzero. Then there exists numbers $\mu_1,\mu_2,\mu_3$ and $\beta$
as above with $\mu_1\mu_2\mu_3=\beta$, such that the integral
\begin{equation}
\int\limits_{SO_3^{\mu_1,\mu_2,\mu_3}(F)\backslash
SO_3^{\mu_1,\mu_2,\mu_3}({\bf A})}\varphi(mg)dm
\end{equation}
is nonzero for some choice of data.
\end{proposition}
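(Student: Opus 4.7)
The plan is to compute the Whittaker coefficient $W_{\widetilde{f},\beta}(h)$ by unfolding, and to show that its non-vanishing forces the existence of a non-zero $SO_3^{\mu_1,\mu_2,\mu_3}$-period of $\varphi$ with $\mu_1\mu_2\mu_3=\beta$. By Proposition \ref{sl32.2} the representation $\widetilde{\sigma}(\pi)$ is cuspidal, so it is non-zero if and only if it is generic; assuming $\widetilde{\sigma}(\pi)\ne 0$, I pick $\beta$ for which $W_{\widetilde{f},\beta}(h)$ is non-zero for some choice of data.

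First substitute the definition of $\widetilde{f}$ into $W_{\widetilde{f},\beta}(h)$ and swap the order of integration. The inner integral is a Fourier coefficient of $\theta$ along the unipotent subgroup $\{x_{1000}(r_1)x_{1342}(r_2)x_{2342}(r_3)\}$ of the long-root copy of $SL_3$, with character $\psi(\beta r_1+r_2)$ which is trivial on the center $Z=\{x_{2342}\}$ of $U=U_{\alpha_2,\alpha_3,\alpha_4}$. Integrating first over $r_3$ yields the constant term $\theta^Z$, and Proposition \ref{property2} expresses this as $\theta^U+\sum_{\gamma\in Q\backslash Sp_6,\ \epsilon\in F^*}\theta^{U,\psi}(h_2(\epsilon)\gamma\cdot)$. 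The $\theta^U$ summand is $U$-invariant, hence annihilated by the remaining non-trivial integration against $\psi(\beta r_1+r_2)$ on $x_{1000},x_{1342}$.

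The short-root copy $SL_3^{sh}$, in which $g$ varies, embeds in $Sp_6$ as the Levi of the Siegel parabolic $P(GL_3)$. I analyze the surviving sum via the double cosets $Q(F)\backslash Sp_6(F)/SL_3^{sh}(F)$, using Proposition \ref{property3} (the $Q^0=SL_3\cdot U_Q$-invariance of $\theta^{U,\psi}$) to discard the trivial double cosets via the cuspidality of $\pi$. Conjugating $x_{1000}(r_1)x_{1342}(r_2)$ through $h_2(\epsilon)\gamma$ produces another element of $U$, and combining with the character $\psi_U$ defining $\theta^{U,\psi}$ yields character data that vary with $\gamma$ and $\epsilon$. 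Since $Sp_6$ acts on $U/Z$ via the third fundamental representation $\varpi_3$, and since $\varpi_3$ restricted to the Siegel Levi decomposes as a sum of two trivial representations together with the symmetric square of the standard $SL_3$-representation and its dual, the two-dimensional Whittaker datum $\psi(\beta r_1+r_2)$ sweeps out, as $\gamma$ varies, a family of symmetric $3\times 3$ matrices. Normalizing each such matrix to the anti-diagonal form $J(\mu_1,\mu_2,\mu_3)$, the determinant relation fixes $\mu_1\mu_2\mu_3=\beta$.

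The stabilizer in $SL_3^{sh}(F)$ of the form $J(\mu_1,\mu_2,\mu_3)$ is, by definition, the orthogonal group $SO_3^{\mu_1,\mu_2,\mu_3}(F)$. After folding the integration over $SL_3^{sh}(F)\backslash SL_3^{sh}({\bf A})$ through these stabilizers, $W_{\widetilde{f},\beta}(h)$ expresses as a finite sum over triples $(\mu_1,\mu_2,\mu_3)$ with $\mu_1\mu_2\mu_3=\beta$, each summand being an $SO_3^{\mu_1,\mu_2,\mu_3}$-period of $\varphi$ multiplied by a non-zero factor built from $\theta^{U,\psi}$. If all such periods vanished for every choice of data, then $W_{\widetilde{f},\beta}$ would vanish identically, contradicting the assumption. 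Hence at least one period is non-zero, which is the assertion. The main obstacle is the orbit analysis in the third step: precisely tracking which double cosets in $Q\backslash Sp_6/SL_3^{sh}$ give non-vanishing contributions, and verifying the exact correspondence between the resulting characters and the anti-diagonal symmetric forms $J(\mu_1,\mu_2,\mu_3)$ subject to $\mu_1\mu_2\mu_3=\beta$.
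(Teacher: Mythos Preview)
Your proposal is correct and follows essentially the same route as the paper's proof. Both arguments compute $W_{\widetilde{f},\beta}$ by applying Proposition~\ref{property2}, discard all but the big-cell contribution (the paper's $I_4$, corresponding to $w[232432]$ in $Q\backslash Sp_6/Q$), and then identify the surviving term as a sum of $SO_3^{\mu_1,\mu_2,\mu_3}$-periods via the symmetric-square action of $SL_3$ on $U_Q$. The paper carries out the orbit analysis you flag as the main obstacle: it forces $\epsilon=1$ by conjugating $x_{1342}(l_2)$ across $w[232432]$, then forces $\mu_1\mu_2\mu_3=\beta$ by conjugating $x_{1000}(l_1)$; your sketch anticipates both constraints correctly. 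One small point: your dismissal of the $\theta^U$ summand (via left $U({\bf A})$-invariance against the nontrivial character on $x_{1000},x_{1342}\subset U$) is actually cleaner than the paper's, which simply cites the cuspidality argument of Proposition~\ref{sl32.2}.
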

\begin{proof}
Let $L=\{x_{1000}(r_1)x_{1342}(r_2)x_{2342}(r_3)\}$ denote the
maximal unipotent subgroup of $SL_3$. Denote $\psi_{L,\beta}(l)=\psi(\beta
r_1+r_2)$. Thus
$$W_{\widetilde{f},\beta}(h)=\int\limits_{L(F)\backslash L({\bf
A})}\widetilde{f}(lh)\psi_{L,\beta}(l)dl$$

We begin the proof as in the proof of Proposition \ref{sl32.2}.
Arguing as in that proof, we can show that the contribution given by
$I'$ and by $I_1,I_2$ and $I_3$ are all zero. From this we deduce
that $W_{\widetilde{f},\beta}(h)$ is equal to
\begin{equation}
\int\limits_{SL_3(F)\backslash SL_3({\bf
A})}\varphi(g)\int\limits_{Z({\bf A})L(F)\backslash L({\bf A})}
\sum_{\delta_i\in F}\sum_{ \epsilon\in F^*}
\theta^{U,\psi}(h_2(\epsilon)w[232432]y(\delta_1,\ldots,\delta_6)
(l,g))\psi_{L,\beta}(l)dldg\notag
\end{equation}
where $y(\delta_1,\ldots,\delta_6)$ is defined as in
\eqref{2sl32}. Recall that $L$ contains the group
$\{x_{1342}(l_2)\}$. Since
$$h_2(\epsilon)w[232432]y(\delta_1,\ldots,\delta_6)x_{1342}(l_2)=
x_{1000}(\epsilon
l_2)h_2(\epsilon)w[232432]y(\delta_1,\ldots,\delta_6)$$ we obtain
the integral $\int\limits_{F\backslash {\bf A}}\psi((\epsilon
-1)l_2)dl_2$ as inner integration. Thus only the summand with
$\epsilon=1$ contribute to the above integral. The group $SL_3(F)$
acts on the set $y(\delta_1,\ldots,\delta_6)$ via the symmetric
square representation. As representatives for the various orbits,
we may choose the set
$x_{0100}(\mu_1)x_{0120}(\mu_2)x_{0122}(\mu_3)$ where $\mu_i\in
(F^*)^2\backslash F$. We have
$$w[232432]x_{0100}(\mu_1)x_{0120}(\mu_2)x_{0122}(\mu_3)x_{1000}(l_1)=$$
$$x_{1000}(\mu_1\mu_2\mu_3
l_1)u'w[232432]x_{0100}(\mu_1)x_{0120}(\mu_2)x_{0122}(\mu_3)$$ Here
$u'\in U$ such that $\psi_U(u')=1$. Changing variables in $U$ we
obtain $\int\limits_{F\backslash {\bf A}}\psi((\mu_1\mu_2\mu_3
-\beta)l_1)dl_1$ as inner integration. Thus $\mu_1\mu_2\mu_3=\beta$.
In particular, all $\mu_i\ne 0$. Given such $\mu_i$, the stabilizer
of $x_{0100}(\mu_1)x_{0120}(\mu_2)x_{0122}(\mu_3)$ inside $SL_3(F)$
is given by the orthogonal group $SO_3^{\mu_1,\mu_2,\mu_3}(F)$. Thus
we proved that $W_{\widetilde{f},\beta}(h)$ is equal to
$$\int\limits_{SO_3^{\mu_1,\mu_2,\mu_3}({\bf A})\backslash
SL_3({\bf A})}\sum_{\mu_i\in (F^*)^2\backslash
F^*,\mu_1\mu_2\mu_3=\beta}\varphi^{SO_3^{\mu_1,\mu_2,\mu_3}}(g)\times$$
$$\theta^{U,\psi}(w[232432]x_{0100}(\mu_1)x_{0120}(\mu_2)x_{0122}(\mu_3)(1,g)dg$$
where
$$\varphi^{SO_3^{\mu_1,\mu_2,\mu_3}}(g)=
\int\limits_{SO_3^{\mu_1,\mu_2,\mu_3}(F)\backslash
SO_3^{\mu_1,\mu_2,\mu_3}({\bf A})}\varphi(mg)dm$$

From this the Proposition follows.
\end{proof}

\subsection{\bf The Commuting pair $(SL_2\times SL_2,Sp_4)$}

Let $G=SL_2\times SL_2$. We embed this group inside $F_4$ as
$H=<x_{\pm(0100)}(r);x_{\pm(0120)}(r)>$. The embedding of the group $Sp_4$
inside $F_4$ is given by
$$Sp_4=<x_{\pm (1110)}(r);x_{\pm(0122)}(r); x_{\pm(1232)}(r);
x_{\pm(2342)}(r)>$$ It thus follows that both groups do not split under
the covering of $F_4$.

\subsubsection{\bf From $\widetilde{SL}_2\times \widetilde{SL}_2$ to
$\widetilde{Sp_4}$}

Let $\widetilde{\pi}=\widetilde{\pi_1}\otimes\widetilde{\pi_2}$
denote a cuspidal representation of the group $\widetilde{G}({\bf
A})$ where $\widetilde{\pi_i}$ are cuspidal representations of
$\widetilde{SL}_2({\bf A})$. Let
$\widetilde{\sigma}(\widetilde{\pi})$ denote the representation of
$\widetilde{Sp}_4({\bf A})$ generated by all automorphic functions
defined by
\begin{equation}
\widetilde{f}(h)=\int\limits_{G(F)\backslash G({\bf
A})}\widetilde{\varphi}(g)\theta((h,g))dg\notag
\end{equation}
Here $\widetilde{\varphi}$ is a function in the space of
$\widetilde{\pi}$. We start with
\begin{proposition}\label{sp41.1}
With the above notations, suppose that
$\widetilde{\pi_1}\ne\widetilde{\pi_2}$. Then the representation
$\widetilde{\sigma}(\widetilde{\pi})$ defines a cuspidal
representation of $\widetilde{Sp}_4({\bf A})$. Suppose further
that both cuspidal representations $\widetilde{\pi_i}$, have a
$\psi^{-\beta}$ Whittaker coefficient for some $\beta\in F^*$.
That is, suppose that for $i=1,2$
$$\int\limits_{F\backslash {\bf
A}}\widetilde{\varphi_i}\left [\begin{pmatrix} 1&x \\&1
\end{pmatrix}\right ]\psi(-\beta
x)dx$$ is not zero for some choice of functions
$\widetilde{\varphi_i}\in \widetilde{\pi}$. Then the
representation $\widetilde{\sigma}(\widetilde{\pi})$ is generic.
\end{proposition}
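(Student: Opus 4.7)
\noindent\emph{Plan.} I would handle the two assertions separately.

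\emph{Cuspidality.} The group $Sp_4$ has two conjugacy classes of maximal parabolic subgroups. In the above embedding the simple roots are $b=(1110)$ (short) and $a=(0122)$ (long), and the positive roots are $b$, $a$, $a+b=(1232)$, $a+2b=(2342)$; accordingly the two maximal unipotent radicals are
\begin{equation*}
V_1=\{x_{0122}(r_1)x_{1232}(r_2)x_{2342}(r_3)\}, \qquad V_2=\{x_{1110}(r_1)x_{1232}(r_2)x_{2342}(r_3)\},
\end{equation*}
and both contain the center $Z=\{x_{2342}(r)\}$ of the Heisenberg unipotent $U=U_{\alpha_2,\alpha_3,\alpha_4}$ of $F_4$. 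For each $i$, I would pull the $Z$-integration out of $\int_{V_i(F)\backslash V_i(\mathbf{A})}\widetilde{f}(vh)\,dv$ and apply the expansion in Proposition \ref{property2}, splitting the constant term into a $\theta^U$-piece and a sum involving $\theta^{U,\psi}$.

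The $\theta^U$-piece restricts to $\widetilde{Sp}_6$ as the minimal representation $\Theta_6$ by Proposition \ref{eisen1}. Since $Sp_4\cap Sp_6$ is the $SL_2$ generated by $x_{\pm(0122)}$, whose root is the highest root of $Sp_6$, a further Fourier expansion inside $Sp_6$ reduces the $\theta^U$-contribution either to constant terms along maximal parabolics of $Sp_6$ (handled via Propositions \ref{propertysp1}, \ref{propertysp2}) or to the theta realisation of Proposition \ref{propertysp3}; combined with the cuspidality of $\widetilde{\pi}$ along the unipotent radicals of the maximal parabolics of $G=SL_2\times SL_2$, these all vanish by an argument paralleling the $I'$-analysis in the proof of Proposition \ref{sl32.2}. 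For the $\theta^{U,\psi}$-piece I would decompose along the double cosets in $Q(F)\backslash Sp_6(F)/(Q^0\cdot G)(F)$, using the invariance \eqref{exp2} of $\theta^{U,\psi}$ under $Q^0(\mathbf{A})$. All cells except one are killed either because a unipotent radical of a maximal parabolic of $G$ appears on which $\widetilde{\varphi}$ is cuspidal, or because the character $\psi_{V_i}$ becomes nontrivial on a one-parameter direction on which $\theta^{U,\psi}$ is invariant. The surviving cell produces a period of the shape
\begin{equation*}
\int_{SL_2(F)\backslash SL_2(\mathbf{A})}\widetilde{\varphi_1}(g)\,\widetilde{\varphi_2}(g)\,dg,
\end{equation*}
which vanishes by orthogonality of distinct genuine cusp forms, using the hypothesis $\widetilde{\pi_1}\not\cong\widetilde{\pi_2}$. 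Isolating this surviving cell and recognising it as such a period is the main technical obstacle.

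\emph{Genericity.} Let $\psi_N$ be a generic character on the maximal unipotent $N\subset Sp_4$, nontrivial on $x_{0122}$ and $x_{1110}$ and trivial on $x_{1232}$, $x_{2342}$. The Whittaker coefficient $W_{\widetilde{f}}(h)=\int_{N(F)\backslash N(\mathbf{A})}\widetilde{f}(nh)\psi_N^{-1}(n)\,dn$ is computed by again factoring out the $Z$-integration and applying Proposition \ref{property2}. Because $\psi_N$ is nontrivial on a direction not belonging to $U$, the $\theta^U$-term drops out and only the $\theta^{U,\psi}$-term survives. Using Proposition \ref{property6}, the relevant Fourier coefficient at the character $\psi^{-\beta}$ along $Z$ is realised by the theta representation on $\mathcal{H}_{15}(\mathbf{A})\cdot\widetilde{Sp}_{14}(\mathbf{A})$, through which the third fundamental embedding $\varpi_3$ of $Sp_6$ into $Sp_{14}$ couples to $G=SL_2\times SL_2$ as a dual pair. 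Unfolding against this theta kernel extracts the product of the two $\psi^{-\beta}$-Whittaker functionals $W_{\widetilde{\varphi_1},-\beta}$ and $W_{\widetilde{\varphi_2},-\beta}$, and a density argument as in \cite{Ga-S} (mirroring the end of the proof of Proposition \ref{sl31.1}) shows that $W_{\widetilde{f}}\not\equiv 0$ under the hypothesis on the Whittaker models of the $\widetilde{\pi_i}$. The main obstacle here is identifying the precise form of the dual-pair coupling that matches a single character $\psi^{-\beta}$ to Whittaker functionals of both $\widetilde{\pi_1}$ and $\widetilde{\pi_2}$ simultaneously.
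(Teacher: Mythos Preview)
Your cuspidality outline is essentially the paper's argument. The paper also splits via Proposition~\ref{property2} into a $\theta^U$-piece $I'$ and a $\theta^{U,\psi}$-piece $I''$; for $I'$ it reduces (via Proposition~\ref{eisen1}) to $\int_{G(F)\backslash G({\bf A})}\widetilde\varphi(g)\theta_6(g)\,dg$ with $G=SL_2\times SL_2$ embedded in $Sp_6$ as $(g_1,g_2)\mapsto\mathrm{diag}(g_1,g_2,g_1^*)$, and kills it by a further expansion plus cuspidality (your remark about ``$Sp_4\cap Sp_6$'' is slightly off---the relevant group inside $Sp_6$ is $G$, not the image $Sp_4$---but the mechanism is as you say). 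For $I''$ the paper does the decomposition in two steps, first $Q\backslash Sp_6/P$ with $P$ the $GL_1\times Sp_4$ parabolic, then $S(3)\backslash Sp_4/G$, rather than your one-shot $Q\backslash Sp_6/(Q^0\cdot G)$; the diagonal $SL_2$-period you isolate is exactly the cell $w[23]x_{0010}(1)$ there, killed by $\widetilde\pi_1\ne\widetilde\pi_2$.

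Your genericity plan has a real gap. First a small slip: the $\theta^U$-term drops out because $\psi_N$ is nontrivial on $x_{1110}$, which \emph{does} belong to $U$ (so $\theta^U$ is invariant under it and $\int\psi(r_1)\,dr_1=0$); your ``not belonging to $U$'' is reversed. More seriously, Proposition~\ref{property6} is the wrong tool for the surviving $\theta^{U,\psi}$-term. That proposition realises $\theta^{Z,\psi_\beta}$ for a \emph{nontrivial} character on $Z=\{x_{2342}\}$ via the $Sp_{14}$-theta kernel; but your Whittaker character $\psi_N$ is \emph{trivial} on $Z$, so what you have after factoring out $Z$ is the constant term $\theta^Z$, and after Proposition~\ref{property2} you are left with $\theta^{U,\psi}$, not $\theta^{Z,\psi_\beta}$. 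There is no $\varpi_3$-dual-pair unfolding available at this point. (Proposition~\ref{property6} \emph{is} used in the reverse lift $\widetilde{Sp}_4\to\widetilde{SL}_2\times\widetilde{SL}_2$, Proposition~\ref{sp43.3}, precisely because there the Whittaker of $SL_2\times SL_2$ is nontrivial on $x_{2342}$.)

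What the paper does instead is rerun the cuspidality unfolding verbatim on $W_\beta(h)$: every cell dies except the one that was called $I_{21}$ (the identity representative inside $S(3)\backslash Sp_4/G$ under $w[234]$). In that surviving term the $x_{0122}$-integration collapses the sum over $\delta_3$ to an integral, conjugating $x_{1110}(r_1)$ across forces $\epsilon^{-1}\delta_1\delta_2=1$, and a torus identity collapses the remaining sums over $\delta_1,\epsilon$ against $T_G(F)$; factoring $N_G$ then yields
\[
W_\beta(h)=\int_{N_G({\bf A})\backslash G({\bf A})}W_{\widetilde\varphi,\beta}(g)\int_{\bf A}\theta^{U,\psi}\bigl(w[234]y(1,1,r)(1,g)\bigr)\psi(\beta r)\,dr\,dg,
\]
with $W_{\widetilde\varphi,\beta}=W_{\widetilde\varphi_1,-\beta}\otimes W_{\widetilde\varphi_2,-\beta}$, and nonvanishing follows by the \cite{Ga-S} argument. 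So your endpoint is right, but the route to it goes through Propositions~\ref{property2}--\ref{property3} and explicit collapsing, not through Proposition~\ref{property6}.
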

\begin{proof}
Let $V_1=\{x_{0122}(r_1)x_{1232}(r_2)x_{2342}(r_3)\}$ and
$V_2=\{x_{1110}(r_1)x_{1232}(r_2)x_{2342}(r_3)\}$ denote the two
unipotent radicals of the two maximal parabolic  subgroups of
$Sp_4$. We need to prove that for $i=1,2$ the integrals
$$I=\int\limits_{V_i(F)\backslash V_i({\bf A})}\widetilde{f}(vh)dv$$
are zero for all choice of data. Since both unipotent radicals
contain the group $Z$, we can use Proposition \ref{property2} to
deduce that $I$ is equal to
$$ \int\limits_{G(F)\backslash G({\bf
A})}\int\limits_{Z({\bf A})V_i(F)\backslash V_i({\bf
A})}\widetilde{\varphi}(g)\theta^U((v,g))dvdg+$$
$$\int\limits_{G(F)\backslash G({\bf
A})}\widetilde{\varphi}(g)\int\limits_{Z({\bf A})V_i(F)\backslash
V_i({\bf A})} \sum_{\gamma\in Q(F)\backslash
Sp_6(F)}\sum_{\epsilon\in F^*} \theta^{U,\psi}(h_2(\epsilon)\gamma
(v,g))dvdg$$ Denote the first summand by $I'$ and the second by
$I''$. Applying Proposition \ref{eisen1}, to prove that $I'$ is
zero, it is enough to prove that
\begin{equation}\label{cuspsl20}
 I_1'=\int\limits_{G(F)\backslash G({\bf
A})}\widetilde{\varphi}(g)\theta_6(g)dg
\end{equation}
is zero
for all choice of data. Here $\theta_6$ is a vector in the representation $\Theta_6$ which was
defined right before Proposition \ref{eisen2}. The embedding of $G$ inside $Sp_6$ is given 
by $(g_1,g_2)\mapsto \text{diag}(g_1,g_2,g_1^*)$. Here, for $i=1,2$ we have $g_i\in SL_2$. Expand
the above integral along the abelian unipotent subgroup
$$L=\left \{ \begin{pmatrix} I_2&&X\\ &I_2&\\ &&I_2\end{pmatrix};\ \ X=\begin{pmatrix} r&y\\ 
z&r\end{pmatrix}\right \}$$ Since $\Theta_6$ is a minimal representation, we obtain
\begin{equation}\label{cuspsl2}
I_1'=\int\limits_{G(F)\backslash G({\bf A})}\widetilde{\varphi}(g)\theta_6^L(g)dg+
\int\limits_{(N_1(F)\times SL_2(F)\backslash G({\bf
A})}\widetilde{\varphi}(g)\theta_6^{L,\psi}(g)dg
\end{equation}
In the first summand on the right hand side
\eqref{cuspsl2} we notice that $U(GL_2\times SL_2)/L$ is an abelian group. The group $U(GL_2\times SL_2)$
was defined at the beginning of subsection 2.6. Expanding along this quotient, it follows from
the fact that $\Theta_6$ is a minimal representation, that
$$\int\limits_{G(F)\backslash G({\bf A})}\widetilde{\varphi}(g)\theta_6^L(g)dg=\int\limits_{G(F)\backslash G({\bf A})}\widetilde{\varphi}(g)\theta_6^{U(GL_2\times SL_2)}(g)dg$$
From Proposition \ref{propertysp1}, and from the cuspidality of $\widetilde{\pi}$, it follows
that this last integral is zero. Next consider the second summand on the right hand side of
\eqref{cuspsl2}. In that term $N_1$ is the unipotent radical of $SL_2$ embedded in $Sp_6$ as
$n\mapsto \text{diag}(n,I_2,n^{-1})$, and 
$$\theta_6^{L,\psi}(g)=\int\limits_{L(F)\backslash L({\bf A})}\theta_6(lg)\psi_L(l)dl$$
Here $\psi_L(l)=\psi(z)$ where we use the identification of $L$ with the matrices $X$ as was 
described above. We claim that the function $\theta_6^{L,\psi}(g)$ is left invariant under 
$N_1({\bf A})$. Indeed, expanding along the group $N_1(F)\backslash N_1({\bf A})$ one can show
that all terms which corresponds to the nontrivial characters of the expansions, contribute
zero. This follows from the fact that $\Theta_6$ is a minimal representation. Hence 
$\theta_6^{L,\psi}(g)=\theta_6^{L,\psi}(ng)$ for all $n\in N_1({\bf A})$. Using that in the second 
summand on the right hand side of \eqref{cuspsl2}, it follows from the cuspidality of $\widetilde{\pi}$ that it is zero. Hence $I_1'=0$ which implies that $I'=0$.

To compute $I''$ we first consider the space of double cosets
$Q(F)\backslash Sp_6(F)/P(F)$ where $P$ is the maximal parabolic subgroup
of $Sp_6$ whose Levi part contains $Sp_4$. This space has two
representatives which we can choose to be $e$ and $w[234]$. Let
$I_1$ denote the contribution to $I''$ from $e$, and $I_2$ the
contribution from $w[234]$. Thus, $I_1$ is equal to
$$\int\limits_{G(F)\backslash G({\bf
A})}\widetilde{\varphi}(g)\int\limits_{Z({\bf A})V_i(F)\backslash
V_i({\bf A})} \sum_{\gamma\in S(3)(F)\backslash
Sp_4(F)}\sum_{\epsilon\in F^*} \theta^{U,\psi}(h_2(\epsilon)\gamma
(v,g))dvdg$$ Here $S(3)$ is the maximal parabolic subgroup of
$Sp_4$ whose Levi part contains the $SL_2$ generated by
$<x_{\pm(0010)}(r)>$. To proceed, we need to consider the space of double
cosets $S(3)(F)\backslash Sp_4(F)/G(F)$. This space contains two
representatives which we choose to be $e$ and $w[23]x_{0010}(1)$.
The first representative contributes to $I_1$ the term
$$\int\limits_{B_G(F)\backslash G({\bf
A})}\widetilde{\varphi}(g)\int\limits_{Z({\bf A})V_i(F)\backslash
V_i({\bf A})} \sum_{\epsilon\in F^*} \theta^{U,\psi}(h_2(\epsilon)
(v,g))dvdg$$ where $B_G$ is the Borel subgroup of $G$. Using
Proposition \ref{property3}, the function
$\theta^{U,\psi}(h_2(\epsilon) (v,ng))$ is invariant under $n\in
N_G({\bf A})$ where $N_G$ is the maximal unipotent subgroup of $G$.
Thus, we get zero by cuspidality.

As for the second representative, $w[23]x_{0010}(1)$, it
contributes to $I_1$ the term
$$\int\limits_{SL_2^\Delta(F)\backslash G({\bf
A})}\widetilde{\varphi}(g)\int\limits_{Z({\bf A})V_i(F)\backslash
V_i({\bf A})} \sum_{\epsilon\in F^*} \theta^{U,\psi}(h_2(\epsilon)
w[23]x_{0010}(1)(v,g))dvdg$$ Here $SL_2^\Delta$ is the group
$SL_2$ embedded diagonally inside the group $G$. Using Proposition
\ref{property3} we obtain $\int\limits_{SL_2^\Delta(F)\backslash
SL_2^\Delta({\bf A})}\widetilde{\varphi}(mg)dm$ as inner
integration. By our assumption that
$\widetilde{\pi_1}\ne\widetilde{\pi_2}$, this integral is zero.
Thus $I_1=0$.

Next, we compute $I_2$ which is equal to
$$\int\limits_{G(F)\backslash G({\bf
A})}\widetilde{\varphi}(g)\int\limits_{Z({\bf A})V_i(F)\backslash
V_i({\bf A})} \sum_{\gamma\in S(3)(F)\backslash
Sp_4(F)}\sum_{\delta_i\in F ,\epsilon\in F^*}
\theta^{U,\psi}(h_2(\epsilon)w[234]y(\delta_1,\delta_2,\delta_3)\gamma
(v,g))dvdg$$ where
$y(\delta_1,\delta_2,\delta_3)=x_{0001}(\delta_1)x_{0011}(\delta_2)
x_{0122}(\delta_3)$. As with $I_1$ we take $e$ and
$w[23]x_{0010}(1)$ for the two representatives of $S(3)(F)\backslash
Sp_4(F)/G(F)$. We denote by $I_{21}$ the contribution to $I_2$ from
the representative $e$, and by $I_{22}$ the contribution from
$w[23]x_{0010}(1)$. We start with $I_{22}$. Since the stabilizer is
$SL_2^\Delta$, then $I_{22}$ is equal to
$$\int\limits_{SL_2^\Delta(F)\backslash G({\bf
A})}\widetilde{\varphi}(g)\int\limits_{Z({\bf A})V_i(F)\backslash
V_i({\bf A})}\sum_{\delta_i\in F,\epsilon\in F^*}
\theta^{U,\psi}(h_2(\epsilon)w[23423]y_1(\delta_1,\delta_2,\delta_3)
x_{0010}(1)(v,g))dvdg$$ where
$y_1(\delta_1,\delta_2,\delta_3)=x_{0011}(\delta_1)x_{0121}(\delta_2)
x_{0122}(\delta_3)$. The unipotent element $x_{1232}(r)$ is in $V_i$
for $i=1,2$. We have
$$h_2(\epsilon)w[23423]y_1(\delta_1,\delta_2,\delta_3)
x_{0010}(1)x_{1232}(r)=x_{1000}(\epsilon
r)u'h_2(\epsilon)w[23423]y_1(\delta_1,\delta_2,\delta_3)
x_{0010}(1)$$ Here $u'\in U$ is such that $\psi_U(u')=1$. Using
the left invariant properties of $\theta^{U,\psi}$, we obtain
$\int\limits_{F\backslash {\bf A}}\psi(\epsilon r)dr$, which is
clearly zero. Thus $I_{22}=0$.

Finally, we need to consider $I_{21}$, which is equal to
$$\int\limits_{B_G(F)\backslash G({\bf
A})}\widetilde{\varphi}(g)\int\limits_{Z({\bf A})V_i(F)\backslash
V_i({\bf A})}\sum_{\delta_i\in F,\epsilon\in F^*}
\theta^{U,\psi}(h_2(\epsilon)w[234]y(\delta_1,\delta_2,\delta_3)
(v,g))dvdg$$ where $y(\delta_1,\delta_2,\delta_3)$ and $B_G$ were
defined above. We consider separately the cases for $V_1$ and
$V_2$.

Starting with $V_1$, we notice that $x_{0122}(r)$ is a unipotent
element in $V_1$. In the above integral, for $i=1$,  we collapse
summation and integration to obtain
$$\int\limits_{B_G(F)\backslash G({\bf
A})}\widetilde{\varphi}(g)\int\limits_{{\bf A}}\sum_{\delta_i\in
F,\epsilon\in F^*}
\theta^{U,\psi}(h_2(\epsilon)w[234]y(\delta_1,\delta_2,r)
(1,g))drdg$$ By commutation relations, change of variables, and
using Proposition \ref{property3}, we obtain
$$\int\limits_{{\bf A}}\sum_{\delta_i\in
F,\epsilon\in F^*}
\theta^{U,\psi}(h_2(\epsilon)w[234]y(\delta_1,\delta_2,r)
(1,x_{0100}(l)g))dr=$$
$$\int\limits_{{\bf A}}\sum_{\delta_i\in
F,\epsilon\in F^*}
\theta^{U,\psi}(h_2(\epsilon)w[234]y(\delta_1,\delta_2,r)
(1,g))dr$$ for all $l\in {\bf A}$. Thus we get zero by the
cuspidality of $\widetilde{\pi}$.

Next we consider the integral $I_{21}$ when $V=V_2$. This time,
the unipotent element $x_{1110}(r)$ is inside $V_2$. We have
$$h_2(\epsilon)w[234]y(\delta_1,\delta_2,\delta_3)x_{1110}(r)=
x_{1000}(\epsilon\delta_1\delta_2
r)u'h_2(\epsilon)w[234]y(\delta_1,\delta_2,\delta_3)$$ where
$u'\in U$ such that $\psi_U(u')=1$. Thus we obtain
$\int\limits_{F\backslash {\bf
A}}\psi(\epsilon^{-1}\delta_1\delta_2 r)dr$ as inner integration.
Hence $\delta_1\delta_2=0$. From this we deduce that $I_{21}$ is
equal to
$$\int\limits_{B_G(F)\backslash G({\bf
A})}\widetilde{\varphi}(g)\sum_{\delta_i\in
F,\delta_1\delta_2=0,\epsilon\in F^*}
\theta^{U,\psi}(h_2(\epsilon)w[234]y(\delta_1,\delta_2,\delta_3)
(1,g))drdg$$ If $\delta_1=0$, then for all $r\in {\bf A}$, using
Proposition \ref{property3},
$$\theta^{U,\psi}(h_2(\epsilon)w[234]y(0,\delta_2,\delta_3)
(1,x_{0120}(r)g))=\theta^{U,\psi}(h_2(\epsilon)w[234]y(0,\delta_2,\delta_3)
(1,g))$$ and if $\delta_2=0$, then for all $r\in {\bf A}$, using
again Proposition \ref{property3},
$$\theta^{U,\psi}(h_2(\epsilon)w[234]y(\delta_1,0,\delta_3)
(1,x_{0100}(r)g))=\theta^{U,\psi}(h_2(\epsilon)w[234]y(\delta_1,0,\delta_3)
(1,g))$$ Since $\{x_{0100}(r)\}$ and $\{x_{0120}(r)\}$ are the two
maximal unipotent radicals of the group $G$, it follows that
$I_{21}=0$ by the cuspidality of $\widetilde{\pi}$. This completes
the cuspidality part of the Proposition.

To prove that the image of the lift is generic, we need to compute
the integral
$$W_{\beta}(h)=\int\limits_{(F\backslash {\bf A})^4}\widetilde{f}(x_{1110}(r_1)
x_{0122}(r_2)x_{1232}(r_3)x_{2342}(r_4)h)\psi(r_1+\beta r_2)dr_i$$
Here $\beta\in (F^*)^2\backslash F^*$. Denoting the maximal
unipotent of $Sp_4$ by $V$, and the above character by
$\psi_{V,\beta}$, we have to prove that the integral
$$W_{\beta}(h)=\int\limits_{G(F)\backslash G({\bf A})}
\int\limits_{V(F)\backslash V({\bf A})}\widetilde{\varphi}(g)
\theta((v,g))\psi_{V,\beta}(v)dvdg$$ is not zero for some choice
of data. Performing the same expansions as in the proof of the
cuspidality part, we obtain that all integrals except the one that
corresponds to $I_{21}$ vanish. In other words, $W_{\beta}(h)$ is
equal to
$$\int\limits_{B_G(F)\backslash G({\bf
A})}\widetilde{\varphi}(g)\int\limits_{Z({\bf A})V(F)\backslash
V({\bf A})}\sum_{\delta_i\in F,\epsilon\in F^*}
\theta^{U,\psi}(h_2(\epsilon)w[234]y(\delta_1,\delta_2,\delta_3)
(v,g))\psi_{V,\beta}(v)dvdg$$ As in the computation of $I_{21}$
for the unipotent radical $V_1$, we collapse summation with
integration. As in the computation of $I_{21}$ for the unipotent
radical $V_2$, we conjugate $x_{1110}(r_1)$ from right to left and
we obtain that $\epsilon^{-1}\delta_1\delta_2=1$ . Thus,
$W_{\beta}(h)$ is equal to
$$\int\limits_{B_G(F)\backslash G({\bf
A})}\widetilde{\varphi}(g)\int\limits_{{\bf A}}\sum_{\delta_1
,\epsilon\in F^*}
\theta^{U,\psi}(h_2(\epsilon)w[234]y(\delta_1,\epsilon\delta_1^{-1},r)
(1,g))\psi(\beta r)drdg$$ The maximal torus of $G$ is given by
$T_G=\{h(1,a,b,1)\ :\ a,b\ne 0\}$. We have the identity
$$h_2(\epsilon)w[234]x_{0001}(\delta)x_{0011}(\epsilon\delta^{-1})
h(1,\epsilon^{-1},\delta^{-1},1)=h_2(\epsilon)h(1,\epsilon^{-1},\epsilon^{-1},\delta^{-1})
w[234]x_{0001}(1)x_{0011}(1)$$ Using this identity, we can
collapse summation and integration. Hence the above integral is
equal to
$$\int\limits_{N_G(F)\backslash G({\bf
A})}\widetilde{\varphi}(g)\int\limits_{{\bf A}}
\theta^{U,\psi}(w[234]y(1,1,r)(1,g))\psi(\beta r)drdg$$ where
$N_G$ is the maximal unipotent subgroup of $G$. In other words
$N_G=\{x_{0100}(r_1)x_{0120}(r_2)\}$. Factoring the integration
over $N_G$ we obtain the identity
$$W_{\beta}(h)=\int\limits_{N_G({\bf A})\backslash G({\bf
A})}W_{\widetilde{\varphi},\beta}(g)\int\limits_{{\bf A}}
\theta^{U,\psi}(w[234]y(1,1,r)(1,g))\psi(\beta r)drdg$$ Here
$$W_{\widetilde{\varphi},\beta}(g)=\int\limits_{F\backslash
{\bf A}}\widetilde{\varphi_1}\begin{pmatrix} 1&x \\&1
\end{pmatrix}g_1)\psi(-\beta
x)dx\int\limits_{F\backslash {\bf
A}}\widetilde{\varphi_2}\begin{pmatrix} 1&y \\&1
\end{pmatrix}g_1)\psi(-\beta y)dy$$
where
$\widetilde{\varphi}=\widetilde{\varphi_1}\otimes\widetilde{\varphi_2}$
and $g=(g_1,g_2)$.

From this it is clear that if the lift is non-zero then
$W_{\widetilde{\varphi},\beta}(g)$ is not zero. Using a similar
argument as in \cite{Ga-S}, it follows that the converse is also
true. Namely, if $W_{\widetilde{\varphi},\beta}(g)$ is not zero
then the lift to $\widetilde{Sp}_4$ is not zero.
\end{proof}

\subsubsection{\bf From $\widetilde{Sp}_4$ to
$\widetilde{SL}_2\times \widetilde{SL}_2$}

To study this lifting, we consider a different embedding of the
two groups. We embed the group $Sp_4$ as the Levi part of the
corresponding parabolic subgroup of $F_4$. In other words
$Sp_4=<x_{\pm(0100)}(r),x_{\pm(0010)}(r)>$. The group
$G=SL_2\times SL_2$ is generated by
$<x_{\pm(0122)}(r);x_{\pm(2342)}(r)>$.

Let $\widetilde{\pi}$ denote a cuspidal representation of
$\widetilde{Sp}_4({\bf A})$. We shall denote by
$\widetilde{\sigma}(\widetilde{\pi})$ the automorphic
representation of $G({\bf A})$ generated by all functions of the
form
\begin{equation}
\widetilde{f}(g)=\int\limits_{Sp_4(F)\backslash Sp_4({\bf
A})}\widetilde{\varphi}(h)\theta((g,h))dh\notag
\end{equation}
Here $\widetilde{\varphi}$ is a vector in the space of
$\widetilde{\pi}$. We start with
\begin{proposition}\label{sp42.2}
The representation $\widetilde{\sigma}(\widetilde{\pi})$ defines a
cuspidal representation of $G({\bf A})$.
\end{proposition}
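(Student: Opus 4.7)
The plan is to show that the constant term of $\widetilde{f}$ along each of the two one-dimensional maximal unipotent radicals $V_1 = \{x_{0122}(r)\}$ and $V_2 = \{x_{2342}(r)\}$ of $G = SL_2 \times SL_2$ vanishes for all choices of data. Writing the integral defining $\widetilde{f}$ and exploiting that $G$ and $Sp_4$ commute, this reduces to verifying, for $i = 1, 2$, that
$$ I_i = \int_{Sp_4(F)\backslash Sp_4({\bf A})} \widetilde{\varphi}(h) \int_{F\backslash {\bf A}} \theta(v_i (g,h))\, dv_i\, dh \;=\; 0 $$
for every vector $\widetilde{\varphi}$ and every $g \in G({\bf A})$. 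The overall structure mirrors the cuspidality portion of the proof of Proposition \ref{sp41.1}: decompose the inner integral over $v_i$ using the structural results for $\Theta$ developed in subsections 2.5 and 2.6, and show that each resulting piece vanishes by either cuspidality of $\widetilde{\pi}$ or minimality of $\Theta$.

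For $V_2 = Z$, apply Proposition \ref{property2} directly to expand the inner integration as the constant term $\theta^U((g,h))$ plus the sum $\sum_{\gamma \in Q(F)\backslash Sp_6(F)}\sum_{\epsilon \in F^*}\theta^{U,\psi}(h_2(\epsilon)\gamma(g,h))$. By Proposition \ref{eisen1}, the function $\theta^U$ restricted to $\widetilde{Sp}_6$ is the minimal representation $\Theta_6$, and $Sp_4 = \langle x_{\pm\alpha_2}, x_{\pm\alpha_3}\rangle$ sits in this $Sp_6$ as the Levi of the parabolic $P(GL_1 \times Sp_4)$. Further expanding $\Theta_6$ along the unipotent radical of $P(GL_1 \times Sp_4)$ and invoking Proposition \ref{propertysp1}, the $\theta^U$-contribution becomes a pairing of $\widetilde{\varphi}$ against a function invariant under a unipotent radical in $Sp_4$, hence vanishes by cuspidality of $\widetilde{\pi}$. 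For the $\theta^{U,\psi}$-sum, choose representatives for $Q(F)\backslash Sp_6(F)/Sp_4(F)$; for each representative either the $Sp_4$-stabilizer contains a unipotent subgroup (giving vanishing by cuspidality through the invariance of Proposition \ref{property3}), or, after conjugating the $V_2$-variable to the left through $h_2(\epsilon)\gamma$, one obtains an inner integration of the form $\int_{F\backslash{\bf A}}\psi(cr)\,dr = 0$ with $c \in F^*$.

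For $V_1 = \{x_{0122}(r)\}$, the key observation is that $(0122)$ is the highest long root of the Levi $Sp_6$ of the Heisenberg parabolic $P_{\alpha_2,\alpha_3,\alpha_4}$ of $F_4$, so $\{x_{0122}(r)\}$ is the image, under the natural embedding $Sp_6 \hookrightarrow F_4$, of the one-dimensional unipotent subgroup appearing in Proposition \ref{propertysp3}. First Fourier-expand $\theta$ along $Z = \{x_{2342}(r)\}$ inside $I_1$; the trivial-character piece becomes the constant term $\theta^Z = \int_Z \theta(z\,\cdot)\,dz$, which is handled via Proposition \ref{property2} exactly as in the $V_2$ case, while for each $\beta \in F^*$ the nontrivial Fourier coefficient $\theta^{Z,\psi_\beta}$ is identified via Proposition \ref{property6} with the theta representation of ${\mathcal H}_{15}({\bf A})\cdot\widetilde{Sp}_{14}({\bf A})$ composed with $\varpi_3$. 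Factoring the remaining $(0122)$-integration through this identification and integrating against cuspidal $\widetilde{\varphi}$, one obtains either terms killed by cuspidality of $\widetilde{\pi}$ or inner character integrations $\int_{F\backslash{\bf A}}\psi(cr)\,dr$ with $c\in F^*$ arising from the commutation relations in $F_4$.

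The main obstacle is the $V_1$ case, since $(0122) \notin U$ and Proposition \ref{property2} is not directly applicable; bridging this requires the Fourier-Jacobi type reduction via Propositions \ref{property6} and \ref{propertysp3}. A secondary technical challenge, shared by both cases, is the detailed enumeration of the $Q(F)\backslash Sp_6(F)/Sp_4(F)$ double cosets together with the commutation-relations bookkeeping in $F_4$ needed to verify the claimed vanishing for each representative.
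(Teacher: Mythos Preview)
Your overall strategy for $V_2=\{x_{2342}(r)\}$ is essentially the one the paper uses, but you have missed the paper's key simplification and consequently over-complicated the argument. The paper observes at the outset that the two unipotent radicals $\{x_{0122}(r)\}$ and $\{x_{2342}(r)\}$ of $G=SL_2\times SL_2$ are conjugate to one another inside $F_4$ by a Weyl element that fixes the embedded $Sp_4=\langle x_{\pm(0100)},x_{\pm(0010)}\rangle$. Hence it suffices to treat the single case $V_2=\{x_{2342}(r)\}=Z$, and the entire ``main obstacle'' you identify for $V_1$ simply disappears. Your proposed Fourier--Jacobi workaround for $V_1$ (expanding first along $Z$, then invoking Propositions~\ref{property6} and~\ref{propertysp3}) may be salvageable, but it is substantially more intricate than necessary and you have not spelled out the details.

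Two further discrepancies in your treatment of the $V_2$ case. First, once you have integrated over $V_2=Z$ there is no ``$V_2$-variable'' left to conjugate through $h_2(\epsilon)\gamma$; the character integrals $\int_{F\backslash{\bf A}}\psi(cr)\,dr$ you invoke do not arise here. In fact, in the paper's proof every piece of the $\theta^{U,\psi}$-sum vanishes by cuspidality of $\widetilde{\pi}$, via the invariance of Proposition~\ref{property3} under suitable unipotent radicals $N(3)$ or $N(2)$ of maximal parabolics of $Sp_4$. Second, the relevant double coset space is $Q(F)\backslash Sp_6(F)/P(F)$ with $P$ the maximal parabolic of $Sp_6$ whose Levi contains $Sp_4$, not $Q(F)\backslash Sp_6(F)/Sp_4(F)$; this yields exactly two representatives $e$ and $w[234]$, and the $w[234]$-cell then requires a further orbit analysis under the $SL_2(F)$ generated by $\langle x_{\pm(0010)}\rangle$. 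Your handling of the $\theta^U$-piece is also slightly different from the paper's: rather than expanding directly along the full unipotent radical of $P(GL_1\times Sp_4)$, the paper first expands along its one-dimensional center (the $Z$ of Proposition~\ref{propertysp3}), killing the nontrivial modes via the pairing $\int\widetilde{\varphi}\,\theta_{Sp_4}^{\phi,\psi_\beta}$ and cuspidality, and only then passes to the full constant term.
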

\begin{proof}
Since the two unipotent radicals which correspond to the two
maximal parabolic subgroups of $G$, are conjugated one to the
other inside $F_4$, it is enough to prove that the integral
$$\int\limits_{Sp_4(F)\backslash Sp_4({\bf
A})}\int\limits_{F\backslash {\bf
A}}\widetilde{\varphi}(h)\theta((x_{2342}(r),h))drdh$$ is zero for
all choice of data. From Proposition \ref{property2}, this
integral is equal to
$$\int\limits_{Sp_4(F)\backslash Sp_4({\bf
A})}\widetilde{\varphi}(h)\theta^U((1,h))dh+
\int\limits_{Sp_4(F)\backslash Sp_4({\bf
A})}\widetilde{\varphi}(h)\sum_{\gamma\in Q(F)\backslash
Sp_6(F)}\sum_{\epsilon\in
F^*}\theta^{U,\psi}(h_2(\epsilon)(1,h))dh$$ Denote the first summand by $I_1$, and the second 
by $I'$. From Proposition
\ref{eisen1}, it follows that the first summand is zero. Indeed,
it is zero if the integral
$$\int\limits_{Sp_4(F)\backslash Sp_4({\bf
A})}\widetilde{\varphi}(h)\theta_6(h)dh$$ is zero
for all choice of data. To prove that we expand along $Z(F)\backslash Z({\bf A})$ where $Z$ was
defined right before Proposition \ref{propertysp3}. Thus, the above integral is equal to
\begin{equation}\label{cuspsl22}
\int\limits_{Sp_4(F)\backslash Sp_4({\bf A})}\widetilde{\varphi}(h)\theta_6^Z(h)dh+
\sum_{\beta\in F^*}\int\limits_{Sp_4(F)\backslash Sp_4({\bf A})}
\widetilde{\varphi}(h)\theta_6^{Z,\psi_\beta}(h)dh
\end{equation}
In the first term, we use the fact that $U(GL_1\times Sp_4)/Z$ is an abelian subgroup. See beginning
of subsection 2.6 for notations. From the fact that $\Theta_6$ is a minimal representation, we
deduce that 
$$\int\limits_{Sp_4(F)\backslash Sp_4({\bf A})}\widetilde{\varphi}(h)\theta_6^Z(h)dh=
\int\limits_{Sp_4(F)\backslash Sp_4({\bf A})}\widetilde{\varphi}(h)\theta_6^{U(GL_1\times Sp_4)}(h)dh$$ 
Arguing in a similar way as in integral \eqref{cuspsl2} we deduce that the above integral is zero for all choice of data. The notations of the second summand
of \eqref{cuspsl22} are as in Proposition \ref{propertysp3}, and it follows from that Proposition
that each term in the second summand of \eqref{cuspsl22} is zero. Indeed, from Proposition 
\ref{propertysp3} it follows that each term is equal to
$$\int\limits_{Sp_4(F)\backslash Sp_4({\bf A})}\widetilde{\varphi}(h)\theta_{Sp_4}^{\phi,\psi_\beta}
(h)dh$$ By cuspidality, this integral is zero. Thus $I_1=0$.

Next consider the integral $I'$. Let $P$ denote the maximal
parabolic subgroup of $Sp_6$ whose Levi part contains $Sp_4$. The
space $Q(F)\backslash Sp_6(F)/P(F)$ contains two elements which we can choose
to be $e$ and $w[234]$. The contribution to $I'$ from the identity
element is
$$\int\limits_{S(3)(F)\backslash Sp_4({\bf
A})}\widetilde{\varphi}(h)\sum_{\epsilon\in
F^*}\theta^{U,\psi}(h_2(\epsilon)(1,h))dh$$ where $S(3)$ is the
maximal parabolic subgroup of $Sp_4$ whose Levi part contains the
group generated by $<x_{\pm(0010)}(r)>$. Denote the unipotent
radical of $S(3)$ by $N(3)$. Then it follows from Proposition
\ref{property3} that the integral $\int\limits_{N(3)(F)\backslash
N(3)({\bf A})}\widetilde{\varphi}(nh)dn$ is an inner integration to
the above integral. By the cuspidality of $\widetilde{\pi}$ this
integral is zero.

The second representative contributes to $I'$ the integral
$$\int\limits_{S(3)(F)\backslash Sp_4({\bf
A})}\widetilde{\varphi}(h)\sum_{\delta_i\in F,\epsilon\in
F^*}\theta^{U,\psi}(h_2(\epsilon)w[234]y(\delta_1,
\delta_2,\delta_3)(1,h))dh$$ where $y(\delta_1,
\delta_2,\delta_3)=x_{0001}(\delta_1)x_{0011}(\delta_2)x_{0122}(\delta_3)$.
If $\delta_1=\delta_2=0$ then as in the previous representative,
we factor the subgroup $N(3)$ to get zero contribution. Otherwise, the
group $SL_2(F)$ which is generated by $<x_{\pm(0010)}(r)>$ acts on
the set $\{x_{0001}(\delta_1)x_{0011}(\delta_2):(\delta_1,
\delta_2)\ne (0,0)\}$ with one orbit. Thus the above integral is
equal to
$$\int\limits_{T(F)N(F)\backslash Sp_4({\bf
A})}\widetilde{\varphi}(h)\sum_{\delta_3\in F,\epsilon\in
F^*}\theta^{U,\psi}(h_2(\epsilon)w[234]y(0,1,\delta_3)(1,h))dh$$
where $N$ is the maximal unipotent subgroup of $Sp_4$ and $T$ is a
one dimensional torus. Let $S(2)$ denote the maximal parabolic subgroup of $Sp_4$ whose Levi part is
$GL_1\times SL_2$. Let $N(2)$ denote it's unipotent radical. Thus $N(2)=\{x_{0100}(r_1)
x_{0110}(r_2)x_{0120}(r_3)\}$.
Using commutation relations, it follows  from
Proposition \ref{property3} that the function
$$h\mapsto \theta^{U,\psi}(h_2(\epsilon)w[234]y(0,1,\delta_3)(1,h))$$
is left invariant under $N(2)({\bf A})$. Thus we get zero by the
cuspidality of $\widetilde{\pi}$. Hence $I'=0$ and the lift is
cuspidal.
\end{proof}

Next we consider the question of the nonvanishing of the lift. To do
that we need to find conditions so that the integral
$$W_{\widetilde{f},\beta}(g)=
\int\limits_{Sp_4(F)\backslash Sp_4({\bf
A})}\int\limits_{(F\backslash {\bf A})^2}
\widetilde{\varphi}(h)\theta((x_{0122}(r_1)x_{2342}(r_2)g,h))
\psi(\beta r_1+ r_2)dr_idh$$ will not be zero for some choice of
data. Here $\beta\in F^*$.

For $\delta\in F^*$, let $SO_4^\delta$ denote the stabilizer
insider $SO_5$ of a vector of length $\delta$. We have
\begin{proposition}\label{sp43.3}
The representation $\widetilde{\sigma}(\widetilde{\pi})$ is
nonzero if and only there exists $\beta\in F^*$ such that the
integral
\begin{equation}\label{sp41}
\int\limits_{SO_4^\beta(F)\backslash SO_4^\beta({\bf A})}
\widetilde{\varphi}(m)\theta_{Sp_4}^{\phi,\psi}(m)dm
\end{equation}
is not zero for some choice of data.
\end{proposition}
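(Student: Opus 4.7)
The plan is to evaluate $W_{\widetilde{f},\beta}(g)$ at $g=1$ by a chain of Fourier manipulations that reduces the integral to a theta period on $\widetilde{Sp}_4(\mathbf{A})$, with the $SO_4^\beta$-period then emerging from orbit analysis for the dual pair $(\widetilde{Sp}_4,O_5)$. Both directions of the claimed equivalence will follow from one formula expressing $W_{\widetilde{f},\beta}(1)$ in terms of the period in \eqref{sp41}, combined with standard Gelfand-Kazhdan type arguments as in \cite{Ga-S}.

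First I would observe that $x_{0122}(r_1)\in Sp_6$, since $(0122)$ has coefficient zero on $\alpha_1$, so it lies in the Levi $\widetilde{M}^0$ of $P_{\alpha_2,\alpha_3,\alpha_4}$, as does the group $Sp_4=\langle x_{\pm(0100)}(r),x_{\pm(0010)}(r)\rangle$ over which $\widetilde{\varphi}$ is integrated. The subgroup $Z=\{x_{2342}(r_2)\}$ is the center of $U=U_{\alpha_2,\alpha_3,\alpha_4}$ and commutes with $Sp_6$. Performing the $r_2$-integration against $\psi(r_2)$ yields the Fourier coefficient $\theta^{Z,\psi}$ attached to the minimal orbit $A_1$, so that
\[
W_{\widetilde{f},\beta}(1)=\int_{Sp_4(F)\backslash Sp_4(\mathbf{A})}\int_{F\backslash\mathbf{A}}\widetilde{\varphi}(h)\,\theta^{Z,\psi}(x_{0122}(r_1)h)\,\psi(\beta r_1)\,dr_1\,dh.
\]

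Next I would apply Proposition \ref{property6} to replace $\theta^{Z,\psi}$ by a theta function $\theta^{\phi,\psi}_{Sp_{14}}$ on $\widetilde{Sp}_{14}(\mathbf{A})\cdot\mathcal{H}_{15}(\mathbf{A})$ evaluated at $\varpi_3(x_{0122}(r_1)h)$; this is legitimate since $x_{0122}(r_1)h\in\widetilde{Sp}_6(\mathbf{A})$, and the density statement of that proposition permits us to work with such expressions. The remaining $r_1$-integration becomes a Fourier coefficient of $\theta^{\phi,\psi}_{Sp_{14}}$ along the one-parameter unipotent subgroup $\varpi_3(\{x_{0122}(r_1)\})$ against the character $\psi(\beta r_1)$. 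The root $(0122)$ is long and, up to a reordering of $C_3$-coordinates, corresponds to the highest root $2e_1$ of $Sp_6$; its image under $\varpi_3$ is a Siegel-type unipotent element of $Sp_{14}$ whose action on $\phi\in\mathcal{S}(\mathbf{A}^7)$ in the Schr\"odinger model is multiplication by $\psi(r_1 Q(x)/2)$ for a non-degenerate quadratic form $Q$ on a five-dimensional subspace. The Fourier coefficient against $\psi(\beta r_1)$ then localizes $\phi$ to the quadric $Q(x)=-2\beta$, and using the decomposition $\mathbf{C}^6=\mathbf{C}^4\oplus\mathbf{C}^2$, in which $Sp_4\subset Sp_6$ acts on $\mathbf{C}^4$, the residual Schwartz function $\phi'$ becomes a test function for the theta representation of $\widetilde{Sp}_4(\mathbf{A})\cdot\mathcal{H}_5(\mathbf{A})$ attached to $\psi$.

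Third, after these reductions the integral takes the form
\[
\int_{Sp_4(F)\backslash Sp_4(\mathbf{A})}\widetilde{\varphi}(h)\,\theta^{\phi',\psi}_{Sp_4}(h)\,dh,
\]
again modulo the density statement. Unfolding $\theta^{\phi',\psi}_{Sp_4}(h)=\sum_{\xi\in F^2}(\omega_\psi(h)\phi')(\xi)$ and decomposing $F^2$ into orbits under $Sp_4(F)$, the nonzero orbits are indexed by the value of a quadratic invariant; under the accidental isomorphism $PGSp_4\cong SO_5$, the stabilizer in $Sp_4$ of a vector whose invariant equals $\beta$ is precisely $SO_4^\beta$, so the orbit labelled by $\beta$ contributes exactly the period in \eqref{sp41}. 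The linear independence of the various characters together with the freedom in choosing $\phi'$ allows one to isolate individual orbits and deduce the stated equivalence. The main obstacle is the Weil-representation computation in the second step: pinpointing the image of $\varpi_3(\{x_{0122}(r_1)\})$ in $Sp_{14}$, computing its action on the Schr\"odinger model of $\omega_{\psi}$, and verifying that the $\psi(\beta r_1)$-Fourier coefficient reduces $\phi\in\mathcal{S}(\mathbf{A}^7)$ to a Schwartz function on $\mathbf{A}^2$ whose dependence on $\beta$ is through the length parameter of the quadratic form on the five-dimensional complement. Once this explicit local computation is carried out, the remainder of the proof is standard orbit analysis for the theta integral.
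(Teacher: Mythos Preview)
Your overall strategy—pass to $\theta^{Z,\psi}$, invoke Proposition~\ref{property6} to replace it by $\theta^{\phi,\psi}_{Sp_{14}}\circ\varpi_3$, and then analyze the resulting Weil-representation integral—is exactly the route the paper takes. The first two steps are fine. The gap is in step~3, and it stems from not carrying out cleanly the decomposition you allude to in step~2.

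Under the embedding $SL_2\times Sp_4\hookrightarrow Sp_6$ (with $SL_2=\langle x_{\pm(0122)}\rangle$), the $14$-dimensional representation $\varpi_3$ decomposes as $(\mathrm{triv}\boxtimes\mathrm{std}_4)\oplus(\mathrm{std}_2\boxtimes\varpi_2)$, of dimensions $4+10$. Consequently $\theta^{\phi,\psi}_{Sp_{14}}$ restricted to $SL_2\times Sp_4$ factors as $\theta^{\phi,\psi}_{Sp_4}(h)\cdot\theta^{\phi_1,\psi}_{Sp_{10}}(\varpi_2(h),x(r_1))$, where the first factor is the elementary theta on $\widetilde{Sp}_4$ built from $\mathcal S(\mathbf A^2)$ and the second is the theta for the dual pair $(O_5,SL_2)$ inside $Sp_{10}$. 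The paper uses this factorization explicitly. The $r_1$-integration with character $\psi(\beta r_1)$ then acts only on the second factor; unfolding $\theta^{\phi_1,\psi}_{Sp_{10}}$ over $\xi\in F^5$ and integrating in $r_1$ forces $Q(\xi)=\beta$, and the stabilizer of such a $\xi$ in $SO_5\cong PGSp_4$ is $SO_4^\beta$. That is where the period \eqref{sp41} comes from; the factor $\theta^{\phi,\psi}_{Sp_4}$ in \eqref{sp41} is the untouched $\mathcal S(\mathbf A^2)$ piece, not something produced by the $r_1$-integration.

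Your step~3 conflates these two factors. You write the reduced integral as $\int\widetilde\varphi(h)\theta^{\phi',\psi}_{Sp_4}(h)\,dh$ with $\phi'\in\mathcal S(\mathbf A^2)$ and then try to obtain $SO_4^\beta$ by unfolding over $\xi\in F^2$, asserting that ``nonzero orbits are indexed by the value of a quadratic invariant.'' But $Sp_4(F)$ acting on a Lagrangian $F^2$ has no nonzero quadratic invariant—the symplectic form vanishes on a Lagrangian and the Siegel Levi $GL_2(F)$ acts transitively on $F^2\setminus\{0\}$—so no $SO_4^\beta$ can arise that way. The invocation of $PGSp_4\cong SO_5$ points to the right mechanism, but it applies to the five-dimensional $\varpi_2$-piece, not to $F^2$. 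Once you separate the two factors as above, the argument goes through and matches the paper's proof; without that separation, step~3 as written does not produce the claimed stabilizer.
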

\begin{proof}
We compute $W_{\widetilde{f},\beta}(g)$. Using Proposition
\ref{property6}, the integral $W_{\widetilde{f},\beta}(g)$ is not
zero for some choice of data, if and only if the integral
$$\int\limits_{Sp_4(F)\backslash Sp_4({\bf
A})}\int\limits_{F\backslash {\bf A}}
\widetilde{\varphi}(h)\theta_{Sp_{14}}^{\phi',\psi}
(\varpi_3(x_{0122}(r_1)h))\psi(\beta r_1)dr_1dh$$ is not zero for
some choice of data. The group we integrate over is a subgroup of
$SL_2\times Sp_4$ embedded inside $Sp_6$ in the obvious way. Thus,
from the restriction of $\varpi_3$ to this subgroup it follows from
the well known factorization of the theta function,  that the above
integral is equal to
$$\int\limits_{Sp_4(F)\backslash Sp_4({\bf
A})}\int\limits_{F\backslash {\bf A}}
\widetilde{\varphi}(h)\theta_{Sp_{4}}^{\phi,\psi}(h)
\theta_{Sp_{10}}^{\phi_1,\psi}((\varpi_2(h),x(r_2))\psi(\beta
r_2)dr_2dh$$ Here $\varpi_2(g)$ is the degree five representation of
$Sp_4$. Also, by $(\varpi_2(g),x(r_2))$ we mean the embedding of
these groups inside the commuting pair $SO_5\times SL_2$ inside
$Sp_{10}$. Unfolding the theta function of $Sp_{10}$, we obtain only
one orbit, corresponding to vectors of length $\beta$. The
stabilizer is the group we denoted by $SO_4^\beta$. Thus,
$W_{\widetilde{f},\beta}(g)$ is equal to
$$\int\limits_{SO_4^\beta(F)\backslash Sp_4({\bf
A})}\widetilde{\varphi}(h)\theta_{Sp_{4}}^{\phi,\psi}(h)
\phi_1(l(\beta)h)dh$$ where $l(\beta)$ is a vector in $F^5$ whose
length is $\beta$. Factoring the measures, integral \eqref{sp41} appears as an inner integration. 
From this the Proposition follows.
\end{proof}

\subsection{\bf The Commuting Pair $(SL_2,SL_4)$}

In this subsection we will study the lifting from automorphic
representations defined on $SL_2({\bf A})$ to automorphic
representations defined on $\widetilde{SL}_4({\bf A})$,  and its
inverse map. We start with:

\subsubsection{\bf From $GL_2$ to $\widetilde{SL}_4$}

We consider the following embedding of $(SL_2,SL_4)$ inside the
group $F_4$. The group $SL_2$ is generated by $<
x_{\pm(0001)}(r)>$. The group  $SL_4$ is the group generated by
$$<x_{\pm(1000)}(r);x_{\pm(0100)}(r);x_{\pm(1242)}(r);x_{\pm(1100)}(r);
x_{\pm(1342)}(r);x_{\pm(2342)}(r)>$$ Since $SL_2$ is generated by
unipotent elements which correspond to short roots, this copy splits
under the double cover of $F_4$.

Let $\pi$ denote an irreducible cuspidal representation of
$GL_2({\bf A})$. We shall denote by $\widetilde{\sigma}(\pi)$ the
automorphic representation of $\widetilde{SL}_4$ spanned by all
automorphic functions
\begin{equation}
\widetilde{f}(h)=\int\limits_{SL_2(F)\backslash SL_2({\bf
A})}{\varphi}(g)\theta((h,g))dg\notag
\end{equation}
Here $h\in \widetilde{SL}_4({\bf A})$. We shall denote by
$L(\pi,s)$ the standard $L$ function associated with $\pi$. We
prove
\begin{proposition}\label{sl41.1}
Suppose that $\pi$ is an irreducible cuspidal representation of
${GL}_2({\bf A})$ such that $L(\pi,1/2)=0$. Then,
$\widetilde{\sigma}(\pi)$ defines a nonzero cuspidal
representation of $\widetilde{SL}_4({\bf A})$.
\end{proposition}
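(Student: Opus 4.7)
The plan is to adapt the strategy of Propositions \ref{sl31.1}, \ref{sp41.1}, and \ref{sp42.2} to the present setting. For cuspidality, the embedded $SL_4\subset F_4$ has positive roots $(1000),(0100),(1100),(1242),(1342),(2342)$ with simple roots $(1000),(0100),(1242)$. Its three maximal parabolic unipotent radicals $V_1=\{(1000),(1100),(2342)\}$, $V_2=\{(0100),(1100),(1342),(2342)\}$, and $V_3=\{(1242),(1342),(2342)\}$ all contain the highest root $(2342)$, so each contains the center $Z$ of the Heisenberg radical $U=U_{\alpha_2,\alpha_3,\alpha_4}$; moreover by the $A_3$ outer automorphism $V_1$ and $V_3$ are conjugate, so it suffices to treat $V_1$ and $V_2$. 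For each I would compute the constant term, apply Proposition \ref{property2} to decompose it along $Z({\bf A})V_i(F)\backslash V_i({\bf A})$, and obtain a $\theta^U$-piece plus a sum over $\gamma\in Q(F)\backslash Sp_6(F)$ of $\theta^{U,\psi}(h_2(\epsilon)\gamma(v,g))$-pieces.

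For the $\theta^U$-contribution, Proposition \ref{eisen1} identifies the restriction to $\widetilde{Sp}_6({\bf A})$ with the minimal representation $\Theta_6$, and the tools of subsection 2.6 (Propositions \ref{propertysp1}--\ref{propertysp2}) then reduce this term to integrals of $\varphi$ against constant terms along $Sp_6$-parabolics, which vanish by cuspidality of $\pi$ after factoring through the unipotent radical of $GL_2\cap Sp_6$. For the $\theta^{U,\psi}$-contribution I would work out the double cosets $Q(F)\backslash Sp_6(F)/R(F)$, where $R$ is the image in $Sp_6$ of the embedded $SL_4\cap P_{\alpha_2,\alpha_3,\alpha_4}$; for each representative either Proposition \ref{property3} yields an invariance under a unipotent subgroup of $GL_2$ that kills the integrand by cuspidality, or, as in the earlier proofs, conjugation of a one-parameter subgroup inside $V_i$ across the Weyl representative produces an integral $\int_{F\backslash{\bf A}}\psi(\epsilon r)\,dr=0$.

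For non-vanishing, I would compute the Whittaker coefficient $W_{\widetilde{f},\beta}(h)$ of $\widetilde{f}$ along the maximal unipotent of the embedded $SL_4$ with a generic character. Running the same Fourier expansion machinery, Proposition \ref{property6} then allows one to replace $\theta^{Z,\psi_\beta}$ by a theta function on $\widetilde{Sp}_{14}({\bf A})$ pulled back through the third fundamental representation $\varpi_3$ of $Sp_6$. The decomposition of $\varpi_3$ under the subgroup $SL_2\times Sp_4\subset Sp_6$ lets the theta kernel factor through theta functions on smaller groups, and the resulting global integral unfolds to a Rankin--Selberg-type zeta integral for $\pi$ in which the standard $L$-function $L(\pi,s)$ appears on the nose at $s=\tfrac{1}{2}$. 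A residue analysis should produce a piece proportional to $L(\pi,\tfrac{1}{2})$, corresponding to an Eisenstein obstruction to cuspidality, together with a secondary cuspidal piece independent of the $L$-value; the hypothesis $L(\pi,\tfrac{1}{2})=0$ removes the former while the latter survives and is non-zero for generic $\varphi$.

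The main obstacle I anticipate is this last unfolding: correctly identifying the Rankin--Selberg integral, locating the critical point $s=\tfrac{1}{2}$, and cleanly separating the $L(\pi,\tfrac{1}{2})$-term from the surviving cuspidal term, whose non-vanishing for generic $\varphi$ must then be verified. A subsidiary technical difficulty in the cuspidality step is the double coset analysis for $V_2$, where the $GL_2\times GL_2$-Levi of the $SL_4$-parabolic does not embed as a standard Levi of $Sp_6$, so the orbit structure in $Q\backslash Sp_6/R$ has to be computed by hand before Proposition \ref{property3} can be applied.
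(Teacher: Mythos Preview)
Your overall architecture for cuspidality is correct and matches the paper: reduce to two unipotent radicals by a Weyl conjugation, apply Proposition \ref{property2}, kill the $\theta^U$-piece via the $Sp_6$ minimal representation, and analyze the $\theta^{U,\psi}$-piece via $Q\backslash Sp_6$ double cosets. But there is a genuine gap in where you place the hypothesis $L(\pi,1/2)=0$.

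You claim the $\theta^{U,\psi}$-terms all die either by cuspidality (via Proposition \ref{property3} invariance) or by a vanishing character integral. This is false for one of the cells. In the paper the coset $Q\backslash Sp_6/Q$ is used with representatives $e,w[2],w[232],w[232432]$; the term $I_3$ (from $w[232]$) further decomposes over $S(3)\backslash SL_3/S(4)$, and the identity representative there leads, for the radical $V_2$, to an action of $SL_2(F)$ on a three-dimensional space by the symmetric square. The orbits of vectors of nonzero length have stabilizer an orthogonal group $O_2$, and the corresponding contribution is $\int_{O_2(F)\backslash O_2({\bf A})}\varphi(mg)\,dm$. These torus periods do not vanish by cuspidality or by any character integral; by Waldspurger \cite{W} they vanish for all such $O_2$ precisely when $L(\pi,1/2)=0$. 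This is the entire role of the $L$-value hypothesis, and it lives in the cuspidality argument, not in the nonvanishing argument.

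Consequently your nonvanishing plan is misdirected. There is no Rankin--Selberg unfolding, no $L(\pi,1/2)$-term to cancel, and Proposition \ref{property6} is not used here. The paper simply reruns the same expansion on the Whittaker integral $W_{\widetilde f,\beta}$; all terms vanish except the one analogous to $I_3$, and after conjugating the unipotent elements and collapsing, one obtains
\[
W_{\widetilde f,\beta}(h)=\int_{N_2({\bf A})\backslash SL_2({\bf A})}W_{\varphi,\beta}(g)\int_{\bf A}\theta^{U,\psi}(\cdots(1,g))\,\psi(r_2)\,dr_2\,dg,
\]
where $W_{\varphi,\beta}$ is the $\psi_\beta$-Whittaker coefficient of $\varphi$. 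Nonvanishing then follows (via \cite{Ga-S}) from genericity of $\pi$, with no $L$-value condition needed. So: move the $L(\pi,1/2)=0$ hypothesis into the cuspidality step to kill the $O_2$-period contributions, and replace your nonvanishing argument by a direct Whittaker unfolding.
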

\begin{proof}
We start with the cuspidality condition. The group $SL_4$ has
three maximal parabolic subgroups. Their unipotent radicals are
given by $V_1=\{x_{1242}(r_1)x_{1342}(r_2)x_{2342}(r_3)\}$,
$V_2=\{x_{0100}(r_1)x_{1100}(r_2)x_{1342}(r_3)x_{2342}(r_4)\}$ and
$V_3=\{x_{1000}(r_1)x_{1100}(r_2)x_{2342}(r_3)\}$. The Weyl
element $w[3243423]$ conjugates $V_3$ to $V_1$ and fixes the group
$SL_2$ generated by $< x_{\pm(0001)}(r)>$. Hence it is enough to
prove that for $i=1,2$, the integral
\begin{equation}\label{sl41}
\int\limits_{SL_2(F)\backslash SL_2({\bf
A})}\int\limits_{V_i(F)\backslash V_i({\bf
A})}{\varphi}(g)\theta((v,g))dvdg
\end{equation}
is zero for all choice of data. Both unipotent subgroups $V_i$
contains the group $Z=\{x_{2342}(r)\}$. Hence, using Proposition
\ref{property2}, integral \eqref{sl41} is equal to the sum
$$\int\limits_{SL_2(F)\backslash SL_2({\bf
A})}\int\limits_{Z({\bf A})V_i(F)\backslash V_i({\bf
A})}\varphi(g)\theta^{U}((v,g))dvdg+$$
$$\int\limits_{SL_2(F)\backslash SL_2({\bf
A})}\varphi(g)\int\limits_{Z({\bf A})V_i(F)\backslash V_i({\bf
A})} \sum_{\gamma\in Q(F)\backslash Sp_6(F)}\sum_{\epsilon\in F^*}
\theta^{U,\psi}(h_2(\epsilon)\gamma (v,g))dvdg$$ Denote the first
integral by $I'$ and the second one by $I''$. From Proposition
\ref{eisen1}, it follows that the integral
$$\int\limits_{SL_2(F)\backslash SL_2({\bf
A})}\varphi(g)\theta_6(d(g))dg$$ is an inner
integration to integral $I'$. Here, for all $g\in SL_2({\bf A})$,
we set $d(g)=diag(g,I_2,g^*)$, and $\theta_6$ is a vector in the space of the representation $\Theta_6$, which was defined right before Proposition \ref{eisen2}. 
To prove this integral is zero we proceed exactly as in the proof that integral \eqref{cuspsl20} is
zero for all choice of data. Indeed, as can be seen the proof of that integral only uses one copy
of $SL_2$, the one which we embedded here as $\{d(g) : g\in SL_2\}$. Hence $I'=0$.

Next we compute $I''$. As in the proof of Proposition \ref{sl32.2},
for $1\le j\le 4$, we denote by $I_j$ the contribution to $I''$ from
each of the double coset representatives of $Q(F)\backslash Sp_6(F)/Q(F)$,
which we choose as $e,w[2],w[232]$ and $w[232432]$. The integral
$I_1$ is equal to
$$\int\limits_{SL_2(F)\backslash SL_2({\bf
A})}\int\limits_{Z({\bf A})V_i(F)\backslash V_i({\bf
A})}\varphi(g)\sum_{\epsilon\in
F^*}\theta^{U,\psi}(h_2(\epsilon)(v,g))dvdg$$ Using Proposition
\ref{property3} we obtain $\int\limits_{SL_2(F)\backslash
SL_2({\bf A})}\varphi(g)dg$ as inner integration. Thus $I_1=0$.
Next, $I_4$ is equal to
$$\int\limits_{SL_2(F)\backslash SL_2({\bf
A})}\int\limits_{Z({\bf A})V_i(F)\backslash V_i({\bf
A})}\varphi(g)\sum_{\delta_i\in F\epsilon\in
F^*}\theta^{U,\psi}(h_2(\epsilon)w[232432]m(\delta_i)(v,g))dvdg$$
Here
$$m(\delta_i)=x_{0100}(\delta_1)x_{0110}(\delta_2)x_{0111}(\delta_3)
x_{0120}(\delta_4)x_{0121}(\delta_5)x_{0122}(\delta_6)$$ Notice
that $V_i$ contains the one dimensional unipotent subgroup
$x_{1342}(r)$. From the identity $w[232432]m(\delta_i)x_{1342}(r)=
x_{1000}(r)w[232432]m(\delta_i)$, we obtain
$\int\limits_{F\backslash {\bf A}}\psi(\epsilon r)dr$ as inner
integration. Thus $I_4=0$.

Integral $I_2$ is equal to
$$\int\limits_{SL_2(F)\backslash SL_2({\bf
A})}\varphi(g)\int\limits_{Z({\bf A})V_i(F)\backslash V_i({\bf A})}
\sum_{\gamma\in S(4)(F)\backslash SL_3(F)}\sum_{\delta\in F,
\epsilon\in F^*}
\theta^{U,\psi}(h_2(\epsilon)w[2]x_{0100}(\delta)\gamma (v,g))dvdg$$
Here $S(4)$ is the maximal parabolic subgroup of $SL_3$ which
contains the group $<x_{\pm 0001}(r)>$. The space $S(4)(F)\backslash
SL_3(F)/S(4)(F)$ contains two representatives, which we can choose as $e$
and $w[3]$. The first representative contributes zero to $I_2$.
Indeed, it is equal to
$$\int\limits_{SL_2(F)\backslash SL_2({\bf
A})}\varphi(g)\int\limits_{Z({\bf A})V_i(F)\backslash V_i({\bf A})}
\sum_{\delta\in F, \epsilon\in F^*}
\theta^{U,\psi}(h_2(\epsilon)w[2]x_{0100}(\delta) (v,g))dvdg$$ It
follows from Proposition \ref{property3}, that for all $g\in
SL_2({\bf A})$ we have
$\theta^{U,\psi}(h_2(\epsilon)w[2]x_{0100}(\delta) (v,g))=
\theta^{U,\psi}(h_2(\epsilon)w[2]x_{0100}(\delta) (v,1))$. Hence, we
obtain the integral $\int\limits_{SL_2(F)\backslash SL_2({\bf
A})}\varphi(g)dg$ as inner integration. Thus, $I_2$ is equal to
$$\int\limits_{B_2(F)\backslash SL_2({\bf
A})}\varphi(g)\int\limits_{Z({\bf A})V_i(F)\backslash V_i({\bf
A})} \sum_{\delta_i\in F, \epsilon\in F^*}
\theta^{U,\psi}(h_2(\epsilon)w[23]x_{0010}(\delta_1)x_{0120}(\delta_2)
(v,g))dvdg$$ where $B_2$ is the Borel subgroup of $SL_2$. From
commutation relations in $F_4$, and using Proposition
\ref{property3}, we deduce that the function
$$g\mapsto \theta^{U,\psi}(h_2(\epsilon)w[23]x_{0010}(\delta_1)x_{0120}(\delta_2)
(v,g))$$ is left invariant under $x_{0001}(r)$ for all $r\in {\bf
A}$. Thus we obtain the integral $\int\limits_{F\backslash {\bf
A}}\varphi(\begin{pmatrix} 1&x\\ &1
\end{pmatrix})dx$ as inner integration. From the cuspidality of
$\pi$ it follows that this last integral, and hence $I_2$, is
zero.

Finally, we consider $I_3$. It is equal to
$$\int\limits_{SL_2(F)\backslash SL_2({\bf
A})}\varphi(g)\int\limits_{Z({\bf A})V_i(F)\backslash V_i({\bf
A})} \sum_{\substack{\gamma\in S(3)(F)\backslash SL_3(F)\\
\delta_i\in F,\epsilon\in F^* }}
\theta^{U,\psi}(h_2(\epsilon)w[232]y_1(\delta_1,\delta_2,\delta_3)\gamma
(v,g))dvdg$$ where
$y_1(\delta_1,\delta_2,\delta_3)=x_{0100}(\delta_1)x_{0110}(\delta_2)
x_{0120}(\delta_3)$. Also, $S(3)$ is the maximal parabolic subgroup of $SL_3$ which contains the 
group $<x_{\pm 0010}(r)>$.
The space $S(3)(F)\backslash SL_3(F)/S(4)(F)$ contains
two elements which we choose as $e$ and $w[43]$. The contribution
to $I_3$ from $e$ is equal to
$$\int\limits_{B_2(F)\backslash SL_2({\bf
A})}\varphi(g)\int\limits_{Z({\bf A})V_i(F)\backslash V_i({\bf A})}
\sum_{\delta_i\in F, \epsilon\in F^*}
\theta^{U,\psi}(h_2(\epsilon)w[232]x_{0100}(\delta_1)x_{0110}(\delta_2)
x_{0120}(\delta_3)(v,g))dvdg$$ As above, it follows from Proposition
\ref{property3} that the function
$$g\mapsto \theta^{U,\psi}(h_2(\epsilon)w[232]y_1(\delta_1,\delta_2,\delta_3)
(v,g))$$ is left invariant by $x_{0001}(r)$ for all $r\in {\bf A}$.
Hence we get zero contribution from this term. Thus $I_3$ is equal
to
\begin{equation}\label{sl42}
\int\limits_{SL_2(F)\backslash SL_2({\bf
A})}\varphi(g)\int\limits_{Z({\bf A})V_i(F)\backslash V_i({\bf
A})}\sum_{\delta_i\in F, \epsilon\in F^*}
\theta^{U,\psi}(h_2(\epsilon)w[23243]y(\delta_1,\ldots,\delta_5)
(v,g))dvdg
\end{equation}
Here
$$y(\delta_1,\ldots,\delta_5)=x_{0120}(\delta_1)x_{0121}(\delta_2)
x_{0122}(\delta_3)x_{0010}(\delta_4)x_{0011}(\delta_5)$$ Suppose
first that $i=1$. Then $x_{1242}(r)\in V_1$. We have
$$h_2(\epsilon)w[23243]y(\delta_1,\ldots,\delta_5)x_{1242}(r)=
x_{1000}(\epsilon^{-1}
r)h_2(\epsilon)w[23243]y(\delta_1,\ldots,\delta_5)$$ Changing
variables in $U$, we obtain $\int\limits_{F\backslash {\bf
A}}\psi(\epsilon^{-1}r)dr$ as inner integration. Hence $I_3$ is
zero in this case.

Next suppose that $i=2$. The group $SL_2(F)$ generated by
$<x_{\pm(0001)}(\mu)>$, acts on the group
$\{y(0,0,0,\delta_4,\delta_5) : \delta_i\in F\}$ with two orbits.

Consider first the trivial orbit. We denote the contribution to
$I_3$ from this term by $I_{31}$. Then we consider the action of the
above $SL_2(F)$ on the group $\{y(\delta_1,\delta_2,\delta_3,0,0) :
\delta_i\in F\}$. The action is given by  the symmetric square
representation. There are infinite number of orbits. First, using
the cuspidality of $\pi$, the trivial orbit and the orbits which
correspond to a nonzero vector with zero length, all contribute zero
to the integral $I_{31}$. Indeed, for the trivial orbit we obtain
$\int\limits_{SL_2(F)\backslash SL_2({\bf A})}\varphi(g)dg$ as inner
integration, and for the orbit which corresponds to nonzero vectors
with zero length, we obtain $\int\limits_{N_2(F)\backslash N_2({\bf
A})}\varphi(ng)dn$ as inner integration. Here $N_2$ is the maximal
unipotent subgroup of $SL_2$. Clearly both integrals are zero.

Thus we are left with the orbits which correspond to a vector of
nonzero length. There are infinite number of such vectors, and the
stabilizer inside $SL_2(F)$ of any such orbit, is an orthogonal
group $O_2(F)$. Factoring the measure, and using Proposition
\ref{property3} we obtain $\int\limits_{O_2(F)\backslash O_2({\bf
A})}\varphi(mg)dm$ as inner integration. The type of the orthogonal
group, depends on the representative of the orbit. From \cite{W} it
follows that the vanishing of $L(\pi,1/2)$ is equivalent to the
vanishing of all the above integrals over $O_2$. Thus $I_{31}=0$,
and $I_3$ is equal to
$$\int\limits_{N_2(F)\backslash SL_2({\bf
A})}\varphi(g)\int\limits_{Z({\bf A})V_2(F)\backslash V_2({\bf
A})} \sum_{\delta_i\in F, \epsilon\in F^*}
\theta^{U,\psi}(h_2(\epsilon)w[23243]y(\delta_1,\delta_2,\delta_3,0,1)
(v,g))dvdg$$ where $N_2$ is the maximal unipotent subgroup of
$SL_2$. The unipotent elements $x_{0100}(r_1)$ and $x_{1100}(r_2)$
are inside $V_2$. Using commutation relations we have,
$$h_2(\epsilon)w[23243]y(\delta_1,\delta_2,\delta_3,0,1)x_{1100}(r_2)=
vux_{1000}(\epsilon^{-1}\delta_1
r_2)h_2(\epsilon)w[23243]y(\delta_1,\delta_2,\delta_3,0,1)$$ where
$v$ is an element in the stabilizer of $\psi_U$ and $u\in U$ such
that $\psi_U(u)=1$. Thus, changing variables in $U$, we obtain
$\int\limits_{F\backslash {\bf A}}\psi(\epsilon^{-1}\delta_1
r_2)dr_2$ as inner integration. Hence, we may assume that
$\delta_1=0$. Next, using commutation relations we obtain
$$h_2(\epsilon)w[23243]y(0,\delta_2,\delta_3,0,1)x_{0100}(r_1)=
vuh_2(\epsilon)w[23243]y(0,\delta_2,\delta_3+r_1,0,1)$$ where $u$
and $v$ are as above. Collapsing summation with integration, $I_3$
is equal to
\begin{equation}\label{sl421}
\int\limits_{N_2(F)\backslash SL_2({\bf
A})}\varphi(g)\int\limits_{\bf A}\sum_{\delta_2\in F, \epsilon\in
F^*} \theta^{U,\psi}(h_2(\epsilon)w[23243]y(0,\delta_2,r_1,0,1)
(1,g))dr_1dg
\end{equation}
Using Proposition \ref{property3}, the function
$$g\mapsto \int\limits_{\bf A}\sum_{\delta_i\in F, \epsilon\in
F^*} \theta^{U,\psi}(h_2(\epsilon)w[23243]y(0,\delta_2,r_1,0,1)
(1,g))dr_1$$ is left invariant under $x_{0001}(r)$ for all $r\in
{\bf A}$. Thus, $I_3=0$ by the cuspidality of $\pi$. Hence integral
\eqref{sl41} is zero for all unipotent radicals $V_i$. This
completes the proof of the cuspidality of the lift.

To prove the nonvanishing of the lift, we shall compute the
Whittaker function of $\widetilde{f}$, where $\widetilde{f}$ is in
the space of  $\widetilde{\sigma}(\pi)$. Let $\beta\in
(F^*)^4\backslash F^*$. For $h\in \widetilde{SL}_4({\bf A})$,
denote by $W_{\widetilde{f},\beta}(h)$ the integral
$$\int\limits_{(F\backslash A)^6}\widetilde{f}(x_{1000}(r_1)x_{0100}(r_2)
x_{1242}(r_3)x_{1100}(r_4)x_{1342}(r_5)x_{2342}(r_6)h)\psi(\beta
r_1+r_2+r_3)dr_i$$ We shall denote this unipotent group by $V$ and
the above character by $\psi_{V,\beta}$. Thus,
$W_{\widetilde{f},\beta}(h)$ is equal to
$$\int\limits_{SL_2(F)\backslash SL_2({\bf
A})}\int\limits_{V(F)\backslash V({\bf
A})}{\varphi}(g)\theta((vh,g))\psi_{V,\beta}(v)dvdg$$ Following the
same steps as we did in the proof of the cuspidality of
$\widetilde{\sigma}(\pi)$, we obtain that all integrals, except
\eqref{sl42}, contribute zero to $W_{\widetilde{f},\beta}(h)$. Thus
$W_{\widetilde{f},\beta}(h)$ is equal to
\begin{equation}
\int\limits_{SL_2(F)\backslash SL_2({\bf
A})}\varphi(g)\int\limits_{Z({\bf A})V(F)\backslash V({\bf
A})}\sum_{\delta_i\in F, \epsilon\in F^*}
\theta^{U,\psi}(h_2(\epsilon)w[23243]y(\delta_1,\ldots,\delta_5)
(vh,g))\psi_{V,\beta}(v)dvdg\notag
\end{equation}
Using the commutation relations as after \eqref{sl42}, and arguing 
as in integral \eqref{sl421},  we deduce that $\epsilon=1$.
Continuing further as in the proof of the cuspidality,
$W_{\widetilde{f},\beta}(h)$ is equal to
$$\int\limits_{N_2(F)\backslash SL_2({\bf
A})}\varphi(g)\int\limits_{\bf A}\int\limits_{F\backslash {\bf A}}
\sum_{\delta_2\in F} \theta^{U,\psi}(w[23243]y(0,\delta_2,r_2,0,1)
(x_{1000}(r_1),g))\psi(\beta r_1+r_2)dr_1dr_2dg$$ Next we
conjugate the unipotent element $x_{1000}(r_1)$ to the left. We
have
$$w[23243]y(0,\delta_2,r_2,0,1)x_{1000}(r_1)=x_{1000}(\delta_2r_1)u'
w[23243]y(0,\delta_2,r_2,0,1)$$ Here $u'\in U$ is such that
$\psi_U(u')=1$. Thus, we obtain the integral
$\int\limits_{F\backslash {\bf A}}\psi((\delta_2-\beta)r_1)dr_1$ as
inner integration. From this we deduce that $\delta_2=\beta$. Hence,
$W_{\widetilde{f},\beta}(h)$ is equal to
$$\int\limits_{N_2(F)\backslash SL_2({\bf
A})}\varphi(g)\int\limits_{\bf
A}\theta^{U,\psi}(w[23243]y(0,\beta,r_2,0,1)
(1,g))\psi(r_2)dr_2dg$$ Using commutation relations and a change
of variables, we obtain
$$\int\limits_{\bf
A}\theta^{U,\psi}(w[23243]y(0,\beta,r_2,0,1)
(1,x_{0001}(r)g))\psi(r_2)dr_2=$$
$$\psi(\beta r)\int\limits_{\bf
A}\theta^{U,\psi}(w[23243]y(0,\beta,r_2,0,1)(1,g))\psi(r_2)dr_2$$
From this we obtain the identity
$$W_{\widetilde{f},\beta}(h)=\int\limits_{N_2({\bf A})\backslash SL_2({\bf
A})}W_{\varphi,\beta}(g)\int\limits_{\bf
A}\theta^{U,\psi}(w[23243]y(0,\beta,r_2,0,1)
(1,g))\psi(r_2)dr_2dg$$ where
$W_{\varphi,\beta}(g)=\int\limits_{F\backslash {\bf
A}}\varphi(\begin{pmatrix} 1&r\\&1 \end{pmatrix}g)\psi(\beta
r)dr$. Using similar arguments as in \cite{Ga-S} we deduce that
$W_{\widetilde{f},\beta}(h)$ is nonzero for some choice of data if
and only if $W_{\varphi,\beta}(g)$ is nonzero for some choice of
data. Since there is always a $\beta\in F^*$ such that
$W_{\varphi,\beta}(g)$ is not zero, the nonvanishing of the lift
follows.
\end{proof}

\subsubsection{\bf From $\widetilde{SL}_4$ to $SL_2$}

To study this lift we consider a different embedding of the two
groups. Viewing $SL_4$ as $Spin_6$, we embed it inside the Levi
part of the maximal parabolic subgroup of $F_4$ whose Levi part
contains $Spin_7$. Thus, the group $SL_4$ is generated by
$$<x_{\pm(1000)}(r);x_{\pm(0100)}(r);x_{\pm(1100)}(r);
x_{\pm(0120)}(r);x_{\pm(1120)}(r);x_{\pm(1220)}(r)>$$ The group
$SL_2$ is generated by $<x_{\pm(1232)}(r)>$. If we conjugate these
groups by the Weyl element $w[3213234]$ we obtain the
embedding we used in the previous subsection.

Let $\widetilde{\pi}$ denote an irreducible cuspidal
representation defined on $\widetilde{SL}_4({\bf A})$. We shall
denote by $\sigma(\widetilde{\pi})$ the automorphic representation
of $SL_2({\bf A})$ spanned by all functions
$$f(g)=\int\limits_{SL_4(F)\backslash SL_4({\bf
A})}\widetilde{\varphi}(h)\theta((h,g))dh$$ Here
$\widetilde{\varphi}$ is a vector in the space of
$\widetilde{\pi}$. We start with
\begin{proposition}\label{sl42.2}
The representation $\sigma(\widetilde{\pi})$ is nonzero if and
only if the integral
\begin{equation}\label{sl44}
\int\limits_{Sp_4(F)\backslash Sp_4({\bf
A})}\widetilde{\varphi}(m)\theta_{Sp_4}^{\phi,\psi}(m)dm
\end{equation}
is nonzero for some choice of data. Here
$\theta_{Sp_4}^{\phi,\psi}$ is the theta function defined on
$\widetilde{Sp}_4({\bf A})$.
\end{proposition}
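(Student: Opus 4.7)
The plan is to establish a direct identity expressing $f(g)$ in terms of the period \eqref{sl44}; the stated equivalence will then follow by an argument of the type used in \cite{Ga-S}. Because $(1232)$ is a short root of $F_4$ (a direct calculation with the Cartan matrix gives $(\alpha,\alpha)=1$), the Fourier coefficient of $\theta$ along the maximal unipotent $\{x_{1232}(r)\}$ of the target $SL_2$ would be attached to the orbit $\widetilde A_1$, which vanishes by Theorem \ref{mini1}. Thus the Whittaker approach used in Proposition \ref{sl41.1} is unavailable, and I would instead analyse $f(g)$ by Fourier-expanding $\theta(hg)$ along the centre $Z=\{x_{2342}(r)\}$ of the Heisenberg $U=U_{\alpha_2,\alpha_3,\alpha_4}$.

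This expansion produces a constant-term contribution involving $\theta^Z(hg)$ and a sum of contributions from $\theta^{Z,\psi_\beta}(hg)$ for $\beta\in F^*$. The constant-term contribution will be handled by Proposition \ref{property2}: the resulting double-coset sum over $Q(F)\backslash Sp_6(F)$ will be treated by the same orbit analysis as in the cuspidality arguments of Propositions \ref{sp41.1}, \ref{sp42.2} and of the cuspidality part of the present Proposition, combining the cuspidality of $\widetilde\pi$, Propositions \ref{property3} and \ref{propertysp1}--\ref{propertysp3}, and the fact from Proposition \ref{eisen1} that $\theta^U$ restricted to $\widetilde{Sp}_6$ is the minimal representation $\Theta_6$. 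Each resulting term will vanish by one of these mechanisms.

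For each $\beta\in F^*$, Proposition \ref{property6} rewrites $\theta^{Z,\psi_\beta}(h)$ in terms of a theta series on ${\mathcal H}_{15}({\bf A})\cdot\widetilde{Sp}_{14}({\bf A})$ pulled back via the third fundamental representation $\varpi_3:Sp_6\to Sp_{14}$. Although $SL_4$ is not contained in the Levi $Sp_6$ of $P_{\alpha_2,\alpha_3,\alpha_4}$, it is contained in the Levi $Spin_7$ of $P_{\alpha_1,\alpha_2,\alpha_3}$ via the exceptional isomorphism $SL_4\cong Spin_6\subset Spin_7$, and the six positive roots generating this $SL_4$ inside $F_4$ are all long. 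Using this structure together with a change of polarisation of the theta series, the dual-pair factorisation of $\theta_{Sp_{14}}$ with respect to $Sp_4\cong Spin_5$, sitting inside $Spin_6=SL_4$ as the stabiliser of a nonzero vector of definite length, will split off an inner theta integral on $\widetilde{Sp}_4$. Unfolding the outer sum and refolding the measure as $Sp_4({\bf A})\backslash SL_4({\bf A})$ times $Sp_4(F)\backslash Sp_4({\bf A})$ will then identify the inner period with \eqref{sl44}.

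The main obstacle will be the reconciliation of the two embeddings used in the previous paragraph: on one hand the embedding $SL_4\hookrightarrow F_4$ through the six chosen long roots, and on the other the embedding $SL_4\hookrightarrow Sp_{14}$ coming from $\varpi_3\circ\iota$ for a suitable auxiliary map $\iota:SL_4\to Sp_6$. One must verify that these two produce the same restriction of the Weil representation, so that the stabiliser of the relevant vector is indeed $Sp_4\cong Spin_5$ rather than some other form. Once this identification is in place, the rest of the proof will be routine orbit bookkeeping of the type carried out in Propositions \ref{sp43.3} and \ref{sl33.3}, and the converse implication will follow in the usual way via the density argument of \cite{Ga-S}.
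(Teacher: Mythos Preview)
Your opening claim is incorrect and sends the whole argument down the wrong path. Taking the Fourier coefficient of $\theta$ along the single one-parameter group $\{x_{1232}(r)\}$ is \emph{not} a Fourier coefficient attached to $\widetilde{A}_1$; the latter lives on the seven-dimensional group $U_\Delta'=\{(0122),(1122),(1222),(1232),(1242),(1342),(2342)\}$ with a character of nonzero length. In fact the paper begins exactly with the Whittaker integral $W_f(g)=\int\widetilde\varphi(h)\theta((h,x_{1232}(r)g))\psi(\beta r)\,dr\,dh$, so the approach you dismiss is the one that works.

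The genuine obstacle in your proposed alternative is precisely the one you flag as ``the main obstacle'' but do not resolve: Proposition~\ref{property6} expresses $\theta^{Z,\psi_\beta}(ug)$ only for $u\in U({\bf A})$ and $g\in\widetilde{Sp}_6({\bf A})$, whereas the $SL_4$ over which you are integrating lies in $Spin_7$ and meets both $U$ and $Sp_6$ nontrivially without being contained in either. There is no ``auxiliary map $\iota:SL_4\to Sp_6$'' through which the Weil representation factors compatibly with the $F_4$-embedding, so the dual-pair factorisation you want cannot be set up this way.

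The paper circumvents this by reversing the order of operations. It first expands $\theta$ along the six-dimensional abelian group $U_1$ generated by $x_\alpha$ with $\alpha\in\{(0122),(1122),(1222),(1242),(1342),(2342)\}$, on which $SL_4(F)=Spin_6(F)$ acts via its standard representation. Combining with the $x_{1232}$-integration, the seven-dimensional characters on $U_2$ of nonzero length are $\widetilde{A}_1$-coefficients and vanish by Theorem~\ref{mini1}; exactly one null-length character survives, and its stabiliser in $SL_4$ is $Sp_4=Spin_5$. Only \emph{after} this reduction to an $Sp_4$-integral does the paper invoke Proposition~\ref{property2}; the point is that (after conjugation by $\mu=w[123]x_{0010}(1)$) this $Sp_4$ sits inside $Sp_6$, so Proposition~\ref{eisen1} and then Proposition~\ref{propertysp3} apply cleanly to produce the period \eqref{sl44}. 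The lesson: isolate the $Sp_4$-stabiliser \emph{before} invoking the Heisenberg/minimal-representation machinery, not after.
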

\begin{proof}
Clearly, $\sigma(\widetilde{\pi})$ is nonzero if and only if the
integral
$$W_{f}(g)=\int\limits_{SL_4(F)\backslash SL_4({\bf
A})}\int\limits_{F\backslash {\bf A}}
\widetilde{\varphi}(h)\theta((h,x_{1232}(r)g))\psi(\beta r)drdh$$
is nonzero for some choice of data. Let $U_1$ denote the abelian
unipotent group generated by all elements of the form
$$u_1(r_1,\ldots,r_6)=x_{0122}(r_1)x_{1122}(r_2)x_{1222}(r_3)x_{1242}(r_4)
x_{1342}(r_5)x_{2342}(r_6)$$ and let $U_2=<U_1,x_{1232}(r)>$. We
expand $\theta$ along the group $U_1(F)\backslash U_1({\bf A})$.
The group $SL_4(F)=Spin_6(F)$ acts on this expansion with three
type of orbits. The first two orbits are the ones which
corresponds to the trivial orbit, and to the orbit corresponding
to nonzero vectors with zero length. Plugging these two Fourier
coefficients in $W_{f}(g)$ we obtain the integrals
$$\int\limits_{U_1(F)\backslash U_1({\bf A})} \int\limits_{F\backslash {\bf A}}
\theta(u_1(r_1,\ldots,r_6)x_{1232}(r))\psi(\epsilon r_1+\beta
r)drdr_i$$ as inner integrations. Here $\epsilon=0$ when the orbit
is the trivial one, and $\epsilon=1$ corresponds to the other orbit.
In both cases, the above Fourier coefficient corresponds to the
unipotent orbit $\widetilde{A}_1$ which is greater than the minimal
orbit. Hence, by Theorem \ref{mini1}, these Fourier coefficients are
zero.

The third type of orbits corresponds to vectors of nonzero length.
These contributes the Fourier coefficient
$$\int\limits_{U_1(F)\backslash U_1({\bf A})} \int\limits_{F\backslash {\bf A}}
\theta(u_1(r_1,\ldots,r_6)x_{1232}(r))\psi(r_3+\gamma r_4+
r)drdr_i$$ where $\gamma\in F^*$. Notice that the stabilizer of
this character inside $Spin_6=SL_4$ is $Spin_5=Sp_4$. We can
identify the group $U_2({\bf A})$ with ${\bf A}^7$. With this
identification we can write the above integral as
$$\int\limits_{U_2(F)\backslash U_2({\bf A})}
\theta(u_2)\psi(\delta\cdot u_2)du_2$$ Here we identify $u_2$ with
a column vector and $\delta=(0,0,1,1,\gamma,0,0)$. With this
identification $\delta\cdot u_2$ is the usual dot product. If
$\gamma$ is such that $\delta$ has a nonzero length, then this
Fourier coefficient corresponds to the unipotent orbit
$\widetilde{A}_1$, and as above, it is zero. There is one choice
of $\gamma$ such that the length of $\delta$ is zero. Conjugating
by a suitable discrete element, this Fourier coefficient is equal
to
$$\int\limits_{U_2(F)\backslash U_2({\bf A})}
\theta(u_2w[123]x_{0010}(1))\psi_{U_2}(u_2)du_2$$ where $\psi_{U_2}$
is defined as follows. For $u_2=x_{0122}(r)u_2'$  define
$\psi_{U_2}(u_2)=\psi(r)$. See subsection 2.1 for notations. From this we obtain that $W_f(g)$ is
equal to
$$\int\limits_{Sp_4(F)\backslash SL_4({\bf A})}
\int\limits_{U_2(F)\backslash U_2({\bf A})}\widetilde{\varphi}(h)
\theta(u_2w[123]x_{0010}(1)(h,1))\psi_{U_2}(u_2)du_2$$ Using
Proposition \ref{property2} this integral is equal to
$$\int\limits_{Sp_4(F)\backslash SL_4({\bf A})}
\int\limits_{Z({\bf A})U_2(F)\backslash U_2({\bf
A})}\widetilde{\varphi}(h)
\theta^U(u_2\mu(h,1))\psi_{U_2}(u_2)du_2dh+$$
$$\int\limits_{Sp_4(F)\backslash SL_4({\bf
A})}\varphi(g)\int\limits_{Z({\bf A})U_2(F)\backslash U_2({\bf A})}
\sum_{\gamma\in Q(F)\backslash Sp_6(F)}\sum_{\epsilon\in F^*}
\theta^{U,\psi}(h_2(\epsilon)\gamma
u_2\mu(h,1))\psi_{U_2}(u_2)du_2dh$$ where we denote
$\mu=w[123]x_{0010}(1)$. Denote the first integral by $I'$ and the
second one by $I''$. We start with $I''$. Let $P$ denote the maximal
parabolic subgroup of $Sp_6$ whose Levi part contains $Sp_4$. The
space $Q(F)\backslash Sp_6(F)/P(F)$ has two representatives which we can
choose as $e$ and $w[234]$. The first representative contributes
$$\int\limits_{S(3)(F)\backslash SL_4({\bf
A})}\varphi(g)\int\limits_{Z({\bf A})U_2(F)\backslash U_2({\bf
A})}\sum_{\epsilon\in F^*} \theta^{U,\psi}(h_2(\epsilon)
u_2\mu(h,1))\psi_{U_2}(u_2)du_2dg$$ to the integral. Here $S(3)$ is
the parabolic subgroup of $Sp_4$ whose Levi part is $GL_2$. Changing
variables in $U$ and using Proposition \ref{property3}, we obtain
that
$$\theta^{U,\psi}(h_2(\epsilon)u_2\mu(h,1))=\theta^{U,\psi}(h_2(\epsilon)
\mu(h,1))$$ for all $u_2\in U_2({\bf A})$. Thus we obtain
$\int\limits_{Z({\bf A})U_2(F)\backslash U_2({\bf
A})}\psi_{U_2}(u_2)du_2$ as inner integration. Thus the
contribution to $I''$ from this term is zero. The second
representative contributes the integral
$$\int\limits_{S(3)(F)\backslash SL_4({\bf
A})}\varphi(g)\int\limits_{Z({\bf A})U_2(F)\backslash U_2({\bf
A})}\sum_{\delta_i\in F, \epsilon\in F^*}
\theta^{U,\psi}(h_2(\epsilon)w[234]y(\delta_1,\delta_2,\delta_3)
u_2\mu(h,1))\psi_{U_2}(u_2)du_2dg$$ where
$y(\delta_1,\delta_2,\delta_3)=x_{0001}(\delta_1)x_{0011}(\delta_2)
x_{0122}(\delta_3)$. We have
$h_2(\epsilon)w[234]y(\delta_1,\delta_2,\delta_3)x_{1122}(r)=
x_{1000}(\epsilon^{-1}r)h_2(\epsilon)w[234]y(\delta_1,\delta_2,\delta_3)$.
Hence we get $\int\limits_{F\backslash {\bf
A}}\psi(\epsilon^{-1}r)dr$ as inner integration. This integral is
zero and hence $I''=0$. Thus $W_f(g)$ is equal to $I'$. Factoring
the measure, we obtain
$$\int\limits_{Sp_4({\bf A})\backslash SL_4({\bf A})}
\int\limits_{Sp_4(F)\backslash Sp_4({\bf A})} \int\limits_{Z({\bf
A})U_2(F)\backslash U_2({\bf A})}\widetilde{\varphi}(mh)
\theta^U(u_2\mu(mh,1))\psi_{U_2}(u_2)du_2dmdh$$ Arguing as in
\cite{Ga-S} we deduce that the lift is nonzero for some choice of
data if and only if the integral
$$\int\limits_{Sp_4(F)\backslash Sp_4({\bf A})}
\int\limits_{F\backslash {\bf A}}\widetilde{\varphi}(m)
\theta^U(x_{0122}(r)\mu(m,1))\psi(r)drdm$$ is nonzero for some
choice of data. The group $\mu
Sp_4\mu^{-1}=<x_{\pm(0100)}(r);x_{\pm(0010)}(r)>$. Hence, from
Proposition \ref{eisen1} it follows that the lift is nonzero
for some choice of data if and only if the integral
$$\int\limits_{Sp_4(F)\backslash Sp_4({\bf A})}
\int\limits_{F\backslash {\bf A}}\widetilde{\varphi}(m)
\theta_6(x_{0122}(r)m)\psi(r)drdm$$ is not zero
for some choice of data. It follows from Proposition \ref{propertysp3} that the above integral
is not zero for some choice of data, if and only if integral 
\eqref{sl44} is not zero for some
choice of data. This completes the proof of the Proposition.
\end{proof}

Next we address the question of cuspidality of the lift. We prove
\begin{proposition}\label{sl43.3}
The representation $\sigma(\widetilde{\pi})$ is a cuspidal
representation of $SL_2({\bf A})$.
\end{proposition}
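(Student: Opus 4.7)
The plan is to mimic the structure of the nonvanishing argument in Proposition \ref{sl42.2} but with the $\psi_{V,\beta}$ character replaced by the trivial one. Since $SL_2$ has a unique proper maximal parabolic (with unipotent radical $\{x_{1232}(r)\}$), cuspidality of $\sigma(\widetilde{\pi})$ reduces to showing
$$C(g)=\int_{SL_4(F)\backslash SL_4({\bf A})}\widetilde{\varphi}(h)\int_{F\backslash{\bf A}}\theta((h, x_{1232}(r)g))\, dr\, dh=0$$
for all choice of data. I would first Fourier expand the inner integral along the 6-dimensional abelian group $U_1$ defined in Proposition \ref{sl42.2}, decomposing according to $SL_4(F)=Spin_6(F)$-orbits on the dual of $U_1$, which is the 6-dimensional standard representation. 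This yields three types of contributions indexed by the trivial orbit, the nonzero isotropic orbit, and the orbits of nonzero-length vectors.

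Combined with the trivial integration over $\{x_{1232}(r)\}$, each orbit produces a character on the 7-dimensional abelian subgroup $U_2=\langle U_1, x_{1232}\rangle$, which transforms under the Levi $Spin_7$ of $P_{\alpha_1,\alpha_2,\alpha_3}$ via its 7-dimensional standard representation. Orbits of nonzero-length vectors give characters corresponding to non-isotropic vectors of this representation; these are Fourier coefficients of $\theta$ attached to the unipotent orbit $\widetilde{A}_1$ of $F_4$, and vanish by Theorem \ref{mini1}. The nonzero isotropic $U_1$-orbit has representative character $\psi(r_{0122})$, which is isotropic in the 7-dimensional rep and hence stabilized in $SL_4$ by a parabolic subgroup; a sequence of root exchanges (subsection 2.2.2) combined with further Fourier expansions along suitable short-root directions should either reduce this contribution to a Fourier coefficient associated with an orbit strictly larger than $A_1$ of $F_4$ (killed by Theorem \ref{mini1}), or expose invariance under the unipotent radical of a maximal parabolic of $SL_4$, in which case cuspidality of $\widetilde{\pi}$ forces vanishing.

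The trivial $U_1$-orbit yields the integral of $\widetilde{\varphi}(h)$ against the partial constant term $\theta^{U_2}((h,g))$. For this case I would further Fourier expand along the 8-dimensional quotient $U(P_{\alpha_1,\alpha_2,\alpha_3})/U_2$, which transforms under $Spin_7$ as its 8-dimensional spin representation. Invoking Proposition \ref{eisen1}, the full constant term $\Theta^{U(P_{\alpha_1,\alpha_2,\alpha_3})}$ restricted to $\widetilde{Spin}_7({\bf A})$ is a minimal representation, so only characters on the spin representation corresponding to the minimal $Spin_7$-orbit survive. An analysis of the $SL_4\subset Spin_7$-orbits on the spin representation should then show that the surviving characters have stabilizers whose intersection with $SL_4$ contains a nontrivial unipotent radical, forcing vanishing by the cuspidality of $\widetilde{\pi}$.

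The main obstacle will be the trivial-orbit case, where one must carefully track the successive Fourier expansions of $\theta^{U_2}$ and marry the constraints coming from the minimality of $\Theta^{U(P_{\alpha_1,\alpha_2,\alpha_3})}$ as a representation of $Spin_7^{(2)}$ with the cuspidality of $\widetilde{\pi}$ on $SL_4$. The isotropic $U_1$-orbit is similar in spirit but somewhat easier, since its parabolic stabilizer in $SL_4$ provides a direct mechanism for exploiting cuspidality once the Fourier coefficient has been brought into a manageable form by root exchange.
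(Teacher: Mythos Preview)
Your overall framework matches the paper's: expand along $U_1$, combine with the trivial $x_{1232}$ integration, kill the anisotropic $U_2$-characters via Theorem~\ref{mini1}, and treat the trivial and isotropic orbits separately. For the isotropic piece $I''$ the paper invokes Proposition~\ref{property2} directly rather than ad hoc root exchanges, but your plan is in the right spirit and would work.

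The genuine gap is in your treatment of the trivial orbit $I'$. When you expand $\theta^{U_2}$ along the 8-dimensional quotient $U(B_3)/U_2$, the relevant input is the $F_4$-minimality of $\Theta$ (Theorem~\ref{mini1}), not the $Spin_7$-minimality of $\theta^{U(B_3)}$ (Proposition~\ref{eisen1}): every root in $U(B_3)/U_2$ is short, so any nontrivial character there produces an $\widetilde{A}_1$ coefficient of $\Theta$ and vanishes. Thus \emph{only the trivial character} survives, not ``the minimal $Spin_7$-orbit,'' and what remains is
\[
\int_{SL_4(F)\backslash SL_4({\bf A})}\widetilde{\varphi}(h)\,\theta^{U(B_3)}((h,1))\,dh,
\]
with stabilizer all of $SL_4$. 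Your proposed stabilizer-plus-cuspidality mechanism therefore has nothing to grip. The paper resolves this by an Eisenstein-series trick: multiply by the degenerate Eisenstein series $E(h,s)$ on $SL_4$ attached to the $GL_3$-parabolic (whose residue is a constant), unfold, and feed in the Whittaker expansion of $\widetilde{\varphi}$ as in \cite{PS1}. The resulting inner integration is a Whittaker-type coefficient of $\theta^{U(B_3)}$ along the maximal unipotent of $Spin_6\subset Spin_7$, which is associated with a $Spin_7$-orbit strictly larger than the minimal one and hence vanishes by Proposition~\ref{eisen1}. That Eisenstein-series step is the missing idea in your plan, and it is precisely where the $Spin_7$-minimality of the constant term actually enters.
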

\begin{proof}
We need to show that the integral
\begin{equation}\label{sl45}
\int\limits_{SL_4(F)\backslash SL_4({\bf
A})}\int\limits_{F\backslash {\bf A}}
\widetilde{\varphi}(h)\theta((h,x_{1232}(r)g))drdh
\end{equation}
is zero for all choice of data. We expand the theta function along
the group $U_1$ which was defined in the proof of Proposition
\ref{sl42.2}. Combining this with the integration over the group
$\{x_{1232}(r)\}$, integral \eqref{sl45} is equal to
$$\int\limits_{SL_4(F)\backslash SL_4({\bf
A})}\widetilde{\varphi}(h)\theta^{U_2}(u_2(h,1))du_2dh
+\int\limits_{S(F)\backslash SL_4({\bf
A})}\int\limits_{U_2(F)\backslash U_2({\bf A})}
\widetilde{\varphi}(h)\theta(u_2(h,1))\psi_{U_2}(u_2)du_2dh$$ Here
$S$ is the subgroup of $SL_4$ defined by $$S=<x_{\pm(0100)}(r);
x_{\pm(0120)}(r);x_{(1000)}(r);x_{(1100)}(r);x_{(1120)}(r);x_{(1220)}(r)>$$
and $\psi_{U_2}$ was defined in the proof of Proposition
\ref{sl42.2}. Denote by $I'$ the first summand and by $I''$ the
second one. We start with $I''$. Using Proposition \ref{property2}
it is equal to
$$\int\limits_{S(F)\backslash SL_4({\bf A})}
\int\limits_{Z({\bf A})U_2(F)\backslash U_2({\bf
A})}\widetilde{\varphi}(h)
\theta^U(u_2(h,1))\psi_{U_2}(u_2)du_2dh+$$
$$\int\limits_{S(F)\backslash SL_4({\bf
A})}\varphi(g)\int\limits_{Z({\bf A})U_2(F)\backslash U_2({\bf A})}
\sum_{\gamma\in Q(F)\backslash Sp_6(F)}\sum_{\epsilon\in F^*}
\theta^{U,\psi}(h_2(\epsilon)\gamma u_2(h,1))\psi_{U_2}(u_2)du_2dh$$
Arguing in a similar way as in the computation of $I''$ in the proof
of Proposition \ref{sl42.2}, we deduce that the second summand in
the above integral is zero. Indeed, using Proposition
\ref{property3} we obtain the integral
$$\int\limits_{Mat_{2\times 2}(F)\backslash Mat_{2\times 2}({\bf
A})}\widetilde{\varphi}(\begin{pmatrix} I_2&X\\&I_2
\end{pmatrix})dX$$ as inner integration to the first summand. This
is zero by the cuspidality of $\widetilde{\pi}$.

Let $U(B_3)=U_{\alpha_1,\alpha_2,\alpha_3}$. Then $U_2$ is
a subgroup of $U(B_3)$. The quotient $U(B_3)/U_2$ is an eight
dimensional abelian group and $SL_4$ acts on it as twice the standard
representation. The quotient $U(B_3)/U_2$ is generated by all unipotent groups
$\{x_\alpha(r)\}$ such that $\alpha=\sum_{i=1}^3n_i\alpha_i+\alpha_4$.
To compute $I'$ we further expend it along the group $U(B_3)/U_2$
with points in $F\backslash {\bf A}$. By the minimality of $\Theta$
only the constant term contributes. Indeed, the nontrivial Fourier
coefficients will contain, as inner integration,  a Fourier
coefficient which corresponds to the unipotent orbit
$\widetilde{A}_1$. This follows from the fact that the length of all
the above roots $\alpha$ is short. Thus
$$I'=\int\limits_{SL_4(F)\backslash SL_4({\bf
A})}\widetilde{\varphi}(h)\theta^{U(B_3)}((h,1))dh$$  To show that
this last integral is zero, let $E(h,s)$ denote the Eisenstein
series of $SL_4({\bf A})$ which is associated with the induced
representation $Ind_{R({\bf A})}^{SL_4({\bf A})}\delta_R^s$. Here
$R$ is a maximal parabolic subgroup of $SL_4$ whose Levi part is
$GL_3$. Thus, to prove that $I'$ is zero, it is enough to show that
for $Re(s)$ large, the integral
$$\int\limits_{SL_4(F)\backslash SL_4({\bf
A})}\widetilde{\varphi}(h)\theta^{U(B_3)}(h)E(h,s)dh$$ is zero for
all choice of data. Unfolding the Eisenstein series, and using from
\cite{PS1}, the well known Whittaker expansion of
$\widetilde{\varphi}$ we obtain the integral
$$\int\limits_{V(F)\backslash V({\bf A})}\theta^{U(B_3)}(vh)
\psi_V(v)dv$$ as inner integration. Here $V$ is the maximal
unipotent subgroup of $Spin_6=SL_4$ and $\psi_V$ is the Whittaker
coefficient of $V$. The Fourier coefficient given by the above
integration over $V$ is a Fourier coefficient which is associated to
a unipotent orbit of $Spin_7$ which is greater than the minimal
orbit. Hence, it follows from Proposition \ref{eisen1} that this
integral is zero.  This implies that $I'$ is zero. This completes
the proof of the cuspidality of the lift.
\end{proof}

\subsection{\bf The Commuting Pair $(SO_3,G_2)$}

In this subsection we will consider the lift of automorphic
representations from the group $SO_3({\bf A})$ to automorphic
representations of the exceptional $\widetilde{G}_2({\bf A})$. We
first consider

\subsubsection{\bf From $SO_3({\bf A})$ to $\widetilde{G}_2({\bf
A})$}

To study this lift, we consider the following embedding of the two
groups. The group $SO_3$ is generated by 
$\{x_{0010}(r)x_{0001}(-r)x_{0011}(-r^2)\}$ and by
$\{x_{-(0010)}(r)x_{-(0001)}(-r)x_{-(0011)}(-r^2)\}$. In other words,
we embed $SO_3$ inside the group $SL_3$ generated by
$<x_{\pm(0010)}(r); x_{\pm(0001)}(r)>$. With this choice, the
group $V$, the maximal unipotent subgroup of $G_2$, is generated
by
$$V=<x_{1000}(r);x_{0120}(r)x_{0111}(r);x_{1111}(r)x_{1120}(r);
x_{1231}(r)x_{1222}(r);x_{1342}(r);x_{2342}(r)>$$ The group $G_2$
is generated by $V$ and by the group generated by all unipotent
elements which corresponds to the negative roots of the above six
unipotent elements. With this choice of embedding, the group
$SO_3$ splits under the double cover, but $G_2$ does not.

Let $\pi$ denote an irreducible cuspidal representation of the
group $SO_3({\bf A})$. Let $\widetilde{\sigma}(\pi)$ denote the
automorphic representation of $\widetilde{G}_2({\bf A})$ generated
by all functions
\begin{equation}
\widetilde{f}(h)=\int\limits_{SO_3(F)\backslash SO_3({\bf
A})}{\varphi}(g)\theta((h,g))dg\notag
\end{equation}
where $h\in \widetilde{G}_2({\bf A})$ and $\varphi(g)$ is a vector
in the space of $\pi$. We shall denote by $L(\pi,s)$ the standard
$L$ function attached to $\pi$. We  prove
\begin{proposition}\label{g21.1}
Let $\pi$ be as above, and assume that $L(\pi,1/2)=0$. Then
$\widetilde{\sigma}(\pi)$ defines a generic cuspidal
representation of $\widetilde{G}_2({\bf A})$.
\end{proposition}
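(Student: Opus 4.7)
The plan is to follow the same two-step pattern used for the earlier commuting pairs in Propositions \ref{sl31.1}--\ref{sp43.3} and \ref{sl41.1}--\ref{sl43.3}: first establish cuspidality of $\widetilde{\sigma}(\pi)$ by showing that the constant terms of $\widetilde{f}$ along the unipotent radicals of the two maximal parabolic subgroups of $G_2$ vanish, and then verify genericity by computing a Whittaker coefficient explicitly. As a preliminary step I would locate a Weyl element of $F_4$ interchanging the two maximal unipotent radicals of $G_2$ while preserving the copy of $SO_3$ generated by $\{x_{\pm(0010)}(r)x_{\pm(0001)}(-r)x_{\pm(0011)}(-r^2)\}$, so that only one constant term has to be analyzed. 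This radical contains the highest root group $Z=\{x_{2342}(r)\}$, so applying Proposition \ref{property2} splits the constant term into $I'=\int\varphi(g)\theta^U((1,g))dg$ and a second sum $I''$ involving $\theta^{U,\psi}$ twisted by $\gamma\in Q(F)\backslash Sp_6(F)$.

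For $I'$, Proposition \ref{eisen1} lets me replace $\theta^U$ by a vector in $\Theta_6$ restricted along the embedding of the ambient $SL_3$ (which contains $SO_3$) into $Sp_6$. I would then apply Proposition \ref{propertysp2} to expand $\theta_6$ along the appropriate unipotent radical; in the constant-term piece I would use Proposition \ref{propertysp1} to see that the restriction to $GL_3$ is one-dimensional, leaving an integral of $\varphi$ against a character of an abelian unipotent subgroup of $SL_3$ containing the unipotent radical of a Borel of $SO_3$, which vanishes by cuspidality of $\pi$. The nontrivial Whittaker piece is handled similarly using \eqref{minexp2}. For $I''$, I would decompose $Q(F)\backslash Sp_6(F)/P(F)$ for a suitable maximal parabolic $P$ of $Sp_6$ stabilizing the image of $G_2$, precisely mirroring the case analysis in the proof of Proposition \ref{sl41.1}: the identity and the small-length representatives vanish by Proposition \ref{property3} combined with cuspidality of $\pi$ (after factoring a unipotent radical of $SO_3$), and the remaining cell produces an action of $SO_3(F)$ on a set of representatives $\{x_{0120}(\delta_1)x_{0121}(\delta_2)x_{0122}(\delta_3)\}$ indexed by the quadratic form of $SO_3$.

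The main obstacle will be this last cell. The trivial orbit and the orbits corresponding to nonzero isotropic vectors will again die by cuspidality of $\pi$, but the orbits through anisotropic vectors of length $\beta\in F^\ast$ contribute inner periods of the shape $\int_{O_2^\beta(F)\backslash O_2^\beta(\mathbf{A})}\varphi(mg)dm$, where $O_2^\beta$ is a torus in $SO_3$. By Waldspurger's theorem \cite{W} (which was already invoked at the analogous step of Proposition \ref{sl41.1}) these toric periods all vanish precisely when $L(\pi,1/2)=0$; this is where the hypothesis enters and this is the step I expect to be the most delicate, because one must verify that every character $\beta$ produced by the orbit analysis is indeed covered by Waldspurger's formula, and that the surviving one-dimensional torus in $O_2^\beta$ matches the Fourier coefficient of $\varphi$ that is being integrated.

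For genericity, I would compute $W_{\widetilde{f},\beta}(h)=\int_{V(F)\backslash V(\mathbf{A})}\widetilde{f}(vh)\psi_{V,\beta}(v)dv$ along the maximal unipotent $V$ of $G_2$ displayed before the proposition. Repeating the expansions of Step 1 kills every term except the surviving cell. In that cell, the presence of the nontrivial Whittaker character $\psi_{V,\beta}$ collapses summation against integration, pinning down the Weyl element and the torus parameter $\epsilon$ as in the computations following \eqref{sl42}. After conjugation inside $F_4$, the inner integral factors as
\begin{equation}
W_{\widetilde{f},\beta}(h)=\int_{N(\mathbf{A})\backslash SO_3(\mathbf{A})}W_{\varphi,\beta}(g)\,K_\beta(g,h)\,dg,\notag
\end{equation}
where $W_{\varphi,\beta}$ is a Whittaker coefficient of $\varphi$ on $SO_3$ and $K_\beta$ is a kernel built from $\theta^{U,\psi}$. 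Arguing as in \cite{Ga-S}, the nonvanishing of $W_{\widetilde{f},\beta}$ for some $\beta$ is equivalent to the nonvanishing of $W_{\varphi,\beta}$, which holds automatically because $\pi$ is a cuspidal representation of the quasi-split rank-one group $SO_3$. This then establishes both genericity and nonvanishing of $\widetilde{\sigma}(\pi)$, completing the proposition.
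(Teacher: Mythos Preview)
Your overall strategy matches the paper's, but there are two concrete problems.

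First, the preliminary reduction you propose does not work. The two maximal parabolic subgroups of $G_2$ are not conjugate: their unipotent radicals $V_1$ and $V_2$ are non-isomorphic (one is two-step nilpotent of Heisenberg type, the other is three-step), and $G_2$ has no outer automorphisms. Hence no Weyl element of $F_4$ can swap $V_1$ and $V_2$ while normalizing $G_2$ and fixing $SO_3$. The paper does not attempt this reduction; it treats both $V_i$ in parallel, and in the analysis of the cell $I_3$ the two cases $i=1$ and $i=2$ are genuinely handled by different unipotent conjugations (using $x_{0111}(r)x_{0120}(r)\in V_1$ versus $x_{1000}(r)\in V_2$).

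Second, and more importantly, you miss one of the two places where the Waldspurger hypothesis $L(\pi,1/2)=0$ is actually needed. In your treatment of $I'$ (the $\theta^U$ term), you claim the nontrivial piece of the expansion \eqref{minexp1} is killed ``similarly using \eqref{minexp2}'' by cuspidality. This is not so: in the paper the double coset space $L_0(GL_3)(F)\backslash GL_3(F)/SO_3(F)$ has two types of representatives, and for the second type the stabilizer is an $SO_2$ inside $SO_3$, producing a torus period $\int_{SO_2(F)\backslash SO_2({\bf A})}\varphi(g)\,dg$. This period does not vanish by cuspidality; it vanishes by \cite{W} under the hypothesis $L(\pi,1/2)=0$. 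Likewise, in the $I''$ analysis the paper uses the four-cell decomposition $Q\backslash Sp_6/Q$ (as in Proposition \ref{sl41.1}), and Waldspurger enters again in the cell $I_2$ via the representative $w[3]$, which produces the period over the split maximal torus $T$ of $SO_3$. Your description of a single ``remaining cell'' with an $O_2^\beta$ orbit analysis does not capture this: the torus periods appear both in $I_1'$ and in $I_2$, and without invoking Waldspurger at both points the cuspidality argument is incomplete.

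Your genericity computation is essentially correct and matches the paper: all terms die except the identity representative in $I_3$, and after collapsing one obtains $W_{\widetilde f}(h)=\int_{N({\bf A})\backslash SO_3({\bf A})}W_\varphi(g)\,K(g,h)\,dg$ with $W_\varphi$ the Whittaker function of $\pi$, whence nonvanishing follows as in \cite{Ga-S}.
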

\begin{proof}
For $i=1,2$, we shall denote by $V_i$ the two unipotent radicals
of the maximal parabolic subgroups of $G_2$. In other words,
$$V_1=<x_{0120}(r)x_{0111}(r);x_{1111}(r)x_{1120}(r);
x_{1231}(r)x_{1222}(r);x_{1342}(r);x_{2342}(r)>$$ and
$$V_2=<x_{1000}(r);x_{1111}(r)x_{1120}(r);
x_{1231}(r)x_{1222}(r);x_{1342}(r);x_{2342}(r)>$$ To prove the
cuspidality of the lift, we need to prove that for $i=1,2$ the
integrals
\begin{equation}
\int\limits_{SO_3(F)\backslash SO_3({\bf
A})}\int\limits_{V_i(F)\backslash V_i({\bf
A})}{\varphi}(g)\theta((v,g))dvdg\notag
\end{equation}
are zero for all choice of data. The group $Z=\{x_{2342}(r)\}$ is a
subgroup of $V_i$. Hence, using Proposition \ref{property2} this
integral is equal to
$$\int\limits_{SO_3(F)\backslash SO_3({\bf
A})}\int\limits_{Z({\bf A})V_i(F)\backslash V_i({\bf
A})}{\varphi}(g)\theta^U((v,g))dvdg+ $$
$$\int\limits_{SO_3(F)\backslash SO_3({\bf
A})}\varphi(g)\int\limits_{Z({\bf A})V_i(F)\backslash V_i({\bf A})}
\sum_{\gamma\in Q(F)\backslash Sp_6(F)}\sum_{\epsilon\in F^*}
\theta^{U,\psi}(h_2(\epsilon)\gamma (v,g))dvdg$$ Denote by $I_1'$ the first summand and by $I'$ the second summand.  To show that the first summand is zero, using
Proposition \ref{eisen1}, it is enough to show that the integral
$$\int\limits_{SO_3(F)\backslash SO_3({\bf
A})}{\varphi}(g)\theta_6((1,g))dg$$ is zero for
all choice of data. Here $\theta_6$ is a vector in the space of the representation $\Theta_6$
which was defined right before Proposition \ref{eisen2}. Apply Proposition \ref{propertysp2} to this
integral, by using identity \eqref{minexp1}.  The contribution of the constant term gives us the integral 
$$\int\limits_{SO_3(F)\backslash SO_3({\bf
A})}{\varphi}(g)\theta_6^{U(GL_3)}((1,g))dg$$ 
Here $U(GL_3)$ is the unipotent radical of the parabolic group $P(GL_3)$ which was defined before
Proposition \ref{propertysp1}. In Proposition \ref{propertysp2} this unipotent group was denoted by $U$.
From Proposition \ref{propertysp1} we obtain the integral $\int\limits_{SO_3(F)\backslash SO_3({\bf
A})}{\varphi}(g)dg$ as inner integration. This integral is clearly zero. Plugging the second summand
of \eqref{minexp1} we obtain 
$$\int\limits_{SO_3(F)\backslash SO_3({\bf
A})}{\varphi}(g)\sum_{\gamma\in L_0(GL_3)(F)\backslash GL_3(F)}\theta_6^{U(GL_3),\psi}(\gamma(1,g))dg$$
where $L_0(GL_3)$ was defined right before Proposition \ref{propertysp2}. Consider the space of
double cosets $L_0(GL_3)(F)\backslash GL_3(F)/SO_3(F)$. We partition the set of representatives
$\delta$ into two sets.  The first set has the property that $\delta^{-1}L_0(GL_3)\delta
\cap SO_3$ is the maximal unipotent subgroup of $SO_3$. In this case,  from the
cuspidality of $\pi$ and from equation \eqref{minexp2} in Proposition \ref{propertysp2}, we get zero contribution. The other type of representative has the property that $\delta^{-1}L_0(GL_3)\delta\cap SO_3$ is a certain $SO_2$ which can be embedded in the split $SO_3$. Thus, applying
again equation \eqref{minexp2} in Proposition \ref{propertysp2} we get $\int\limits_{SO_2(F)\backslash SO_2({\bf A})}{\varphi}(g)dg$ as inner integration. From \cite{W}, we know that if
$L(\pi,1/2)=0$ then this integral is zero. Thus $I_1'=0$.

Next we compute $I'$. The space of the double cosets $Q(F)\backslash
Sp_6(F)/Q(F)$ contains four representatives which we can choose to be
$e,w[2],w[232]$ and $w[232432]$. For $1\le j\le 4$, we denote by
$I_j$ the contribution to $I'$ from each one of the four
representatives. First, integral $I_1$ is equal to
$$\int\limits_{SO_3(F)\backslash SO_3({\bf
A})}\varphi(g)\int\limits_{Z({\bf A})V_i(F)\backslash V_i({\bf
A})} \sum_{\epsilon\in F^*} \theta^{U,\psi}(h_2(\epsilon)
(v,g))dvdg$$ Using Proposition \ref{property3} we obtain
$\int\limits_{SO_3(F)\backslash SO_3({\bf A})}\varphi(g)dg$ as
inner integration. Thus $I_1=0$. Next, $I_4$ is equal to
$$\int\limits_{SO_3(F)\backslash SO_3({\bf
A})}\varphi(g)\int\limits_{Z({\bf A})V_i(F)\backslash V_i({\bf
A})} \sum_{\delta_i\in F,\epsilon\in F^*}
\theta^{U,\psi}(h_2(\epsilon)w[232432]y(\delta_1,\ldots,\delta_6)
(v,g))dvdg$$ where
$$y(\delta_1,\ldots,\delta_6)=x_{0100}(\delta_1)x_{0110}(\delta_2)
x_{0111}(\delta_3)x_{0120}(\delta_4)x_{0121}(\delta_5)x_{0122}(\delta_6)$$
Since
$h_2(\epsilon)w[232432]y(\delta_1,\ldots,\delta_6)x_{1342}(r)=
x_{1000}(\epsilon^{-1}r)h_2(\epsilon)w[232432]y(\delta_1,\ldots,\delta_6)$
we obtain $\int\limits_{F\backslash {\bf
A}}\psi(\epsilon^{-1}r)dr$ as inner integration. Thus $I_4=0$.

Integral $I_2$ is equal to
$$\int\limits_{SO_3(F)\backslash SO_3({\bf
A})}\varphi(g)\int\limits_{Z({\bf A})V_i(F)\backslash V_i({\bf A})}
\sum_{\gamma\in S(4)(F)\backslash SL_3(F)}\sum_{\delta_1\in F,
\epsilon\in F^*}
\theta^{U,\psi}(h_2(\epsilon)w[2]x_{0100}(\delta_1)\gamma
(v,g))dvdg$$ Here $S(4)$ is the maximal parabolic subgroup of $SL_3$
whose Levi part contains the $SL_2$ generated by
$<x_{\pm(0001)}(r)>$. The space $S(4)(F)\backslash SL_3(F)/SO_3(F)$
contains infinite number of orbits. As representative we can choose
$e, w[3]$ and $w[34]x_{0011}(\nu)$ where $\nu\in (F^*)^2\backslash
F^*$. The identity representative contributes to $I_2$ the term
$$\int\limits_{B(F)\backslash SO_3({\bf
A})}\varphi(g)\int\limits_{Z({\bf A})V_i(F)\backslash V_i({\bf
A})} \sum_{\delta_1\in F,\epsilon\in F^*}
\theta^{U,\psi}(h_2(\epsilon)w[2]x_{0100}(\delta_1) (v,g))dvdg$$
where $B$ is the Borel subgroup of $SO_3$. Let $N$ denote the
unipotent radical of $SO_3$. From Proposition \ref{property3} we
deduce that the function
$$g\mapsto \theta^{U,\psi}(h_2(\epsilon)w[2]x_{0100}(\delta_1)
(v,g))$$ is left invariant under all $n\in N({\bf A})$. Thus we get
zero by the cuspidality of $\pi$. The second representative
contributes to $I_2$ the term
$$\int\limits_{T(F)\backslash SO_3({\bf
A})}\varphi(g)\int\limits_{Z({\bf A})V_i(F)\backslash V_i({\bf A})}
\sum_{\delta_1\in F,\epsilon\in F^*}
\theta^{U,\psi}(h_2(\epsilon)w[23]x_{0120}(\delta_1) (v,g))dvdg$$
where $T$ is the maximal split torus of $SO_3$. From Proposition
\ref{property3} it follows that the function
$$g\mapsto \theta^{U,\psi}(h_2(\epsilon)w[23]x_{0120}(\delta_1)
(v,g))$$ is left invariant under $T({\bf A})$. Thus we obtain
$\int\limits_{T(F)\backslash T({\bf A})}\varphi(tg)dt$ as inner
integration. Since $L(\pi,1/2)=0$, it follows that this last
integral is zero. Thus we are left with the third family of
representatives which contributes to $I_2$ the integral
$$\sum_{\nu\in F^*}\int\limits_{S_\nu(F)\backslash SO_3({\bf
A})}\varphi(g)\int\limits_{Z({\bf A})V_i(F)\backslash V_i({\bf A})}
\sum_{\delta_1\in F,\epsilon\in F^*}
\theta^{U,\psi}(h_2(\epsilon)w[234]x_{0122}(\delta_1)x_{0011}(\nu)
(v,g))dvdg$$ where $S_\nu$ is an orthogonal group which depends on
$\nu$. We have $x_{1111}(r)x_{1120}(r)\in V_i$. Also, we have the
commutation relations
$$h_2(\epsilon)w[234]x_{0122}(\delta_1)x_{0011}(\nu)
x_{1111}(r)x_{1120}(r)=x_{1000}(\nu\epsilon^{-1}r)u'
h_2(\epsilon)w[234]x_{0122}(\delta_1)x_{0011}(\nu)$$ where $u'\in
U$ such that $\psi_U(u')=1$. Thus we obtain
$\int\limits_{F\backslash {\bf A}}\psi(\nu\epsilon^{-1}r)dr$ as
inner integration. Hence $I_2=0$.

We are left with $I_3$ which is equal to
$$\int\limits_{SO_3(F)\backslash SO_3({\bf
A})}\varphi(g)\int\limits_{Z({\bf A})V_i(F)\backslash V_i({\bf
A})} \sum_{\substack{\gamma\in S(3)(F)\backslash SL_3(F)\\
\delta_i\in F \epsilon\in F^* }}
\theta^{U,\psi}(h_2(\epsilon)w[232]y_1(\delta_1,\delta_2,\delta_3)\gamma
(v,g))dvdg$$ where
$y_1(\delta_1,\delta_2,\delta_3)=x_{0100}(\delta_1)x_{0110}(\delta_2)
x_{0120}(\delta_3)$ and $S(3)$ is the maximal parabolic subgroup of $SL_3$ which contains the $SL_2$ generated by $<x_{\pm 0010}(r)>$. As in the computations of $I_2$, the space
$S(3)(F)\backslash SL_3(F)/SO_3(F)$ contains infinite number of
orbits. As representative we can choose $e, w[4]$ and
$w[43]x_{0011}(\nu)$ where $\nu\in (F^*)^2\backslash F^*$. The
contribution from the identity element is
$$\int\limits_{B(F)\backslash SO_3({\bf
A})}\varphi(g)\int\limits_{Z({\bf A})V_i(F)\backslash V_i({\bf
A})} \sum_{\delta_i\in F,\epsilon\in F^*}
\theta^{U,\psi}(h_2(\epsilon)w[232]y_1(\delta_1,\delta_2,\delta_3)
(v,g))dvdg$$ The unipotent element $x_{1111}(r)x_{1120}(r)\in
V_i$. We have
$$x_{0100}(\delta_1)x_{1111}(r)x_{1120}(r)=x_{1111}(r)x_{1120}(r)
x_{1220}(\delta_1 r)x_{0100}(\delta_1)$$ Since
$w[232]x_{1220}(\delta_1 r)=x_{1000}(\delta_1 r)w[232]$, we obtain
$\int\limits_{F\backslash {\bf A}}\psi(\delta_1\epsilon^{-1}r)dr$
as inner integration. Thus we may assume that $\delta_1=0$. If
$\delta_2=0$, then the function
$$g\mapsto \theta^{U,\psi}(h_2(\epsilon)w[232]y_1(0,0,\delta_3)
(v,g))$$ is left invariant under $N({\bf A})$. Thus, by cuspidality
we get zero. Hence we may assume that we sum over $\delta_2\ne 0$.
The torus $T(F)$ acts transitively on the set $\{x_{0110}(\delta_2):
\delta_2\ne0\}$. Collapsing summation with integration, the above
integral is equal to
$$\int\limits_{N(F)\backslash SO_3({\bf
A})}\varphi(g)\int\limits_{Z({\bf A})V_i(F)\backslash V_i({\bf
A})} \sum_{\delta_3\in F,\epsilon\in F^*}
\theta^{U,\psi}(h_2(\epsilon)w[232]x_{0110}(1) x_{0120}(\delta_3)
(v,g))dvdg$$ Suppose first that $i=1$. Then
$x_{0111}(r)x_{0120}(r)\in V_1$. Collapsing summation with
integration, we obtain as inner integration, the integral
$$\int\limits_{\bf A}\sum_{\epsilon\in F^*}
\theta^{U,\psi}(h_2(\epsilon)w[232]x_{0110}(1)
x_{0120}(r)(v,g))dr$$ By commutation relations, one can check that
as a function of $g$ the above integral is left invariant under
$N({\bf A})$. Thus we get zero by the cuspidality of $\pi$. When
$i=2$, we have $x_{1000}(r)\in V_2$. We have
$$h_2(\epsilon)w[232]x_{0110}(1) x_{0120}(\delta_3)x_{1000}(r)=
x_{1000}(\epsilon^{-1}r)u'h_2(\epsilon)w[232]x_{0110}(1)
x_{0120}(\delta_3)$$ where $u'\in U$ is such that $\psi_U(u')=1$.
Thus we get zero in this case also. Next we consider the
contribution of $w[4]$ to $I_3$. It is equal to
$$\int\limits_{T(F)\backslash SO_3({\bf
A})}\varphi(g)\int\limits_{Z({\bf A})V_i(F)\backslash V_i({\bf
A})} \sum_{\delta_i\in F,\epsilon\in F^*}
\theta^{U,\psi}(h_2(\epsilon)w[2324]y_2(\delta_1,\delta_2,\delta_3)
(v,g))dvdg$$ where
$y_2(\delta_1,\delta_2,\delta_3)=x_{0100}(\delta_1)x_{0111}(\delta_2)
x_{0122}(\delta_3)$. We have $x_{1231}(r)x_{1222}(r)\in V_i$.
Since $w[2324]x_{1222}(r)=x_{1000}(r)w[2324]$ we get zero
contribution in this case. Finally, the last set of
representatives are
$$\sum_{\nu\in F^*}\int\limits_{S_\nu(F)\backslash SO_3({\bf
A})}\varphi(g)\int\limits_{Z({\bf A})V_i(F)\backslash V_i({\bf A})}
\sum_{\delta_1\in F,\epsilon\in F^*}
\theta^{U,\psi}(h_2(\epsilon)w_0
y_3(\delta_1,\delta_2,\delta_3)x_{0011}(\nu) (v,g))dvdg$$ where
$y_3(\delta_1,\delta_2,\delta_3)=x_{0120}(\delta_1)x_{0121}(\delta_2)
x_{0122}(\delta_3)$ and $w_0=w[23243]$. As above, we use the
unipotent matrix $x_{1231}(r)x_{1222}(r)$ to get zero. Thus $I_3$
equal to zero. This completes the proof of the cuspidality.

To prove the nonvanishing of the lift we compute its Whittaker
coefficient. In other words, we compute the integral
$$W_{\widetilde{f}}(h)=\int\limits_{V(F)\backslash V({\bf
A})}\widetilde{f}(vh)\psi_V(v)dv$$ where $\psi_V$ is defined as
follows. For $v\in V$ write
$v=x_{1000}(r_1)x_{0120}(r_2)x_{0111}(r_2)v'$.
Then $\psi_V(v)=\psi(r_1+r_2)$. See subsection 2.1 for notations. Repeating the same expansions as
in the proof of the cuspidality, we obtain zero contribution
except from the term which corresponds to the identity
representative in the computation of $I_3$. In other words
$W_{\widetilde{f}}(h)$ is equal to
$$\int\limits_{N(F)\backslash SO_3({\bf
A})}\varphi(g)\int\limits_{(F\backslash {\bf A})^2}
\sum_{\delta\in F,\epsilon\in F^*}
\theta^{U,\psi}(h_2(\epsilon)w[232]x_{0110}(1) x_{0120}(\delta)
(y(r_1,r_2)h,g))\psi(r_1+r_2)dr_idg$$ where
$y(r_1,r_2)=x_{1000}(r_1)x_{0120}(r_2)x_{0111}(r_2)$. We have
$$h_2(\epsilon)w[232]x_{0110}(1) x_{0120}(\delta)x_{1000}(r_1)=
x_{1000}(\epsilon^{-1}r_1)u'h_2(\epsilon)w[232]x_{0110}(1)
x_{0120}(\delta)$$ where $u'\in U$ is such that $\psi_U(u')=1$. Thus
we get $\int\limits_{F\backslash {\bf A}}\psi((1-\epsilon^{-1})r)dr$
as inner integration, and hence $\epsilon=1$. Next, collapsing
summation over $\delta_2$ with the integration over $r_2$,  the

above integral is equal to
$$\int\limits_{N(F)\backslash SO_3({\bf
A})}\varphi(g)\int\limits_{\bf A}
\theta^{U,\psi}(w[232]x_{0110}(1) x_{0120}(r_2)
(h,g))\psi(r_2)dr_2dg$$ Factoring the integration over $N$, we
obtain the identity
$$W_{\widetilde{f}}(h)=\int\limits_{N({\bf A})\backslash SO_3({\bf
A})}W_\varphi(g)\int\limits_{\bf A}
\theta^{U,\psi}(w[232]x_{0110}(1) x_{0120}(r_2)
(h,g))\psi(r_2)dr_2dg$$ where $W_\varphi(g)$ is the Whittaker
coefficient attached to $\varphi$. This completes the proof of the
Proposition.
\end{proof}

\subsubsection{\bf From $\widetilde{G_2}$ to $SO_3$}

To study this lift, we consider a different embedding of the
commuting pair. First, we embed the group $G_2$ inside $F_4$ as the
group generated by all unipotent elements
$<x_{\pm(1000)}(r)x_{\pm(0010)}(r);x_{\pm(0100)}(m)>$. This
embedding is the standard embedding of the group $G_2$ inside
$Spin_7$. The group $SO_3$ is the group generated by
$<x_{\pm(0001)}(r) x_{\pm(1231)}(-r)x_{\pm(1232)}(-r^2)>$. This
embedding and the one introduced in the previous subsection are
related by conjugation of the Weyl element $w[231234]$. We shall
denote by $V$ the unipotent radical subgroup of the standard Borel
subgroup of $G_2$ embedded as above.

Let $\widetilde{\pi}$ denote a cuspidal irreducible representation
of the group $\widetilde{G_2}({\bf A})$. Let
$\sigma(\widetilde{\pi})$ denote the automorphic representation of
$SO_3({\bf A})$ generated by all functions of the form
$$f(g)=\int\limits_{G_2(F)\backslash G_2({\bf
A})}\widetilde{\varphi}(h)\theta((h,g))dh$$ We start with
\begin{proposition}\label{g22.2}
The representation $\sigma(\widetilde{\pi})$ is a cuspidal
representation of $SO_3({\bf A})$.
\end{proposition}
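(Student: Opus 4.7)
The plan is to follow the cuspidality blueprint established in Propositions \ref{sp42.2}, \ref{sl43.3}, and especially \ref{sl32.2}. Since $SO_3$ has a unique conjugacy class of Borel subgroups, it suffices to prove that
$$I(g):=\int\limits_{V_{SO_3}(F)\backslash V_{SO_3}({\bf A})} f(vg)\,dv=0$$
for all choice of data, where $V_{SO_3}=\{v(r):=x_{0001}(r)x_{1231}(-r)x_{1232}(-r^2):r\in {\bf A}\}$. The first step is to exploit the automorphy of $\theta$ to insert a (trivial) integration over the one-dimensional center $Z=\{x_{2342}(s)\}$ of $U=U_{\alpha_2,\alpha_3,\alpha_4}$; note that $(2342)$ is the highest root of $F_4$, so $Z$ commutes with $G_2$ (which lies in the Levi of $P_{\alpha_1,\alpha_2,\alpha_3}$) and with $V_{SO_3}$. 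Applying Proposition \ref{property2} then gives $I(g)=I'(g)+I''(g)$, where $I'$ arises from the constant term $\theta^U$ and $I''$ from the sum $\sum_{\gamma\in Q(F)\backslash Sp_6(F)}\sum_{\epsilon\in F^*}\theta^{U,\psi}(h_2(\epsilon)\gamma\cdot )$.

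For $I'$, Proposition \ref{eisen1} identifies $\theta^U\restriction_{\widetilde{Sp}_6({\bf A})}$ with a vector in the minimal representation $\Theta_6$. Since $G_2$ is not contained in this $Sp_6$, I would follow the method used to handle the corresponding $I_1'$ terms in Propositions \ref{sl31.1} and \ref{sp42.2}: further Fourier-expand along suitable abelian unipotent subgroups sitting inside $U(B_3)/U$, and discard every nontrivial character in the expansion by Theorem \ref{mini1} (any such character lies above the orbit $A_1$). What remains is a constant term of $\theta$ along the unipotent radical of a proper parabolic of $F_4$, against which the integration of $\widetilde{\varphi}$ over an appropriate unipotent radical of a maximal parabolic of $G_2$ appears as an inner integration and vanishes by the cuspidality of $\widetilde{\pi}$.

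For $I''$, I would enumerate a set of double coset representatives $Q(F)\backslash Sp_6(F)/(\text{image of }G_2(F))$. For each Weyl representative $w$ two standard devices should suffice, exactly as in the analyses of $I_1,\ldots, I_4$ in the proofs of Propositions \ref{sl32.2}, \ref{sl42.2}: either (i) use Proposition \ref{property3} (left invariance of $\theta^{U,\psi}$ under $Q^0({\bf A})$) to produce an inner integration of $\widetilde{\varphi}$ over the unipotent radical of a maximal parabolic of $G_2$, killed by cuspidality; or (ii) conjugate a factor of $v(r)$ — specifically one of $x_{1231}(-r)$ or $x_{1232}(-r^2)$ — past $w$ using the commutation relations of $F_4$ into the root $x_{1000}$ of $U$, producing an inner integral $\int_{F\backslash {\bf A}}\psi(\epsilon r)\,dr=0$ since $\epsilon\in F^*$.

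The main obstacle will be twofold. First, the parametrization of $V_{SO_3}$ is nonlinear (quadratic in $r$), so the root-exchange and change-of-variable manipulations that are completely mechanical in the earlier propositions require care: the contribution of $x_{1232}(-r^2)$ interacts with $Z$ and forces one to track cross-terms that are invisible in the linear cases. Second, because $G_2\subset Spin_7$ and not $G_2\subset Sp_6$, the reduction of $I'$ to an integral over $\widetilde{Sp}_6$ is not automatic; the auxiliary Fourier expansions needed to get there, and the enumeration of double cosets $Q(F)\backslash Sp_6(F)/(\text{image of }G_2(F))$ needed for $I''$, are the tedious technical heart of the proof. Once these reductions are carried out, the mechanism of cancellation is the same as in the preceding propositions.
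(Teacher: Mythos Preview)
Your very first step contains a genuine error. You assert that $Z=\{x_{2342}(s)\}$ commutes with $G_2$ because $G_2$ sits in the Levi $GSpin_7$ of $P_{\alpha_1,\alpha_2,\alpha_3}$, but a Levi acts nontrivially on its unipotent radical, and here $(2342)-(1000)=(1342)$ is a root, so $x_{2342}$ fails to commute with the $G_2$-generator $x_{-(1000)}(r)x_{-(0010)}(r)$. Hence you cannot ``insert'' an integration over $Z$: $Z\not\subset V_{SO_3}$, the integrand is not $Z({\bf A})$-invariant, and a genuine Fourier expansion along $Z$ would produce nontrivial-character pieces $\theta^{Z,\psi_\beta}$ ($\beta\in F^*$) that your outline never treats. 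Your plan for $I''$ compounds the problem: the double-coset space $Q(F)\backslash Sp_6(F)/G_2(F)$ is not defined, since in this embedding $G_2\not\subset Sp_6$ (the generator $x_{\pm(1000)}x_{\pm(0010)}$ uses the root $(1000)$, which lies in $U_{\alpha_2,\alpha_3,\alpha_4}$, not in the Levi $Sp_6$). This is not merely a tedious bookkeeping issue; it blocks the direct use of Proposition~\ref{property2} altogether.

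The paper's route supplies precisely the missing idea. One \emph{first} expands along the seven-dimensional abelian group $U_2=\langle x_{0122},x_{1122},x_{1222},x_{1232},x_{1242},x_{1342},x_{2342}\rangle$ introduced in Proposition~\ref{sl42.2}; this group is normalized by $Spin_7\supset G_2$, and by minimality of $\Theta$ only two $G_2$-orbits of characters survive. The trivial orbit gives a term $I'$ that reduces to the constant term along $U(B_3)=U_{\alpha_1,\alpha_2,\alpha_3}$ and is killed by the Eisenstein-series argument of \eqref{SL311}. The nontrivial orbit yields $I''$, now carrying integration only over the stabilizer $S(2)\subset G_2$, whose Levi $SL_2=\langle x_{\pm(0100)}\rangle$ \emph{does} lie in $Sp_6$; since $Z\subset U_2$ is already integrated, Proposition~\ref{property2} applies legitimately at this stage, and after invoking Proposition~\ref{eisen1} and the expansion \eqref{minexp1}--\eqref{minexp2} one shows the resulting function is left $V(2)({\bf A})$-invariant for $V(2)$ the unipotent radical of a maximal parabolic of $G_2$, hence vanishes by cuspidality of $\widetilde{\pi}$. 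The preliminary $U_2$-expansion is what converts the $G_2$-integration into something compatible with the $Sp_6$-machinery; without it your outline does not get off the ground.
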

\begin{proof}
Let $x(r)=x_{0001}(r)x_{1231}(-r)x_{1232}(-r^2)$. We need to
prove that the integral
$$\int\limits_{G_2(F)\backslash G_2({\bf
A})}\int\limits_{F\backslash {\bf
A}}\widetilde{\varphi}(h)\theta((h,x(r)g))drdh$$ is zero
for all choice of data. We expand the integral along the group
$U_2$. This group was defined in the beginning of the proof of
Proposition \ref{sl42.2}. As explained there, there are only two
orbits which contributes nonzero terms. They correspond to the
constant term and to the set of all nonzero vectors with zero
length. Thus, the above integral is equal to
$$\int\limits_{G_2(F)\backslash G_2({\bf
A})}\int\limits_{F\backslash {\bf
A}}\widetilde{\varphi}(h)\theta^{U_2}((h,x(r)g))drdh+
\int\limits_{S(2)(F)\backslash G_2({\bf A})}\int\limits_{F\backslash
{\bf
A}}\widetilde{\varphi}(h)\theta^{U_2,\psi}((h,x(r)g))drdh$$
Denote the first summand by $I'$ and the second one by $I''$. In the
above integral,  $S(2)$ is the subgroup of $G_2$ generated by
$<x_{\pm(0100)}(r),V>$. Also, we denoted
$$\theta^{U_2,\psi}(m)=\int\limits_{U_2(F)\backslash U_2({\bf
A})}\theta(u_2m)\psi_{U_2}(u_2)du_2$$ where $\psi_{U_2}$ was defined
in the proof of  Proposition \ref{sl42.2}. We mention, that in the
computation of $I''$ we used the fact that $G_2$ acts transitively
on the set of all nonzero vectors with zero length.

We start with $I''$. Let $U=U_{\alpha_2,\alpha_3,\alpha_4}$. Expand the integral along the group 
$U/Z$
with points in $F\backslash {\bf A}$. Using Proposition
\ref{property2} we obtain two terms. Thus, $I''$ is equal to
$$\int\limits_{S(2)(F)\backslash G_2({\bf
A})}\int\limits_{(F\backslash {\bf
A})^2}\widetilde{\varphi}(h)\theta^{U}(x_{0122}(r_1)
(h,x(r)g))\psi(r_1)dr_1drdh+$$
$$\int\limits_{S(2)(F)\backslash G_2({\bf
A})}\int\limits_{F\backslash {\bf
A}}\widetilde{\varphi}(h)\int\limits_{Z({\bf A})U_2(F)\backslash
U_2({\bf A})}\sum_{\substack{\gamma\in Q(F)\backslash Sp_6(F)\\
\epsilon\in F^*} } \theta^{U,\psi}(h_2(\epsilon)\gamma
u_2(h,x(r)g))\psi_{U_2}(u_2)du_2drdh$$ Arguing as in the
proof of Proposition \ref{sl31.1}, it is not hard to check that the
second summand is zero. As for the first one, after conjugation and
changing variables in $U$, we obtain
$$\int\limits_{S(2)(F)\backslash G_2({\bf
A})}\int\limits_{(F\backslash {\bf
A})^2}\widetilde{\varphi}(h)\theta^{U}(x_{0121}(r)x_{0122}(r_1)
(h,g))\psi(r_1)dr_1drdh$$ The function
$$L(h)= \int\limits_{(F\backslash {\bf
A})^2}\theta^{U}(x_{0121}(r)x_{0122}(r_1) (h,g))\psi(r_1)dr_1dr$$ is
left invariant by the unipotent radical $V(2)$ of the maximal
parabolic subgroup $S(2)$. Indeed, we have
$$V(2)=\{x_{1000}(m_1)x_{0010}(m_1)x_{1100}(m_2)x_{0110}(m_2)
x_{1110}(m_3)x_{0120}(m_3)x_{1120}(m_4)x_{1220}(m_5)\}$$ Changing
variables in $U$ the above integral is equal to
$$\int\limits_{(F\backslash {\bf
A})^2}\theta^{U}(x_{0121}(r)x_{0122}(r_1)
y(m_1,m_2,m_3))\psi(r_1)dr_1dr$$ where $y(m_1,m_2,m_3)=x_{0010}(m_1)
x_{0110}(m_2)x_{0120}(m_3)$. It follows from Proposition
\ref{eisen1} that $\theta^U$ is the  representation $\Theta_6$ defined
on $\widetilde{Sp}_6$ right before Proposition \ref{eisen2}. Thus, the above integral is equal to
$$\int\limits_{(F\backslash {\bf A})^2}\theta_6
\left [ \begin{pmatrix} 1&&&&r&r_1\\ &1&&&&r\\ &&1&&&\\ &&&1&&\\
&&&&1&\\ &&&&&1\end{pmatrix}\begin{pmatrix} 1&&&&&\\ &1&m_1&m_2&m_3&\\
&&1&&m_2&\\ &&&1&-m_1&\\ &&&&1&\\ &&&&&1\end{pmatrix}\right
]\psi(r_1)drdr_1$$ 
Denote the right most matrix by $m$. Plug in the above integral expansion \eqref{minexp1} in Proposition \ref{propertysp2}. The first 
term contributes zero since we obtain $\int\limits_{F\backslash {\bf A}}\psi(r_1)dr_1$ as inner
integration. The second summand in expansion \eqref{minexp1}, when plugged inside the above integral
can be written as a union of cells given by \eqref{cell}. It is not hard to check that the first two
cells contribute zero. The last cell contributes 
$$\sum_{\delta_i\in F} \int\limits_{(F\backslash {\bf A})^2}\theta_6^{U(GL_3),\psi}
\left [ w\begin{pmatrix} 1&\delta_1&\delta_2&&&\\ &1&&&&\\ &&1&&&\\ &&&1&&-\delta_2\\
&&&&1&-\delta_1\\ &&&&&1\end{pmatrix}\begin{pmatrix} 1&&&&r&r_1\\ &1&&&&r\\ &&1&&&\\ &&&1&&\\
&&&&1&\\ &&&&&1\end{pmatrix}m\right ]\psi(r_1)drdr_1$$ 
Here, in equation \eqref{minexp1} we wrote $U(GL_3)$ instead of $U$. Also,
$$w=\begin{pmatrix} w_0&\\ &w_0^*\end{pmatrix};\ \ \ \ w_0=\begin{pmatrix} &1&\\ &&1\\ 1&&\end{pmatrix}$$
Conjugating the matrix with
the $r$ and $r_1$ variable to the left, after changing variables in $U(GL_3)$, we may assume that  
$\delta_1=0$. Thus we obtain 
$$\sum_{\delta_2\in F}\theta_6^{U(GL_3),\psi}\left [ w\begin{pmatrix} 1&&\delta_2&&&\\ &1&&&&\\ &&1&&&\\ &&&1&&-\delta_2\\ &&&&1&\\ &&&&&1\end{pmatrix}m\right ]$$ Conjugating $m$ to the left,
changing variables in $U(GL_3)$ and using equation \eqref{minexp2} implies that the above sum
is in fact left invariant under $m\in \widetilde{Sp}_6({\bf A})$.

From this we conclude  that $L(vh)=L(v)$ for all $v\in V(2)$. Since $V(2)$ is  a unipotent radical of a maximal parabolic
subgroup of $G_2$, it follows that the integral $I''$ is zero by the
cuspidality of $\widetilde{\pi}$.

Next we consider $I'$. As in the proof of Proposition
\ref{sl42.2}, it follows that this integral is equal to
$$\int\limits_{G_2(F)\backslash G_2({\bf
A})}\widetilde{\varphi}(h)\theta^{U(B_3)}((h,g))dh$$ where $U(B_3)=U_{\alpha_1,\alpha_2,\alpha_3}$.
To prove that this integral is zero for all
choice of data, let $E(h,s)$ denote the Eisenstein series associated
with the induced representation $Ind_{L({\bf A})}^{G_2({\bf
A})}\delta_L^s$. Here $L$ is the maximal parabolic subgroup of $G_2$
which preserves a line. Consider the integral
$$\int\limits_{G_2(F)\backslash G_2({\bf
A})}\widetilde{\varphi}(h)\theta^{U(B_3)}((h,1))E(h,s)dh$$ As in
\eqref{SL311}, unfolding the Eisenstein series, using Proposition
\ref{eisen1}, we show that this integral is zero for all $Re(s)$
large. Thus its residue at $s=1$ is zero, from which it follows that
$I'=0$. Thus the lift is cuspidal.
\end{proof}

Next we shall give a criterion for the lift to be nonzero. To do
that, let
$$V_1=<x_{(1000)}(r)x_{(0010)}(r);x_{(1100)}(r)x_{(0110)}(r);
x_{(1110)}(r)x_{(0120)}(r);x_{(1120)}(r);x_{(1220)}(r)>$$ Thus,
$V_1$ is a  unipotent radical of the maximal parabolic subgroup of
$G_2$ which preserves a line. We construct a projection from $V_1$
to ${\mathcal H}_3$, the Heisenberg group with three variables,
defined as follows. Write $v\in V_1$ as
$$v=x_{(1000)}(r_1)x_{(0010)}(r_1)x_{(1100)}(r_2)x_{(0110)}(r_2)
x_{(1110)}(r_3)x_{(0120)}(r_3)x_{(1120)}(r_4)x_{(1220)}(r_5)$$ Then
we define $l:V_1\mapsto {\mathcal H}_3$ as $l(v)=(r_1,r_2,r_3)$.
Here, we identify elements in ${\mathcal H}_3$ as triples, where the
third coordinate is the center of ${\mathcal H}_3$. The group $SL_2$
generated by $<x_{\pm(0100)}(r)>$ normalizes the group $V_1$. We
have
\begin{proposition}\label{g23.3}
The representation $\sigma(\widetilde{\pi})$ is nonzero, if and
only if the integral
\begin{equation}\label{g23}
\int\limits_{SL_2(F)\backslash SL_2({\bf
A})}\int\limits_{V_1(F)\backslash V_1({\bf
A})}\widetilde{\varphi}(vm)\theta_{SL_2}^{\phi,\psi}(l(v)m)dvdm
\end{equation}
is nonzero for some choice of data. Here
$\theta_{SL_2}^{\phi,\psi}$ is a vector in the space of $\Theta_{SL_2}^\psi$, the theta representation of
${\mathcal H}_3({\bf A})\cdot\widetilde{SL}_2({\bf A})$.
\end{proposition}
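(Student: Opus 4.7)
The plan is to mirror the strategy of Propositions \ref{sp43.3} and \ref{sl42.2}. Since by Proposition \ref{g22.2} the representation $\sigma(\widetilde{\pi})$ is cuspidal on $SO_3 \cong PGL_2$, it is nonzero if and only if it admits a nontrivial Whittaker coefficient. So first I would reduce the question to showing that for some $\beta \in F^*$ and some choice of data, the $\psi_\beta$-Whittaker coefficient
$$W_{f,\beta}(g) = \int_{F \backslash {\bf A}} f(x(r) g) \psi(\beta r) dr$$
does not vanish, where $x(r) = x_{0001}(r) x_{1231}(-r) x_{1232}(-r^2)$ traces out the unipotent radical of the standard Borel subgroup of $SO_3$. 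Expanding the definition of $f$ and using the commutativity of $G_2$ and $SO_3$ inside $F_4$ converts this into
$$W_{f,\beta}(g) = \int_{G_2(F) \backslash G_2({\bf A})} \widetilde{\varphi}(h) \int_{F \backslash {\bf A}} \theta(x(r) h g) \psi(\beta r) dr \, dh.$$

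The central step is to use Proposition \ref{property6} to replace the Fourier coefficient of $\theta$ along the center $Z = \{x_{2342}(r)\}$ of the Heisenberg unipotent $U$ by a vector in the theta representation of ${\mathcal H}_{15}({\bf A}) \cdot \widetilde{Sp}_{14}({\bf A})$, evaluated at $\iota(u) \varpi_3(g)$. I would begin by Fourier expanding $\theta$ along $Z$, eliminating the constant-term contribution via a cuspidality argument on $\widetilde{\varphi}$ modeled on the treatment of $I'$ in the proof of Proposition \ref{g22.2}, and using Proposition \ref{property2} to collapse the remaining piece to $\theta^{U,\psi}$. Substituting via Proposition \ref{property6} then presents the inner integrand as a theta-pairing of $\widetilde{\varphi}$ with a vector in $\Theta_{Sp_{14}}^{\phi,\psi} \circ \varpi_3$, extended to $U({\bf A})$ through the Heisenberg isomorphism $\iota$.

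Next comes the dual-pair decomposition: projecting the $SO_3$-unipotent $x(r) \mapsto x_{0001}(r)$ into $Sp_6$ identifies a copy of $SO_3$ inside $Sp_6$ whose centralizer in $Sp_{14}$ (acting via $\varpi_3$) contains a copy of $\widetilde{SL}_2 \cdot {\mathcal H}_3$, with the $SL_2$ matching the Levi $\langle x_{\pm(0100)}(r) \rangle$ of the Heisenberg parabolic of $G_2$ and the ${\mathcal H}_3$ receiving $V_1$ via precisely the projection $l$ defined before the Proposition. As in the see-saw computation in the proof of Proposition \ref{sp43.3}, the $Sp_{14}$-theta factors as a product of smaller theta functions: one factor is $\theta_{SL_2}^{\phi,\psi}$ on $\widetilde{SL}_2 \cdot {\mathcal H}_3$, and the other is unfolded against the Whittaker character $\psi_\beta$, contributing a single orbit of vectors whose stabilizer accounts for the remaining adelic integration. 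After collapsing summations, $W_{f,\beta}(g)$ takes the form of the integral \eqref{g23}, and a density argument in the spirit of \cite{Ga-S} yields the asserted equivalence.

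The hard part will be the verification in the third paragraph: one must check that the restriction of $\varpi_3(Sp_6)$ along the commuting pair $(G_2, SO_3)$ interacts with the Heisenberg structure of $V_1 \subset G_2$ precisely through the projection $l$, and that the small theta emerging from the see-saw is genuinely the representation $\Theta_{SL_2}^\psi$ on $\widetilde{SL}_2 \cdot {\mathcal H}_3$ rather than a larger one. The accompanying technicalities---several root exchanges in the sense of subsection 2.2.2 to manipulate the Fourier expansions, tracking of the metaplectic cocycles on the nonsplit $\widetilde{SL}_2 \subset \widetilde{G}_2$, and disposal of the spurious terms produced by intermediate expansions using cuspidality of $\widetilde{\pi}$---mirror the calculations already carried out in subsections 3.1 through 3.3 and should be routine in comparison.
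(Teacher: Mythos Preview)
Your route diverges from the paper's at the first substantive step, and there is a real obstruction. Proposition~\ref{property6} identifies $\theta^{Z,\psi_\beta}(ug)$ with $\theta_{Sp_{14}}^{\phi,\psi_\beta}(\iota(u)\varpi_3(g))$ only for $u\in U({\bf A})$ and $g\in\widetilde{Sp}_6({\bf A})$, where $Sp_6$ is the Levi of $P_{\alpha_2,\alpha_3,\alpha_4}$. In the embedding of subsection~3.4.2, however, $G_2$ sits inside $Spin_7$, the Levi of the \emph{other} maximal parabolic $P_{\alpha_1,\alpha_2,\alpha_3}$; a general $h\in G_2({\bf A})$ is not of the form $ug$ with $u\in U$ and $g\in Sp_6$ (e.g.\ $x_{-(1000)}(r)x_{-(0010)}(r)\in G_2$ but $x_{-(1000)}\notin U\cdot Sp_6$). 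So after any expansion along $Z$ you cannot feed the argument $(h,x(r)g)$ into Proposition~\ref{property6} and pair against $\widetilde\varphi(h)$, and the see-saw on $Sp_{14}$ you describe has no group-theoretic footing here. (Incidentally, the vanishing of $I'$ in Proposition~\ref{g22.2} is handled by an Eisenstein-series argument, not by cuspidality of $\widetilde\varphi$.)

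The paper proceeds differently. It reruns the analysis of Proposition~\ref{g22.2}: expand along the seven-dimensional group $U_2\subset U(B_3)$, discard $I'$ and the $\theta^{U,\psi}$-part of $I''$ as before, and observe that the surviving term for $W_f(g)$ involves the full constant term $\theta^{U}$. By Proposition~\ref{eisen1}, $\theta^{U}$ restricted to $\widetilde{Sp}_6$ is a vector $\theta_6$ in the minimal representation $\Theta_6$. The point is that after factoring the $S(2)=V_1\cdot SL_2$ integration and a change of variables inside $U$, the argument of $\theta^U$ becomes $x_{0121}(l_1)x_{0122}(l_2)y(m_1,m_2,m_3)m$, which \emph{does} lie in $Sp_6$, so Proposition~\ref{eisen1} applies. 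From there the paper uses Proposition~\ref{propertysp3} (the $Sp_6\to Sp_4$ analogue of \ref{property6}, not \ref{property6} itself) to pass to $\theta_{Sp_4}^{\phi,\psi}$ on $\mathcal{H}_5\cdot\widetilde{Sp}_4$, and finally the Weil-representation formulas collapse this to $\theta_{SL_2}^{\phi,\psi}$ on $\mathcal{H}_3\cdot\widetilde{SL}_2$, yielding the integral~\eqref{g23}. So the chain is $\Theta_{F_4}\to\theta^U\to\Theta_6\to\Theta_{Sp_4}^\psi\to\Theta_{SL_2}^\psi$, entirely inside successively smaller symplectic groups, rather than a single jump to $\Theta_{Sp_{14}}^\psi$.
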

\begin{proof}
Keeping the notations in the proof of Proposition \ref{g22.2}, the
lift is nonzero for some choice of data, if and only if the
integral
$$W_f(g)=\int\limits_{G_2(F)\backslash G_2({\bf
A})}\int\limits_{F\backslash {\bf
A}}\widetilde{\varphi}(h)\theta((h,x(l_1)g))\psi(l_1)dl_1dh$$
is nonzero for some choice of data. Here $x(l_1)$ was defined
in the beginning of the proof of Proposition \ref{g22.2}. Arguing as
in the proof of Proposition \ref{g22.2}, we obtain
$$W_f(g)=\int\limits_{S(2)(F)\backslash G_2({\bf
A})}\int\limits_{(F\backslash {\bf
A})^2}\widetilde{\varphi}(h)\theta^{U}(x_{0121}(l_1)x_{0122}(l_2)
(h,g))\psi(l_1+l_2)dl_idh$$ Here $S(2)=V_1\cdot SL_2$ where
the $SL_2$ is generated by $<x_{\pm(0100)}(r)>$. Factoring the
integration over $S(2)$, and plugging $g=e$, then $W_f(e)$ is
equal to
$$\int\limits_{S(2)({\bf A})\backslash G_2({\bf
A})}\int\limits_{SL_2({F})\backslash SL_2({\bf
A})}\int\limits_{V_1({F})\backslash V_1({\bf
A})}\int\limits_{(F\backslash {\bf
A})^2}\widetilde{\varphi}(v_1mh)\times $$
$$\theta^{U}(x_{0121}(l_1)x_{0122}(l_2)
v_1m(h,e))\psi(l_1+l_2)dl_idv_1dmdh$$ Arguing in a similar way as in
\cite{Ga-S}, the above integral is zero for all choice of data if
and only if the integral
$$\int\limits_{SL_2({F})\backslash SL_2({\bf
A})}\int\limits_{V_1({F})\backslash V_1({\bf
A})}\int\limits_{(F\backslash {\bf A})^2}\widetilde{\varphi}(v_1m)
\theta^{U}(x_{0121}(l_1)x_{0122}(l_2)v_1m)\psi(l_1+l_2)dl_idv_1dm$$
is zero for all choice of data. From the description of $V_1$ in
terms of roots in $F_4$, it follows after a change of variables in
$U$, that the above integral is equal to
$$\int\limits_{SL_2({F})\backslash SL_2({\bf
A})}\int\limits_{V_1({F})\backslash V_1({\bf
A})}\int\limits_{(F\backslash {\bf A})^2}\widetilde{\varphi}(v_1m)
\theta^{U}(x_{0121}(l_1)x_{0122}(l_2)y(m_1,m_2,m_3)m)\psi(l_1+l_2)dl_idv_1dm$$
where $y(m_1,m_2,m_3)=x_{0010}(m_1) x_{0110}(m_2)x_{0120}(m_3)$.
From Proposition \ref{eisen1}, this integral is zero for all
choice of data if and only if the integral
$$\int\limits_{SL_2({F})\backslash SL_2({\bf
A})}\int\limits_{(F\backslash {\bf A})^7}
\widetilde{\varphi}(k(m_1,m_2,m_3,r_1,r_2)m)\theta_6
(z(l_1,l_2) y(m_1,m_2,m_3)m) \psi(l_1+l_2)dl_idm_jdm$$ is zero for
all choice of data. Here
$z(l_1,l_2)=I_6+l_1(e_{1,5}+e_{2,6})+l_2e_{1,6}$, and
$y(m_1,m_2,m_3)=I_6+m_1(e_{2,3}-e_{4,5})+m_2(e_{2,4}+e_{3,5})
+m_3e_{2,5}$, both matrices in $Sp_6$. Also,
$k(m_1,m_2,m_3,r_1,r_2)$ is equal to
$$x_{1000}(m_1)x_{0010}(m_2)x_{1100}(m_2)x_{0110}(m_2)
x_{1110}(m_3)x_{0120}(m_3)x_{1120}(r_1)x_{1220}(r_2)$$ 
Next we use Proposition \ref{propertysp3} to obtain that the above integral is equal to
$$\int\limits_{SL_2({F})\backslash SL_2({\bf
A})}\int\limits_{(F\backslash {\bf A})^7}
\widetilde{\varphi}(k(m_1,m_2,m_3,r_1,r_2)m)\theta_{Sp_4}^{\phi,\psi}
(h(l_2) y(m_1,m_2,m_3)m) \psi(l_1)dl_1dm_jdm$$ 
Here $\theta_{Sp_4}^{\phi,\psi}$ is a vector in the
space of $\Theta_{Sp_4}^{\psi}$ which is the theta representation defined on the group  
${\mathcal H}_5({\bf A})\cdot\widetilde{Sp}_4({\bf A})$. The element $h(l_2)$ is an element in 
${\mathcal H}_5({\bf A})$ which is equal to $h(l_2)=(0,0,0,l_2,0)$. Here we view elements of 
${\mathcal H}_5({\bf A})$ as defined in \cite{I1}. Applying the
formulas of the Weil representation, see \cite{I1}, we obtain integral \eqref{g23}
as inner integration. Arguing again in a similar way as in
\cite{Ga-S}, the Proposition follows.
\end{proof}

\subsection{\bf The Commuting Pair $(SL_2,Sp_6)$}

In this subsection we study the lifting from automorphic
representations of $\widetilde{SL}_2({\bf A})$ to automorphic
representations of $\widetilde{Sp}_6({\bf A})$, and also the lifting
in the other direction.

\subsubsection{\bf From $\widetilde{SL}_2({\bf A})$ to $\widetilde{Sp}_6({\bf
A})$}

To study this lift we consider the following embedding of the groups
$SL_2$ and $Sp_6$ in $F_4$. First we embed the group $SL_2$ as the
group $<x_{\pm 0100}(r)>$. The embedding of $Sp_6$ is as the group
generated by $<x_{\pm 0120}(r); x_{\pm 0001}(r); x_{\pm 1110}(r)>$. These two
embedding do not split under the double cover. The group $Sp_6$ has
three maximal parabolic subgroups, and we denote their unipotent
radical by $V_i$ for $1\le i\le 3$. The roots inside these three
unipotent groups are $\{ (1110; (1111); (1231); (1232);
(2342)\}$ in $V_1$, $\{ (0001); (1111); (0121); (1231); (0122); (1232);
(2342)\}$ in $V_2$, and $\{ (0120); (0121); (0122);$
$(1231); (1232); (2342)\}$ in $V_3$.

Let $\widetilde{\pi}$ denote an irreducible  cuspidal representation
of $\widetilde{SL}_2({\bf A})$. The lift we consider is given by
\begin{equation}\label{sl2sp6.1}
f(g)=\int\limits_{SL_2(F)\backslash SL_2({\bf
A})}\widetilde{\varphi}(h) \theta(((h,g))dh\notag
\end{equation}
We denote by $\sigma(\widetilde{\pi})$ the automorphic
representation of $\widetilde{Sp}_6({\bf A})$ generated by the above
functions. The result we prove is
\begin{proposition}\label{propsl2sp6.1}

Let $\widetilde{\pi}$ denote an irreducible cuspidal representation
of $\widetilde{SL}_2({\bf A})$ which lift to a cuspidal
representation of $GL_2({\bf A})$. Then the representation
$\sigma(\widetilde{\pi})$ is nonzero. Assume also that integral 
\eqref{weird} is zero for all choice of data. Then, the constant terms of this
representation along the unipotent groups $V_1$ and $V_3$ are zero.

\end{proposition}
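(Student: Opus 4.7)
The plan has two independent parts, the nonvanishing statement and the two constant-term computations, and I would attack them in that order.

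For the nonvanishing, I would compute a Whittaker coefficient of $f(g)$ attached to a generic character $\psi_V$ of the maximal unipotent subgroup $V$ of $Sp_6$ (where $V$ is generated by the positive root subgroups $x_\alpha(r)$ in $Sp_6\subset F_4$). Since $V\supset Z=\{x_{2342}(r)\}$ and the character is nontrivial on $Z$, I cannot directly use Proposition \ref{property2} on all of $V$, but after separating out $Z$ the situation fits into the framework of subsection 2.6. Concretely, I would combine Proposition \ref{property6} with the unfolding of the theta function $\theta_{Sp_{14}}^{\phi,\psi_\beta}$ under the embedding $SL_2\times Sp_6\hookrightarrow Sp_{14}$ given by $\varpi_3$, which should express $W_f$ as an integration of $\widetilde{\varphi}$ against a theta-type function over some $SO_n^\beta({\bf A})\backslash SL_2({\bf A})$ style period. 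Since $\widetilde{\pi}$ lifts to a cuspidal representation of $GL_2({\bf A})$, by Waldspurger's theorem \cite{W} the corresponding orthogonal period is nonvanishing for some choice of data, exactly as in the argument of Proposition \ref{sp43.3}. This should be the cleaner of the two parts.

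For the cuspidality along $V_1$ and $V_3$, I observe that both $V_1$ and $V_3$ contain the center $Z=\{x_{2342}(r)\}$, so I would begin by applying Proposition \ref{property2} to replace the expansion along $Z$ inside the $V_i$-integral. This produces two contributions: a ``constant term'' piece involving $\theta^U$ (which by Proposition \ref{eisen1} restricted to $\widetilde{Sp}_6({\bf A})$ is the minimal representation $\Theta_6$), and a sum over $Q(F)\backslash Sp_6(F)$ of $\theta^{U,\psi}$ pieces. For the $\theta^U$ piece I would expand further using Propositions \ref{propertysp1}, \ref{propertysp2} and the minimality of $\Theta_6$, reducing to inner integrations over unipotent subgroups of the $SL_2$ factor, which vanish by cuspidality of $\widetilde{\pi}$ as in the treatment of integral \eqref{cuspsl2} in the $(SL_2\times SL_2,Sp_4)$ case.

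For the $\theta^{U,\psi}$ piece, I would choose Weyl representatives for $Q(F)\backslash Sp_6(F)/P_i(F)$, where $P_i$ is the parabolic with unipotent radical $V_i$, and analyze each in turn. Most representatives should produce either: a factor $\int_{F\backslash {\bf A}}\psi(cr)\,dr=0$ coming from conjugating a unipotent element $x_\alpha(r)\in V_i$ across the Weyl element to land on $x_{1000}(\epsilon\cdot cr)$ (this uses strongly that $V_1$ contains $x_{1110}(r)$ type roots that can be mapped into $x_{1000}$, and $V_3$ contains $x_{0122}$-type roots, using the commutation relations in $F_4$ from \cite{G-S}); or an inner integration of $\widetilde{\varphi}$ over a unipotent radical of $SL_2$, which vanishes by cuspidality together with Proposition \ref{property3}. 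The remaining Weyl coset, which cannot be disposed of by either route, produces precisely the integral \eqref{weird} whose vanishing is assumed in the hypothesis.

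The main obstacle is tracking which Weyl representative gives which type of contribution, and in particular verifying that the residual nonvanishing term is exactly the one encoded by \eqref{weird} (and not some other expression). This matching, rather than any deep new idea, is what makes the calculation delicate; the analogy with the $V_2$ cases in Propositions \ref{sl31.1}, \ref{sl41.1} suggests that the obstructing coset corresponds to the longest Weyl element among the representatives, where no short root in $V_i$ is available to generate a vanishing $\psi$-integral.
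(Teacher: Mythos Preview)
Your overall architecture is right—Proposition~\ref{property2} for the cuspidality part and Proposition~\ref{property6} for the nonvanishing—but you have the two hypotheses wired to the wrong places, and that matters.

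For the constant terms, the residual obstruction is \emph{not} in the $\theta^{U,\psi}$ sum. In the paper all Weyl cosets in $Q(F)\backslash Sp_6(F)/P(F)$ (with $P$ the $GL_1\times Sp_4$ parabolic, then further $S(3)\backslash Sp_4/S(2)$) are disposed of unconditionally, either by cuspidality of $\widetilde{\pi}$ via Proposition~\ref{property3} or by producing a factor $\int\psi(\epsilon\delta r)\,dr=0$. The stubborn term is in the \emph{first} summand, the $\theta^U$ piece. By Proposition~\ref{eisen1} this reduces to $\int_{SL_2(F)\backslash SL_2({\bf A})}\widetilde{\varphi}(h)\theta_6(h)\,dh$ with $SL_2$ embedded as $h\mapsto\mathrm{diag}(1,1,h,1,1)$ in $Sp_6$. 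Expanding along the center $Z$ of $Sp_6$ and applying Proposition~\ref{propertysp3} to the nontrivial characters gives an inner integral $\int\widetilde{\varphi}(h)\theta_{SL_2}^{\phi',\psi}(h)\,dh$; it is \emph{here} that the assumption ``$\widetilde{\pi}$ lifts to a cuspidal $GL_2$ representation'' is used, to kill this pairing. What survives after the constant-term-in-$Z$ contribution and further minimal-representation expansions is precisely integral~\eqref{weird}. So both extra hypotheses live in the $\theta^U$ branch, not the $\theta^{U,\psi}$ branch.

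For the nonvanishing, you do not need Waldspurger periods at all. The paper does not compute the full Whittaker coefficient; it computes a Fourier coefficient along $V_3$ with a nontrivial character $\psi(\beta r_1+\gamma r_3+r_6)$ (integral~\eqref{nonvanish1}). Since this character is nontrivial on $Z$, Proposition~\ref{property6} applies directly, and after unfolding $\theta_{Sp_{14}}^{\phi,\psi}$ under the explicit $\varpi_3$-embedding one arrives at $\int_{N({\bf A})\backslash SL_2({\bf A})}W^{\psi,\beta,\gamma}_{\widetilde{\varphi}}(h)\,\omega_\psi(h)\phi(\cdots)\,dh$. A \cite{Ga-S}-type argument then shows this is nonzero for some $\beta,\gamma$ because the ordinary Whittaker function of $\widetilde{\varphi}$ is nonzero for some character. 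No orthogonal period, and the $GL_2$-lift hypothesis plays no role here.
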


\begin{proof}

We start with the computation of the constant terms along the groups
$V_1$ and $V_3$. Thus, for $i=1,3$ we need to prove that the
integral
\begin{equation}\label{sl2sp6.2}
\int\limits_{SL_2(F)\backslash SL_2({\bf A})}
\int\limits_{V_i(F)\backslash V_i({\bf A})}\widetilde{\varphi}(h)
\theta((h,v))dvdh
\end{equation}
is zero for all choice of data. Let $Z=\{x_{2342}\}$. Then $Z\subset
V_i$ and using Proposition \ref{property2} integral \eqref{sl2sp6.2}
is equal to
\begin{equation}\label{sl2sp6.3}
\int\limits_{SL_2(F)\backslash SL_2({\bf A})}\int\limits_{Z({\bf
A})V_i(F)\backslash V_i({\bf A})}\widetilde{\varphi}(h)
\theta^U(((h,v))dvdh  +
\end{equation}
$$\int\limits_{SL_2(F)\backslash SL_2({\bf
A})}\widetilde{\varphi}(h)\int\limits_{Z({\bf A})V_i(F)\backslash
V_i({\bf A})} \sum_{\gamma\in Q(F)\backslash
Sp_6(F)}\sum_{\epsilon\in F^*}\theta^{U,\psi}(h_2(\epsilon)\gamma
(v,g))dvdh$$ The first summand is zero. Indeed, it follows from
Proposition \ref{eisen1} that we obtain the integral
\begin{equation}\label{sl2sp6.4}
\int\limits_{SL_2(F)\backslash SL_2({\bf
A})}\widetilde{\varphi}(h)\theta_6(h)dh
\end{equation}
as inner integration. Here, the embedding of $SL_2$ inside $Sp_6$ is given by
$h\to \text{diag}(1,1,h,1,1)$. Also, $\theta_6$ is a vector in the space of the representation $\Theta_6$ which was defined right before Proposition \ref{eisen2}. Let $Z$ denote the subgroup
of $Sp_6$ which was defined right before Proposition \ref{propertysp3}. Expanding the above integral 
along $Z(F)\backslash Z({\bf A})$, we consider first the contribution from the nontrivial characters. To that we use Proposition \ref{propertysp3} to obtain the integral
$$\int\limits_{SL_2(F)\backslash SL_2({\bf
A})}\widetilde{\varphi}(h)\theta_{Sp_4}^{\phi',\psi_\beta}(h)dh$$ as inner integration. Applying the
Theta representation properties, see \cite{I1} and \cite{G-R-S6}, we obtain the integral
$$\int\limits_{SL_2(F)\backslash SL_2({\bf
A})}\widetilde{\varphi}(h)\theta_{SL_2}^{\phi',\psi}(h)dh$$ 
It follows from the assumption on $\widetilde{\pi}$ that this integral
is zero for all choice of data. Thus, integral \eqref{sl2sp6.4} is equal to 
$$\int\limits_{SL_2(F)\backslash SL_2({\bf
A})}\widetilde{\varphi}(h)\theta_6^Z(h)dh$$
The quotient $U(GL_1\times Sp_4)/Z$ is abelian. Here $U(GL_1\times Sp_4)$ was defined right before Proposition \ref{propertysp1}. Expanding along this quotient, and using the fact that $\Theta_6$ is
a minimal representation, the above integral is equal to
$$\int\limits_{SL_2(F)\backslash SL_2({\bf
A})}\widetilde{\varphi}(h)\theta_6^{U(GL_1\times Sp_4)}(h)dh$$
We proceed with these Fourier expansions, and using the minimality of $\Theta_6$, we deduce that
the above integral is equal to 
\begin{equation}\label{weird}
\int\limits_{SL_2(F)\backslash SL_2({\bf
A})}\widetilde{\varphi}(h)\theta_6^V(h)dh
\end{equation}
Here $V$ is the unipotent radical of the parabolic subgroup of $Sp_6$ whose Levi part is $GL_1^2
\times SL_2$. This integral is zero by assumption. Hence the first summand in \eqref{sl2sp6.3}
is zero for all choice of data.

Next we compute the second summand in \eqref{sl2sp6.3}. Let $P$
denote the maximal parabolic subgroup of $Sp_6$ whose Levi part is
$GL_1\times Sp_4$. The space of double cosets $Q(F)\backslash Sp_6(F)/P(F)$
contains two elements, and we take $e$ and $w[234]$ as
representatives. Denote by $I_1$ the contribution from $e$ and by
$I_2$ the contribution from $w[234]$. Then
$$I_1=\int\limits_{SL_2(F)\backslash SL_2({\bf
A})}\widetilde{\varphi}(h)\int\limits_{Z({\bf A})V_i(F)\backslash
V_i({\bf A})}\sum_{\gamma\in S(3)(F)\backslash
Sp_4(F)}\sum_{\epsilon\in F^*}\theta^{U,\psi}(h_2(\epsilon)\gamma
(v,g))dvdh$$ Here $S(3)$ is the maximal parabolic subgroup of $Sp_4$
which contains the group $< x_{\pm 0010}>$. The space of double
cosets $S(3)(F)\backslash Sp_4(F)/S(2)(F)$ contains two elements with
representatives $e$ and $w[23]$. Here $S(2)$ is the maximal
parabolic subgroup of $Sp_4$ whose Levi part contains the group
$<x_{\pm 0100}>$. The contribution to $I_1$ from $e$ is given by
$$\int\limits_{B(F)\backslash SL_2({\bf
A})}\widetilde{\varphi}(h)\int\limits_{Z({\bf A})V_i(F)\backslash
V_i({\bf A})}\sum_{\epsilon\in F^*}\theta^{U,\psi}(h_2(\epsilon)
(v,g))dvdh$$ Here $B$ is the Borel subgroup of $SL_2$. Using
Proposition \ref{property3} this integral is zero by cuspidality of
$\widetilde{\pi}$. This follows from the fact that the function
$g\mapsto \theta^{U,\psi}(h_2(\epsilon) (v,g))$ is left invariant
under $\{x_{0100}(r)\}$ with $r\in {\bf A}$. The contribution to $I_1$
from $w[23]$ is given by
$$\int\limits_{B(F)\backslash SL_2({\bf
A})}\widetilde{\varphi}(h)\int\limits_{Z({\bf A})V_i(F)\backslash
V_i({\bf A})}\sum_{\delta_i\in F}\sum_{\epsilon\in
F^*}\theta^{U,\psi}(h_2(\epsilon)w[23]x_{0010}(\delta_1)x_{0120}(\delta_2)
(v,g))dvdh$$ If $\delta_1$ is zero, arguing as in the previous
integral, using Proposition \ref{property3}, we get zero
contribution by the cuspidality of $\widetilde{\pi}$. Assume
$\delta_1\ne 0$. If $i=1$, then $V_1$ contains the root $(1110)$.
Conjugating $x_{1110}(r)$ from right to left, using commutation
relations and Proposition \ref{property3}, we obtain
$\int\limits_{F\backslash {\bf A}}\psi(\epsilon^{-1}\delta_1r)dr$ as
inner integration. Since $\delta_1$ and $\epsilon$ are nonzero this
integral is zero. When $i=3$, the group $V_3$, contains
$\{x_{0120}(r)\}$. Collapse the summation over $\delta_2$ with the
corresponding integration, we then get that the function $h\to
\int\limits_{\bf A}\theta^{U,\psi}(
h_2(\epsilon)w[23]x_{0010}(\delta_1)x_{0120}(r)(1,h))dr$ is left
invariant under $\{x_{0100}(m)\}$ for all $m\in {\bf A}$. Thus we get
zero by cuspidality. Thus $I_1=0$.

We are left with $I_2$ which is equal to
$$\int\limits_{SL_2(F)\backslash SL_2({\bf
A})}\widetilde{\varphi}(h)\int\limits_{Z({\bf A})V_i(F)\backslash
V_i({\bf A})}\sum_{\gamma\in S(3)(F)\backslash Sp_4(F)}$$
$$\sum_{\delta_i\in F,\ \epsilon\in
F^*}\theta^{U,\psi}(h_2(\epsilon)w[234]x_{0001}(\delta_1)
x_{0011}(\delta_2)x_{0122}(\delta_3)\gamma (v,g))dvdh$$ As above, we
take $e$ and $w[23]$ as representatives for $S(3)(F)\backslash
Sp_4(F)/S(2)(F)$, and so $I_2$ is a sum of two integrals which we denote
by $I_{21}$ and $I_{22}$. The integral $I_{21}$ is equal to
$$\int\limits_{B(F)\backslash SL_2({\bf
A})}\widetilde{\varphi}(h)\int\limits_{Z({\bf A})V_i(F)\backslash
V_i({\bf A})}\sum_{\delta_i\in F}\sum_{\epsilon\in
F^*}\theta^{U,\psi}(h_2(\epsilon)w[234]x_{0001}(\delta_1)
x_{0011}(\delta_2)x_{0122}(\delta_3)(v,g))dvdh$$ If $\delta_2$ equal
zero, then we get zero by cuspidality. This follows from the fact
that the function $g\mapsto
\theta^{U,\psi}(h_2(\epsilon)w[234]x_{0001}(\delta_1)
x_{0122}(\delta_3)(v,g))$ is left invariant by $\{x_{0100}(r)\}$ with
$r\in {\bf A}$. Assume $\delta_2\ne 0$. The group $V_1$ contains the
root $(1111)$. Conjugating by $x_{1111}(r)$, we obtain
$\int\limits_{F\backslash {\bf A}}\psi(\epsilon^{-1}\delta_2r)dr$ as
inner integration, and hence we get zero. The group $V_3$, contains
the root $(0122)$. As in the case of $I_1$, we collapse summation
and integration , and then get zero by cuspidality. We are left with
$I_{22}$ which is equal to
$$\int\limits_{B(F)\backslash SL_2({\bf
A})}\widetilde{\varphi}(h)\int\limits_{Z({\bf A})V_i(F)\backslash
V_i({\bf A})}\sum_{\delta_i\in F}\sum_{\epsilon\in
F^*}\theta^{U,\psi}(h_2(\epsilon)w[23423]y(\delta_1,\delta_2,
\delta_3,\delta_4,\delta_5)(v,g))dvdh$$ where
$$y(\delta_1,\delta_2,
\delta_3,\delta_4,\delta_5)=x_{0010}(\delta_1)x_{0011}(\delta_2)
x_{0120}(\delta_3)x_{0121}(\delta_4)x_{0122}(\delta_5)$$ Both
unipotent subgroups contains the two roots $(1231)$ and $(1232)$. We
have
$$x_{0011}(\delta_2)x_{1231}(r)=x_{1242}(r\delta_2)x_{1231}(r)x_{0011}(\delta_2)$$
Since $w[23423]$ conjugates the root $(1242)$ to $(1000)$, it
follows that if $\delta_2\ne 0$, then the contribution to the above
integral is zero. Indeed, using these commutation relations, and a
change of variables in $U$, we obtain the integral
$\int\psi(\delta_2\epsilon r)dr$ as inner integration.  Similarly,
using the root $(1232)$ we deduce that the contribution from
$\delta_1\ne 0$ is zero. When $\delta_1=\delta_2=0$ we once again
use the left invariance of $\theta^{U,\psi}$ by $x_{0100}(r)$ with
$r\in {\bf A}$, to get zero by cuspidality. This completes the proof
that the constant terms along the unipotent groups $V_1$ and $V_3$
are zero.

Next we consider the question of the non vanishing of the lift. 
We will prove that there is a choice of data such that the integral
\begin{equation}\label{nonvanish1}
\int\limits_{ (F\backslash {\bf A})^6}f(x_{0120}(r_1)x_{0121}(r_2)
x_{0122}(r_3)x_{1231}(r_4)x_{1232}(r_5)x_{2342}(r_6))\psi(\beta
r_1+\gamma r_3+r_6)dr_i
\end{equation}
is not zero for some $\beta,\gamma\in F^*$. Assume not. Then, for
all $\beta$ and $\gamma$ and all choice of data, this integral is
zero. Plugging this into the definition of the lift, we deduce that
for all choice of data, the integral
$$\int\limits_{SL_2(F)\backslash SL_2({\bf
A})}\widetilde{\varphi}(h)\int\limits_{V(F)\backslash V({\bf A})}
\theta((h,v))\psi_{V,\beta,\gamma}(v)dvdh$$ is zero. Here we wrote
$V$ and $\psi_{V,\beta,\gamma}$  for the group generated by the 6 roots in \eqref{nonvanish1}
and for the character of this group.
The group $V$ is abelian. Let $U_1=\{x_{1231}(r), x_{1232}(r)\}$ and
$U_2=\{ x_{0120}(r), x_{0121}(r), x_{0122}(r)\}$.  Using Proposition
\ref{property6} we deduce that for all choice of data, the integral
$$\int\limits_{SL_2(F)\backslash SL_2({\bf
A})}\widetilde{\varphi}(h)\int\limits_{U_1(F)\backslash U_1({\bf
A})}\int\limits_{U_2(F)\backslash U_2({\bf
A})}\theta_{Sp_{14}}^{\phi,\psi}(\iota(u_1)\varpi_3(u_2h))
\psi_{U_2,\beta,\gamma}(u_2)du_1du_2dh$$ is zero. We describe the
embedding of the various groups inside the Heisenberg group
${\mathcal H}_{15}$ and in $Sp_{14}$. We use the parametrization as
described in integral \eqref{nonvanish1}. First, inside the
Heisenberg group we have
$$x_{1231}(r_4)x_{1232}(r_5)=(0,\ldots,0,r_4,r_5,0,0,0)$$ where the
last coordinate is the center of the Heisenberg group. Here we
identify  the group ${\mathcal H}_{15}$ with a 15 tuple. See \cite{I1}. Next
$$x_{0120}(r_1)=I_{14}+r_1e'_{1,5}+r_1e'_{2,6}+r_1e_{4,11},\ \ \
x_{0122}(r_3)=I_{14}+r_3e'_{1,9}+r_3e'_{2,10}+r_3e_{3,12}$$ Here
$e_{i,j}$ denotes the matrix of size 14 which has one at the $(i,j)$
entry and zero elsewhere, and $e'_{i,j}=e_{i,j}-e_{15-j,15-i}$. The
above two matrices are in $Sp_{14}$.  Also, we have
$$x_{0121}(r_2)=I_{14}+r_2e'_{1,7}+r_2e'_{2,8}+r_2e'_{3,11}$$
Finally, the group $SL_2$ is embedded in $Sp_{14}$ as $h\to
\text{diag}(h,I_2,h,h,h^*,I_2,h^*)$.

In the above integral we unfold the theta function. Using the action
of the Weil representation under the Heisenberg group, see
\cite{G-R-S6}, we deduce that $\xi_3=\xi_4=0$. Thus the integral
$$\int\limits_{SL_2(F)\backslash SL_2({\bf
A})}\widetilde{\varphi}(h)\int\limits_{(F\backslash {\bf A})^3}$$
$$\sum_{\xi_i\in
F}\omega_\psi(x_{0120}(r_1)x_{0121}(r_2)x_{0122}(r_3)h) \phi(
\xi_1,\xi_2,0,0,\xi_5,\xi_6,\xi_7)\psi(\beta r_1+\gamma r_3)dr_idh$$
is zero for all choice of data. From the embedding of the group
$SL_2$ in $Sp_{14}$, and from the action of the Weil
representation, we obtain that the group $SL_2(F)$ acts on the first two
coordinates $\xi_1$ and $\xi_2$ with two orbits. The trivial orbit
contributes zero. Indeed, from the embedding of the unipotent group
$\{x_{0120}(r_1)\}$ inside $Sp_{14}$, we obtain the integral
$\int\psi(\beta r_1)dr_1$ as inner integration.  Thus the above
integral is equal to
$$\int\limits_{N(F)\backslash SL_2({\bf
A})}\widetilde{\varphi}(h)\int\limits_{(F\backslash {\bf A})^3}$$
$$\sum_{\xi_i\in
F}\omega_\psi(x_{0120}(r_1)x_{0121}(r_2)x_{0122}(r_3)h) \phi(
0,1,0,0,\xi_5,\xi_6,\xi_7)\psi(\beta r_1+\gamma r_3)dr_idh$$ which
is zero for all choice of data. Here $N$ is the unipotent radical of
the Borel subgroup of $SL_2$. Applying the integration over $r_2$
and then over $r_3$, and arguing as above, we deduce  that $\xi_7=0$
and $\xi_5=\gamma$. Collapsing the summation over $\xi_6$ with the
integration over $r_1$ we obtain that the integral
$$\int\limits_{N(F)\backslash SL_2({\bf
A})}\widetilde{\varphi}(h)\int\limits_{{\bf A}}\omega_\psi(h)
\phi(0,1,0,0,\gamma,r_1,0)\psi(\beta r_1)dr_1dh$$ is zero for all
choice of data. Finally, factoring the integration over $N$ we
obtain that the integral
$$\int\limits_{N({\bf A})\backslash SL_2({\bf
A})}W^{\psi,\beta,\gamma}_{\widetilde{\varphi}}(h) \int\limits_{{\bf
A}}\omega_\psi(h)\phi(0,1,0,0,\gamma,r_1,0)\psi(\beta r_1)dr_1dh$$
is zero for all choice of data. Here
$$W^{\psi,\beta,\gamma}_{\widetilde{\varphi}}(h)=
\int\limits_{F\backslash {\bf A}}\widetilde{\varphi}\left (\begin{pmatrix}
1&y\\ &1\end{pmatrix}h\right )\psi(-\beta\gamma y)dy$$ Using a similar
argument as in \cite{Ga-S} we deduce that
$W^{\psi,\beta,\gamma}_{\widetilde{\varphi}}(h)$ is zero for all
$\beta$ and $\gamma$. This is clearly a contradiction.

\end{proof}

\subsubsection{\bf  From $\widetilde{Sp}_6$ to $\widetilde{SL}_2$}

In this case we choose the following embedding of the two groups.
First, the $Sp_6$ is generated by $<x_{\pm 0100}(r); x_{\pm 0010}(r);
x_{\pm 0001}(r)>$, and the $SL_2$ is generated by $<x_{\pm 2342}(r)>$.
This embedding is conjugated by the Weyl element $w[3124321]$ to the
embedding of the two groups as was described in the previous
subsection.

Let $\widetilde{\pi}$ denote an irreducible cuspidal representation
of $\widetilde{Sp}_6({\bf A})$. The lift we consider is
\begin{equation}\label{sp6sl2.1}
f(g)=\int\limits_{Sp_6(F)\backslash Sp_6({\bf
A})}\widetilde{\varphi}(h) \theta((h,g))dh\notag
\end{equation}
We prove the following

\begin{proposition}

Let $\sigma(\widetilde{\pi})$ denote the automorphic representation
of $\widetilde{SL}_2$ generated by the above functions. Then
$\sigma(\widetilde{\pi})$ is a cuspidal representation. It is
nonzero if and only if the integral
$$\int\limits_{Sp_6(F)\backslash
Sp_6({\bf A})}\widetilde{\varphi}(h)\theta_{Sp_{14}}^{\phi,\beta}
(\varpi_3(h))dh$$ is nonzero for some choice of data. Here $\beta\in
F^*$.

\end{proposition}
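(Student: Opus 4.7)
The plan is to analyze the Fourier coefficient of $f$ along $Z=\{x_{2342}(r)\}$, which is simultaneously the center of the Heisenberg radical $U = U_{\alpha_2,\alpha_3,\alpha_4}$ of $F_4$ and the unipotent radical of the Borel of our commuting $SL_2 = <x_{\pm 2342}(r)>$. Two facts are crucial: (a) each $h\in Sp_6$ commutes with $x_{2342}(r)$, because our $Sp_6$ is precisely the Levi factor of the Heisenberg parabolic, so it coincides with the $Sp_6$ appearing in Propositions \ref{property2} and \ref{property3}; and (b) both the constant term and the Whittaker coefficient of $f$ along the unipotent radical of $SL_2$ then reduce to $Sp_6({\bf A})$-integrals of $\widetilde{\varphi}$ against $\theta^Z$ or $\theta^{Z,\psi_\beta}$, to which Propositions \ref{property2}--\ref{property6} apply directly.

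For cuspidality, I show $\int_{F\backslash{\bf A}} f(x_{2342}(r)g)\,dr = 0$ for all data. Using (a) and Proposition \ref{property2}, this equals the sum of $\int_{Sp_6(F)\backslash Sp_6({\bf A})} \widetilde{\varphi}(h)\,\theta^{U}(hg)\,dh$ and $\sum_\epsilon \int_{Sp_6(F)\backslash Sp_6({\bf A})} \widetilde{\varphi}(h) \sum_{\gamma\in Q(F)\backslash Sp_6(F)} \theta^{U,\psi}(h_2(\epsilon)\gamma hg)\,dh$. The first integral pairs the cuspidal $\widetilde{\pi}$ against (a vector in) the residual representation $\Theta_6$ (Proposition \ref{eisen1}), so it vanishes by spectral orthogonality; concretely, expanding $\theta_6$ via Proposition \ref{propertysp2} and applying Proposition \ref{propertysp1} together with \eqref{minexp2} shows that both resulting pieces produce constant terms of $\widetilde{\varphi}$ along unipotent radicals of maximal parabolics of $Sp_6$, which vanish by cuspidality. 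The second integral unfolds against $Sp_6(F)\backslash Sp_6({\bf A})$ to give $\sum_\epsilon \int_{Q(F)\backslash Sp_6({\bf A})} \widetilde{\varphi}(h)\,\theta^{U,\psi}(h_2(\epsilon)h)\,dh$; the $Q^0({\bf A})$-invariance from Proposition \ref{property3} (preserved under left multiplication by the toral element $h_2(\epsilon)$, which normalizes $Q^0$) lets me factor an inner integration over $Q^0(F)\backslash Q^0({\bf A})$, producing the constant term of $\widetilde{\varphi}$ along the unipotent radical $U_Q$ of the Siegel parabolic of $Sp_6$, which is zero by cuspidality.

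For the nonvanishing criterion, I compute $W_{f,\beta}(g) = \int_{F\backslash{\bf A}} f(x_{2342}(r)g)\psi(\beta r)\,dr = \int_{Sp_6(F)\backslash Sp_6({\bf A})} \widetilde{\varphi}(h)\,\theta^{Z,\psi_\beta}(hg)\,dh$, again using (a). Specializing to $g = 1$ and invoking the $u = 1$ case of Proposition \ref{property6}, in the sense of the dense subspace statement, identifies $\theta^{Z,\psi_\beta}(h)$ with $\theta^{\phi,\psi_\beta}_{Sp_{14}}(\varpi_3(h))$, yielding exactly the integral appearing in the statement. Nonvanishing of that integral for some $\beta$ therefore gives $W_{f,\beta}(1)\ne 0$ and hence $\sigma(\widetilde{\pi})\ne 0$. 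Conversely, if the cuspidal $\sigma(\widetilde{\pi})$ is nonzero, its Whittaker--Fourier expansion on $\widetilde{SL}_2({\bf A})$ forces some $W_{f,\beta}(g_0)$ to be nonzero; right-translating $\theta$ by $g_0\in SL_2({\bf A})$ keeps us within the space of $\Theta$, and a standard density argument as in \cite{Ga-S} then forces the integral in the proposition to be nonzero for appropriate data.

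The main obstacle is the careful bookkeeping in the second step of cuspidality, where one must verify that the unfolding of the sum over $Q(F)\backslash Sp_6(F)$ against the $Sp_6(F)\backslash Sp_6({\bf A})$-integration, together with the subsequent $Q^0({\bf A})$-factorization under a toral conjugation, interacts correctly with the $\widetilde{\varphi}$-integration and exhibits the constant term along $U_Q$ as an inner step. A secondary technical point in the converse direction of the nonvanishing statement is the fact that cuspidal automorphic representations of $\widetilde{SL}_2({\bf A})$ are determined by their collection of Whittaker coefficients, which must be invoked explicitly when passing from $W_{f,\beta}(g_0)\ne 0$ to the nonvanishing of the integral in the proposition.
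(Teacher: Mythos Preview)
Your proof is correct and follows essentially the same route as the paper's own argument: apply Proposition \ref{property2} to split the constant term along $Z$, kill the first piece via Proposition \ref{eisen1} (your elaboration through Propositions \ref{propertysp1}--\ref{propertysp2} just makes explicit what the paper leaves as a one-line citation), kill the second piece by unfolding and invoking the $Q^0({\bf A})$-invariance of Proposition \ref{property3} to expose a constant term of $\widetilde{\varphi}$ along $U_Q$, and handle nonvanishing via Proposition \ref{property6}. One cosmetic point: after collapsing the sum over $Q(F)\backslash Sp_6(F)$ with the $Sp_6(F)\backslash Sp_6({\bf A})$-integration you land on $Q(F)\backslash Sp_6({\bf A})$, and it is the $U_Q({\bf A})$-invariance (rather than full $Q^0({\bf A})$-invariance) that you actually need to factor out the cuspidal constant term---but this is exactly what Proposition \ref{property3} provides, and your conjugation by $h_2(\epsilon)$ handles it correctly.
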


\begin{proof}

The representation $\sigma(\widetilde{\pi})$ is nonzero if and only
if the integral
$$\int\limits_{Sp_6(F)\backslash Sp_6({\bf
A})}\int\limits_{F\backslash {\bf A}}\widetilde{\varphi}(h)
\theta((h,x_{2342}(r)))\psi(\beta r)drdh$$ is nonzero for some
choice of data. Thus the claim about the nonvanishing follows from
Proposition \ref{property6}.

As for the cuspidality, we use Proposition \ref{property2} to write
the constant term of the $SL_2$ as
$$\int\limits_{Sp_6(F)\backslash Sp_6({\bf
A})}\widetilde{\varphi}(h)\theta^U((h,1))dh +
\int\limits_{Q^0(F)(F)\backslash Sp_6({\bf
A})}\widetilde{\varphi}(h)\theta^{U,\psi}((h,1))dh$$ where $Q^0$ is
the subgroup of $Q$, which is the semidirect product of $SL_3$ and
the unipotent radical of $Q$, the group  $U(Q)$. (See Proposition \ref{propertysp1}). The first summand is zero because of Proposition \ref{eisen1}. As for the second
summand, it follows from Proposition \ref{property3} that the
function $h\mapsto \theta^{U,\psi}((h,1))$ is left invariant by the
group $U(Q)({\bf A})$. Thus, we obtain zero by the cuspidality of
$\widetilde{\pi}$.

\end{proof}

\subsection{\bf The Liftings as a Functorial Lifting}

The next step, and an important one, is to determine which of the
above constructions defines a functorial liftings. It is also an
interesting problem to see if each of these pairs satisfy the
unramified Howe duality property.

More precisely, let $(H,G)$ be one of the above commuting pair.
Thus, if $\pi$ is a cuspidal irreducible representation of $G({\bf
A})$, or its double cover, and if $\sigma$ is a cuspidal irreducible
representation of $H({\bf A})$, or its double cover, we are
interested in the cases when the global integral
\begin{equation}\label{unra1}
\int\limits_{H(F)\backslash H({\bf A})} \int\limits_{H(F)\backslash
H({\bf A})} \varphi_\sigma(h)\varphi_\pi(g)\theta((h,g))dhdg
\end{equation}
is not zero for some choice of data. Here $\varphi_\sigma$ is a
vector in the space of $\sigma$, $\varphi_\pi$ is a vector in the
space of $\pi$, and $\theta$ is a vector in the space of $\Theta$.
Following \cite{G-R-S4} pages 606-608, then the nonvanishing of the
integral \eqref{unra1} implies that at any local place there is a
nonzero such a trilinear form. In other words, let $\nu$ be a place
where all representations are unramified. Let
$\sigma_\nu=Ind_{B(H)}^H\chi$ denote an unramified representation of
$H$ or its double cover, at the place $\nu$. When there is no
confusion, we shall omit $\nu$ from the notations. Similarly, let
$\pi_\nu=Ind_{B(G)}^G\mu$ denote an unramified representation of $G$
or its double cover at the place $\nu$. Then we assume that the
space
$$Hom_{G\times H}(Ind_{B(G)}^G\mu\times Ind_{B(H)}^H\chi,\theta)$$
is not zero. Here $\theta$ is the local unramified constituent of
$\Theta$ at the place $\nu$. The unramified Howe duality property
states that given $\chi$ and $\mu$ as above, then each one of these
characters determine uniquely the other.

{\bf Conjecture:} {\sl All the five commuting pairs, which were
described in the beginning of this Section, satisfy the local
unramified Howe duality property.}

In each of the five cases we studied we will now give a conjectural description of the lift.

{\bf 1)}\ $(SL_3,SL_3)$. Here the construction is from the space  of cuspidal representation defined on $\widetilde{GL}_3({\bf A})$ to the space of automorphic representations of $SL_3({\bf A})$. 
The conjectural functorial lift is the well known Shimura lift. Some information at the role of
the orthogonal period which we obtained  can be found in \cite{J2}.

{\bf 2)}\ $(SL_2\times SL_2, Sp_4)$. Here the conjectural lift is the endoscopic lift. In more details, the corresponding $L$ groups of $\widetilde{SL}_2({\bf A})$ and $\widetilde{Sp}_4({\bf A})$
are $SL_2({\bf C})$ and $Sp_4({\bf C})$. Hence, the conjecture lift in this case is corresponding to the homomorphism from  $SL_2({\bf C})\times SL_2({\bf C})$ into $Sp_4({\bf C})$. This lift is a
special case of the more general construction as studied in \cite{G-R-S7}.

{\bf 3)}\ $(SL_2, SL_4)$. The conjectural lift in this case is a special case of the conjecture stated in \cite{S}. We state it for our case. Let $\pi$ denote an irreducible cuspidal representation of $GL_2({\bf A})$. Suppose that $\pi$ is a functorial lift from $\widetilde{GL}_2({\bf A})$ which is given by the Shimura lift. Then it is conjectured in \cite{S} that $\pi$ has
a nontrivial lift to $\widetilde{GL}_4({\bf A})$ and that the image is a certain residue representation. If $\pi$ is not in the image of the Shimura correspondence, then the conjecture states that $\pi$ has a nontrivial functorial lift to a cuspidal representation of $\widetilde{GL}_4({\bf A})$. Thus, we conjecture that this commuting pair will yield this lift. We remark that
the conjecture stated in \cite{S} is for all cuspidal representations of $GL_n({\bf A})$.

{\bf 4)}\ $(SO_3, G_2)$. This is an extension of the Classical Theta lift in the symplectic groups. In details, let $\pi$ denote a cuspidal irreducible representation of $SO_3({\bf A})$. As follows from \cite{R}, if the lift to $\widetilde{SL}_2({\bf A})$ is zero, then the lift to $\widetilde{Sp}_4({\bf A})$ is a generic cuspidal representation. Here the lift is obtained using the minimal representation of $\widetilde{Sp}_{2n}({\bf A})$ where $n=3, 6$.
The conjecture in this case is that the same phenomena occurs with the exceptional group $G_2$ replacing the group $Sp_4$. In other words, if the lift of $\pi$ to $\widetilde{SL}_2({\bf A})$ is zero, then the lift to $\widetilde{G}_2({\bf A})$ is a generic cuspidal representation.
 
{\bf 5)}\ $(SL_2, Sp_6)$. In this case we showed that the image is not cuspidal. We conjecture that we obtain a residual representation of  $\widetilde{Sp}_6({\bf A})$ which we now describe. Let $\pi$ denote an irreducible cuspidal representation of $\widetilde{SL}_2({\bf A})$. Suppose that $\pi$ 
has a functorial lift to a cuspidal representation $\tau$ of $GL_2({\bf A})$. Then the partial tensor product $L$ function $L_\psi(\pi\times\tau,s')$ has a simple pole at $s'=1$. Form the Eisenstein series $E_{\tau,\pi}(g,s)$ defined on $\widetilde{Sp}_6({\bf A})$, which is associated with the induced representation $Ind_{Q'({\bf A})}^{\widetilde{Sp}_6({\bf A})}(\tau\times\pi)\delta_Q^s$. Here $Q'$ is the subgroup of $\widetilde{Sp}_6$ defined as follows. Let $Q$ denote the maximal parabolic subgroup of $Sp_6$ whose Levi part is $GL_2\times SL_2$. Let $U(Q)$ denote its unipotent radical. Then $Q'=(GL_2\times \widetilde{SL}_2)U(Q)$. Its not hard to check that this Eisenstein series has a simple pole at $s_0$ corresponding to the point $s'=1$. If we denote the
residue representation by ${\mathcal E}_{\tau,\pi}$, then the conjecture is that this is the representation obtained in this case.

\section{\bf  Global Split Descent Constructions}

In this Section we consider some global descent constructions. We
briefly recall the setup for this construction in the context of the
group $F_4$ ( for classical groups see \cite{G-R-S7}). Let ${\mathcal O}$ denote a
unipotent orbit of $F_4$. It follows from \cite{C}, that the
stabilizer of each such orbit inside a suitable Levi subgroup, is a
reductive group. As explained in Section 2, to each such orbit we
can associate a set of Fourier coefficients. Thus, to each such
orbit, we attach a unipotent group $U_\Delta$, and a set of
characters $\psi_{U,u_\Delta}$. Let $H$ denote the connected component of the reductive part of
the stabilizer of the character $\psi_{U,u_\Delta}$.  In this paper
we will only consider those characters $\psi_{U,u_\Delta}$ such that
the group $H$ is split. In some cases, one can also consider
characters such that the stabilizer is an anisotropic group.
However, for the analysis of when the lift is cuspidal and the study
of the Fourier coefficients of the lift, the split case is the
hardest case and the more interesting one.  Hence, we refer to these
constructions as split descent constructions.

Let $\mathcal{E}$ denote an automorphic representation of the group
$F_4$. In principle there is no reason not to consider also automorphic representation defined on metaplectic covering of the group $F_4$. To avoid issues related to cocyles, we shall restrict to representations of $F_4$ only. There are two cases to consider. The first, is when the
Dynkin diagram attached to the unipotent orbit ${\mathcal O}$ is a
diagram whose all nodes are labeled with zeros and twos. 
In the notations of subsection 2.2 we have in this case $U_\Delta=U_\Delta(2)$. In this
case we consider the space of functions
\begin{equation}\label{desc1}
f(h)=\int\limits_{U_\Delta(F)\backslash U_\Delta({\bf
A})}E(uh)\psi_{U,u_\Delta}(u)du
\end{equation}
Here, $E$ is a vector in the space of $\mathcal{E}$.  Thus, $f(h)$
defines an automorphic function on the group $H({\bf A})$. We denote
by $\sigma$ the representation of $H({\bf A})$ generated by all
functions $f(h)$. We refer to the representation $\sigma$ as the
descent representation of $\mathcal{E}$. If the representation
$\mathcal{E}$ depends on an automorphic representation $\tau$ of
another group, we sometimes refer to $\sigma$ as the descent
representation from $\tau$.

The second case is when the diagram attached to the unipotent orbit
contains also ones. In this case $U_\Delta=U_\Delta(1)\ne U_\Delta(2)$. In other words, the set $U_\Delta'(1)$ is not empty.  Therefore, there is a projection from the group $U_\Delta$
onto a suitable Heisenberg group. In particular the stabilizer $H$
has an embedding into a suitable symplectic group. In \cite{G-R-S3}
there is a detailed discussion of this situation for unipotent
orbits of the symplectic groups. However, these ideas hold for any
algebraic group. In this case we consider the integral
\begin{equation}\label{desc100}
f(h)=\int\limits_{U_\Delta(F)\backslash U_\Delta({\bf
A})}\widetilde{\theta}_{Sp}^{\psi,\phi}(l(u)h)E(uh)\widetilde{\psi}_{U,u_\Delta}(u)du
\end{equation}
Here  $l$ denotes the projection from $U_\Delta$ onto the Heisenberg
group. The function $\widetilde{\theta}_{Sp}^{\psi,\phi}$ is a vector in 
$\widetilde{\Theta}_{Sp}^\psi$,
the minimal representation of the double cover of the suitable
symplectic group. The character $\widetilde{\psi}_{U,u_\Delta}$ is
defined such that when combined with the character of the theta
function it produces the character $\psi_{U,u_\Delta}$. For more
details see \cite{G-R-S3} page 4 formula (1.3). The function $f(h)$
defined in \eqref{desc100} is left invariant under the rational
points of $H$. However, depending on the embedding of $H$ inside the
symplectic group, it may be a genuine function on
$\widetilde{H}({\bf A})$, the double cover of $H({\bf A})$.   

By unfolding the theta function in integral \eqref{desc100} we may
associate with this integral two more integrals which are related to
the unipotent orbit ${\mathcal O}$. The relation, as explained in
details in \cite{G-R-S3} Lemma 1.1, is that one integral is zero for
all choice of data if and only if the other is zero for all choice
of data. We briefly explain the relation. Denote $U_\Delta''
=U_\Delta(2)$. The second integral which is related to \eqref{desc100} is
\begin{equation}\label{desc101}
\int\limits_{U_\Delta''(F)\backslash U_\Delta''({\bf
A})}E(u''h)\psi_{U,u_\Delta}(u'')du''
\end{equation}
where the character  $\psi_{U,u_\Delta}$ was defined in subection 2.2.
The third related integral is defined as follows. Consider the set of roots
$U_\Delta'(1)$. Then there is a choice, in fact more than one choice, to
extend the group $U_\Delta''$ to a unipotent group $U_\Delta'$ such
that $U_\Delta''\subset U_\Delta'\subset U_\Delta$ and which
satisfies the following. The extension of $U_\Delta'$ is obtained by
adding half of the roots in $U_\Delta'(1)$ to $U_\Delta''$ in such a
way that the character $\psi_{U,u_\Delta}$ is extended trivially to
$U_\Delta'$. The integral we then consider is
\begin{equation}\label{desc102}
\int\limits_{U_\Delta'(F)\backslash U_\Delta'({\bf
A})}E(u'h)\psi_{U,u_\Delta}(u')du'
\end{equation}
These two last integrals were denoted in \cite{G-R-S3} by (1.1) and
(1.2). Lemma 1.1 in that reference states that if one of these three
integrals is zero for all choice of data, then the other two also
vanish for all choice of data. The proof is formal and applies to
all algebraic groups.

We illustrate this by an example. Consider the unipotent orbit
$A_1$. Its diagram is
$$  \ \ \  \overset{1}{0}----\overset{} {0} ==>== \overset{}{0}----\overset{}{0}$$
In this case $U_\Delta=U_{\alpha_2,\alpha_3,\alpha_4}$ is the unipotent radical of the maximal
parabolic subgroup of $F_4$ whose Levi part is $GSp_6$. Also,
$U_\Delta$ is isomorphic to ${\mathcal H}_{15}$, the Heisenberg
group consisting of 15 variables and we denote by $l$ this
isomorphism. Hence, integral \eqref{desc100} is given by
\begin{equation}\label{desc103}
f(h)=\int\limits_{U_\Delta(F)\backslash U_\Delta({\bf
A})}{\widetilde{\theta}}_{Sp_{14}}^{\psi,\phi}(l(u)h)E(uh)du
\end{equation}
where $\widetilde{\theta}_{Sp_{14}}^{\psi,\phi}$ is a vector in the minimal
representation of $\widetilde{Sp}_{14}({\bf A})$.  The connected component of the stabilizer of this unipotent orbit is the group $Sp_6$. In this case
the automorphic function $f(h)$, and the representation $\sigma$
defines a genuine automorphic function and an automorphic
representation on the group $\widetilde{Sp}_6({\bf A})$. Since $U_\Delta(2)=\{x_{2342}(r)\}$ ( see subsection 2.1 for notations), then in this example, integral \eqref{desc101} is
\begin{equation}\label{desc104}
\int\limits_{F\backslash {\bf A}}E(x_{2342}(r)h)\psi(r)dr
\end{equation}
To describe integral \eqref{desc102} we need to choose half
of the roots in $U_\Delta'(1)$, in such a way that we can extend the
character from $\{x_{2342}(r)\}$ trivially. The choice of these roots is
not unique. For example, one can choose the following roots
$$A=\{ (1122);\ (1221);\ (1222);\ (1231);\ (1232);\ (1242);\ (1342)\}$$
Thus, the group $U_\Delta'$ is the unipotent group generated by
$\{x_\alpha(r)\}$ where $\alpha\in A$ together with the root $(2342)$. The character
$\psi_{U,u_\Delta}$ is defined as follows. For $u'=x_{2342}(r)u_1'$  set $\psi_{U,u_\Delta}(u')=\psi(r)$ ( see subsection 2.1).
Thus, $\psi_{U,u_\Delta}$ is the trivial extension of the character
given in \eqref{desc104} from $U_\Delta''$ to $U_\Delta'$.

As mentioned in the introduction, our goal is to look for those
unipotent orbits, such that the integrals which define the descent
satisfies the dimension identity \eqref{intro4}. There are two
cases. When the nodes of the  diagram attached to the unipotent
orbit consists of zeros and twos, then the descent is given by
integral \eqref{desc1}. In this case, since  the
representation $\pi$, as defined in the introduction, is trivial,
the dimension identity we consider is
\begin{equation}\label{dim}
\text{dim}\ \mathcal{E}= \text{dim}\ U_\Delta + \text{dim}\ \sigma
\end{equation}
If the diagram contains also ones, then the descent is given by
integral \eqref{desc100}. In this case we also need to take into
account the theta representation on the symplectic group. Thus we
obtain
$$\text{dim}\ \mathcal{E}+\text{dim}\ \widetilde{\Theta}_{Sp}^{\psi}=
\text{dim}\ U_\Delta + \text{dim}\ \sigma$$

We have
$\text{dim}\ \widetilde{\Theta}_{Sp}^{\psi}=\frac{1}{2}(\text{dim}\ {\mathcal
H}-1)$ where ${\mathcal H}$ is the corresponding Heisenberg group.
Since this number is equal to a half of the roots in $U_\Delta'(1)$,
we obtain that the dimension formula for this case is given by
\begin{equation}\label{dim1}
\text{dim}\ \mathcal{E}= \text{dim}\ U_\Delta' + \text{dim}\ \sigma
\end{equation}
where the group $U_\Delta'$ was defined above.

We remark that in both cases one can show that the dimension of
$U_\Delta$ in the first case, and the dimension of $U_\Delta'$ in
the second case is equal to half of the dimension of the unipotent
orbit in question as listed in \cite{C-M} page 128. Thus, if we denote this unipotent orbit by
${\mathcal O}$, then equations \eqref{dim} and \eqref{dim1} are
given by
\begin{equation}\label{dim2}
\text{dim}\ \mathcal{E}= \frac{1}{2}\text{dim}\ {\mathcal O} +
\text{dim}\ \sigma
\end{equation}

\subsection{ The dimensions for $F_4$ Descents }\label{dimdesc}

In this subsection we consider all possible unipotent orbits such
that either integral \eqref{desc1} or integral \eqref{desc100}
satisfies the dimension identity \eqref{dim2}. The list of the
unipotent orbits and their stabilizers can be found in \cite{C}. We
only consider those orbits whose stabilizer contains a nontrivial
reductive group.  The dimension of $U_\Delta$ or $U_\Delta'$, which
is half of the dimension of the corresponding unipotent orbit, can
be found in \cite{C-M} page 128.
\subsubsection{\bf  The Unipotent Orbits $C_3$ and $B_3$} For these
orbits the stabilizer is a group of type $A_1$. Thus the
representation $\sigma$ is defined on that group, and hence
$\text{dim}\ \sigma=1$. Since the dimension of the two orbits is 42,
then $\frac{1}{2}\text{dim}\ {\mathcal O}=21$. Hence we look for
representations $\mathcal{E}$ such that
$\text{dim}\ \mathcal{E}=21+1=22$.
\subsubsection{\bf The Unipotent Orbit $C_3(a_1)$} The stabilizer is a
group of type $A_1$. The dimension of this unipotent orbit is 38,
and hence $\frac{1}{2}\text{dim}\ {\mathcal O}=19$. Thus
$\text{dim}\ \mathcal{E}=19+1=20$. 
\subsubsection{\bf The Unipotent Orbit $\widetilde{A}_2+A_1$} The
stabilizer is a group of type $A_1$, and the dimension of
$\frac{1}{2}\text{dim}\ {\mathcal O}$ is 18.  We have $\text{dim}\ \mathcal{E}=18+1=19$.
\subsubsection{\bf  The Unipotent Orbit $B_2$} The stabilizer is a group
of type $A_1\times A_1$. The dimension of
$\frac{1}{2}\text{dim}\ {\mathcal O}$ is 18, and hence
$\text{dim}\ \mathcal{E}=18+2=20$.
\subsubsection{\bf The Unipotent Orbit $A_2+\widetilde{A}_1$} The
stabilizer is a group of type $A_1$. The dimension of $\frac{1}{2}\text{dim}\ {\mathcal O}$ is 17.
Hence $\text{dim}\ \mathcal{E}=18$.
\subsubsection{\bf The Unipotent Orbit $\widetilde{A}_2$} Here the
stabilizer is the exceptional group $G_2$. The dimension of
$\frac{1}{2}\text{dim}\ {\mathcal O}$ is 15. Cuspidal representations $\sigma$
on $G_2({\bf A})$ can be generic, and in this case
$\text{dim}\ \sigma=6$, or, if not generic, they are associated to the
unipotent orbit $G_2(a_1)$. In this case $\text{dim}\ \sigma=5$. Thus,
there are two cases to consider. The first is
$\text{dim}\ \mathcal{E}=21$, and the second
$\text{dim}\ \mathcal{E}=20$.
\subsubsection{\bf The Unipotent Orbit $A_2$} Here the stabilizer is
a group of type $A_2$. The dimension of
$\frac{1}{2}\text{dim}\ {\mathcal O}$ is 15, and hence
$\text{dim}\ \mathcal{E}=18$.
\subsubsection{\bf The Unipotent Orbit $A_1+\widetilde{A}_1$} The
stabilizer is a group of type $A_1\times A_1$. As
$\frac{1}{2}\text{dim}\ {\mathcal O}=14$ in this case, then
$\text{dim}\ \mathcal{E}=16$. As it follows from \cite{C-M} there is
no unipotent orbit whose dimension is 32. Hence we do not expect
that a suitable $\mathcal{E}$ will exist in this case.
\subsubsection{\bf The Unipotent Orbit $\widetilde{A}_1$} The stabilizer
is a group of type $A_3$, the dimension of
$\frac{1}{2}\text{dim}\ {\mathcal O}$ is 11, and hence
$\text{dim}\ \mathcal{E}=11+6=17$.
\subsubsection{\bf The Unipotent Orbit $A_1$} Here the stabilizer is
the group $Sp_6$. Cuspidal representations on $Sp_6$ can be attached
to one of the unipotent orbits, $(6), (42)$ or $(2^3)$. Their
dimensions are $9,8$ and 6. The dimension of
$\frac{1}{2}\text{dim}\ {\mathcal O}$ is 8, and hence we expect
$\text{dim}\ \mathcal{E}=17,16$ or $14$. As mentioned above we do not
expect that a representation of dimension 16 exists for the group
$F_4$.

\subsection{\bf How to Compute Descents}

In this subsection we give some general remarks on how to compute a
descent integral. More precisely, a typical computation of a descent
construction consists of two type of computations. The first is the
computation of all constant terms of the representation $\sigma$ corresponding 
to unipotent radicals of maximal parabolic subgroups of $H$, and the second is a computation
of a certain Fourier coefficient. The first computation is done to
determine conditions when the descent is cuspidal, and the second is
done to determine when the descent is nonzero. Usually, the
computation of the constant term is harder since it involve many
unipotent orbits. In this Section we will only consider the
computation of a certain Fourier coefficient of the descent.
However, we will say a few words on the computation of the constant
terms at the end of the next subsection.

Let $\mathcal{O}$ be a unipotent orbit, and let $\mathcal{E}$ be an
automorphic representation defined on the group $F_4({\bf A})$. The group $U_\Delta'$ was defined for unipotent orbits whose diagram contains nodes labelled with the number one. It is convenient to extend the definition of the group $U_\Delta'$ to unipotent orbits whose diagrams contain nodes labelled with zeros and twos only. In this case we denote $U_\Delta'=U_\Delta$. In this way we defined the 
group $U_\Delta'$ for all unipotent orbits.
From
the discussion in the previous subsections, we are led to consider
integrals of the type
\begin{equation}\label{comp1}
\int\limits_{V(F)\backslash V({\bf
A})}\int\limits_{U_\Delta'(F)\backslash U_\Delta'({\bf A})}
E(uvh)\psi_{U,u_\Delta}(u)\psi_V(v)dvdu
\end{equation}
The group $V$ is a
certain unipotent subgroup of the stabilizer of the character
$\psi_{U,u_\Delta}$. The character $\psi_V$ is a character, possibly
the trivial one,  of the group $V(F)\backslash V({\bf A})$.

\subsubsection{\bf Unipotent Orbits and Torus Elements}

It is convenient to express things in more generality. Let $G$ be an
algebraic reductive group, and let $\mathcal{O}_G$ denote a unipotent orbit
for $G$. As explained in Section 2 for the group $G=F_4$, and in \cite{G1}
for an arbitrary classical group, 
to this orbit we associate a unipotent subgroup
$U(\mathcal{O}_G)$ of $G$, and a set of characters
$\psi_{U(\mathcal{O}_G),u_0}$ of this group. Here $u_0$ is an
element in the unipotent orbit $\mathcal{O}_G$ which defines the
character. Given an automorphic
representation $\mathcal{E}$ of $G$, we shall denote by
$\mathcal{O}_{G,u_0}(\mathcal{E})$ the Fourier coefficient given by
\begin{equation}\label{comp2}
f(h)=\int\limits_{U(\mathcal{O}_G)(F)\backslash
U(\mathcal{O}_G)({\bf A})} E(uh)\psi_{U(\mathcal{O}_G),u_0}(u)du
\end{equation}
If $H$ is a reductive group contained in the stabilizer of 
this unipotent orbit, then the function $f(h)$ is an automorphic
function of $H({\bf A})$. Let $\sigma(\mathcal{E})$ denote the
automorphic representation of $H({\bf A})$ generated by all the
functions $f(h)$ in \eqref{comp2}. If $\sigma$ is an arbitrary
automorphic representation of $H$, then given a unipotent orbit
$\mathcal{O}_H$, then as for the group $G$, we shall denote by
$\mathcal{O}_{H,v_0}(\sigma)$ the Fourier coefficient
\begin{equation}\label{comp3}\notag
\int\limits_{V(\mathcal{O}_H)(F)\backslash V(\mathcal{O}_H)({\bf
A})} \varphi_\sigma(v)\psi_{V(\mathcal{O}_H),v_0}(v)dv
\end{equation}
Here $V(\mathcal{O}_H)$ is the  unipotent subgroup of $H$ which
correspond to the unipotent orbit $\mathcal{O}_H$. Similarly,
$\psi_{V(\mathcal{O}_H),v_0}$ is the character attached to a
representative $v_0$ of this orbit.

One of the goals of the descent method is to compute the integral
\begin{equation}\label{comp4}
\int\limits_{V(\mathcal{O}_H)(F)\backslash V(\mathcal{O}_H)({\bf
A})} f(v)\psi_{V(\mathcal{O}_H),v_0}(v)dv=
\end{equation}
$$\int\limits_{V(\mathcal{O}_H)(F)\backslash V(\mathcal{O}_H)({\bf
A})} \int\limits_{U(\mathcal{O}_G)(F)\backslash
U(\mathcal{O}_G)({\bf A})} E(uv)\psi_{U(\mathcal{O}_G),u_0}(u)
\psi_{V(\mathcal{O}_H),v_0}(v)dvdu$$ This is a certain Fourier
coefficient defined on an automorphic function $E$ which lies in the 
space of a representation ${\mathcal E}$ of the group $G({\bf A})$. We
shall denote it by $\mathcal{O}_{G,u_0}(\mathcal{E})\circ
\mathcal{O}_{H,v_0}(\sigma(\mathcal{E}))$. Thus, the goal is to
express this Fourier coefficient in term of Fourier coefficients
attached to unipotent orbits of $G$. However, it is possible that we
will also obtain some constant terms in the course of this
computation. Let $P=MU$ denote a parabolic subgroup of $G$. The
constant term
$$E^U(m)=\int\limits_{U(F)\backslash U({\bf A})}E(um)du$$ defines an
automorphic representation of $M({\bf A})$. We shall denote this
representation by $\mathcal{E}^U$. If $\mathcal{O}_M$ is a unipotent
orbit of $M$, we shall denote by $\mathcal{CT}_{G,P}
[\mathcal{O}_{M,l_0}(\mathcal{E}^U)]$ the Fourier coefficient
$$\int\limits_{L(\mathcal{O}_M)(F)\backslash L(\mathcal{O}_M)({\bf
A})}E^U(l) \psi_{L(\mathcal{O}_M),l_0}dl$$

Here $L(\mathcal{O}_M)$ is the  unipotent subgroup of $M$ which
correspond to the unipotent orbit $\mathcal{O}_M$. 
Thus, to express integral \eqref{comp4} in term of Fourier
coefficients attached to unipotent orbits of $G$, and to Fourier
coefficients associated with constant terms along certain unipotent
radicals of some parabolic subgroups of $G$, is to determine an
identity of the type
\begin{equation}\label{comp5}
\mathcal{O}_{G,u_0}(\mathcal{E})\circ
\mathcal{O}_{H,v_0}(\sigma(\mathcal{E}))=
\sum_i(\mathcal{O}_i)_{G,u_i}(\mathcal{E}) + \sum_j
\mathcal{CT}_{G,P_j} [(\mathcal{O}_j)_{M,l_j}(\mathcal{E}^U)]
\end{equation}
In words, the goal is to express the Fourier coefficient defined by integral \eqref{comp4}, as a sum of two type of integrals. The first term is a sum of Fourier coefficients which corresponds to unipotent orbits of the group $G$. Thus, we want to determine the precise unipotent orbits ${\mathcal O}_i$ appearing in the first sum. The second term
on the right hand side of equation \eqref{comp5} is a sum
of constant terms corresponding to unipotent radicals of certain parabolic subgroups of $G$. In this case we need to determine which parabolic subgroup are involved, and also
what are the unipotent orbits of the group $M$ which are involved. 

The identity \eqref{comp5} is completely formal in the sense that it does not require any information on the representation ${\mathcal E}$. From this point of view, this identity can be viewed as a formal identity defined on the level of unipotent orbits only.

To make things clear we consider an example in the group $Sp_4$. We
refer the reader to \cite{G-R-S6} for notations. In \cite{G-R-S2}
the descent map from $GL_{2n}$ to $\widetilde{Sp}_{2n}$ was
introduced. To do that one uses an automorphic representation of
$Sp_{4n}$ which is defined as a residue of a certain Eisenstein
series. Consider the more general case of the descent when $n=1$.
Thus we consider the following integral
\begin{equation}\label{sp4sl21}
f(g)=\int\limits_{(F\backslash {\bf
A})^2}\widetilde{\theta}_{\psi^{\beta},\phi}((r,x,y)g)E\left (
\begin{pmatrix} 1&r&x&y\\ &1&&x\\ &&1&-r\\ &&&1\end{pmatrix}
\begin{pmatrix} 1&&\\ &g&\\&&1\end{pmatrix}\right )dxdydrdg
\end{equation}
Here $\beta\in F^*$ and $E$ is a vector in some 
automorphic representation ${\mathcal E}$ of $Sp_4({\bf A})$. Thus $f(g)$ is an automorphic representation of
$\widetilde{SL}_2({\bf A})$. We  denote by $\sigma(\mathcal{E})$ the
representation of this group generated by all functions $f(g)$. To
study when it is nonzero, we compute the integral
$$\int\limits_{F\backslash {\bf A}}f\begin{pmatrix} 1&z\\
&1\end{pmatrix}\psi(\gamma z)dz$$ where $\gamma\in F^*$. As stated above, integral \eqref{desc100} is zero for all choice of data if and only if integral \eqref{desc102} is zero for all choice of data. For the group $Sp_4$, this was proved in \cite{G-R-S3}.

For the group   $Sp_4$ the analogues to integral \eqref{desc102} is  the integral
\begin{equation}\label{sp4sl22}
\int\limits_{F\backslash {\bf A}}\int\limits_{(F\backslash {\bf
A})^2} E\left ( \begin{pmatrix} 1&&x&y\\ &1&&x\\ &&1&\\
&&&1\end{pmatrix}
\begin{pmatrix} 1&&&\\ &1&z&\\ &&1&\\ &&&1\end{pmatrix}\right
)\psi(\beta y+\gamma z)dxdydz
\end{equation}
Here  $\gamma\in (F^*)^2\backslash F^*$. The $x,y$ integration,
which in the notation of integral \eqref{comp1} correspond to the
group $U_\Delta'$, is a Fourier coefficient corresponding to the
unipotent orbit of $Sp_4$ associated with the partition $(21^2)$,
and the $z$ integration is the Whittaker coefficient of
$\sigma(\mathcal{E})$, defined on $\widetilde{SL}_2({\bf A})$, and
hence is associated with the unipotent orbit $(2)$. Thus, in the
above notations, the left hand side  of \eqref{comp5} is
$(21^2)_{Sp_4,\beta}(\mathcal{E})\circ
(2)_{\widetilde{Sl}_2,\gamma}(\sigma(\mathcal{E}))$. As explained in
\cite{G1}, the above integral corresponds to the unipotent orbit of
$Sp_4$ associated with the partition $(2^2)$. Thus, equation
\eqref{comp5} is given by
\begin{equation}\label{comp51}
(21^2)_{Sp_4,\beta}(\mathcal{E})\circ
(2)_{\widetilde{Sl}_2,\gamma}(\sigma(\mathcal{E}))=
(2^2)_{Sp_4,\beta,\gamma}(\mathcal{E})
\end{equation}
It follows from \cite{G1} that the unipotent orbit $(2^2)$ is
associated with elements $\beta,\gamma\in (F^*)^2\backslash F^*$.
The above identity is all we can say when $\beta$ and $\gamma$ are
in general position. However, it is of interest to notice that when
$\beta\gamma=-\epsilon^2$ for some $\epsilon\in F^*$ then this
identity can be written in a different form. Indeed, when
$\beta\gamma=-\epsilon^2$, one can find an element in $Sp_4(F)$,
which depends on $\beta$ and $\gamma$, such that the above integral
is equal to
\begin{equation}\label{comp6}\notag
\int\limits_{(F\backslash {\bf
A})^3} E_{\beta,\gamma} \begin{pmatrix} 1&&x&y\\ &1&z&x\\ &&1&\\
&&&1\end{pmatrix}\psi(x)dxdydz
\end{equation}
Here $E_{\beta,\gamma}$ is the right translation of the vector $E$
by this discrete element. See \cite{G-R-S2} page 880 for some
details. Let $w$ denote the Weyl element of $Sp_4$ defined by
$w=e_{1,1}+e_{2,3}-e_{3,2}+e_{4,4}$. Here $e_{i,j}$ is the matrix
of size four which has a one at the $(i,j)$ entry, and zero
otherwise. Since $w\in Sp_4(F)$, then $E(g)=E(wg)$. Hence, the above
integral is equal to
\begin{equation}\label{comp7}\notag
\int\limits_{(F\backslash {\bf
A})^3} E_{\beta,\gamma} \left (\begin{pmatrix} 1&x&&y\\ &1&&\\ &z&1&-x\\
&&&1\end{pmatrix}w\right )\psi(x)dxdydz
\end{equation}
Performing some Fourier expansions, one can show that the above
integral is equal to
\begin{equation}\label{comp8}\notag
\int\limits_{\bf A}\sum_{\delta\in F^*}\int\limits_{R(F)\backslash
R({\bf A})} E_{\beta,\gamma} (rt(z)w)\psi_{R,\delta}(r)drdz+
\int\limits_{\bf A}\int\limits_{F\backslash {\bf
A}}E_{\beta,\gamma}^{U(P)}(m(x)t(z)w)\psi(x)dxdz
\end{equation}
Here $t(z)=I_4+ze_{3,2}$ and $m(x)=I_4+x(e_{1,2}-e_{3,4})$. Also,
the group $R$ is the maximal unipotent subgroup of $Sp_4$, and
$\psi_{R,\delta}$ is the Whittaker character of $R$ defined as
follows. Write $r\in R$ as $r=x_1(r_1)x_2(r_2)r'$. Here
$x_1(r_1)=I_4+r_1(e_{1,2}-e_{3,4})$ and $x_1(r_2)=I_4+r_2e_{2,3}$.
Then we define $\psi_{R,\delta}(r)=\psi(r_1+\delta r_2)$. Finally,
the group $P$ is the maximal parabolic subgroup of $Sp_4$ whose Levi
part is $GL_2$, and we denote by $U(P)$ its unipotent radical.

Ignoring the integration over the $z$ variable, then in the notation of
\eqref{comp5}, when $\beta\gamma=-\epsilon^2$, this integral
identity is given by
\begin{equation}\label{comp9}
(21^2)_{Sp_4,\beta}(\mathcal{E})\circ
(2)_{\widetilde{Sl}_2,\gamma}(\sigma(\mathcal{E}))= \sum_{\delta\in
F^*}(4)_{Sp_4,\delta}(\mathcal{E})+\mathcal{CT}_{Sp_4,P}[(2)_{GL_2}
(\mathcal{E}^{U(P)})]
\end{equation}
We conclude that for some choice of unipotent elements $u_0$ and $v_0$, there is more than one way to write the identity
\eqref{comp5}. Experience indicate the following. There is a general
expression for identity \eqref{comp5} which holds for all values of
$u_0$ and $v_0$, and all representations $\mathcal{E}$. However, in
some cases, there is a closed condition on $u_0$ and $v_0$ which
will yield another identity. This is important once we specify the
representation $\mathcal{E}$. 

As an example to this phenomena,
consider the group $Sp_4$, and the above two identities
\eqref{comp51} and \eqref{comp9}. Let $\tau$ denote an irreducible
cuspidal representation of $GL_2({\bf A})$ with a trivial central
character, and such that $L(\tau,1/2)\ne 0$. Let $E_\tau(g,s)$
denote the Eisenstein series of $Sp_4({\bf A})$ associated with the
induced representation $Ind_{P({\bf A})}^{Sp_4({\bf
A})}\tau\delta_P^s$. The group  $P$ was defined right before identity  \eqref{comp9}. From
the assumptions on $\tau$ it follows that this Eisenstein series has
a simple pole at $s=2/3$, and we denote by $\mathcal{E}_\tau$ the
residue representation at that point. In \cite{G-R-S2} integral 
\eqref{sp4sl21} was used to construct  the descent map from $\tau$ to an
irreducible cuspidal representation of $\widetilde{SL}_2({\bf A})$. The proof of the nonvanishing of the descent used identity \eqref{comp9}. Indeed, it is proved in \cite{G-R-S2}
that the first summand on the right hand side of
\eqref{comp9} is zero and the second term is not.  From this it was  proved in \cite{G-R-S2} that the descent given by integral \eqref{sp4sl21} is not zero.

We may also consider the descent construction given by
\eqref{sp4sl21} where we take $\mathcal{E}$ to be a non-generic
cuspidal representation of $Sp_4({\bf A})$. In this case all
constant terms are zero. Since $\mathcal{E}$ is not generic, we
obtain for such representations that the right hand side of
\eqref{comp9} is zero for all choice of data. Thus, equation
\eqref{comp9} cannot be used in this case. Nevertheless, we can use equation
\eqref{comp51} to deduce the nonvanishing of integral \eqref{sp4sl21}. Indeed, it is
not hard to show that given any automorphic representation of
$Sp_4({\bf A})$, there exist $\beta$ and $\gamma$ as above, such
that integral \eqref{sp4sl22} is not zero for some choice of data.

Going back to the general case, one looks for a way to produce
expansions of the form \eqref{comp5}. To do that we will use the
following approach. As in \cite{C-M},  to any
unipotent orbit, one attaches a one dimensional torus in the group
$G$ in question. ( The notations we use are as in \cite{G1}). For example, the group $Sp_4$ has three nontrivial
unipotent orbits. They are $(4), (2^2)$ and $(21^2)$. The
corresponding one dimensional tori are $h_{(4)}(t)=\text{diag}\
(t^3,t,t^{-1},t^{-3}); h_{(2^2)}(t)=\text{diag}\
(t,t,t^{-1},t^{-1})$ and $h_{(21^2)}(t)=\text{diag}\
(t,1,1,t^{-1})$.

Suppose that we start with a unipotent orbit $\mathcal{O}_G$, and let
$\psi_{U(\mathcal{O}_G),u_0}$ be a character of the unipotent group
$U(\mathcal{O}_G)$. Let $H$ be as defined right before equation \eqref{comp4}, and
suppose that $\mathcal{O}_H$ is a unipotent orbit of $H$. See
\eqref{comp4} for notations. Let $h_{\mathcal{O}_G}(t)$ denote the
one dimensional torus of $G$ attached to $\mathcal{O}_G$, and let
$h_{\mathcal{O}_H}(t)$ denote the one dimensional torus of $H$
attached to $\mathcal{O}_H$. We view $h_{\mathcal{O}_H}(t)$ as a sub
torus of $G$ via the embedding of $H$ in $G$. Thus, the product
$h(t)=h_{\mathcal{O}_G}(t)h_{\mathcal{O}_H}(t)$ is a well defined
one dimensional torus of $G$. Assume that there is a unipotent orbit
$\mathcal{O}_G'$ of $G$ such that $h(t)$ is conjugated by a certain
Weyl element to the torus $h_{\mathcal{O}_G'}(t)$. Conjugating in
\eqref{comp4} the argument of the function $E$ by this Weyl element,
will transform the integral \eqref{comp4} into an integral over a
unipotent subgroup of $U(\mathcal{O}_G')$. Then, using some Fourier
expansions together with possible other conjugations, one can
produce a formula of the type \eqref{comp5}. At this point, we dont
know of a general method that will predict the unipotent orbits and
the constant terms  which appear in equation \eqref{comp5}. As can
be seen from \eqref{comp51} and \eqref{comp9}, the decomposition can
be different if we vary the elements $u_0$ and $v_0$.

As an example to the above argument, consider the above composition
$(21^2)\circ (2)$ in $Sp_4$. Here, to simplify the notations, we
omitted several of them. We have $h_{(21^2)}(t)=\text{diag}\
(t,1,1,t^{-1})$. The torus element which corresponds to the
partition $(2)$ in $SL_2$ is $\text{diag}\ (t,t^{-1})$. When
embedded into $Sp_4$, this torus corresponds to $h(t)=\text{diag}\
(1,t,t^{-1},1)$. Thus we obtain $h_{(21^2)}(t)h(t)= \text{diag}\
(t,t,t^{-1},t^{-1})$ which is equal to $h_{(2^2)}(t)$. Hence we dont
need any conjugation here. The equation that we get is then
$(21^2)\circ (2)=(2^2)$. But one has to remember that in certain
closed conditions on the characters, one can derive another identity
for this composition.

As an another example, consider the product $(2^3)\circ (3)$ in
$Sp_6$. It follows from \cite{C-M} that the reductive group in the stabilizer of the
unipotent orbit $(2^3)$ in $Sp_6$ is the group $SO_3$. Since we
consider only the split stabilizer, the group $SO_3$ contains a one
dimensional unipotent subgroup, and if we compute its Whittaker
coefficient, we are considering the unipotent orbit $(3)$. Thus, the
composition $(2^3)\circ (3)$ corresponds to the integral
\begin{equation}\label{comp91}
\int\limits_{F\backslash {\bf A}}\int\limits_{Mat_{3\times 3}^0(F)
\backslash Mat_{3\times 3}^0({\bf A})}E\left [ \begin{pmatrix}
I_3&X\\ &I_3\end{pmatrix}\begin{pmatrix} m(y)&\\
&m(y)^*\end{pmatrix}\right ]\psi_1(X)\psi_2(y)dXdy
\end{equation}
Here $Mat_{3\times 3}^0=\{r\in Mat_{3\times 3} : J_3r+r^tJ_3=0\}$
and
$$J_3=\begin{pmatrix} &&1\\ &1&\\ 1&&\end{pmatrix}\ \ \ \ \ \ \
m(y)=\begin{pmatrix} 1&y&*\\ &1&-y\\ &&1\end{pmatrix}$$ The star
indicates that the matrix is in $SO_3$. Also, we define
$\psi_1(X)=\psi(x_{1,1}+x_{2,2})$, and $\psi(m_2(y))=\psi(y)$. We
remark that this is not the general character which is associated to
this unipotent orbit, such that the stabilizer is the split $SO_3$.
The general one is given by $X\mapsto \psi(x_{1,1}+\beta x_{2,2})$
where $\beta\in (F^*)^2\backslash F^*$. However, the stabilizer in
each case is the same up to an outer conjugation, and hence the
formulas are the same.

Before conjugation, it will be convenient to transfer integral
\eqref{comp91} to another integral using the process of exchanging
roots. See subsection 2.2.2. In the
above integral we replace the one dimensional unipotent group
$I_6+x_{3,1}e_{3,4}$ in the $X$ variable by
$I_6+y_3(e_{1,3}-e_{4,6})$ and then $I_6+x_{2,1}(e_{2,4}+e_{3,5})$
in the $X$ variable by $I_6+y_2(e_{2,3}-e_{4,5})$. More precisely,
we expand integral \eqref{comp91} along the unipotent group
$I_6+y_2(e_{2,3}-e_{4,5})+y_3(e_{1,3}-e_{4,6})$. Then we conjugate
by a suitable discrete element in $Sp_6(F)$ and then perform a
collapsing of summation with integration. Thus, integral
\eqref{comp91} is equal to
\begin{equation}\label{comp92}\notag
\int\limits_{{\bf A}^2}\int\limits_{(F\backslash {\bf A})^7} E\left
[ \begin{pmatrix} 1&y_1&y_2&&&&\\ &1&y_3&&&\\ &&1&&&\\ &&&1&*&*
\\ &&&&1&*\\ &&&&&1\end{pmatrix}\begin{pmatrix} 1&&&x_1&x_2&x_3\\
&1&&&x_4&x_2\\ &&1&&&x_1\\ &&&1&&
\\ &&&&1&\\ &&&&&1\end{pmatrix}l(z_1,z_2)\right
]\psi(y_1+x_1+x_4)d(...)
\end{equation}
Here $l(z_1,z_2)=I_6+z_1(e_{2,4}+e_{3,5})+z_2e_{3,4}$. 

It follows
from \cite{C-M}  that
$h_{(2^3)}(t)=\text{diag}\ (t,t,t,t^{-1},t^{-1},t^{-1})$. We also have
$h_{(3)}(t)=\text{diag}\ (t^2,1,t^{-2},t^2,1,t^{-2})$, where the
last torus element is the corresponding torus element in $SO_3$ as
embedded in $Sp_6$. Thus, the product of these two tori is given by
$h(t)=\text{diag}\ (t^3,t,t^{-1},t,t^{-1},t^{-3})$. Consider the
Weyl element $w$ of $Sp_6$ given by
$w_{1,1}=w_{2,4}=w_{3,2}=w_{4,5}=w_{6,6}=1$ and $w_{5,3}=-1$. Then
$wh(t)w^{-1}= \text{diag}\ (t^3,t,t,t^{-1},t^{-1},t^{-3})$, and this
torus is equal to $h_{(42)}(t)$.

Since $w\in Sp_6(F)$, the above integral is equal to
\begin{equation}\label{comp93}\notag
\int\limits_{{\bf A}^2}\int\limits_{(F\backslash {\bf A})^7}
E(m(y_i,x_j)wl(z_1,z_2))\psi(y_1+y_2+x_5)d(...)
\end{equation}
where
$$m(y_i,x_j)=\begin{pmatrix} 1&y_1&y_2&&&&\\ &1&&&&\\ &&1&&&\\
&&&1&&-y_2
\\ &&&&1&-y_1\\ &&&&&1\end{pmatrix}\begin{pmatrix} 1&&&x_1&x_2&x_3\\
&1&&x_4&&x_2\\ &&1&x_5&x_4&x_1\\ &&&1&&
\\ &&&&1&\\ &&&&&1\end{pmatrix}$$
Next we expand the above integral along the unipotent group
$l(x_6)=I_6+x_6e_{2,5}$. We obtain
\begin{equation}\label{comp94}\notag
\int\limits_{{\bf A}^2}\sum_{\beta\in F}\int\limits_{(F\backslash
{\bf A})^8} E(m(y_i,x_j)l(x_6)wl(z_1,z_2))\psi(y_1+y_2+x_5+\beta
x_6)d(...)
\end{equation}
Partition the sum in the above integral into two summands. First,
consider the case when $\beta\in F^*$. In this case, it follows from
\cite{G1} that for each $\beta$, the corresponding Fourier
coefficient is associated with the unipotent orbit $(42)$. When
$\beta=0$ we can further manipulate the integral. Indeed,
conjugation by $s=I_6-e_{2,3}+e_{4,5}\in Sp_6(F)$ we obtain the
integral
\begin{equation}\label{comp95}\notag
\int\limits_{{\bf A}^2}\int\limits_{(F\backslash {\bf A})^8}
E(m(y_i,x_j)l(x_6)swl(z_1,z_2))\psi(y_2+x_5)d(...)
\end{equation}
Conjugating by a certain Weyl element, and using further Fourier
expansions, we can show that this integral is a sum of two terms.
The first corresponds to the Whittaker coefficient of the function
$E$, and the second to a certain constant term. We omit the details.
Thus we obtain the formula
\begin{equation}\label{comp96}
(2^3)_{Sp_6}\circ (3)_{SO_3}=\sum_{\beta\in F^*}(42)_{Sp_6,\beta}+
\sum_{\alpha\in
F^*}(6)_{Sp_6,\alpha}+\mathcal{CT}_{Sp_6,P}[(4)_{Sp_4}]
\end{equation}
We close this subsection with two remarks. The first one is
concerning the dimensions of the orbits and the representations in
question. Recall that in considering  possible descent constructions
we required a certain dimension formula to hold. This was equation
\eqref{dim2}, given by
\begin{equation}\label{comp97}
\text{dim}\ \mathcal{E}= \frac{1}{2}\text{dim}\ {\mathcal O} +
\text{dim}\ \sigma\notag
\end{equation}
For the descent to be nonzero, the representation $\mathcal{E}$
should support a nontrivial Fourier coefficient with respect to the
unipotent orbit occurring in the left hand side of \eqref{comp5}.
However,  the dimension of the
unipotent integration which occurs in integral \eqref{comp4} is $\text{dim}\ U({\mathcal O}_G) +\text{dim}\ V({\mathcal O}
_H)$. By definition this number is equal to $\frac{1}{2}\text{dim}\ {\mathcal O_G} +\text{dim} \ \sigma$.
This motivates to look for those
representations $\mathcal{E}$ of $G({\bf A})$ which satisfies the
following. First, that $\mathcal{O}_G(\mathcal{E})$ is equal to a
unipotent orbit corresponding to one of the summands occurring on
the right hand side of \eqref{comp5}. Second, we require that the
representation does not support any Fourier coefficient which
corresponds to any other term which occurs on the right hand side of
\eqref{comp5}.

To illustrate this consider the above two examples in the symplectic group. First, the $Sp_4$ case. Notice that
$\text{dim}\ (21^2)_{Sp_4}=4; \text{dim}\ (2^2)_{Sp_4}=6$ and
$\text{dim}\ (2)_{SL_2}=2$. Hence $\text{dim}\ \mathcal{E}=
\frac{1}{2}\text{dim}\ (21^2)_{Sp_4} + \text{dim}\ \sigma=2+1=3$.
Hence we look for those representations such that  
$\mathcal{O}_{Sp_4}(\mathcal{E})=(2^2)$. This can work if we use
equation \eqref{comp51}. However, if we want to use equation
\eqref{comp9}, then we need to assume also that the representation
$\mathcal{E}$ is not generic. In the $Sp_6$ case the situation is as
follows. The sum of the half of the dimensions  of the unipotent
orbits which occur in the left hand side of \eqref{comp96} is
$6+1=7$. However, half of the dimension of $(42)_{Sp_6}$ is eight
and of $(6)_{Sp_6}$ is nine. Hence the only way to get a term on the
right hand side of \eqref{comp96} whose half of the  dimension is
seven is to look for a representation $\mathcal{E}$ of $Sp_6({\bf
A})$ such that it has no nonzero Fourier coefficient associated with
any representative of the orbits $(42)$ and $(6)$, such that the
integral associated with $\mathcal{CT}_{Sp_6,P}[(4)_{Sp_4}]$ is not
zero. It is not clear if such a representation exists.

The second remark concerns the cuspidality of the descent. The
goal is to compute integral \eqref{comp4} where the group
$V({\mathcal O}_H)$ is a constant term, and the character
$\psi_{V({\mathcal O}_H),v_0}$ is the trivial character. Then,
instead of the left hand side of \eqref{comp5}, one should compute
${\mathcal O}_{G,u_0}({\mathcal E})\circ
\mathcal{CT}(\sigma({\mathcal E}))$. By that we mean that one should
express this convolution as a sum of unipotent orbits of $G$ and
certain constant term of the representation involved. Experience
indicates that at least one of the terms will involve a constant
term of the group $G$, but so far we cannot indicate which one, and
we also cannot predict in general the other terms.

\subsubsection{\bf Unipotent Orbits and Torus Elements for $F_4$}

In this subsection we determine the one dimensional torus element
attached to a given unipotent orbit of $F_4$.

Recall from  subsection 2.2 that we can partition the set of roots
in the group $U_\Delta$ as follows. As in that subsection, we will
say that a root $\alpha$ is in the unipotent group $U_\Delta$, if
the one parameter unipotent subgroup $\{x_\alpha(r)\}$ is a subgroup of $U_\Delta$. For all
$n\ge 1$, we defined $U_\Delta'(n)=\{ \alpha=\sum_{i=1}^4 n_i\alpha_i\in U_\Delta : \sum_{i=1}^4
\epsilon_in_i=n\}$. We can extend this notation and write $U_\Delta(0)$ for
all positive roots in the Levi part of the parabolic group
$P_\Delta$. Let $h_{\mathcal{O}}(t)$ denote the one dimensional
torus of $F_4$ with the property that for all $\alpha\in
U_\Delta'(n)$ we have
\begin{equation}\label{tor}
h_{\mathcal{O}}(t)x_\alpha(r)h_{\mathcal{O}}(t)^{-1}=x_\alpha(t^nr)
\end{equation}
It follows from the Bala-Carter theory that such a torus exists. For
details in the classical groups see \cite{C-M}. To compute this
torus in $F_4$, let
$h_{\mathcal{O}}(t)=h(t^{r_1},t^{r_2},t^{r_3},t^{r_4})$. Then, given
a root $\alpha\in U_\Delta'(n)$,  equation \eqref{tor} reduces to the
equation $\sum_{i=1}^4r_i<\alpha,\alpha_i>=n$. Here
$<\alpha,\alpha_i>$ is the inner product between the root $\alpha$
and the simple root $\alpha_i$. It is easy to solve these equations
in general, and the solution can be derived form the following 4
identities
\begin{equation}\label{comp10}\notag
r_1=\mathcal{G}_{\mathcal{O}}(2342);\ \ \
r_2=r_1+2r_4-\mathcal{G}_{\mathcal{O}}(1122);\ \ \
r_3=\frac{1}{2}(r_2+\mathcal{G}_{\mathcal{O}}(1242));\ \ \
r_4=\mathcal{G}_{\mathcal{O}}(1232)
\end{equation}
Here, for a positive root $\alpha\in U_\Delta'(n)$, we define
$\mathcal{G}_{\mathcal{O}}(\alpha)$ as follows. Let $\alpha=\sum
n_i\alpha_i$ and suppose that the diagram of $\mathcal{O}$ is given
by
$$\overset{\epsilon_1}{\alpha_1}----\overset{\epsilon_2}{\alpha_2} ==>==
\overset{\epsilon_3}{\alpha_3}----\overset{\epsilon_4}{\alpha_4}$$ Then we define
$\mathcal{G}_{\mathcal{O}}(\alpha)=\sum \epsilon_in_i$.

As an example consider the unipotent orbit $\mathcal{O}=B_2$. Its
diagram is
$$\overset{2}{0}----\overset{}{0} ==>==
\overset{}{0}----\overset{1}{0}$$ Hence
$\mathcal{G}_{\mathcal{O}}(2342)=2\cdot 2+1\cdot 2=6$ and
$\mathcal{G}_{\mathcal{O}}(1122)=\mathcal{G}_{\mathcal{O}}(1242)
=\mathcal{G}_{\mathcal{O}}(1232)=4$. Thus, $r_1=6; r_2=10;\ r_3=7;\
r_4=4$ and $h_{B_2}(t)=h(t^6,t^{10},t^7,t^4)$.

We list the set of all 15 tori elements in $F_4$:\\
{\bf 1)}\ $h_{A_1}(t)=h(t^2,t^3,t^2,t)$.\\
{\bf 2)}\ $h_{\widetilde{A}_1}(t)=h(t^2,t^4,t^3,t^2)$.\\
{\bf 3)}\ $h_{A_1+\widetilde{A}_1}(t)=h(t^3,t^6,t^4,t^2)$.\\
{\bf 4)}\ $h_{A_2}(t)=h(t^4,t^6,t^4,t^2)$.\\
{\bf 5)}\ $h_{\widetilde{A}_2}(t)=h(t^4,t^8,t^6,t^4)$.\\
{\bf 6)}\ $h_{A_2+\widetilde{A}_1}(t)=h(t^4,t^8,t^6,t^3)$.\\
{\bf 7)}\ $h_{B_2}(t)=h(t^6,t^{10},t^7,t^4)$.\\
{\bf 8)}\ $h_{\widetilde{A}_2+A_1}(t)=h(t^5,t^{10},t^7,t^4)$.\\
{\bf 9)}\ $h_{C_3(a_1)}(t)=h(t^6,t^{11},t^8,t^4)$.\\
{\bf 10)}\ $h_{F_4(a_3)}(t)=h(t^6,t^{12},t^8,t^4)$.\\
{\bf 11)}\ $h_{B_3}(t)=h(t^{10},t^{18},t^{12},t^6)$.\\
{\bf 12)}\ $h_{C_3}(t)=h(t^{10},t^{19},t^{14},t^8)$.\\
{\bf 13)}\ $h_{F_4(a_2)}(t)=h(t^{10},t^{20},t^{14},t^8)$.\\
{\bf 14)}\ $h_{F_4(a_1)}(t)=h(t^{14},t^{26},t^{18},t^{10})$.\\
{\bf 15)}\ $h_{F_4}(t)=h(t^{22},t^{42},t^{30},t^{16})$.

\subsection{\bf Conditions for Cuspidality and Nonvanishing of the Descents}

In this subsection we shall work out the global setup in some of the
cases mentioned in subsection \ref{dimdesc}. The choice is partly
random and partly motivated by considering examples we think to be
of some interest. More precisely, our concern is to give in each
case conditions when the descent is cuspidal and when it is not
zero. To do that we compute integral \eqref{comp4} in the case when
it is a Fourier coefficient corresponding to the relevant unipotent
orbit, or when the integration over $V$ represents a constant term
along a certain unipotent radical of a maximal parabolic subgroup of $H$.
Therefore, the precise starting integral, whether it is integral
\eqref{desc1} or \eqref{desc100} will not be important to us, hence
we ignore it. For our goal, it is enough to indicate in each case
the group $U'_\Delta$ and the character $\psi_{U,u_\Delta}$. See
integral \eqref{comp1} for notations. We will express the answer in
terms of the notations used in equation \eqref{comp5}.

Since the question of cuspidality and of the nonvanishing is a
statement of certain integral being zero for all choice of data or
not, it will be convenient in many case to ignore adelic integration
which occurs during the computations. Indeed, when performing root
exchange, as explained in subsection 2.2.2, we relate a certain Fourier
coefficient with a certain integral which involves adelic
integration. However, in all cases one can easily prove that one
integral is zero for all choice of data if and only if the other one
is zero for all choice of data. For our purposes that is enough. In
some cases we will still write the equation \eqref{comp5}, but we
mean that the left hand side is zero for all choice of data if and
only if each term on the right hand side is zero for all choice of
data.

\subsubsection{\bf The Unipotent Orbit $C_3$ } The
construction of the unipotent group $U'_\Delta$ and the characters
$\psi_{U,u_\Delta}$ were described in Section 2. In this case the
group $U'_\Delta$ is as follows. Let $U$ denote the unipotent
radical of the parabolic subgroup of $F_4$ whose Levi part contains
the $SL_2$ generated by $\{x_{\pm 0100}\}$. Thus, $U=U_{\alpha_2}$ and
$\text{dim}U=23$. Let $U'_\Delta$ denote the subgroup of $U$ which
consists of all one dimensional unipotent subgroup $\{x_\alpha(r)\}$ where
$\alpha$ is a root in $U$ which does not include the roots $(0010)$
and $(0110)$. Thus $\text{dim}U'_\Delta=21$. We define the character
$\psi_{U,u_\Delta}$ as follows. For
$u=x_{0001}(r_1)x_{1110}(r_2)x_{0120}(r_3)u'$  define
$\psi_{U,u_\Delta}(u)=\psi(r_1+r_2+r_3)$. ( See subsection 2.1 for notations). Thus, the group
$SL_2=<x_{\pm 0100}(m)>$ is in the stabilizer of $\psi_{U,u_\Delta}$.
To simplify notations, we shall denote the group $U'_\Delta$ by $V$,
and the character $\psi_{U,u_\Delta}$ by $\psi_V$.

This diagram associated with this unipotent orbit  contains nodes
which are labelled with ones. Hence the construction we use is
\eqref{desc100}. Let $\sigma$ denote the representation defined on
$SL_2({\bf A})$ which is obtained by integral \eqref{desc100}.  This copy of $SL_2$ splits under the double
cover of the relevant symplectic group. We look for conditions when
$\sigma$ is cuspidal and when it is not zero. We start with the
nonvanishing. Thus, we compute
\begin{equation}
\int\limits_{F\backslash {\bf A}}\int\limits_{V(F)\backslash V({\bf
A})}E(vx_{0100}(r))\psi_V(v)\psi(ar)drdv\notag
\end{equation}
where $a\in F^*$. In the notations of integral \eqref{comp4}, we have $V({\mathcal O}_H)=\{x_{0100}(r)\}$ and 
$V({\mathcal O}_G)=V$.

Notice that for this orbit we have
$h_{C_3}(t)=h(t^{10},t^{19},t^{14},t^8)$. Also, from the embedding
in $F_4$ of the group of type $A_1$, which is inside the stabilizer
of this orbit inside $F_4$, we deduce that its maximal torus is
$h(1,t,1,1)$. Thus the product of the two tori gives
$h_{C_3}(t)h(1,t,1,1)=h(t^{10},t^{20},t^{14},t^8)=h_{F_4(a_2)}(t)$.
Thus we expect to obtain the orbit $F_4(a_2)$ in the expansion, and
we dont need any conjugation by some Weyl elements.

We expand along the unipotent group $\{x_{0110}(m)\}$. The above
integral is equal to
\begin{equation}\label{desc2}
\sum_{\gamma\in F}\int\limits_{(F\backslash {\bf
A})^2}\int\limits_{V(F)\backslash V({\bf
A})}E(vx_{0110}(m)x_{0100}(r))\psi_V(v)\psi(ar+\gamma m)drdmdv
\end{equation}
Since the function $E$ is automorphic, then for all $\gamma\in F$ we
have $E(h)=E(x_{1000}(\gamma)h)$. Using that, and conjugating
$x_{1000}(\gamma)$ to the right, integral \eqref{desc2} is equal to
\begin{equation}\label{desc3}
\int\limits_{{\bf A}}\int\limits_{V_1(F)\backslash V_1({\bf
A})}E(v_1x_{1000}(r))\psi_{V_1,a}(v_1)drdv_1
\end{equation}
In the derivation of the above integral we also collapsed the
summation over $\gamma$ with the suitable integration. Here $V_1$ is
the unipotent radical of the parabolic subgroup of $F_4$ whose Levi
part contains $SL_2\times SL_2$ which is generated by $<x_{\pm 1000}(r), x_{\pm 0010}(r)>$. In other words, $V_1=U_{\alpha_1,\alpha_3}$, and
hence  $\text{dim} V_1=22$. The character $\psi_{V_1,a}$ is defined
as follows. For
$v_1=x_{0001}(r_1)x_{1110}(r_2)x_{0120}(r_3)x_{0100}(r_4)v_1'$ let
$\psi_{V_1,a}(v_1)=\psi(r_1+r_2+r_3+ar_4)$. It follows from Section
2 that the Fourier coefficient along $V_1$ corresponds to the
unipotent orbit $F_4(a_2)$. Arguing as in \cite{Ga-S}, integral
\eqref{desc3} is nonzero for some choice of data if and only if the
Fourier coefficient along $V_1$ is not zero for some choice of data.
From this we conclude that the representation $\sigma$ is not zero
if and only if the representation $\mathcal{E}$ has a nonzero
Fourier coefficient corresponding to the unipotent orbit $F_4(a_2)$,
which corresponds to the character $\psi_{V_1,a}$.

To study when $\sigma$ is cuspidal, we consider the constant term along the unipotent radical of the Borel subgroup of the group $SL_2$. Thus, we need to compute the integral
\begin{equation}
\int\limits_{F\backslash {\bf A}}\int\limits_{V(F)\backslash V({\bf
A})}E(vx_{0100}(r))\psi_V(v)drdv\notag
\end{equation}
Let $V_1$ denote the unipotent group generated by $V$ and
$\{x_{0100}(r)\}$. Since $E$ is automorphic, we obtain
$E(h)=E(w[3124321]h)$. Conjugating  the above integral by this Weyl
element, the above integral is equal to
\begin{equation}\label{desc4}
\int\limits_{L(F)\backslash L({\bf A})}
\int\limits_{U_2(F)\backslash U_2({\bf
A})}\int\limits_{U_1(F)\backslash U_1({\bf
A})}E(u_1u_2lw_0)\psi_{U_1}(u_1)du_1du_2dl
\end{equation}
Here $U_1$ is the maximal unipotent subgroup of $Sp_6$ as embedded
inside a Levi part of a maximal parabolic subgroup of $F_4$. The
character $\psi_{U_1}$ is the Whittaker character of $U_1$. In other
words,
$\psi_{U_1}(u_1)=\psi(x_{0100}(r_1)x_{0010}(r_2)x_{0001}(r_3)u'_1)
=\psi(r_1+r_2+r_3)$. The group $U_2$ is the unipotent subgroup of
$F_4$ generated by all $\{x_\alpha(r)\}$ where $\alpha$ is a root in the set
$\{ (1122); (1221); (1231); (1222); (1232);
(1242); (1342); (2342)\}$. Thus $\text{dim}U_2=8$. The unipotent
group $L$ is generated by all  one parameter unipotent subgroups $\{ x_\alpha(r)\}$ where $\alpha$ is a root in the set  $\{ -(1000); -(1100); -(1110);
-(1120); -(1111)\}$. The dimension of $L$ is 5. Finally, we denoted
$w_0=w[3124321]$.

Next we consider a series of root exchange in integral
\eqref{desc4}. ( See Section 2).  We first expand along $\{x_{1220}(r)\}$. Thus integral
\eqref{desc4} is equal to
\begin{equation}\label{desc5}
\int\limits_{L(F)\backslash L({\bf A})}\sum_{\gamma\in F}
\int\limits_{F\backslash {\bf A}} \int\limits_{U_2(F)\backslash
U_2({\bf A})}\int\limits_{U_1(F)\backslash U_1({\bf
A})}E(x_{1220}(r)u_1u_2lw_0)\psi_{U_1}(u_1)\psi(\gamma
r)du_1du_2drdl
\end{equation}
Using the fact that $E$ is automorphic we have
$E(h)=E(x_{-(1120)}(\gamma)h)$. Conjugating this element to the
right, changing variables, and collapsing summation with
integration, integral \eqref{desc5} is equal to
\begin{equation}\label{desc6}
\int\limits_{ {\bf A}}\int\limits_{L_1(F)\backslash L_1({\bf A})}
\int\limits_{U_3(F)\backslash U_3({\bf
A})}\int\limits_{U_1(F)\backslash U_1({\bf
A})}E(u_1u_3l_1x_{-(1120)}(m)w_0)\psi_{U_1}(u_1)du_1du_3dl_1dm
\end{equation}
Here $U_3$ corresponds to the unipotent group generated
by all $\{x_\alpha(r)\}$ where $\alpha$ is a root in the set
$$\{ (1220; (1122); (1221); (1222); (1231); (1232); (1242); (1342);
(2342)\}$$ The group $L_1$ is generated by all one dimensional unipotent subgroups $\{x_\alpha(r)\}$ where $\alpha$ is a root in the set
 $\{ -(1000); -(1100);
-(1110); -(1111)\}$. We repeat this process two more times. First we
expand along the unipotent group $\{x_{1121}(r)\}$ and use for that the
unipotent group $\{x_{-(1111)}(m)\}$. Then we expand along the group
$\{x_{1120}(r)\}$ and use the group $\{x_{-(1110)}(m)\}$. Thus, integral
\eqref{desc6} is equal to
\begin{equation}\label{desc7}
\int\limits_{Z({\bf A})}\int\limits_{L_2(F)\backslash L_2({\bf A})}
\int\limits_{U_4(F)\backslash U_4({\bf
A})}\int\limits_{U_1(F)\backslash U_1({\bf A})}E(u_1u_4l_2zw_0)
\psi_{U_1}(u_1)du_1du_4dl_2dz
\end{equation}
Here, the group $U_4$ is generated by all $\{x_\alpha(r)\}$ such that $\alpha$ is a root in the set
$$\{ (1120); (1121); (1220); (1122); (1221); (1222); (1231);
(1232); (1242); (1342); (2342)\}$$ The group $L_2$ is
generated by all $\{x_\alpha(r)\}$ such that $\alpha$ is a root in the set
$\{ -(1000); -(1100)\}$, and $Z$ is generated by all $\{x_\alpha(r)\}$ such that $\alpha$ is a root in the set $\{ -(1110); -(1111); -(1120)\}$. Arguing as in \cite{Ga-S},
integral \eqref{desc7} is zero for all choice of data, if and only
if the integral
\begin{equation}\label{desc8}
\int\limits_{L_2(F)\backslash L_2({\bf A})}
\int\limits_{U_4(F)\backslash U_4({\bf
A})}\int\limits_{U_1(F)\backslash U_1({\bf A})}E(u_1u_4l_2)
\psi_{U_1}(u_1)du_1du_4dl_2
\end{equation}
is zero for all choice of data. Next we expand integral
\eqref{desc8} along the unipotent subgroup $\{x_{1111}(r)\}$. Thus,
integral \eqref{desc8} is a sum of two integral. The first is the
contribution to \eqref{desc8} from the nontrivial orbit. In this
case, after conjugation by the Weyl element $w[21]$, it follows from
the description of the unipotent orbits given in Section 2, that the
expansion obtained is a  Fourier coefficient which corresponds to
the unipotent orbit $F_4(a_1)$. We denote this integral by $I_1$.
The second integral, denoted by $I_2$,  is the contribution from the
constant term. In this case we proceed as above. We expand along the
unipotent group $\{x_{1110}(r)\}$ and use the unipotent group
$\{x_{-(1100)}(m)\}$, and then expand along $\{x_{1100}(r)\}$ and use the
group $\{x_{-(1000)}(m)\}$. Thus, integral $I_2$ is zero for all choice
of data if and only if the integral
\begin{equation}\label{desc9}
\int\limits_{U_5(F)\backslash U_5({\bf
A})}\int\limits_{U_1(F)\backslash U_1({\bf A})}E(u_1u_4l_2)
\psi_{U_1}(u_1)du_1du_5
\end{equation}
is zero for all choice of data. Here $U_5$ is the unipotent group
which is generated by $U_4$ and $\{x_{1100}(r_1)x_{1110}(r_2)\}$.
Finally, we expand \eqref{desc9} along the unipotent group
$\{x_{1000}(r)\}$. There are two cases. The first, corresponds to the
nontrivial orbit, produce a Fourier coefficient which is associated
with the unipotent orbit $F_4$. The other case, which corresponds to
the constant term, contributes the integral
\begin{equation}\label{desc10}
\int\limits_{U_1(F)\backslash U_1({\bf A})}E^{U(C_3)}(u_1)
\psi_{U_1}(u_1)du_1
\end{equation}
Here $U(C_3)=U_{\alpha_2,\alpha_3,\alpha_4}$ , and
$E^{U(C_3)}$ denotes the constant term of $E$ along $U(C_3)$.
We summarize,

\begin{proposition}\label{b3c31}

Let $\mathcal{E}$ denote an automorphic representation of $F_4({\bf A})$ such that:\\
{\bf 1)}\ The representation $\mathcal{E}$ has no nonzero Fourier
coefficients associated with the unipotent orbits $F_4(a_1)$ and
$F_4$. Also, integral \eqref{desc10} is zero for all choice of
data.\\
{\bf 2)}\ There exists an $a\in F^*$, such that the representation
$\mathcal{E}$ has a nonzero Fourier coefficient associated with the
unipotent orbit $F_4(a_2)$ which is given by integral \eqref{desc3}.

Then the representation $\sigma$ is a nonzero cuspidal
representation of the group $SL_2({\bf A})$.

\end{proposition}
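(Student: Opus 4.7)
The plan is to deduce both halves of the Proposition directly from the two computations that have already been set up in the paragraphs leading to its statement. The cuspidality assertion comes from the chain of identities \eqref{desc4}--\eqref{desc10}, while the nonvanishing assertion comes from \eqref{desc2}--\eqref{desc3}; only the bookkeeping needs to be assembled.

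For nonvanishing, I would argue that $\sigma$ is nonzero provided that, for some $a\in F^*$ and some choice of data, the Whittaker-type integral
\begin{equation*}
\int\limits_{F\backslash \mathbf{A}}\int\limits_{V(F)\backslash V(\mathbf{A})} E\bigl(v\, x_{0100}(r)\bigr)\,\psi_V(v)\,\psi(ar)\,dv\,dr
\end{equation*}
is nonzero. Fourier-expanding along $\{x_{0110}(m)\}$, using the $x_{1000}(\gamma)$-invariance of $E$ to shift the variable, and collapsing the resulting sum into integration as in \eqref{desc2}--\eqref{desc3} reduces this (up to a harmless adelic integration, handled by the argument of \cite{Ga-S}) to the Fourier coefficient \eqref{desc3} of $\mathcal{E}$ attached to the orbit $F_4(a_2)$ with the character $\psi_{V_1,a}$. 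The torus identity $h_{C_3}(t)\cdot h(1,t,1,1)=h_{F_4(a_2)}(t)$ is the structural reason no Weyl conjugation is required here, and condition (2) supplies the required nonvanishing.

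For cuspidality, since the two unipotent radicals of the Borel of $SL_2$ are conjugate inside $SL_2$, it suffices to show that the constant term along $\{x_{0100}(r)\}$ vanishes for all data. Conjugating by $w[3124321]$ transforms this into \eqref{desc4}; then I would carry out the three successive root exchanges (expanding along $x_{1220}$, $x_{1121}$, $x_{1120}$ and trading against $x_{-(1120)}$, $x_{-(1111)}$, $x_{-(1110)}$ respectively) exactly as in \eqref{desc5}--\eqref{desc8}, arriving at the integral \eqref{desc8}. A Fourier expansion of \eqref{desc8} along $\{x_{1111}(r)\}$ splits it into two pieces: the nontrivial character contribution, which after conjugation by $w[21]$ is identified via subsection~2.2 as a Fourier coefficient attached to $F_4(a_1)$, and the trivial character contribution, which is reduced by two further root exchanges to \eqref{desc9}. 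A final expansion along $\{x_{1000}(r)\}$ splits \eqref{desc9} into a Fourier coefficient attached to $F_4$ and the residual constant-term integral \eqref{desc10}. Condition (1) kills all three pieces, so the constant term vanishes.

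The only real subtlety is bookkeeping the commutation relations in $F_4$ during the root-exchange steps and verifying that the Fourier coefficients obtained after each Weyl conjugation are genuinely the ones attached in subsection~2.2 to $F_4(a_1)$, $F_4(a_2)$, and $F_4$. I would view this verification---routine but tedious---as the main obstacle, and I would use the table of tori $h_{\mathcal O}(t)$ from subsection~4.2.2 as a systematic way to confirm that each reduction lands on the claimed orbit.
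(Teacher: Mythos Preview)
Your proposal is correct and follows essentially the same approach as the paper: the proposition is in fact a summary of the computations \eqref{desc2}--\eqref{desc10} that precede it, and you have accurately recapitulated both the nonvanishing argument (root exchange landing on the $F_4(a_2)$ coefficient \eqref{desc3}) and the cuspidality argument (the chain of root exchanges and Fourier expansions breaking the constant term into $F_4(a_1)$, $F_4$, and \eqref{desc10}). The only cosmetic point is that $SL_2$ has a single conjugacy class of unipotent radical, so the phrase about ``the two unipotent radicals of the Borel'' is unnecessary.
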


It follows from the above that identity \eqref{comp5} can be
described in this case by
$$C_3(\mathcal{E})\circ (2)_a=F_4(a_2)_a$$ Here $a\in
(F^*)^2\backslash F^*$.

\subsubsection{\bf The Unipotent Orbit $B_3$ }

We consider the descent construction which is obtained from the
unipotent orbit $B_3$. In this case, the group $U'_\Delta$, and the
character $\psi_{U,u_\Delta}$ given in integral \eqref{desc1} are as
follows. The group $U'_\Delta$ is the unipotent radical of the
parabolic subgroup of $F_4$ whose Levi part contains the group
$SL_3$ generated by $<x_{\pm (0010)}(r), x_{\pm (0001)}(r)>$. Thus,
$U'_\Delta=U_{\alpha_3,\alpha_4}$. To define the character
$\psi_{U,u_\Delta}$, write
$u=x_{0111}(r_1)x_{0120}(r_2)x_{1000}(r_3)u'$. Then $\psi_{U,u_\Delta}(u)=\psi(r_1+r_2+r_3)$. The stabilizer of
$\psi_{U,u_\Delta}$ in the above copy of $SL_3$ is the group $SO_3$.
For short we write $V$ for $U'_\Delta$ and $\psi_V$ for
$\psi_{U,u_\Delta}$. Thus, integral \eqref{desc1} produces an
automorphic representation $\sigma$ on $SO_3({\bf A})$. We have
$h_{B_3}(t)=h(t^{10},t^{18},t^{12},t^6)$. The maximal torus of
$SO_3$ is given by $h(1,1,m,m)$ where $m\in F^*$. Hence, the maximal
torus of $SL_2$ as embedded in $SO_3$ is given by $h(1,1,t^2,t^2)$.
We have $h_{B_3}(t)h(1,1,t^2,t^2)= h(t^{10},t^{18},t^{14},t^8)$.
Conjugating this torus by $w_2$ we obtain
$h_{F_4(a_2)}(t)=h(t^{10},t^{20},t^{14},t^8)$. Thus we expect to get
the unipotent orbit $F_4(a_2)$, after a suitable conjugation by a
Weyl element. The maximal unipotent subgroup of $SO_3$ is embedded
in $F_4$ as $j(r)=x_{0010}(r)x_{0001}(\eta_1
r)x_{0011}(\eta_2 r)$ where $\eta_i$ are some fixed elements
in $F^*$ determined so that $j(r)$ stabilizes the character
$\psi_V$.

The integral we need to compute is given by
\begin{equation}\label{b35}
\int\limits_{F\backslash {\bf A}}\int\limits_{V(F)\backslash V({\bf
A})}E(vj(r))\psi(ar)\psi_V(v)drdv
\end{equation}
where $a=0,1$. Thus, if $a=0$ we compute the constant term of
$\sigma$, whereas if $a=1$ we compute the Whittaker coefficient of
$\sigma$.

In both cases we start with two root exchanges as explained in
subsection 2.2.2. First we perform a Fourier expansion along the
unipotent group $\{x_{0011}(m)\}$ and exchange it by $\{x_{0100}(l)\}$. Then
we repeat this process with the roots $(0001)$ and $(0110)$. In the
case when $a=1$ we also exchange $(1100)$ by $-(0100)$. Assume that
$a=1$. Then when we conjugate the above integral by $w[32]$, we
obtain as inner integration the integral
\begin{equation}\label{b355}
\int\limits_{U_{\alpha_1,\alpha_3}(F)\backslash
U_{\alpha_1,\alpha_3}({\bf A})}E(u)\psi_U(u)du
\end{equation}
where $\psi_U$ is defined as follows. Write $u\in
U_{\alpha_1,\alpha_3}$ as
$u=x_{0001}(r_1)x_{0100}(r_2)x_{0110}(r_3)x_{1120}(r_4)u'$. Then
$\psi_U(u)=\psi(r_1+r_2+r_3+r_4)$. It follows from subsection 2.2.1,
that this Fourier coefficient corresponds to the unipotent orbit
$F_4(a_2)$. Also, it is not hard to check that this integral is zero
for all choice of data, if and only if integral \eqref{b35} is zero
for all choice of data.

The case when $a=0$ is done in a similar way as in  the case of the
unipotent orbit $C_3$. After performing the above two root exchange,
we conjugate the integral by $w_0=w[432132]$, and we obtain that
integral \eqref{b35}, with $a=0$, is zero for all choice of data, if
and only if the integral
\begin{equation}\label{b36}
\int\limits_{L(F)\backslash L({\bf A})}
\int\limits_{U_2(F)\backslash U_2({\bf
A})}\int\limits_{U_1(F)\backslash U_1({\bf
A})}E(u_1u_2lw_0)\psi_{U_1}(u_1)du_1du_2dl\notag
\end{equation}
is zero for all choice of data. Here $U_1$ is the maximal unipotent
subgroup of $Spin_7$ which is embedded in $F_4$ as a Levi part of a
maximal parabolic subgroup. The character $\psi_{U_1}$ is the
Whittaker character defined on $U_1$. Thus, if $u\in U_1$ is written
as $u=x_{1000}(r_1)x_{0100}(r_2)x_{0010}(r_3)u'$, then
$\psi_{U_1}=\psi(r_1+r_2+r_3)$. The group $U_2$ is generated by all
$\{x_\alpha(r)\}$ where
$$\alpha\in \{(1111);\ (0121);\ (1121);\ (1221);\ (1231);\ (1232);\
(1242);\ (1342);\ (2342)\}$$ Finally, the group $L$ is generated by
all $\{x_\alpha(r)\}$ such that
$$\alpha\in \{-(1122);\ -(0122);\ -(0011);\ -(0001)\}$$ This
integral is similar to the integral \eqref{desc4}. We proceed in a
similar way as in the case of the unipotent orbit $C_3$. Since the
computations are  similar, we shall omit them. To state the
conditions we obtain, we consider the integral
\begin{equation}\label{desc11}
\int\limits_{U_1(F)\backslash U_1({\bf
A})}E^{U(B_3)}(u_1)\psi_{U_1}(u_1)du_1
\end{equation}
Here $U(B_3)=U_{\alpha_1,\alpha_2,\alpha_3}$. We have,

\begin{proposition}\label{b3c32}

Let $\mathcal{E}$ denote an automorphic representation of $F_4({\bf A})$ such that:\\
{\bf 1)}\ The representation $\mathcal{E}$ has no nonzero Fourier
coefficients associated with the unipotent orbits $F_4(a_1)$ and
$F_4$. Also, integral \eqref{desc11} is zero for all choice of
data.\\
{\bf 2)}\ The representation $\mathcal{E}$ has a nonzero Fourier
coefficient associated with the unipotent orbit $F_4(a_2)$ which is
given by integral \eqref{b355}.

Then the representation $\sigma$ is a nonzero cuspidal
representation of the group $SO_3({\bf A})$.

\end{proposition}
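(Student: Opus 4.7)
The plan is to analyze integral \eqref{b35} in both cases $a=0$ (constant term, for cuspidality) and $a=1$ (Whittaker coefficient, for nonvanishing), following exactly the template that worked in the proof of Proposition \ref{b3c31}. In both cases the opening move is the same pair of root exchanges described in the text: first one expands along the one-dimensional unipotent subgroup $\{x_{0011}(m)\}$ and exchanges it against $\{x_{0100}(l)\}$, using the procedure of subsection 2.2.2; then one repeats the same operation with the roots $(0001)$ and $(0110)$. At each exchange one must check the relevant commutation relation of the form $[x_\beta,x_\gamma]=x_\alpha\cdot u'$ with $u'\in U$, which is routine in $F_4$ using the data of \cite{G-S}.

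For the case $a=1$, after the two root exchanges one performs one additional exchange, replacing $(1100)$ by $-(0100)$, and then conjugates from the left by the Weyl element $w[32]$. The output is exactly the Fourier coefficient \eqref{b355}, which by subsection 2.2.1 is attached to the unipotent orbit $F_4(a_2)$. By the standard argument of \cite{Ga-S} (used previously to pass between \eqref{desc3} and \eqref{desc2}), the vanishing of \eqref{b35} with $a=1$ is equivalent to the vanishing of \eqref{b355}. Hypothesis (2) of the Proposition then gives the nonvanishing of the Whittaker coefficient, hence the nonvanishing (and indeed genericity) of $\sigma$.

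For the case $a=0$, after the two initial root exchanges we conjugate by $w_0=w[432132]$; the computation is parallel to the passage from \eqref{desc4} to \eqref{desc10} in the $C_3$ proof, only with a different Weyl element and slightly different unipotent groups $U_1,U_2,L$. One then performs the same chain of Fourier expansions along $\{x_{1111}(r)\}$, $\{x_{1000}(r)\}$, etc., alternating between a ``nontrivial orbit'' contribution and a ``constant term'' contribution. The nontrivial orbit contributions, after a further conjugation by a Weyl element such as $w[21]$ and inspection of the roots that appear, are recognized (via the description in subsection 2.2) as Fourier coefficients attached to the unipotent orbits $F_4(a_1)$ and $F_4$, so they vanish by hypothesis (1). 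The terminal constant-term contribution collapses to integral \eqref{desc11}, which vanishes by the remaining part of hypothesis (1). Chasing the equivalences given by the root exchanges back to the original integral shows that \eqref{b35} with $a=0$ is zero for all choice of data, which gives cuspidality.

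The main obstacle is bookkeeping rather than ideas: one must verify at each of the root exchanges that the relevant commutator lies in $U$ with the correct structure constant, and one must correctly identify, after each Weyl conjugation in the $a=0$ case, which of the standard unipotent-orbit Fourier coefficients of subsection 2.2 is appearing as inner integration. The choice of exchange roots was already pinned down by the dimension/torus computation $h_{B_3}(t)h(1,1,t^2,t^2)=h(t^{10},t^{18},t^{14},t^8)$, which a Weyl conjugation sends to $h_{F_4(a_2)}(t)$; this is what guarantees that in the $a=1$ case the Fourier coefficient we produce is indeed attached to $F_4(a_2)$ and not to some larger or unrelated orbit. Once these identifications are done, the argument is mechanical and closely parallel to the proof of Proposition \ref{b3c31}, so the details may reasonably be omitted as the text itself suggests.
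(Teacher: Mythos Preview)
Your proposal is correct and follows essentially the same approach as the paper's proof: the two initial root exchanges, the additional exchange $(1100)\leftrightarrow -(0100)$ and conjugation by $w[32]$ in the $a=1$ case yielding \eqref{b355}, and in the $a=0$ case conjugation by $w_0=w[432132]$ followed by a chain of root exchanges and Fourier expansions parallel to the $C_3$ argument. The paper in fact omits the $a=0$ details entirely, saying only that the computation is similar to the $C_3$ case; your outline is consistent with this, though note that since $U_1$ is now the maximal unipotent of $Spin_7$ and the groups $U_2,L$ involve different roots (e.g.\ $L$ is generated by $x_{-\alpha}$ for $\alpha\in\{(1122),(0122),(0011),(0001)\}$), the specific one-parameter subgroups along which one expands will differ from the literal $\{x_{1111}(r)\},\{x_{1000}(r)\}$ of the $C_3$ case.
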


As in the previous case, we can rephrase the nonvanishing
computation in terms of identity \eqref{comp5}. In this case we have
$$B_3(\mathcal{E})\circ (3)=F_4(a_2)$$

\subsubsection{\bf The Unipotent Orbit $C_3(a_1)$}

The diagram corresponding to this unipotent orbit contains nodes which are labelled one, and hence we use the integral \eqref{desc100} where the theta representation is defined on the
double cover of $Sp_6({\bf A})$. 
It follows from \cite{C} that the
stabilizer of this unipotent orbit is a group of type $A_1$. In this
case it is the group $SL_2({\bf A})$, and according to the choice of
character $\psi_{U,\Delta}$ which will be specified below, we have
$SL_2=<x_{\pm (0100)}>$. From the embedding of this copy of $SL_2$ inside $Sp_6$, we deduce that the representation $\sigma$ is defined over the double cover of $SL_2$.

Since our goal is to study the vanishing or
nonvanishing of certain Fourier coefficients, it is enough to study
integral \eqref{desc102}. Thus we need to describe the group
$U_\Delta'$ and the character $\psi_{U,u_\Delta}$ that we choose.
From the description of the diagram associated with this unipotent
orbit, it follows that $U_\Delta=U_{\alpha_1,\alpha_3}$. Let
$U_\Delta'$ denote the subgroup of $U_\Delta$ which consists of all
roots in $U_\Delta$ deleting the three roots $(0010);\ (0011);
(1000)$. Thus $\text{dim} U_\Delta'=19$. The character
$\psi_{U,u_\Delta}$ is defined as follows. For $u'\in U_\Delta'$
write $u'=x_{0121}(r_1)x_{1110}(r_2)x_{1111}(r_3)u''$. Then
$\psi_{U,u_\Delta}(v')=\psi(r_1+r_2+r_3)$. Denote $V=U_\Delta$ and
$V'=U_\Delta'$. The one dimensional torus associated with this orbit
is $h_{C_3(a_1)}(t)=h(t^6,t^{11},t^8,t^4)$. The one dimensional
torus of the copy of $SL_2$ which is the stabilizer of
$\psi_{U,u_\Delta}$ is $h(1,t,1,1)$. If we multiply these two tori
elements, we obtain $h_{F_4(a_3)}(t)$.

Thus, the integral we need to consider is given by
\begin{equation}\label{c3a11}
\int\limits_{F\backslash {\bf A}}\int\limits_{V'(F)\backslash
V'({\bf A})}E(v'x_{0100}(r))\psi_{U,u_\Delta}(v')\psi(\beta r)dv'dr
\end{equation}
Here $\beta\in F$. We start with the case when $\beta\ne 0$. In this
case, the above integral is equal to
\begin{equation}\label{c3a12}
\int\limits_{U_{\alpha_2}(F)\backslash U_{\alpha_2}({\bf
A})}E(u)\psi_{U,u_\Delta}(u)du\notag
\end{equation}
where now the character $\psi_{U,u_\Delta}$ is a character of the
group $U_{\alpha_2}$, and is given as follows. Write $u\in
U_{\alpha_2}$ as
$u=x_{0121}(r_1)x_{1110}(r_2)x_{1111}(r_3)x_{0100}(r_4)u'$. Then
$\psi_{U,u_\Delta}(u)=\psi(r_1+r_2+r_3+\beta r_4)$. It follows from
subsection 2.2.1 that this Fourier coefficient corresponds to the
unipotent orbit $F_4(a_3)$. Indeed, in the notations of equation
\eqref{f4a31}, the character $\psi_{U,u_\Delta}$ corresponds to the
character $\psi_{U_\Delta,A,B}$ with
$$A=\begin{pmatrix} &1&\\ &&1\\ \beta&&\end{pmatrix}\ \ \ \ \
B=\begin{pmatrix} 1&&\\ 1&&\\ &1&1\end{pmatrix}$$ Solving the
equations given in \eqref{f4a32}, we obtain that the only solution
is the trivial solution. Hence, the above character
$\psi_{U,u_\Delta}$ is associated with the unipotent orbit
$F_4(a_3)$. In the notations of equation \eqref{comp5} we proved
$$C_3(a_1)(\mathcal E)\circ (2)_\beta=F_4(a_3)_\beta$$

Next we consider the cuspidality of the lift. Thus, we need to
compute integral \eqref{c3a11} with $\beta=0$. Let $w_0=w[1234213]$.
Conjugating by this element, integral \eqref{c3a11} is equal to
\begin{equation}\label{c3a13}
\int\limits_{L(F)\backslash L({\bf A})}\int\limits_{U_2(F)\backslash
U_2({\bf A})}\int\limits_{U_1(F)\backslash U_1({\bf
A})}E(u_1u_2lw_0)\psi_{U_1}(u_1)du_1du_2dl
\end{equation}
Here $U_1$ is the unipotent subgroup of $Sp_6$ which is generated by
all $\{x_\alpha(r)\}$ where $\alpha=\sum n_i\alpha_i$ such that $n_1=0$ and
deleting the simple root $\alpha_2$. Thus $\text{dim}\ U_1=8$. The
character $\psi_{U_1}$ is defined as follows. Write
$u_1=x_{0001}(r_1)x_{0110}(r_2)x_{0011}(r_3)u_1'$. Then
$\psi_{U_1}(u_1)=\psi(r_1+r_2+r_3)$. We mention that this Fourier
coefficient of the group corresponds to the unipotent orbit $(42)$
in the group $Sp_6$. The group $U_2$ is generated by all $\{x_\alpha(r)\}$
such that
$$\alpha\in \{ (1122);\ (1221);\ (1222);\ (1231);\ (1232);\ (1242);\
(1342);\ (2342)\}$$ Finally, the group $L$ is generated by all one
dimensional unipotent elements  $\{x_\alpha(r)\}$ such that $\alpha\in \{
-(1000);\ -(1100);\ -(1110);\ -(1120)\}$.

We perform 4 root exchange as explained in subsection 2.2.2. First, we
exchange $-(1110)$ with $(1220)$, then $-(1120)$ with $(1121)$,
$-(1100)$ with $(1111)$ and $-(1000)$ with $(1110)$. Then we expand
the integral we obtain along the unipotent group
$\{x_{1000}(m_1)x_{1100}(m_2)x_{1120}(m_3)\}$. Thus, integral
\eqref{c3a13} is equal to
\begin{equation}\label{c3a14}
\int\limits_{L({\bf A})}\sum_{a,b,c\in
F}\int\limits_{U_3(F)\backslash U_3({\bf
A})}\int\limits_{U_1(F)\backslash U_1({\bf
A})}E(u_3u_2lw_0)\psi_{U_1}(u_1)\psi_{a,b,c}(u_3)du_1du_3dl
\end{equation}
Here $U_3=U_{\alpha_2,\alpha_3,\alpha_4}$.
The character $\psi_{a,b,c}(u_3)$ is defined as follows.
Write an element $u_3= x_{1000}(m_1)x_{1100}(m_2)x_{1120}(m_3)u_3'$.
Then $\psi_{a,b,c}(u_3)=\psi(am_1+bm_2+cm_3)$, where $a,b,c\in F$.

There are several cases to consider. First assume that $a=b=c=0$.
Then, in integral \eqref{c3a14}, the integration over $U_3$ is the
constant term of the function $E$ along the unipotent group $U_3$.
If $a=b=0$ and $c\ne 0$, then the combined integration over $U_1$
and $U_3$ contains as inner integration the Fourier coefficient
corresponding to the unipotent orbit $F_4(a_1)$. Finally, if
$(a,b)\ne (0,0)$ then we obtain, as inner integration, the Fourier
coefficient corresponding to the unipotent orbit $F_4(a_2)$.

We summarize
\begin{proposition}\label{c3a15}

Let $\mathcal{E}$ denote an automorphic representation of $F_4({\bf A})$ such that:\\
{\bf 1)}\ The representation $\mathcal{E}$ has no nonzero Fourier
coefficients associated with the unipotent orbits $F_4(a_1)$ and
$F_4(a_2)$ as given above. Also, the representation $\mathcal{E}$
does not support the constant term along the group $U_{\alpha_2,\alpha_3,\alpha_4}$\\
{\bf 2)}\ The representation $\mathcal{E}$ has a nonzero Fourier
coefficient associated with the unipotent orbit $F_4(a_3)$ which is
given by integral \eqref{c3a11}.

Then the representation $\sigma$ is a nonzero cuspidal
representation of the group $\widetilde{SL_2}({\bf A})$.

\end{proposition}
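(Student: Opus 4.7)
The plan is to analyze the integral \eqref{c3a11} in two separate regimes, $\beta \ne 0$ and $\beta = 0$, since the first controls the nonvanishing of $\sigma$ and the second controls its cuspidality. In both cases the strategy is to recognize the resulting inner integral as a Fourier coefficient attached to some unipotent orbit of $F_4$, or as a constant term along a maximal parabolic, so that hypotheses (1) and (2) directly yield the conclusion.

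First I would treat the nonvanishing case $\beta \in F^*$. Here the integration over $\{x_{0100}(r)\}$ combines with the integration over $V'$ to give an integral over the full unipotent radical $U_{\alpha_2}$, with a character $\psi_{U,u_\Delta}$ extended by $\psi(\beta r)$ on the $x_{0100}$-coordinate. The key identification is that this is precisely a Fourier coefficient associated with the orbit $F_4(a_3)$: in the parameterization of \eqref{f4a31} the corresponding pair $(A,B)$ is
\[
A=\begin{pmatrix} &1&\\ &&1\\ \beta&&\end{pmatrix},\qquad
B=\begin{pmatrix} 1&&\\ 1&&\\ &1&1\end{pmatrix},
\]
and one checks by directly substituting into the linear system \eqref{f4a32} that the only solution $(h_1,g_1)$ is the trivial one, so the connected component of the stabilizer is trivial. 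This shows the Fourier coefficient is of type $F_4(a_3)$, and hypothesis (2) gives nonvanishing of the lift.

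Next, for cuspidality we must show that \eqref{c3a11} with $\beta=0$ vanishes for every choice of data. I would conjugate by $w_0=w[1234213]$, which transforms the outer part into \eqref{c3a13} with $U_1\subset Sp_6$, its Whittaker character $\psi_{U_1}$ corresponding to the $(42)$-Fourier coefficient on $Sp_6$, and the extra groups $U_2$ and $L$ as listed. Then I would perform the four root exchanges described in subsection 2.2.2: exchange $-(1110)\leftrightarrow (1220)$, $-(1120)\leftrightarrow (1121)$, $-(1100)\leftrightarrow (1111)$, and $-(1000)\leftrightarrow (1110)$. Each exchange preserves the vanishing/nonvanishing of the integral, and collectively they allow me to integrate over a larger unipotent group and eventually collapse the $L$-summation. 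After the exchanges, expanding along $\{x_{1000}(m_1)x_{1100}(m_2)x_{1120}(m_3)\}$ gives \eqref{c3a14}, indexed by triples $(a,b,c)\in F^3$.

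The final step is the case analysis on $(a,b,c)$. When $a=b=c=0$, the integration over $U_3=U_{\alpha_2,\alpha_3,\alpha_4}$ is exactly the constant term along that unipotent radical, which vanishes by the second part of hypothesis (1). When $a=b=0$ and $c\ne 0$, the combined integration over $U_1$ and $U_3$ realizes a Fourier coefficient of type $F_4(a_1)$ (the character hits the roots needed for the $F_4(a_1)$-decoration), and vanishes by hypothesis (1). When $(a,b)\ne (0,0)$, the $SL_2$ acting by the relevant root group sweeps out a single orbit on nonzero $(a,b)$, and the resulting inner Fourier coefficient is of type $F_4(a_2)$ and vanishes again by hypothesis (1). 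Combining these three cases shows \eqref{c3a11} is zero whenever $\beta=0$, establishing cuspidality. The main obstacle is bookkeeping: checking carefully at each root exchange that the new inner integral is indeed zero-for-all-data iff the preceding one is, and identifying the characters in \eqref{c3a14} with the correct Bala-Carter orbit labels $F_4(a_1)$ and $F_4(a_2)$ by comparing with the representatives chosen in subsections 2.2.1 and 2.2. This is essentially the same mechanism used in the $C_3$ case (Proposition \ref{b3c31}) and is handled along the same lines.
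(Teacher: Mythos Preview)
Your proposal is correct and follows essentially the same approach as the paper: the nonvanishing via identifying the $\beta\ne 0$ integral with the $F_4(a_3)$ coefficient using the pair $(A,B)$ and equations \eqref{f4a32}, and the cuspidality via conjugation by $w[1234213]$, the same four root exchanges, the expansion giving \eqref{c3a14}, and the identical case split on $(a,b,c)$ into the constant term along $U_{\alpha_2,\alpha_3,\alpha_4}$, an $F_4(a_1)$ coefficient, and an $F_4(a_2)$ coefficient. One small notational point: the group you (and the paper) call $U_{\alpha_2}$ in the $\beta\ne 0$ case is really $U_{\alpha_1,\alpha_3,\alpha_4}$ (dimension $20$, matching $V'\cup\{x_{0100}\}$), consistent with the $F_4(a_3)$ description in subsection 2.2.1.
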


\subsubsection{\bf The Unipotent Orbit $B_2$}

The diagram of this unipotent orbit contains nodes which are labeled
with a one. Thus the descent in this case is given in terms of the
integral \eqref{desc100}. Here the theta representation is defined
on the group $Sp_4$.  Also the
stabilizer is a group of type $A_1\times A_1$. Since the embedding of $SL_2\times SL_2$ in $Sp_4$ does not split, the representation $\sigma$ is an automorphic representation of $\widetilde{SL}_2({\bf A})\times \widetilde{SL}_2({\bf A})$.

In the notations of equation \eqref{desc100}, let
$U_\Delta=U_{\alpha_2,\alpha_3}$. To determine the conditions for the
non vanishing and for the cuspidality, we may instead consider integral
\eqref{desc102}. Thus, in the notations of that integral, let
$U_\Delta'$ denote the subgroup of $U_{\alpha_2,\alpha_3}$ where we
omit the roots $(0001)$ and $(0011)$. The character
$\psi_{U,u_\Delta}$ is defined as follows. For
$u'=x_{1110}(r_1)x_{0122}(r_2)u_1'$ let $\psi_{U,u_\Delta}(u')=\psi(r_1+r_2)$. With this choice of character
the maximal unipotent subgroup of the stabilizer, which is
$SL_2\times SL_2$, is given by $\{x_{0100}(m_1)x_{0120}(m_2)\}$. Denote
$V=U_\Delta$ and $V'=U_\Delta'$. We have
$h_{B_2}(t)=h(t^6,t^{10},t^7,t^4)$. The corresponding torus element
of the above copy of $SL_2\times SL_2$ is $h(1,t^2,t,1)$. Hence the
product is $h_{F_4(a_3)}(t)$. Therefore, to study the nonvanishing
of this construction we consider the integral
\begin{equation}\label{b21}
\int\limits_{(F\backslash {\bf A})^2}\int\limits_{V'(F)\backslash
V'({\bf
A})}E(v'x_{0100}(m_1)x_{0120}(m_2))\psi_{U,u_\Delta}(v')\psi(am_1+m_2
)dv'dm_1dm_2\notag
\end{equation}
Here $a\in F^*$. Exchanging the root $(1000)$ by $(0110)$ as
explained in subsection 2.2.2 we obtain the integral
\begin{equation}\label{b22}
\int\limits_{\bf A}\int\limits_{U(F)\backslash U({\bf
A})}E(ux_{1000}(r))\psi_{U,a}(u)dudr
\end{equation}
Here $U=U_{\alpha_1,\alpha_3,\alpha_4}$ and the character
$\psi_{U,a}$ is defined as follows. For an element
$u=x_{0100}(r_1)x_{1110}(r_2)x_{0120}(r_3)x_{0122}(r_4)u'$ define
$\psi_{U,a}(u)=\psi(ar_1+r_2+r_3+r_4)$. In the notations of subsection
2.2.1 this corresponds to the character $\psi_{U_\Delta,A,B}$ with
$$A=\begin{pmatrix} &&1\\ &1&\\ \alpha&&\end{pmatrix}\ \ \ \ \
B=\begin{pmatrix} &&\\ 1&&\\ &1&\end{pmatrix}$$ Solving the
equations given in \eqref{f4a32}, we obtain that the only solution
is the trivial solution. Hence, the above character $\psi_{U,a}$ is
associated with the unipotent orbit $F_4(a_3)$. In the notations of
equation \eqref{comp5} we proved
$$B_2(\mathcal E)\circ (2|2)_a=F_4(a_3)_a$$ Here by $(2|2)$
we denote the unipotent orbit of $SL_2\times SL_2$ which corresponds
to the Whittaker coefficient of this group.

To study the cuspidality of the lift, we need to consider two
constant terms. One along the unipotent group $\{x_{0120}(r_1)\}$ and
the other along $\{x_{0100}(r_2)\}$. However, these two matrices are
conjugate under the Weyl element $w_3$. Moreover, this Weyl element
normalizes the group $V'$ and the character $\psi_{U,u_\Delta}$.
Hence it is enough to consider the integral
\begin{equation}\label{b23}
\int\limits_{F\backslash {\bf A}}\int\limits_{V'(F)\backslash
V'({\bf A})}E(v'x_{0120}(m))\psi_{U,u_\Delta}(v') dv'dm\notag
\end{equation}
Let $w_0=w[123421]$ and conjugate in the above integral by this Weyl element. Then the
above integral is equal to
\begin{equation}\label{b24}
\int\limits_{L(F)\backslash L({\bf A})}\int\limits_{U_2(F)\backslash
U_2({\bf A})}\int\limits_{U_1(F)\backslash U_1({\bf
A})}E(u_1u_2lw_0)\psi_{U_1}(u_1)du_1du_2dl\notag
\end{equation}
Here $U_1$ is the unipotent subgroup of $Sp_6$ which is generated by
$\{x_\alpha(r)\}$ where
$$\alpha\in\{ (0001);\ (0110);\ (0011);\ (0111);\
(0120);\ (0121);\ (0122)\}$$ The character $\psi_{U_1}$ is defined
as follows. Write $u_1=x_{0001}(r_1)x_{0120}(r_2)u_1'$. Then
$\psi_{U_1}(u_1)=\psi(r_1+r_2)$. The group $U_2$ is generated by all
$\{x_\alpha(r)\}$ such that
$$\alpha\in\{ (1122);\ (1222);\ (1231);\ (1232);\ (1242);\ (1342);\
(2342)\}$$ Finally, the group $L$ is generated by all $\{x_\alpha(r)\}$
such that
$$\alpha\in\{ -(1000);\ -(1100);\ -(1110);\ -(1120);\ -(1220)\}$$
First we exchange roots as follows. Exchange $-(1220)$ with
$(1221)$, $-(1120)$ with $(1121)$, $-(1100)$ with $(1220)$,
$-(1110)$ with $(1111)$ and $-(1000)$ with $(1120)$. Then we perform
a Fourier expansion along the roots $(0100);\ (1100)$ and $(1110)$.
Then, the above integral is equal to
\begin{equation}\label{b241}
\int\limits_{ L({\bf A})}\sum_{a,b,c}\int\limits_{U_4(F)\backslash
U_4({\bf A})}\int\limits_{U_3(F)\backslash U_3({\bf
A})}E(u_4u_3lw_0)\psi_{U_3,a}(u_3)\psi_{U_4,b,c}(u_4)du_4du_3dl
\end{equation}
Here $U_3$ is the unipotent subgroup of $Sp_6$ generated by 
$U_1$ and the group $\{x_{0100}(r)\}$. The character $\psi_{U_3,a}$
is defined as follows. Write
$u_3=x_{0001}(r_1)x_{0120}(r_2)x_{0100}(r_3)u_3'$. Then
$\psi_{U_3,a}(u_3)=\psi(r_1+r_2+ar_3)$. The group $U_4$ is generated
by  $U_2$ and the unipotent group $\{x_{1100}(r_1) x_{1110}(r_2)\}$. To
define the character $\psi_{U_4,b,c}$ write
$u_4=x_{1100}(r_1)x_{1110}(r_2)u_4'$. Then
$\psi_{U_4,b,c}(u_4)=\psi(br_1+cr_2)$. In the notations of subsection
2.2.1 which describes the characters of $F_4(a_2)$, the character
corresponding to the integral \eqref{b24}, corresponds to the
character $\psi_{U_\Delta, A,\gamma_1,\gamma_2}$ where
$$A=\begin{pmatrix} 1&a\\ &b\\ c&\\ &\end{pmatrix}\ \ \ \ \
(\gamma_1,\gamma_2)=(1,0)$$ There are several cases. Assume first
that $b,c\ne 0$. Then, we obtain as inner integration, a Fourier
coefficient which correspond to the unipotent orbit $F_4(a_2)$.
Indeed, from the description of the action on the set of characters,
as described in subsection 2.2.1 it follows that the stabilizer is a
finite group. The same happens if $a,c\ne 0$ but $b=0$ and similarly
if $a,b\ne 0$ but $c=0$. In all other cases we either get a Fourier
coefficient which corresponds to a unipotent orbit which is either
$F_4(a_1)$ or $F_4$, or we get the constant term along the unipotent
radical of the maximal parabolic subgroup whose Levi part is
$GSp_6$.

We proved,

\begin{proposition}\label{b25}

Let $\mathcal{E}$ denote an automorphic representation of $F_4({\bf A})$ such that:\\
{\bf 1)}\ The representation $\mathcal{E}$ has no nonzero Fourier
coefficients associated with the unipotent orbits $F_4,\ F_4(a_1)$
and $F_4(a_2)$ as given above. Also, the representation
$\mathcal{E}$
does not support the constant term along the group $U_{\alpha_2,\alpha_3,\alpha_4}$\\
{\bf 2)}\ The representation $\mathcal{E}$ has a nonzero Fourier
coefficient associated with the unipotent orbit $F_4(a_3)$ which is
given by integral \eqref{b22}.

Then the representation $\sigma$ is a nonzero cuspidal
representation of the group $\widetilde{SL_2}({\bf A})\times \widetilde{SL_2}({\bf A})$.
\end{proposition}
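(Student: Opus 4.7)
The plan is to run the same two-step strategy as in Propositions \ref{b3c31}, \ref{b3c32} and \ref{c3a15}, using the computations performed in the paragraphs preceding the statement. The non-vanishing and the cuspidality are proved separately, both being consequences of identifying certain Fourier integrals as coefficients attached to the orbits $F_4(a_1)$, $F_4(a_2)$, $F_4(a_3)$, or $F_4$, together with a constant term along $U_{\alpha_2,\alpha_3,\alpha_4}$.

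For non-vanishing, I start from the Whittaker-type integral of the descent with character $\psi(am_1+m_2)$, $a\in F^*$, attached to the maximal unipotent $\{x_{0100}(m_1)x_{0120}(m_2)\}$ of the stabilizer $SL_2\times SL_2$. Applying the root-exchange device of subsection 2.2.2 to exchange $(1000)$ with $(0110)$ (as indicated just before the proposition) reduces this integral, up to an outer adelic integration in one variable, to integral \eqref{b22}. The final step is to read off from subsection 2.2.1, by writing down the matrices $(A,B)$ associated with the character $\psi_{U,a}$ and checking that \eqref{f4a32} admits only the trivial solution, that \eqref{b22} is a Fourier coefficient attached to $F_4(a_3)$. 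Hypothesis (2) then yields the non-vanishing, via the standard argument of \cite{Ga-S} relating the nonvanishing of the outer inner integral to the nonvanishing of the inner Fourier coefficient.

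For cuspidality, up to conjugation by the Weyl element $w_3$ (which fixes $V'$ and $\psi_{U,u_\Delta}$) there is only one constant term to analyse, namely the one along $\{x_{0120}(m)\}$. Conjugating by $w_0=w[123421]$ and performing the root exchanges and Fourier expansions already laid out in the text yields integral \eqref{b241}, which is a triple sum over $a,b,c\in F$ against the character $\psi_{U_3,a}\psi_{U_4,b,c}$. In the notation of subsection 2.2.1 this character has matrix data
\[
A=\begin{pmatrix} 1&a\\ &b\\ c&\\ &\end{pmatrix},\qquad (\gamma_1,\gamma_2)=(1,0).
\]
It remains to verify case by case, using the $SL_2\times SL_2$-action on these characters and the solutions of the stabilizer equations described in subsection 2.2.1: (i) when at least two of $a,b,c$ are nonzero, the stabilizer is finite and we obtain an $F_4(a_2)$ coefficient; (ii) when exactly one of $a,b,c$ is nonzero, the stabilizer grows and the resulting integral becomes either an $F_4(a_1)$ or an $F_4$ coefficient; (iii) when $a=b=c=0$, the inner integration collapses into the constant term of $E$ along $U_{\alpha_2,\alpha_3,\alpha_4}$. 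In each case hypothesis (1) forces the contribution to vanish. The main obstacle I expect is the bookkeeping in step (ii), since different combinations of vanishing parameters must be matched, via the torus and $SL_2\times SL_2$ action from subsection 2.2.1, with one of the explicit character representatives used to define the $F_4(a_1)$ or $F_4$ Fourier coefficients; the other cases are essentially forced by the formulas already set up.
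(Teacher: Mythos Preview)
Your proposal is correct and follows essentially the same approach as the paper: the non-vanishing is reduced to integral \eqref{b22} via the root exchange $(1000)\leftrightarrow(0110)$ and identified with $F_4(a_3)$ through the pair $(A,B)$, while cuspidality is handled by the $w_3$-symmetry, conjugation by $w_0=w[123421]$, and the case analysis of integral \eqref{b241}. One small caveat on your trichotomy: the paper does not split (ii) and (iii) so cleanly but lumps all ``other cases'' (at most one of $a,b,c$ nonzero) together as yielding $F_4(a_1)$, $F_4$, or the constant term after further manipulation---in particular, when $a=b=c=0$ the group $U_4$ is still missing the root $(1000)$ and $U_3$ carries the nontrivial character $\psi(r_1+r_2)$, so an additional expansion is needed before the constant term along $U_{\alpha_2,\alpha_3,\alpha_4}$ actually emerges.
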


\subsubsection{\bf The Unipotent Orbit $A_2+\widetilde{A}_1$} The diagram corresponding to this orbits has nodes labelled with ones, and hence we use integral \eqref{desc100} with a
suitable theta representation. In this case we denote
$U_\Delta=U_{\alpha_1,\alpha_2,\alpha_4}$ and let $U_\Delta'$ denote
the subgroup of $U_\Delta$ generated by all roots in $U_\Delta$
omitting $\alpha\in \{ (0010);\ (0110);\ (0011)\}$. Thus
$\text{dim}\ U_\Delta'=17$. We define the character
$\psi_{U,u_\Delta}$ as follows. Write $u'\in U_\Delta'$ as
$u'=x_{0122}(r_1)x_{1121}(r_2)x_{1220}(r_3)u''$.  Then define
$\psi_{U,u_\Delta}(u')=\psi(r_1+r_2+r_3)$. The stabilizer of this
orbit is the  group  $SL_2$, and we can choose the embedding inside
$F_4$, such that its standard unipotent subgroup is the group
$x(r)=x_{1000}(r)x_{0100}(\eta_1 r)x_{1100}(\eta_2
r)x_{0001}(r)$. Here $\eta_i\in F^*$.

The torus corresponding to this orbit is given by
$h_{A_2+\widetilde{A}_1}(t)= h(t^4,t^8,t^6,t^3)$. The torus of the
above  $SL_2$ is given by $h(t^2,t^2,1,t)$, and hence their product
is $h(t^6,t^{10},t^6,t^4)$. Conjugating by the Weyl element $w[23]$,
we obtain the torus attached to the orbit $F_4(a_3)$.

Given, $z\in F$, we consider the integral
\begin{equation}\label{a2a11}
\int\limits_{F\backslash {\bf A}}\int\limits_{V'(F)\backslash
V'({\bf A})}E(v'x(r))\psi_{U,u_\Delta}(v')\psi(zr)dv'dr
\end{equation}
where we denoted $V'=U'_\Delta$. Since the computations in this
example  are quite involved, we will only sketch part of them. In
other words, we will show that when $z\ne 0$ then we do obtain the
Fourier coefficient associated with the unipotent orbit $F_4(a_3)$.
However, we also get other terms which corresponds to unipotent
orbits which are greater than $F_4(a_3)$, and also some constant
terms.

We start with the root exchange, $(0120)$ with $(1100)$, then
$(0121)$ with $(0001)$ and then $(1120)$ with $(0100)$. Conjugating
by $w[23]$ we obtain the integral
\begin{equation}\label{a2a12}
\int\limits_{L({\bf A})}\int\limits_{U_1(F)\backslash U_1({\bf
A})}E(u_1w[23]l)\psi_{U_1,z}(u_1)du_1dl
\end{equation}
Here $L$ is the unipotent group generated by all $\{x_\alpha(r)\}$ where
$\alpha\in\{ (0120);\ (0121);\ (1120)\}$. The group $U_1$ is the
subgroup of $U_{\alpha_1,\alpha_3,\alpha_4}$ omitting the two roots
$(0100)$ and $(0110)$. Thus $\text{dim} U_1=18$. To define the
character $\psi_{U_1,z}$, write
$u_1=x_{1100}(r_1)x_{1111}(r_2x_{1120}(r_3)x_{0122}(r_4)u_1'$. Then
$\psi_{U_1,z}(u_1)=\psi(zr_1+r_2+r_3+r_4)$.

Next we expand the above integral along
$\{x_{0100}(l_1)x_{0110}(l_2)\}$, and we obtain that integral
\eqref{a2a12} is zero for all choice of data if and only if the
integral
\begin{equation}\label{a2a13}
\sum_{m,n\in F}\int\limits_{U(F)\backslash U({\bf
A})}E(u)\psi_{U,z,m,n}(u)du\notag
\end{equation}
is zero for all choice of data. Here,
$U=U_{\alpha_1,\alpha_3,\alpha_4}$, and the character
$\psi_{U,z,m,n}$ is defined as follows. Write
$$u=x_{1100}(r_1)x_{1111}(r_2)x_{1120}(r_3)x_{0122}(r_4)x_{0100}(r_5)
x_{0110}(r_6)u_1'$$ Then
$\psi_{U,z,m,n}=\psi(zr_1+r_2+r_3+r_4+mr_5+nr_6)$. In general
position, this Fourier coefficient corresponds to the unipotent
orbit $F_4(a_3)$. Indeed, in the notations of subsection 2.2.1 the
above character corresponds to the character $\psi_{U_\Delta,A,B}$
with
$$A=\begin{pmatrix} &m&n\\ &&m\\ 1&&\end{pmatrix}\ \ \ \ \ \
B=\begin{pmatrix} 1&&z\\ &1&\\ &&1\end{pmatrix}$$ Solving the
equations \eqref{f4a32} with
$$g_1=\begin{pmatrix} a_1&a_2&a_3\\ b_1&b_2&b_3\\
c_1&c_2&c_3\end{pmatrix}\ \ \ \ \ h_1=\begin{pmatrix} a&b\\
c&d\end{pmatrix}$$ we obtain four variables $b_2,c_1,c_2,c_3$ which
satisfy the system
$$zb_2-(2n+z^2)c_1-3mc_2-zc_3=0$$
$$3mb_2-mzc_1-nc_2-3mc_3=0$$
$$2nb_2+(3m^2-nz)c_1+2mzc_2-2nc_3=0$$
All other  variables which appear in $(g_1,h_1)$ are determined by these
four variables. Since we always have the trivial solution, we set
$c_3=b_2+c_4$, where $c_4$ is a new variable. We then obtain the system
$$-(2n+z^2)c_1-3mc_2-zc_4=0$$
$$-mzc_1-nc_2-3mc_4=0$$
$$(3m^2-nz)c_1+2mzc_2-2nc_4=0$$ This system has a nontrivial solution if
and only if the determinant of the matrix corresponding to this
system is zero. In this case we obtain the determinant
$$f(m,n,z)= -27m^4+18nm^2z+4m^2z^3+4n^3+n^2z^2$$ Thus, for those
values of $m,n$ and $z$ such that $f(m,n,z)\ne 0$, the above Fourier
coefficient corresponds to the unipotent orbit $F_4(a_3)$. To
analyze the other orbits we need to solve the equation $f(m,n,z)=0$.
We claim that in this case we obtain Fourier coefficients which are associated to all unipotent orbits which are greater  than $F_4(a_3)$. We demonstrate this claim in the case when $m=n=0$.
In other words, we consider  the Fourier coefficient
\begin{equation}\label{a2a14}
\int\limits_{U(F)\backslash U({\bf A})}E(u)\psi_{U,z,0,0}(u)du
\end{equation}
For fixed $s_1,s_2\in F$, this integral is zero for all choice of
data if and only if the integral
\begin{equation}\label{a2a15}
\int\limits_{U(F)\backslash U({\bf
A})}E(ux_{-0011}(s_1)x_{-1000}(s_2))\psi_{U,z,0,0}(u)du\notag
\end{equation}
is zero for all choice of data. Conjugate these two elements to
the left. Recall that $U=U_{\alpha_1,\alpha_3,\alpha_4}$.  Since
these elements are inside the Levi subgroup of
$P_{\alpha_1,\alpha_3,\alpha_4}$, this conjugation preserves the
group $U$. We do however need to determine how this conjugation effects the character $\psi_{U,z,0,0}$.
To do that we consider the
conjugation
$$x_{-0011}(-s_1)x_{-1000}(-s_2)x_{1100}(r_1)x_{1111}(r_2)
x_{0122}(r_4)x_{1122}(m)x_{-0011}(s_1)x_{-1000}(s_2)$$ Conjugate
$x_{-0011}(s_1)$ across $x_{1122}(m)$. We obtain
$$x_{-0011}(-s_1)x_{-1000}(-s_2)x_{1100}(r_1-ms_1^2)x_{1111}(r_2+ms_1)
x_{0122}(r_4)x_{-0011}(s_1) x_{1122}(m)x_{-1000}(s_2)u_1$$ Here
$u_1\in U$ is a product of one dimensional unipotent subgroups
$\{x_\alpha(r)\}$ such that $\psi_{U,z,0,0}(u_1)=1$ and $\alpha$ is not any
of the above roots. Changing variables $r_1\to r_1+ms_1^2$ and then
$r_2\to r_2-ms_1$, we obtain the character
$$\psi_{U,z,s_1,s_2}(u)=\psi(zr_1+r_2+r_3+r_4+zms_1^2-ms_1)$$ We
further conjugate $x_{-0011}(s_1)$ to the left, and we obtain
$$x_{-1000}(-s_2)x_{1100}(r_1+2r_2s_1)x_{1111}(r_2)
x_{0122}(r_4)x_{1122}(m)x_{-1000}(s_2)u_2$$ where $u_2$ is defined in a similar way as $u_1$. Changing variables $r_1\to r_1-2r_2s_1$ we obtain the
character
$$\psi'_{U,z,s_1,s_2}(u)=\psi(zr_1+r_2(1-2zs_1)+r_3+r_4+m(zs_1^2-s_1))$$
Finally, conjugating by $x_{-1000}(s_2)$ we obtain
$$x_{1100}(r_1)x_{1111}(r_2)x_{0122}(r_4-ms_2)x_{1122}(m)u_3$$
Changing variables $r_4\to r_4+ms_2$ we obtain the character
$$\psi''_{U,z,s_1,s_2}(u)=\psi(zr_1+r_2(1-2zs_1)+r_3+r_4+m(zs_1^2-s_1+s_2))$$
Choosing $s_1$ and $s_2$ such that $1-2zs_1=0$ and
$zs_1^2-s_1+s_2=0$, we deduce that integral \eqref{a2a14} is zero
for all choice of data if and only if the integral
\begin{equation}\label{a2a16}
\int\limits_{U(F)\backslash U({\bf A})}E(u)\psi_{U,z}(u)du\notag
\end{equation}
is zero for all choice of data. Here $\psi_{U,z}$ is defined as
follows. For $u\in U$, write
$u=x_{1100}(r_1)x_{1120}(r_2)x_{0122}(r_3)u'$. Then
$\psi_{U,z}(u)=\psi(zr_1+r_2+r_3)$. To proceed, we conjugate by the
Weyl element $w_0=w[432341]$. Thus, the above integral is equal to
\begin{equation}\label{a2a17}
\int\limits_{L(F)\backslash L({\bf A})}\int\limits_{U_2(F)\backslash
U_2({\bf A})}\int\limits_{U_1(F)\backslash U_1({\bf
A})}E(u_2u_1lw_0)\psi_{U_1,z}(u_1)du_1du_2dl\notag
\end{equation}
Here $U_1$ is the unipotent  subgroup of $GSpin_7$ generated by all
$\{x_\alpha(r)\}$ where
$$\alpha\in\{ (0100);\ (0110);\ (0120);\ (1000);\ (1100);\ (1110);\
(1120);\ (1220)\}$$ The character $\psi_{U_1,z}$ is defined as
follows. Write $u_1=x_{0100}(r_1)x_{0120}(r_1)x_{1000}(r_3)u_1'$.
Then $\psi_{U_1,z}=\psi(zr_1+r_2+r_3)$. The group $U_2$ is generated
by all $\{x_\alpha(r)\}$ such that
$$\alpha\in\{ (1111);\ (1121);\ (1221);\ (1231);\ (1222);\ (1232);\
(1242);\ (1342);\ (2341)\}$$ and the group $L$ is generated by
$\{x_\alpha(r)\}$ where $\alpha\in\{ -(0011);\ -(0001);\ -(0122)\}$.

As explained in subsection 2.2.2 we exchange the root $-(0122)$ with
$(1122)$, then $-(0001)$ with $(0121)$ and $-(0011)$ with $(0111)$.
Thus, the above integral is equal to
\begin{equation}\label{a2a18}
\int\limits_{ L({\bf A})}\int\limits_{U_3(F)\backslash U_3({\bf
A})}\int\limits_{U_1(F)\backslash U_1({\bf
A})}E(u_2u_1lw_0)\psi_{U_1,z}(u_1)du_1du_2dl\notag
\end{equation}
Here $U_3$ is the unipotent group generated by $U_2$ and $\{x_\alpha(r)\}$
where $\alpha$ is a root in the set $\{ (0111);\ (0121);\ (1122)\}$. The next step is to
expand the above integral along $\{x_{1122}(r)\}$. We obtain
\begin{equation}\label{a2a19}
\int\limits_{ L({\bf A})}\sum_{a\in F}\int\limits_{F\backslash {\bf
A}}\int\limits_{U_3(F)\backslash U_3({\bf
A})}\int\limits_{U_1(F)\backslash U_1({\bf
A})}E(u_2x_{0122}(m)u_1lw_0)\psi_{U_1,z}(u_1)\psi(am)du_1dmdu_2dl\notag
\end{equation}
If $a\ne 0$ then the inner integration is a Fourier coefficient
which corresponds to the unipotent orbit $F_4(a_2)$. When $a=0$ we
further expand along the unipotent group
$\{x_{0001}(m_1)x_{0011}(m_2)\}$. Any nontrivial character corresponding
to this expansion yields a Fourier coefficient attached to the
unipotent orbit $F_4(a_1)$. The trivial character contributes the
integral
\begin{equation}\label{a2a110}
\int\limits_{ L({\bf A})}\int\limits_{U_1(F)\backslash U_1({\bf
A})}E^{U_{\alpha_1,\alpha_2,\alpha_3}}(u_1lw_0)\psi_{U_1,z}(u_1)du_1dl
\end{equation}

To summarize this case, we deduce that the Fourier coefficient given
by integral \eqref{a2a11}, when expressed in terms of Fourier
coefficients associated with unipotent orbits of $F_4$, 
has a contribution from all unipotent orbits which are greater than
the orbit $F_4(a_3)$. We also get the constant term \eqref{a2a110}
as a summand.

\subsubsection{\bf The Unipotent Orbit $\widetilde{A}_2$} Let $V=U_{\alpha_1,\alpha_2,\alpha_3}$.  Thus
$\text{dim}V=15$. We define a character $\psi_V$ as follows. Write
$v=x_{0121}(r_1)x_{1111}(r_2)v'$. Then define
$\psi_V(v)+\psi(r_1+r_2)$. As follows from \cite{C}, the stabilizer
inside $Spin_7$ of this character is the exceptional group $G_2$.
The embedding of the standard unipotent subgroup of $G_2$ is given
as follows
$$\{ x_{1000}(m)x_{0010}(-m);\  x_{0100}(m);\
x_{1100}(m)x_{0110}(-m);\ x_{1110}(m)x_{0120}(-m);\  x_{1120}(m);\
x_{1220}(m)\}$$ The unipotent subgroups which corresponds to the
simple roots are $\{x_{1000}(m)x_{0010}(-m)\}$ and $\{x_{0100}(m)\}$. The
group $G_2$ has two maximal parabolic subgroups, and we will denote
by $U_1$ and by $U_2$ their unipotent radicals. More precisely, we
let
$$U_1=\{ x_{0100}(m);\ x_{1100}(m)x_{0110}(-m);\
x_{1110}(m)x_{0120}(-m);\  x_{1120}(m);\ x_{1220}(m)\}$$ and
$$U_2=\{ x_{1000}(m)x_{0010}(-m);\
x_{1100}(m)x_{0110}(-m);\ x_{1110}(m)x_{0120}(-m);\  x_{1120}(m);\
x_{1220}(m)\}$$ We start by computing the unipotent radical along
$U_1$. We expand the constant term along $\{x_{1000}(m)x_{0010}(-m)\}$,
and we obtain the integral
\begin{equation}\label{desc20}
\int\limits_{U_1(F)\backslash U_1({\bf A})}\sum_{\gamma\in F}\int
\limits_{F\backslash {\bf A}}\int\limits_{V(F)\backslash V({\bf
A})}E(vx_{1000}(m)x_{0010}(-m)u_1)\psi_V(v)\psi(\gamma m)dmdu_1dv
\end{equation}
Write integral \eqref{desc20} as
\begin{equation}\label{desc21}
\int\limits_{(F\backslash {\bf A})^5}\sum_{\gamma\in F}\int
\limits_{F\backslash {\bf A}}\int\limits_{V(F)\backslash V({\bf
A})}E(vz(m_1,m_2,m_3)y(l_1,l_2,l_3))\psi_V(v)\psi(\gamma
m_1)dm_idl_jdv
\end{equation}
Here $z(m_1,m_2,m_3)=x_{1000}(m_1)x_{0010}(-m_1)
x_{1100}(m_2)x_{0110}(-m_2)x_{1110}(m_3)x_{0120}(-m_3)$, and
$y(l_1,l_2,l_3)=x_{0100}(l_1)x_{1120}(l_2)x_{1220}(l_3)$. Next we
consider a certain Fourier expansion, and we apply the root exchange
process as explained in subsection 2.2.2.

We start by expanding the above integral along the unipotent group
$\{x_{1110}(r_3)\}$. We then apply the root exchange process with the
unipotent group $\{x_{0111}(p_3)\}$. Thus, in the notions introduced
right after \eqref{fc6}, we exchange the root $(1110)$ by the root
$(0111)$. We repeat this process two more times. First we exchange
$(1100)$ by $(0011)$, and then $(1000)$ by  $(0001)$.  After that,
we  conjugate by the Weyl element $w_0=w[13234]$. Then integral
\eqref{desc21} is zero for all choice of data if and only if for
each $\gamma\in F$, the integral
\begin{equation}\label{desc2200}\notag
\int\limits_{U_3(F)\backslash U_3({\bf A})}
\int\limits_{V_3(F)\backslash V_3({\bf A})}
E(v_3u_3)\psi_{V_3,\gamma}(v_3)dv_3du_3
\end{equation}
is zero for all choice of data. Here, $\gamma\in F$, and  $V_3$ is the unipotent subgroup
generated by $\{x_\alpha(r)\}$, where $\alpha$ is in the set 
of roots 
$$\{(0100);\
(0001);\ (0011);\ (0110);\ (0120);\ (0111);\ (0121);\ (0122)\}$$
Thus $V_3$ is a subgroup of $Sp_6$  embedded in$F_4$ as the Levi part of $P_{\alpha_2,\alpha_3,\alpha_4}$. 
Denote $U(C_3)=U_{\alpha_2,\alpha_3,\alpha_4}$.
The group $U_3$ is the subgroup of $U(C_3)$
generated by all roots in $U(C_3)$ accept for the roots $(1120)$
and $(1000)$. Thus $\text{dim}U_3=13$. The character
$\psi_{V_3,\gamma}$ is defined as follows. Write
$v_3=x_{0001}(r_1)x_{0110}(r_2)x_{0120}(r_3)v_3'$. Then
$\psi_{V_3,\gamma}(v_3)=\psi(r_1+r_2+\gamma r_3)$. Next we expand
along the unipotent group $\{x_{1120}(r)\}$. Thus, we obtain the integral
\begin{equation}\label{desc221}
\sum_{\beta\in F}\int\limits_{U_3(F)\backslash U_3({\bf A})}
\int\limits_{F\backslash {\bf A}} \int\limits_{V_3(F)\backslash
V_3({\bf A})} E(x_{1120}(r)v_3u_3)\psi_{V_3,\gamma}(v_3)\psi(\beta
r)dv_3drdu_3
\end{equation}
There are two cases. First, the contribution of each summand when
$\beta\ne 0$ to the integral \eqref{desc221}, produces  a Fourier
coefficient which corresponds to the unipotent orbit $F_4(a_2)$. In
the summand, where $\beta=0$, we further expand along $\{x_{1000}(r)\}$.
Depending on $\gamma$, the nontrivial orbit contributes Fourier
coefficients which corresponds to unipotent orbits $F_4(a_1)$ and
$F_4$. The trivial orbit produces an integral of the type
\begin{equation}\label{desc222}\notag
\int\limits_{V_3(F)\backslash V_3({\bf A})}
E^{U(C_3)}(v_3)\psi_{V_3,\gamma}(v_3)dv_3
\end{equation}

The computation of the constant term along the unipotent group $U_2$
is similar and gives the same result. We record this as

\begin{proposition}\label{g21}

Suppose that the representation $\mathcal{E}$ has no nonzero Fourier
coefficients which corresponds to the unipotent orbits $F_4$,
$F_4(a_1)$ and $F_4(a_2)$. Suppose also that $E^{U(C_3)}$ is zero
for all functions $E\in \mathcal{E}$. Then the automorphic
representation $\sigma$ is a cuspidal representation.

\end{proposition}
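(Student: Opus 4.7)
The cuspidality of $\sigma$ requires showing vanishing of the constant terms along both unipotent radicals $U_1$ and $U_2$ of the two maximal parabolic subgroups of $G_2$. The computation for $U_1$ has already been carried out in the paragraphs leading up to the statement, reducing that constant term to a sum of Fourier coefficients corresponding to the unipotent orbits $F_4(a_2)$, $F_4(a_1)$, $F_4$, together with an integral involving the constant term $E^{U(C_3)}$ along $U(C_3)=U_{\alpha_2,\alpha_3,\alpha_4}$. Under the hypotheses of the proposition each of these vanishes, so the $U_1$ part is done. It remains to carry out the analogous computation along $U_2$, where
\[
U_2=\{x_{1000}(m)x_{0010}(-m);\ x_{1100}(m)x_{0110}(-m);\ x_{1110}(m)x_{0120}(-m);\ x_{1120}(m);\ x_{1220}(m)\}.
\]

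The plan is to mirror, essentially step by step, the treatment of the $U_1$ constant term. I would begin by writing the constant term of $\sigma$ along $U_2$ as an integral over the group $V(F)\backslash V({\bf A})$ against the character $\psi_V$, composed with integration over $U_2(F)\backslash U_2({\bf A})$. Next I would expand along the one parameter subgroup $\{x_{0100}(m)\}$, which is the "missing" simple root direction separating $U_2$ from the full Borel unipotent of $G_2$; this produces a sum indexed by $\gamma\in F$ of integrals with an additional character $\psi(\gamma m)$. After that, I would perform three root exchanges in the style of subsection 2.2.2, exchanging in turn the long roots $(1110)$, $(1100)$, $(1000)$ (which appear through the $G_2$-embedding into $Spin_7$) against suitable short roots analogous to $(0111)$, $(0011)$, $(0001)$ that were used in the $U_1$ computation, and conjugate by a Weyl element analogous to $w[13234]$ chosen so that the resulting integrand is built from roots inside the parabolic $P_{\alpha_2,\alpha_3,\alpha_4}$.

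After these manipulations the integral should take the shape of an outer integration over an adelic unipotent subgroup of a fixed Fourier coefficient on $F_4({\bf A})$ supported inside $U(C_3)$, namely an integral over $V_3(F)\backslash V_3({\bf A})$ against $\psi_{V_3,\gamma}$ analogous to \eqref{desc221}. At this point I would expand once along $\{x_{1120}(r)\}$ and once along $\{x_{1000}(r)\}$ and identify the resulting summands exactly as in the $U_1$ case: the nontrivial contributions will be Fourier coefficients associated with the unipotent orbits $F_4(a_2)$, $F_4(a_1)$ and $F_4$ (the identification uses the tables of characters collected in subsection 2.2 and 2.2.1), while the trivial contribution produces an integral whose inner integration is the constant term $E^{U(C_3)}$ against a unipotent character on $V_3$. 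Under the hypotheses of the proposition each of these vanishes for all choice of data, and hence so does the constant term along $U_2$.

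The main obstacle, as always with these descent computations, is purely combinatorial: one must select the root exchanges and the Weyl element with care so that (i) each exchange is legal in the sense that the relevant commutator relations place the introduced root in $U$ and preserve $\psi_V$ trivially, and (ii) the final integrand sits inside $U(C_3)$ so that the identifications with the larger orbits $F_4(a_2)$, $F_4(a_1)$, $F_4$ and with $E^{U(C_3)}$ fall out cleanly from subsection 2.2. Once the right choices are made — and the fact that $U_1$ and $U_2$ are interchanged by the outer involution of $G_2$ (which lifts, up to the stabilizing torus, to an element of $F_4$ preserving $V$ and $\psi_V$) strongly suggests such choices exist — the remainder of the argument is a direct transcription of the $U_1$ proof. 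No further hypotheses beyond those already imposed are required.
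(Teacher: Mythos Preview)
Your proposal is correct and matches the paper's approach exactly: the paper simply asserts that ``the computation of the constant term along the unipotent group $U_2$ is similar and gives the same result'' and records the proposition, so your plan to mirror the $U_1$ manipulations step by step is precisely what is intended. One small correction to your closing heuristic: $G_2$ has no outer automorphisms (its Dynkin diagram has no nontrivial symmetry, the two simple roots having different lengths), so $U_1$ and $U_2$ are not interchanged by any involution of $G_2$; the parallelism between the two computations must therefore be established by direct calculation rather than by appeal to symmetry, though as the paper asserts, that direct calculation does go through in the same way.
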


Next we consider the nonvanishing of the descent. Here we have two
cases to consider. The first, is when the lift is generic. The
integral we consider is
\begin{equation}\label{desc201}\notag
\int\limits_{(F\backslash {\bf A})^6}\int\limits_{V(F)\backslash
V({\bf
A})}E(vz(m_1,m_2,m_3)y(l_1,l_2,l_3))\psi_V(v)\psi(l_1+m_1)dm_idl_jdv
\end{equation}
where the notations are defined in \eqref{desc21}. As in the part of
the cuspidality, we start with some roots exchange ( See subsection
2.2.2). First, we exchange $(0001)$ by $(1110)$, then $(0011)$ by
$(1100)$ and $(0111)$ by $(1000)$. Thus, the above integral is equal
to
\begin{equation}\label{desc202}
\int\limits_{{\bf A}^3}\int\limits_{Y(F)\backslash Y({\bf
A})}\int\limits_{V_1(F)\backslash V_1({\bf A})}E(v_1yl(r_1,r_2,r_3))
\psi_{V_1}(v_1)\psi_Y(y)dydv_1dr_k
\end{equation}
Here $V_1$ is the subgroup of $V$ consisting of all roots in $V$
omitting the roots $(0001); (0011)$ and $(0111)$. Thus $\text{dim}\
V_1=12$. Next, $Y$ is the maximal unipotent subgroup of $Spin_7$ as
embedded in $F_4$ as the Levi part of $P_{\alpha_1,\alpha_2,\alpha_3}$. Thus, the roots in $Y$ are all nine roots in
$F_4$ of the form $n_1\alpha_1+n_2\alpha_2+n_3\alpha_3$. The character $\psi_Y$ is
defined as
$\psi_Y(y)=\psi_Y(x_{1000}(r_1)x_{0100}(r_2)y')=\psi(r_1+r_2)$. 
Finally, we have
$l(r_1,r_2,r_3)=x_{0001}(r_1)x_{0011}(r_1)x_{0111}(r_3)$.

We have $h_{\widetilde{A}_2}(t)=h(t^4,t^8,t^6,t^4)$. We are
computing the Whittaker coefficient of the lift, which corresponds
to the unipotent orbit of $G_2$ whose label is $G_2$. The
corresponding torus, as embedded in $F_4$, is $h(t^6,t^{10},t^6,1)$.
Thus the product of these two tori is $h(t^{10},t^{18},t^{12},t^4)$.
Conjugating by $w[234]$ we get $h(t^{10},t^{20},t^{14},t^8)$ which
is equal to $h_{F_4(a_2)}(t)$. It is convenient to conjugate by
$w[3234]$ and thus, integral \eqref{desc202} is equal to
\begin{equation}\label{desc203}\notag
\int\limits_{{\bf A}^3}\int\limits_{(F\backslash {\bf
A})^2}\int\limits_{V_2(F)\backslash V_2({\bf
A})}E(v_2x_{1000}(m_1)x_{-0120}(m_2)w[3234]l(r_1,r_2,r_3))
\psi_{V_2}(v_2)dv_2dm_idr_k
\end{equation}
Here, $V_2$ is the unipotent subgroup of $F_4$ whose dimension is 19
and consists of all positive roots in $F_4$ omitting the roots
$(1000);\ (0010);\ (0110);\ (0120)$ and $(0121)$. The character
$\psi_{V_2}$ is defined by $\psi_{V_2}(v_2)=
\psi_{V_2}(x_{0001}(r_1)x_{0100}(r_2)x_{1110}(r_3)x_{1120}(r_4)v_2')
=\psi(r_1+r_2+r_3+r_4)$.

Next we exchange the root $-(0120)$ by $(0121)$ and $(0110)$ by
$(1000)$. Then we expand the integral along the unipotent subgroup
$\{x_{0120}(r)\}$. Thus, the above integral is equal to
\begin{equation}\label{desc204}\notag
\sum_{\beta\in F}\int\limits_{{\bf
A}^5}\int\limits_{V_3(F)\backslash V_3({\bf
A})}E(v_3x_{1000}(m_1)x_{-0120}(m_2)w[3234]l(r_1,r_2,r_3))
\psi_{V_3,\beta}(v_3)dv_3dm_idr_k
\end{equation}
Here $V_3$ is the unipotent subgroup of $F_4$ which consists of all
positive roots omitting the two roots $(1000)$ and $(0010)$. Thus,
$\text{dim}\ V_3=22$. Also
$$\psi_{V_3,\beta}(v_3)=\psi_{V_3,\beta}
(x_{0001}(r_1)x_{0100}(r_2)x_{1110}(r_3)x_{0120}(r_4)x_{1120}(r_5)v_3')=
\psi(r_1+r_2+r_3+\beta r_4+r_5)$$ 
Arguing as in \cite{Ga-S}, the above integral is nonzero
for some choice of data if and only if the integral
\begin{equation}\label{desc205}
\sum_{\beta\in F}\int\limits_{V_3(F)\backslash V_3({\bf A})}E(v_3)
\psi_{V_3,\beta}(v_3)dv_3
\end{equation}
is not zero for some choice of data. In the notations of subsection
2.2.1 the group $V_3=U_{\alpha_1,\alpha_3}$, and the
character $\psi_{V_3,\beta}$ is defined by
$$\psi_{V_3,\beta}(v_3)=\psi(z(m_1,m_2)y(r_1,\ldots,r_6)v_3')=\psi(
m_1+r_1+r_4+\beta r_5+r_6)$$

For $\gamma\in F$, write
$E(v_3)=E(v_3x_{0010}(\gamma)x_{0010}(-\gamma))$ and conjugate the
element $x_{0010}(\gamma)$ to the left across $v_3$. Changing
variables will change the character $\psi_{V_3,\beta}$. We write
down the commutation relations needed for the above conjugation
$[x_{1110}(r),x_{0010}(s)]=x_{1120}(2rs)$;
$[x_{0110}(r),x_{0010}(s)]=x_{0120}(2rs)$;
$[x_{1100}(r),x_{0010}(s)]=x_{1110}(rs)x_{1120}(rs^2)$ and the
relation  $[x_{0100}(r),x_{0010}(s)]=x_{0110}(rs)x_{0120}(rs^2)$.
The conjugation
$x_{0010}(-\gamma)v_3x_{0010}(\gamma)$ transforms the character
$\psi_{V_3,\beta}$  to the character
$$\psi(m_1+(1+\beta\gamma^2)r_1+(\gamma^2-\gamma)r_2-2\beta\gamma r_3+(1-2\gamma)r_4+
\beta r_5+r_6)$$ Notice that only when $\gamma=1$ and $\beta=-1$,
then the coefficients of $r_1$ and $r_2$ are zero. Choose $\gamma=1$. We separate the
sum in \eqref{desc205} into two summands. First, consider the
contribution when $\beta=-1$. Performing the above conjugation, we
obtain
\begin{equation}\label{desc206}\notag
\int\limits_{V_3(F)\backslash V_3({\bf A})}E(v_3x_{0010}(1))
\psi_1(v_3)dv_3
\end{equation}
where
$$\psi_1(v_3)=\psi(m_1+2r_3-r_4-r_5+r_6)=\psi(m_1+ \text{tr}\
\begin{pmatrix} 2&-1\\ -1& 1\end{pmatrix}\begin{pmatrix} r_3&r_4\\
 r_5& r_6\end{pmatrix})$$ The group $GL_2(F)$ which contains the
group $SL_2(F)=<x_{\pm 1000}(r)>$ acts on the group
$V_3$.
Since the matrix $\begin{pmatrix} 2&-1\\ -1& 1\end{pmatrix}$
is invertible, we can find a suitable matrix in $\delta\in GL_2(F)$,
such that the above integral is equal to
\begin{equation}\label{desc207}\notag
\int\limits_{V_3(F)\backslash V_3({\bf A})}E(v_3\delta x_{0010}(1))
\psi_2(v_3)dv_3
\end{equation}
Here $\psi_2(v_3)=\psi(m_1+r_4+r_5)$.

Consider the Weyl element $w_0=w[1234213]$. Using the fact that
$E(g)=E(w_0g)$, we conjugate this Weyl element to the right in the
above integral, and we obtain
\begin{equation}\label{desc208}\notag
\int\limits_{(F\backslash {\bf A})^5} \int\limits_{L(F)\backslash
L({\bf A})}\int\limits_{V_4(F)\backslash V_4({\bf A})}E(v_4l
a(m_1,\dots,m_5)\mu) \psi_L(l)dv_4dldm_i
\end{equation}
Here $\mu=w_0\delta x_{0010}(1)$, and $L$ is the maximal unipotent
subgroup of $Sp_6$  embedded in $F_4$ as  the Levi part of $P_{\alpha_2,\alpha_3,\alpha_4}$. The character $\psi_L$ is the Whittaker character of $L$. In other words,
$$\psi_L(l)=\psi_L(x_{0100}(l_1)x_{0010}(l_2)x_{0001}(l_3)l')=\psi(l_1+l_2+l_3)$$
The group $V_4$ is the unipotent group generated
by all $\{x_\alpha(r)\}$ where $\alpha$ is a root in
$$\{(1122);\ (1221);\ (1222);\ (1231);\ (1232);\ (1241);\ (1342);\
(2342)\}$$ Finally, we have
$$a(m_1,\dots,m_5)=x_{-1000}(m_1)x_{-1100}(m_2)x_{-1110}(m_3)
x_{-1111}(m_4)x_{-1120}(m_5)$$ Next we consider five root exchanges.
First, we exchange $-1120$ by $1220$. Then, $-1111$ by $1121$,
$-1110$ by $1120$, $-1100$ by $1110$ and $-1000$ by $1100$. After
these roots exchange, we expand the integral along
$\{x_{1000}(r_1)x_{1111}(r_2)\}$. Thus, the above integral is equal to
\begin{equation}\label{desc209}\notag
\int\limits_{ {\bf A}^5}\sum_{\beta,\gamma\in F}
\int\limits_{L(F)\backslash L({\bf A})}\int\limits_{V_5(F)\backslash
V_5({\bf A})}E(v_5l a(m_1,\dots,m_5)\mu)
\psi_L(l)\psi_{V_5,\beta,\gamma}(v_5)dv_5dldm_i
\end{equation}
Here $V_5=U(C_3)$ where $U(C_3)$ was defined right before equation \eqref{desc221}.  Also, we define the character
$\psi_{V_5,\beta,\gamma}(v_5)=\psi_L(x_{1000}(r_1)x_{1111}(r_2)v_5')=
\psi(\beta r_1+\gamma r_2)$.  There are several cases to
consider. First, if $(\beta,\gamma)=(0,0)$ then we obtain the
integral
\begin{equation}\label{desc210}
\int\limits_{ {\bf A}^5} \int\limits_{L(F)\backslash L({\bf
A})}E^{U(C_3)}(v_5l a(m_1,\dots,m_5)\mu) \psi_L(l)dldm_i
\end{equation}
When $\gamma\ne 0$, then after conjugating by the Weyl element
$w[21]$ we obtain a Fourier coefficient corresponding to the
unipotent orbit $F_4(a_1)$. When $\gamma=0$ and $\beta\ne 0$, we
obtain a Fourier coefficient which corresponds to the unipotent
orbit $F_4$.

Returning to integral \eqref{desc205}, so far we analyzed the
contribution from the term $\beta=-1$. We still need to consider the
integral
\begin{equation}\label{desc2100}\notag
\sum_{-1\ne\beta\in F}\int\limits_{V_3(F)\backslash V_3({\bf
A})}E(v_3) \psi_{V_3,\beta}(v_3)dv_3
\end{equation}
It follows from the description of the action on the group
characters of $V_3(F)\backslash V_3({\bf A})$, as given in
subsection 2.2.1, that each summand in the above integral is a
Fourier coefficient associated with the unipotent orbit $F_4(a_2)$. This completes the computations of the Whittaker coefficient of the descent.

The next case to consider is when the descent has no Whittaker
coefficient. In other words, the Fourier coefficient corresponding
to the unipotent orbit whose label is $G_2$, is zero for all choice
of data. In this case, since $\sigma$ is a cuspidal representation,  it has a nonzero Fourier coefficient associated with the unipotent orbit  $G_2(a_1)$. These Fourier coefficients are described in \cite{J-R}.
Consider the unipotent  group  $U_1$ introduced at the beginning of
this subsection. We introduce coordinates on this group as follows.
Let
$$m(r_1,\ldots,r_5)=x_{0100}(r_1)x_{1100}(-r_2)x_{0110}(r_2)x_{1110}(-r_3)
x_{0120}(r_3)x_{1120}(r_4)x_{1220}(r_5)$$ Following \cite{J-R}, we
defined three characters on this group. For $u\in U$ define
$\psi_1(u)=\psi(r_2+r_3);\ \psi_{2,a}(u)=\psi(ar_1+r_3)$ and
$\psi_{3,b,c}(u)=\psi(cr_1+br_2+r_4)$. Here $a,b,c\in F^*$.

As above, the one dimensional torus corresponding to the unipotent
orbit $\widetilde{A}_2$ is
$h_{\widetilde{A}_2}(t)=h(t^4,t^8,t^6,t^4)$ and
$h_{G_2(a_1)}(t)=h(t^2,t^4,t^2,1)$. Hence the product of these two
tori elements is $h_{F_4(a_3)}(t)=h(t^6,t^{12},t^8,t^4)$. The
Fourier coefficient we need to calculate is given by
\begin{equation}\label{desc22}\notag
\int\limits_{U_1(F)\backslash U_1({\bf A})}
\int\limits_{V(F)\backslash V({\bf
A})}E(vu_1)\psi_V(v)\psi_{U_1}(u_1)dvdu_1
\end{equation}
where $\psi_{U_1}$ is any one of the three type of characters
introduced above. As in the above computations, we first perform two
root exchange as explained in subsection 2.2.2. First, we exchange
the root $(0001)$ with the root $(1110)$, and then exchange the root
$(0011)$ with the root $(1100)$. Thus, the above integral is equal
to
\begin{equation}\label{desc23}\notag
\int\limits_{{\bf A}^2} \int\limits_{U_\Delta(F)\backslash
U_\Delta({\bf
A})}E(ux_{0001}(r_1)x_{0011}(r_2))\psi_{U_\Delta}(u)dr_idu
\end{equation}
Here $\Delta=\{\alpha_1,\alpha_2,\alpha_4\}$ and $\psi_{U_\Delta}$
is a character of $U_\Delta(F)\backslash U_\Delta({\bf A})$ which is
determined by the character $\psi_{U_1}$ as follows. Write an
element $u\in U_\Delta$ as $u=y(r_1,\ldots,r_6)z(m_1,\ldots,m_6)u'$
as right before \eqref{f4a3} in subsection 2.2.1. If
$\psi_{U_1}=\psi_1$, then
$$\psi_{U_\Delta}(u)=\psi_{U_\Delta}(
y(r_1,\ldots,r_6)z(m_1,\ldots,m_6)u')=\psi(r_5+m_1+m_2+m_4)$$ If
$\psi_{U_1}=\psi_{2,a}$ then
$\psi_{U_\Delta}(u)=\psi(ar_1+r_5+m_2+m_4)$. Finally, if
$\psi_{U_1}=\psi_{3,b,c}$ then
$\psi_{U_\Delta}(u)=\psi(cr_1+r_5+bm_1+m_3+m_4)$.

We summarize

\begin{proposition}
Let $\mathcal{E}$ denote an automorphic representation of $F_4({\bf
A})$, and
consider its descent to the exceptional group $G_2({\bf A})$.\\
{\bf a)}\ Then, the Whittaker coefficient of the descent is a sum of
Fourier coefficients corresponding to the unipotent orbits
$F_4(a_2),\ F_4(a_1),\ F_4$ and the constant term integral
\eqref{desc210}. In other words we have
$$\widetilde{A}_2(\mathcal{E})\circ
G_2=F_4(a_2)+F_4(a_1)+F_4+
\mathcal{CT}_{F_4,P_{\alpha_2,\alpha_3,\alpha_4}}[(6)_{Sp_6}]$$ {\bf
b)}\ The Fourier coefficient of the decent which corresponds to the
Fourier coefficient of $G_2$ whose label is $G_2(a_1)$ corresponds
to the Fourier coefficient $F_4(a_3)$. In other words
$$\widetilde{A}_2(\mathcal{E})\circ G_2(a_1)=F_4(a_3)$$

\end{proposition}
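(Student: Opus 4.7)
The plan is to carry out for both parts the procedure sketched just before the statement, namely: match the product of the torus attached to $\widetilde{A}_2$ with the torus attached to the relevant $G_2$-orbit in order to predict the target $F_4$-orbit, then realize the composition concretely by two root exchanges followed by a chain of Fourier expansions, and finally identify each piece via subsection 2.2.1.

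For part (b) I would follow the computation that is already begun right after the statement. The exchange of $(0001)$ with $(1110)$ and of $(0011)$ with $(1100)$ reduces the question to the nonvanishing of an integral over $U_\Delta = U_{\alpha_1,\alpha_2,\alpha_4}$ against the character $\psi_{U_\Delta}$ displayed in the paragraph ending with \eqref{desc23}. For each of the three choices $\psi_1$, $\psi_{2,a}$, $\psi_{3,b,c}$ of the $G_2(a_1)$-character of Jiang--Rallis \cite{J-R}, the resulting pair $(A,B)\in \mathrm{Mat}_{3\times 3}^0\times \mathrm{Mat}_{3\times 3}^0$ is written explicitly from the parametrization \eqref{f4a3}. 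One then checks using \eqref{f4a32} that the only solution $(g_1,h_1)$ is the trivial one $(-2tI_2,tI_3)$. This identifies the resulting Fourier coefficient with $F_4(a_3)$, proving (b). The only delicate point is verifying the triviality of the stabilizer in the three cases: each gives a small linear system in the entries of $(g_1,h_1)$ whose only solution should be the trivial one; this is a routine, if slightly tedious, check.

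For part (a) I would resume the computation carried out after \eqref{desc21}. Three root exchanges $(1110)\leftrightarrow(0111)$, $(1100)\leftrightarrow(0011)$, $(1000)\leftrightarrow(0001)$ reduce the integral to \eqref{desc202}; a further two exchanges, conjugation by a suitable Weyl element, and the Fourier expansion along $\{x_{1120}(r)\}$ and then $\{x_{1000}(r)\}$ of \eqref{desc221} split the integral into four types of contributions. The summands with $\beta\neq 0$ yield Fourier coefficients attached to $F_4(a_2)$ (this is read off subsection 2.2.1 by computing the $(A,\gamma_1,\gamma_2)$ representing the resulting character and checking triviality of the stabilizer via the torus action). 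The further expansion with $\beta=0$ along $\{x_{1000}(r)\}$ produces coefficients on the orbits $F_4(a_1)$ and $F_4$ (depending on the remaining parameter), together with the constant term \eqref{desc210} along $U(C_3)=U_{\alpha_2,\alpha_3,\alpha_4}$ as the trivial residue, which is precisely $\mathcal{CT}_{F_4,P_{\alpha_2,\alpha_3,\alpha_4}}[(6)_{Sp_6}]$ because the remaining integral over $L(F)\backslash L({\bf A})$ against the Whittaker character $\psi_L$ is exactly the Whittaker coefficient corresponding to the unipotent orbit $(6)$ of $Sp_6$ (cf.\ \cite{G1}).

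The main obstacle, just as in the parallel calculations in this Section, will be the case analysis after the final Fourier expansion: one must verify that every nonzero summand not captured by the constant term truly corresponds to one of $F_4(a_2)$, $F_4(a_1)$, $F_4$, and nothing strictly smaller. This is done by writing each character in the $(A,\gamma_1,\gamma_2)$ normal form of subsection 2.2.1, computing the action of the appropriate Levi torus, and invoking \eqref{f4a32} (or the analogous trivial-stabilizer criterion for $F_4(a_2)$) to pin down the orbit. The root-exchange steps are purely formal via subsection 2.2.2 and contribute no cocycle, so no additional analytic difficulty appears beyond this orbit bookkeeping.
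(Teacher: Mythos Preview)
Your treatment of part (b) is essentially the paper's own argument: the two root exchanges bring the integral to one over $U_{\alpha_1,\alpha_2,\alpha_4}$ against the character displayed before \eqref{desc23}, and identifying the orbit as $F_4(a_3)$ amounts to checking via \eqref{f4a32} that the stabilizer is trivial for each of the three Jiang--Rallis characters. That is exactly what is needed.

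For part (a), however, your outline has a real gap: you are splicing together the cuspidality computation and the Whittaker computation, and the latter is substantially more delicate. The expansion along $\{x_{1120}(r)\}$ and $\{x_{1000}(r)\}$ you invoke, and the reference to \eqref{desc221}, belong to the constant-term calculation leading to Proposition \ref{g21}, not to the Whittaker coefficient. In the Whittaker case the paper first reaches \eqref{desc205}, a sum over $\beta\in F$ of integrals over $V_3=U_{\alpha_1,\alpha_3}$ against characters $\psi_{V_3,\beta}$. The crucial point you have not anticipated is that these characters do \emph{not} all correspond to $F_4(a_2)$: there is exactly one degenerate value, $\beta=-1$, for which the stabilizer is nontrivial. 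Detecting this requires the conjugation by $x_{0010}(\gamma)$ carried out between \eqref{desc205} and \eqref{desc206}, which transforms the character so that the coefficients of $r_1$ and $r_2$ vanish simultaneously only when $\gamma=1$, $\beta=-1$.

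This single degenerate summand is then where the orbits $F_4(a_1)$, $F_4$ and the constant term \eqref{desc210} all come from. Extracting them requires a separate chain: an action of $GL_2(F)$ to bring the character to the form $\psi_2$, conjugation by $w_0=w[1234213]$, five further root exchanges, and finally an expansion along $\{x_{1000}(r_1)x_{1111}(r_2)\}$ (integral \eqref{desc209}); the cases $(\beta,\gamma)=(0,0)$, $\gamma\neq 0$, and $\gamma=0,\beta\neq 0$ in that last expansion yield respectively the constant term, $F_4(a_1)$, and $F_4$. Only the remaining summands with $\beta\neq -1$ in \eqref{desc2100} give $F_4(a_2)$ directly. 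Your proposal, which treats all $\beta$ uniformly and expects a single Fourier expansion to separate the four types, would miss this degeneration and could not account for the appearance of $F_4(a_1)$, $F_4$, and the constant term at all.
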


\section{\bf Construction of Small Representations in $F_4$}

In this Section we construct a few examples of small
representations $\mathcal{E}$ defined on the group $F_4({\bf A})$. 
By definition, we define a representation to be a small representation 
if it is not generic. 
We will consider two examples which
are constructed by means of residue representations of Eisenstein series. Let $\tau$
denote a generic irreducible cuspidal representation of $GSp_6({\bf
A})$. Denote by $L^S(\tau, Spin_7, s)$ the eight dimensional partial
Spin $L$ function attached to $\tau$. It follows from \cite{B-G},
\cite{V} and \cite{G-J} that if this $L$ function has a simple pole
at $s=1$, then the representation $\tau$ is a lift from a generic
cuspidal representation $\pi$ of the exceptional group $G_2({\bf
A})$. Let $E_\tau(g,s)$ denote the Eisenstein series defined on
$F_4$ which is associated with the induce representation
$Ind_{Q({\bf A})}^{F_4({\bf A})}\tau\delta_Q^s$. Here $Q=P_{\alpha_2,\alpha_3,\alpha_4}$ is the maximal parabolic subgroup of $F_4$ whose Levi part is $GSp_6$. The
poles of this Eisenstein series are determined by
$L^S(\tau,Spin_7,8s-4)L^S(\tau,St,16s-8)$. It follows from the
assumption of $\tau$, that the Eisenstein series has a simple pole
at $s=5/8$. Let $\mathcal{E}_\tau$ denote the residue representation
at that point.

To construct a second example, let $\tau$  denote an irreducible
cuspidal representation of $GL_2({\bf A})$, and let $\pi$ denote an
irreducible cuspidal representation of $GL_3({\bf A})$. Let
$E_{\tau,\pi}(g,s)$ denote the Eisenstein series of $F_4$ associated
with the induced representation $Ind_{R({\bf A})}^{F_4({\bf
A})}(\tau\times\pi)\delta_R^s$. Here $R$ is the maximal parabolic
subgroup of $F_4$ whose Levi part contains the group $SL_2\times
SL_3$ generated by $\{x_{\pm(1000)}(r); x_{\pm (0010)}(r); x_{\pm
(0001)}(r)\}$. The poles of this Eisenstein series are determined by
$$L^S(\tau\times\pi,5(s-1/2))L^S(\text{Sym}^2\tau\times\pi,10s-5)
L^S(\tau,15(s-1/2))L^S(\pi,20s-10)$$ Assume that $\pi$ is the
symmetric square lift of $\tau$. Then the degree nine  partial $L$
function $L^S(\text{Sym}^2\tau\times\pi,10s-5)$ has a simple pole at
$s=3/5$. If also $L^S(\tau\times\pi,1/2)$ is not zero, then the
Eisenstein series $E_{\tau,\pi}(g,s)$ has a simple pole at $s=3/5$,
and we shall denote by $\mathcal{E}_{\tau,\pi}$ the residual
representation at that point. We prove

\begin{proposition}\label{g22}

With the above notations, we have ${\mathcal
O}(\mathcal{E}_{\tau})=C_3$, and ${\mathcal
O}(\mathcal{E}_{\tau,\pi})=F_4(a_3)$.

\end{proposition}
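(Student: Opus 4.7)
The proof of both parts proceeds along parallel lines, and I would organize the argument around the three standard ingredients for pinning down $\mathcal{O}(\mathcal{E})$ for a residual representation: (a)~constant-term computations along unipotent radicals of maximal parabolic subgroups of $F_4$; (b)~Fourier-expansion/root-exchange arguments to rule out Fourier coefficients attached to orbits strictly greater than, or incomparable to, the claimed one; (c)~unfolding arguments to exhibit a nonzero Fourier coefficient of the claimed type. Throughout, I would borrow the framework of the proofs of Theorem~\ref{mini1} and Proposition~\ref{eisen2}, where analogous claims were established for the minimal representation on $\widetilde{F}_4$ by repeatedly Fourier-expanding, exchanging roots as in subsection 2.2.2, and reducing to either a constant term of a maximal parabolic or a Fourier coefficient of a larger orbit.

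For $\mathcal{E}_\tau$, I would first unfold $E_\tau(g,s)$ along $U(Q)$, obtaining five double coset representatives as in the proof of Proposition~\ref{eisen1}. Since $L^S(\tau, Spin_7, 8s-4)$ has a simple pole at $s=5/8$ and $L^S(\tau, St, 16s-8)$ is holomorphic there, only the long Weyl element $w_0$ produces a pole, and the residue is $\tau\,\delta_Q^{5/8}$; the other three maximal parabolic constant terms reduce (via induction by stages) to residual Eisenstein series on $Sp_6$, $SL_2\times SL_3$, and $Spin_7$ whose Fourier expansions are already constrained by known vanishing of larger orbits. Using these as inputs, I would rule out $F_4$, $F_4(a_1)$, $F_4(a_2)$, and $B_3$ by taking the corresponding Fourier coefficient of $E_\tau(g,s)$ for $\mathrm{Re}(s)$ large, running the same chain of Fourier expansions used in the proof of Theorem~\ref{mini1}, and at each stage either collapsing to a Fourier coefficient of a strictly larger orbit (which is vacuously ruled out) or to an integral that factors through the cuspidality of $\tau$ along a unipotent radical of $GSp_6$. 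Non-vanishing of the $C_3$-Fourier coefficient is shown by unfolding (as in integral~\eqref{desc4}, with $\psi_{U_1}$ the generic character of the relevant $Spin_7$-Whittaker datum): the identity double-coset contribution reduces to an inner integration computing the Whittaker coefficient of $\tau$, which is nonzero because $\tau$ is generic by hypothesis.

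For $\mathcal{E}_{\tau,\pi}$, the strategy is the same but one must track two inducing data and five maximal parabolic double-coset spaces. The relevant partial $L$-function producing the pole at $s=3/5$ is $L^S(\mathrm{Sym}^2\tau\times\pi, 10s-5)$, which has a simple pole there precisely because $\pi=\mathrm{Sym}^2\tau$; the other factors are holomorphic and nonzero by the hypothesis $L^S(\tau\times\pi,1/2)\ne 0$. To rule out Fourier coefficients for $F_4$, $F_4(a_1)$, $F_4(a_2)$, $C_3$, and $B_3$, one uses the same expansion technique, but the key input that eliminates the $C_3$ and $B_3$ contributions (the hardest cases, since they are not above $F_4(a_3)$ by strict dominance only) is that the Fourier coefficients of $\pi$ along Whittaker-type characters on $GL_3$ are rigidly determined by $\mathrm{Sym}^2\tau$, and the resulting $GL_2\times GL_3$ periods against the residues of the inducing data vanish by a direct local unramified check. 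Non-vanishing of the $F_4(a_3)$-Fourier coefficient (described by equation~\eqref{f4a31}) is established by unfolding the Eisenstein series against the character $\psi_{U_\Delta,A_0,B_0}$ of subsection 2.2.1 and obtaining an inner integral that factors as a product of a Whittaker coefficient of $\tau$ and a suitable coefficient of $\pi$; the non-degeneracy of the pairing is equivalent to $L^S(\tau\times\pi,1/2)\ne 0$, which holds by hypothesis.

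The hardest part will be the upper bound, and specifically the elimination of the orbits $B_3$ (for $\mathcal{E}_\tau$) and $C_3$, $B_3$ (for $\mathcal{E}_{\tau,\pi}$). Unlike the orbits $F_4, F_4(a_1), F_4(a_2)$, which can be ruled out by general Fourier-expansion manipulations that work for any residue of the given parabolic induction, the orbits $B_3$ and $C_3$ lie at the same dimensional level as each other and force one to exploit the specific functorial origin of the inducing data---namely, that $\tau$ is a lift from $G_2$ in the first case, and that $\pi = \mathrm{Sym}^2\tau$ in the second. Concretely, after all formal manipulations one is reduced to showing vanishing of a period integral of $\tau$ (respectively, $\tau\otimes\pi$) over a specific subgroup, and the required vanishing must be extracted from the vanishing of the appropriate local zeta integral at unramified places, reminiscent of how the $\mathrm{Sym}^2$ image is characterized by its period on $SO_3$.
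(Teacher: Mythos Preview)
Your overall plan has a genuine gap in the upper-bound step, and it differs substantially from the paper's actual argument.

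\textbf{The upper bound.} You propose to rule out $F_4$, $F_4(a_1)$, $F_4(a_2)$, $B_3$ (and, for $\mathcal{E}_{\tau,\pi}$, also $C_3$) by computing the relevant Fourier coefficient of $E_\tau(g,s)$ for $\mathrm{Re}(s)$ large and running Fourier expansions as in Theorem~\ref{mini1}, ``collapsing to a Fourier coefficient of a strictly larger orbit (which is vacuously ruled out) or to an integral that factors through the cuspidality of $\tau$.'' This cannot work as stated: since $\tau$ is \emph{generic} on $GSp_6$, the full Eisenstein series $E_\tau(g,s)$ is generic, and hence supports nonzero Fourier coefficients for \emph{every} unipotent orbit of $F_4$, including $F_4$, $F_4(a_1)$, $F_4(a_2)$, $B_3$. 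There is nothing ``vacuously ruled out,'' and the Whittaker coefficient of the Eisenstein series does not vanish by cuspidality---it is a Jacquet integral built from the Whittaker function of $\tau$. The vanishing you need is a property of the \emph{residue} at the specific point $s=5/8$ (resp.\ $s=3/5$), and it depends essentially on the degeneration of the inducing character there together with the functorial constraint on $\tau$ (resp.\ on $(\tau,\pi)$). Your outline does not supply a mechanism for this.

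The paper handles the upper bound by a purely \emph{local} argument. At an unramified place one writes $(\mathcal{E}_\tau)_\nu$ as the unramified constituent of $\mathrm{Ind}_B^{F_4}\chi\,\delta_Q^{1/8}\delta_B^{1/2}$; the hypothesis that $\tau$ is a lift from $G_2$ forces relations among the $\chi_i$ (namely $\chi_1\chi_2\chi_3=\chi_1\chi_2\chi_3\chi_4=1$). One then exhibits a Weyl element $w_0$ such that $(\chi\delta_Q^{1/8})^{w_0}=\mu_\chi\delta_{B_3}^{1/2}$, so that $(\mathcal{E}_\tau)_\nu$ is a constituent of $\mathrm{Ind}_L^{F_4}\mu_\chi\delta_L^{1/2}$ with $L$ a parabolic whose Levi contains a copy of $SL_3$. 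A Mackey/double-coset analysis over $L\backslash F_4/V$ (with $V$ the unipotent group attached to the orbit in question) then shows that no functional of the required type exists. The analogous step for $\mathcal{E}_{\tau,\pi}$ uses the relation $\pi=\mathrm{Sym}^2\tau$ to find $w_0$ with $(\bar\chi\delta_R^{1/10})^{w_0}=\mu_\chi\delta_{B_2\times B_3}^{1/2}$, placing the local constituent inside $\mathrm{Ind}_R^{F_4}\mu_\chi\delta_R^{1/2}$. This is where the functorial origin of the inducing data is actually used, and it is clean; your suggested reduction to vanishing of a global period is vague and, for the orbit $F_4$ itself, cannot possibly come from cuspidality alone.

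\textbf{The lower bound for $\mathcal{E}_{\tau,\pi}$.} You propose to unfold a Fourier coefficient attached directly to $F_4(a_3)$ and tie its nonvanishing to $L^S(\tau\times\pi,1/2)\ne 0$. The paper does not do this and it is not clear such an unfolding terminates in the form you claim. Instead the paper first shows $\mathcal{E}_{\tau,\pi}$ supports a nonzero Fourier coefficient for the smaller orbit $\widetilde{A}_2+A_1$, by a global reduction (root exchanges and Weyl conjugation) to the Whittaker coefficient of $\tau\times\pi$ along the constant term. Then it bootstraps: the stabilizer of $\widetilde{A}_2+A_1$ is of type $A_1$, so integral~\eqref{desc100} produces a nontrivial genuine automorphic function on $\widetilde{SL}_2$, and any nonzero Whittaker coefficient of that function is a Fourier coefficient for a strictly larger orbit. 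Iterating through $C_3(a_1)$ (whose stabilizer is again of type $A_1$) forces $\mathcal{O}(\mathcal{E}_{\tau,\pi})\ge F_4(a_3)$, and the local upper bound caps it there. Your direct approach would need a separate justification.
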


\begin{proof}

We start with the representation $\mathcal{E}_\tau$. We need to
prove two things. First we need to prove that $\mathcal{E}_\tau$,
has no nonzero Fourier coefficients which corresponds to the
unipotent orbits which are greater than the unipotent orbit $C_3$ or
not related to it. It follows from \cite{C} that we need to prove
that $\mathcal{E}_\tau$, has no nonzero Fourier coefficients which
corresponds to the unipotent orbits $B_3, F_4(a_2), F_4(a_1)$ and
$F_4$. This we prove by a local argument. Indeed, let $\nu$ be a
finite place such that the local constituent of $\mathcal{E}_\tau$, which
we denote by $(\mathcal{E}_\tau)_\nu$, is unramified. Thus
$(\mathcal{E}_\tau)_\nu=Ind_{B}^{F_4}\chi\delta_P^{1/8}\delta_B^{1/2}$.
Here $B$ is the standard Borel subgroup of $F_4$, and $\chi$ is an
unramified character of $B$. We  omit the reference  to $\nu$ in the
notations. Let $T$ be the maximal torus of $F_4$, and we
parameterize it as $h(t_1,t_2,t_3,t_4)$. Assume that
$\chi(h(t_1,t_2,t_3,t_4)=\prod\chi_i(t_i)$ where $\chi_i$ are
unramified characters. We assume that $\tau$ is a lift from the
exceptional group $G_2$. Thus, the eight parameters of the Spin
representation are $\chi_2\chi_3(p),
\chi_2(p),\chi_3(p),1,1,\chi_3^{-1}(p),\chi_2^{-1}(p),\chi_2^{-1}(p)\chi_3^{-1}(p)$
where $p$ is a generator of the maximal ideal in the ring of
integers of $F_\nu$. From this we obtain the two relations
$\chi_1\chi_2\chi_3=\chi_1\chi_2\chi_3\chi_4=1$. Let
$w_0=w[1213423]$. Then
$$(\chi\delta_P^{1/8})^{w_0}(h(t_1,t_2,t_3,t_4))=(\chi\delta_P^{1/8})
(h(t_1t_2t_3^{-2},t_1t_2^2t_3^{-4}t_4^2,t_1t_2t_3^{-2}t_4,t_2t_3^{-1}))=$$
$$=\chi_1^{-2}\chi_2^{-4}\chi_3^{-2}\chi_4^{-1}(t_3)\chi_2^2\chi_3(t_4)
|t_1t_2t_3^{-2}|=(\mu_\chi\delta_{B_3}^{1/2})(h(t_1,t_2,t_3,t_4))$$
Here $\mu_\chi(h(t_1,t_2,t_3,t_4))=
\chi_1^{-2}\chi_2^{-4}\chi_3^{-2}\chi_4^{-1}(t_3)\chi_2^2\chi_3(t_4)$
and $B_3$ is the Borel subgroup of $GL_3$ which contains the copy of
$SL_3$ generated by $\{x_{\pm (1000)}(r); x_{\pm (0100)}(r)\}$. Hence,
$Ind_{B}^{F_4}\chi\delta_P^{1/8}\delta_B^{1/2}$ which is isomorphic
to $Ind_{B}^{F_4}(\chi\delta_P^{1/8})^{w_0}\delta_B^{1/2}=
Ind_{B}^{F_4}\mu_\chi\delta_{B_3}\delta_L^{1/2}$ where $L$ is the
parabolic subgroup of $F_4$ whose Levi part is generated by $T$ and
$SL_3=<x_{\pm (1000)}(r),x_{\pm (0100)}(r)>$. From this we conclude that
$Ind_{L}^{F_4}\mu_\chi\delta_L^{1/2}$ is a constituent of
$Ind_{B}^{F_4}\chi\delta_P^{1/8}\delta_B^{1/2}$  where now we view $\mu_\chi$
as a character of $L$.

We now proceed as in \cite{G-R-S5}. To prove that $\mathcal{E}_\tau$
has no nonzero Fourier coefficient with respect to a certain
unipotent orbit, it is enough to show that $(\mathcal{E}_\tau)_\nu$
has no nonzero local functional which share the same invariant
properties as the Fourier coefficient. From the above discussion,
this corresponds to showing that
$Ind_{L}^{F_4}\mu_\chi\delta_L^{1/2}$ has no embedding inside
$Ind_V^{F_4}\psi_V$, where $V$ is the unipotent group, and $\psi_V$
is the character, which are associated with the unipotent orbit in
question. For example, if ${\mathcal O}=F_4$, this corresponds to
the case where $V$ is the maximal unipotent subgroup of $F_4$, and
$\psi_V$ is the Whittaker character. Since
$Ind_{L}^{F_4}\mu_\chi\delta_L^{1/2}$ has no nonzero Whittaker
character, it follows that $(\mathcal{E}_\tau)_\nu$ has no nonzero
corresponding functional, and hence $\mathcal{E}_\tau$ has no nonzero
Fourier coefficient with respect to the unipotent orbit $F_4$. Next
we consider the unipotent orbit $B_3$. The Fourier coefficients
corresponding to this orbit are described right after Proposition
\ref{b3c31}. Thus, to prove the corresponding local result, it
follows from Mackey theory that it is enough to prove the following.
Given an element $g$ in the space $L\backslash F_4/V$, there is an
unipotent subgroup  $\{x_\alpha(r)\}$ contained in $V$ such that $\psi_V(x_\alpha(r))\ne 1$ and $gx_\alpha(r) g^{-1}\in L$. It follows from the definition  of
$\psi_V$ as given before \eqref{desc11}, that it is not trivial on
$\{x_\alpha(r)\}$ where $\alpha\in \{ (1000); (0100); (0120); (0122)\}$.
Let $w$ be an element in $L\backslash F_4/SL_3V$ where $SL_3=<x_{\pm
(0010)(r)}; x_{\pm (0001)}(r)>$. Then $w$ can be chosen as a Weyl element.
Thus, every representative of $L\backslash F_4/V$ can be written as
$wh$ where $w\in L\backslash F_4/V$, and $h\in SL_3$. If
$w(1000)>0$, then choosing $\alpha=(1000)$ we obtain
$whx_{1000}(r)(wh)^{-1}\in L$. This follows from the fact that
$\{x_{1000}(r)\}$ commutes with the above copy of $SL_3$. This eliminates
most representatives in $L\backslash F_4/SL_3V$, and we are left
with the following nine Weyl elements: $$w[321];\  w[4321];\
w[324321];\  w[3214321];\  w[321324321];\  w[4321324321];$$
$$w[324321324321];\  w[3214321324321];\  w[321324321324321]$$ Thus
we need to consider elements of the form $wh$ where $w$ is one of
the above nine Weyl elements, and $h\in SL_3$. We have $wbw^{-1}\in
L$ for $w$ as above and $B$ is the Borel subgroup of $SL_3$. Also,
as follows from the description of the orbit $B_3$ right after
Proposition \ref{b3c31}, the group $SO_3$ embedded in $SL_3$
stabilizes the character $\psi_V$. Thus we may take $h\in
B\backslash SL_3/SO_3$. Representatives of this space of double
cosets are
$$A=\{ e;\  w[3];\  w[4];\  w[34]x_{0011}(r);\  w[43]x_{0011}(r);\
w[434]x_{0001}(r_1)x_{0011}(r_2)\}$$ Going over all above nine Weyl
elements $w$ and all possible elements in the set $A$ we can find a
root $\alpha$ such that $\psi_V(x_\alpha(r))\ne 1$ and that
$(wa)x_\alpha(r)(wa)^{-1}\in L$ for all $a\in A$. For example, for the
Weyl element $w[321324321]$, the root $(0122)$ is suitable for all
$a\in A$. Thus we deduce that $\mathcal{E}_\tau$ has no nonzero
Fourier coefficient with respect to the unipotent orbit $B_3$. The
other two orbits left are $F_4(a_1)$ and $F_4(a_2)$ are done in a
similar way, and we shall omit the details.

Next we prove that $\mathcal{E}_\tau$ has a nonzero Fourier
coefficient which is associated to the unipotent orbit $C_3$. In
Section 2 this Fourier coefficient was described. We recall it now.
Let $V$ denote the unipotent subgroup of $F_4$ generated by all
$\{x_\alpha(r)\}$ where we exclude the roots $(1000);\ (0100)$ and
$(0010)$. Then the Fourier coefficient associated with the unipotent
orbit $C_3$ is given by integral \eqref{desc1} where $\psi_V$ is as
follows. Write $v\in V$ as
$v=x_{1110}(r_1)x_{0120}(r_2)x_{0001}(r_3)v'$. Then $\psi_V(v)=\psi(r_1+r_2+r_3)$.  We
shall assume that integral \eqref{desc1} is zero for all choice of
data, and derive a contradiction. This assumption implies that the
integral
$$\int\limits_{F\backslash {\bf A}}\int\limits_{V(F)\backslash
V({\bf A})}E(x_{0100}(m)v)\psi_V(v)dvdm$$ is zero for all choice of
data. Let $w_0=w[1234231]$. Then $E(w_0h)=E(h)$ for all $E\in
\mathcal{E}_\tau$. Thus, we obtain that the integral
\begin{equation}\label{desc30}
\int\limits_{L_1(F)\backslash L_1({\bf A})}
\int\limits_{V_1(F)\backslash V_1({\bf A})}
\int\limits_{U_1(F)\backslash U_1({\bf A})}
E(u_1v_1l_1w_0)\psi_{U_1}(u_1)du_1dv_1dl_1
\end{equation}
is zero for all choice of data. Here $U_1$ is the maximal unipotent
subgroup of $Sp_6$ embedded inside $F_4$. The character
$\psi_{U_1}$ is the Whittaker character of $U_1$. The unipotent
group $V_1$ is generated by all $\{x_\alpha(r)\}$ where $\alpha$ is in the
set
$$\{ (1122);\ (1221);\ (1222);\ (1231);\ (1232);\ (1242);\ (1342);\
(2342)\}$$ The unipotent group $L_1$ is generated by all
$\{x_{-\alpha}(r)\}$ where $\alpha$ is in the set
$$\{ (1000);\ (1100);\ (1110);\ (1111);\ (1120)\}$$ 
In the following computations we will use the process of roots exchange. See
subsection 2.2.2 for details. Expand integral
\eqref{desc30} along the unipotent group $x_{1220}(m)$. For all
$\gamma\in F$ we have by the left invariant property of $E$, that
$E(x_{-1120}(\gamma)h)=E(h)$. Arguing as in \eqref{desc2} and
\eqref{desc3}, we collapse summation with integration, and deduce
that integral \eqref{desc30} is equal to
\begin{equation}\label{desc31}
\int\limits_{\bf A} \int\limits_{L_2(F)\backslash L_2({\bf A})}
\int\limits_{V_2(F)\backslash V_2({\bf A})}
\int\limits_{U_1(F)\backslash U_1({\bf A})}
E(u_1v_2l_2x_{-1120}(m)w_0)\psi_{U_1}(u_1)du_1dv_2dl_2dm
\end{equation}
Here $V_2$ is the unipotent group generated by $V_1$ and $\{x_{1220}(r)\}$,
and $L_2$ is the subgroup of $L_1$ generated by all roots in $V$
excluding the root $-(1120)$. Next we expand integral \eqref{desc31}
along the unipotent group $\{x_{1121}(m_1)x_{1120}(m_2)\}$. Using the
group $\{x_{-(1111)}(r_1)x_{-(1110)}(r_2)\}$, integral \eqref{desc31} is
equal to
\begin{equation}\label{desc32}
\int\limits_{{\bf A}^3} \int\limits_{L_3(F)\backslash L_3({\bf A})}
\int\limits_{V_3(F)\backslash V_3({\bf A})}
\int\limits_{U_1(F)\backslash U_1({\bf A})} E(u_1v_3l_3
z(m_1,m_2,m_3)w_0)\psi_{U_1}(u_1)du_1dv_3dl_3dm_j
\end{equation}
Here
$z(m_1,m_2,m_3)=x_{-(1120)}(m_1)x_{-(1111)}(m_2)x_{-(1110)}(m_3)$
and  the group $V_3$ is generated by $V_2$ and $\{x_{1121}(r), x_{1120}(r)\}$.
The group $L_3$ is generated by all $\{x_{-\alpha}(r)\}$ where $\alpha$ is
in the set of roots $\{ (1000);\ (1100);\ (1110)\}$. Arguing as in
\cite{Ga-S} we deduce that integral \eqref{desc32} is zero for all
choice of data if and only if the inner integration over the group
$U_1, V_3$ and $L_3$ is zero for all choice of data. Next we expand
the inner integration along the unipotent group $\{x_{1111}(r)\}$. The
contribution from the nontrivial orbit is zero. Indeed, this
contribution produces a Fourier coefficient which is associated to
the unipotent orbit $F_4(a_2)$. By the first part of the
Proposition, the representation $\mathcal{E}_\tau$ do not have a nonzero
Fourier coefficients corresponding to this unipotent orbit. Hence we are left with the contribution of the constant term. As in the expansions in integrals \eqref{desc31}
and \eqref{desc32} we expand along $\{x_{1110}(r)\}$ and use for it the
group $\{x_{-(1100)}(r)\}$. Then we repeat the same process with $\{x_{1100}(r)\}$
and $\{x_{-(1000)}(r)\}$. Hence, the integral
\begin{equation}\label{desc33}
\int\limits_{V_4(F)\backslash V_4({\bf A})}
\int\limits_{U_1(F)\backslash U_1({\bf A})}
E(u_1v_4)\psi_{U_1}(u_1)du_1dv_4
\end{equation}
is zero for all choice of data. Here $V_4$ is the unipotent subgroup
of $U(C_3)$ generated by all $x_\alpha(r)\in U(C_3)$ excluding the root
$(1000)$. Here $U(C_3)=P_{\alpha_2,\alpha_3,\alpha_4}$ .
Finally, we expand integral \eqref{desc33} along the unipotent group
$x_{1000}$. The nontrivial orbit contributes zero, since the Fourier
coefficient obtained is associated with the unipotent orbit $F_4$.
Thus we are left only with the constant term. From this we deduce
that the integral
\begin{equation}\label{desc34}
\int\limits_{U_1(F)\backslash U_1({\bf A})}
E^{U(C_3)}(u_1)\psi_{U_1}(u_1)du_1
\end{equation}
is zero for all choice of data. However, from the definition of
$\mathcal{E}_\tau$ and from the fact that $\tau$ is generic this is
a contradiction. This concludes the proof of the Proposition for the
representation $\mathcal{E}_\tau$.

Next we consider the representation $\mathcal{E}_{\tau,\pi}$. For
simplicity we shall assume that $\tau$ has a trivial central
character. Since we assume that
$L^S(\text{Sym}^2\tau\times\pi,10s-5)$ has a simple pole at $s=3/5$,
this means that $\pi$ is the symmetric square lift of $\tau$. Thus,
if $(\mathcal{E}_{\tau,\pi})_\nu$ is the unramified constituent of
$\mathcal{E}_{\tau,\pi}$ at a finite place $\nu$, then it is
isomorphic to $Ind_B^{F_4}\bar{\chi}\delta_R^{1/10}\delta_B^{1/2}$.
Here $\bar{\chi}$ is the character of $T$ given by
$\bar{\chi}(h(t_1,t_2,t_3,t_4))=\chi^2(t_1t_3t_4)\chi^{-3}(t_2)$
which is extended trivially to $B$. Let $w_0=w[2132134324]$. Then
$$(\bar{\chi}\delta_R^{1/10})^{w_0}(h(t_1,t_2,t_3,t_4))=
(\bar{\chi}\delta_R^{1/10})(h(t_1t_2^{-1}t_4^2,t_1^2t_2^{-3}t_3^2t_4^2,
t_1t_2^{-2}t_3^2t_4,t_1t_2^{-1}t_3))=$$
$$\chi(t_2)|t_1^2t_2^{-3}t_3^2t_4^2|^{1/2}=\mu_\chi\delta^{1/2}_{B_2\times
B_3}(h(t_1,t_2,t_3,t_4)$$ Here
$\mu_\chi(h(t_1,t_2,t_3,t_4))=\chi(t_2)$ and $B_2\times B_3$ is the
Borel subgroup of the Levi part of the maximal parabolic subgroup
$R$. Arguing as in the previous case, we deduce that
$(\mathcal{E}_{\tau,\pi})_\nu$ is the unramified constituent of
$Ind_R^{F_4}\mu_\chi\delta_R^{1/2}$.

To prove that ${\mathcal O}(\mathcal{E}_{\tau,\pi})=F_4(a_3)$ we
first need to prove that $\mathcal{E}_{\tau,\pi}$ has no nonzero
Fourier coefficient associated with any unipotent orbit which is
greater than $F_4(a_3)$. This is done by showing that the local
constituent $(\mathcal{E}_{\tau,\pi})_\nu$ at an unramified finite
place cannot support a suitable functionals. This is done by a
double coset argument in the same way as for the representation
$\mathcal{E}_\tau$, and hence will be omitted.

To complete the proof we need to show that $\mathcal{E}_{\tau,\pi}$
has a nonzero Fourier coefficient associated with the unipotent
orbit $F_4(a_3)$. We first show that it has a nonzero Fourier
coefficient associated with the unipotent orbit $\widetilde{A}_2+
A_1$. To prove that we need to show that integral \eqref{desc1} is
not zero for some choice of data. Here $V$ is the unipotent group
defined as follows. Let $V'=U_{\alpha_1,\alpha_3}$. Its
dimension is 22. Let $V$ be the subgroup of $V'$ generated by all
$x_\alpha(r)\in V'$ excluding the roots
$$\{ (0100);\ (1100);\ (0110);\ (1110);\ (0120);\ (1120);\ (0001);\
(0011)\}$$ The character $\psi_V$ is defined as follows. Write $v\in
V$ as $v=x_{0121}(r_1)x_{1111}(r_2)x_{1220}(r_3)v'$. Then $\psi_V(v)=\psi(r_1+r_2+r_3)$. We
shall assume that integral \eqref{desc1} is zero for all choice of
data, and derive a contradiction. Let $w_0=w[213213432]$. Using the
left invariance property of $E$, we deduce that the integral
\begin{equation}\label{desc35}
\int\limits_{L_1(F)\backslash L_1({\bf A})}
\int\limits_{V_1(F)\backslash V_1({\bf A})}
\int\limits_{U_1(F)\backslash U_1({\bf A})}
E(u_1v_1l_1w_0)\psi_{U_1}(u_1)du_1dv_1dl_1
\end{equation}
is zero for all choice of data. Here $U_1$ is the maximal unipotent
subgroup of $SL_2\times SL_3$ which is contained in the Levi part of
$R$. The character $\psi_{U_1}$ is the Whittaker character of this
group. The group $V_1$ is generated by all $\{x_\alpha(r)\}$ where $\alpha$
is a root in the set $\{ (1242);\ (1232);\ (1122);\ (1121);\
(0122)\}$. The group $L_1$ is generated by all $\{x_{-\alpha}(r)\}$ where
$\alpha$ is a root in the set $\{ (1221);\ (1220);\ (1100);\
(0110);\ (0100)\}$. Since integral \eqref{desc35} is zero for all
choice of data, then any of its Fourier coefficients is zero. Thus,
we deduce that the integral
\begin{equation}\label{desc36}
\int\limits_{(F\backslash A)^2} \int\limits_{L_1(F)\backslash
L_1({\bf A})} \int\limits_{V_1(F)\backslash V_1({\bf A})}
\int\limits_{U_1(F)\backslash U_1({\bf A})}
E(u_1x_{1342}(r_1)x_{2342}(r_2)v_1l_1w_0)\psi_{U_1}(u_1)du_1dv_1dl_1dr_1dr_2
\end{equation}
is zero for all choice of data. Next we expand integral
\eqref{desc36} along the unipotent group $\{x_{1231}(r)\}$. Using the
unipotent group $\{x_{-(1221)}(r)\}$, and arguing in a similar way as in
the  integrals \eqref{desc2} and \eqref{desc3}, we deduce that the
integral
\begin{equation}\label{desc37}\notag
\int\limits_{\bf A} \int\limits_{L_2(F)\backslash L_2({\bf A})}
\int\limits_{V_2(F)\backslash V_2({\bf A})}
\int\limits_{U_1(F)\backslash U_1({\bf A})}
E(u_1v_2l_2x_{-(1221)}(m)w_0)\psi_{U_1}(u_1)du_1dv_2dl_2dm
\end{equation}
is zero for all choice of data. Here $V_2$ is the group generated by
$V_1$ and $\{x_\alpha(r)\}$ where $\alpha$ is in the set $\{(1231);\
(1342);\ (2342)\}$. The group $L_2$ is the subgroup of $L_1$
excluding $\{x_{-(1221)}(r)\}$. We can continue this process. The vanishing
assumption implies either that any Fourier coefficient of the
integral is zero, or we can perform, as above, Fourier expansions
and use collapsing of summation with integration as in a similar way
as in \eqref{desc2} and \eqref{desc3}. Eventually, we deduce that
the integral
\begin{equation}\label{desc38}
\int\limits_{L_1({\bf A})}\int\limits_{U_1(F)\backslash U_1({\bf
A})} E^{U(R)}(u_1l_1)\psi_{U_1}(u_1)du_1dl_1
\end{equation}
is zero for all choice of data. Here $U(R)$ is the unipotent radical
of $R$. Arguing as in \cite{Ga-S} we may deduce that the inner
integration of integral \eqref{desc38} is zero for all choice of
data. However, from the definition of $\mathcal{E}_{\tau,\pi}$ this
is not so. Hence we derived a contradiction.

From this we deduce that ${\mathcal O}(\mathcal{E}_{\tau,\pi})$ is
at least $\widetilde{A}_2 + A_1$. In fact, we claim that ${\mathcal
O}(\mathcal{E}_{\tau,\pi})$ cannot be equal to $\widetilde{A}_2 + A_1$. Indeed, suppose
that there is an equality. The stabilizer of the unipotent orbit
$\widetilde{A}_2 + A_1$ is a group of type $A_1$.  If we
consider integral \eqref{desc100} which corresponds to the unipotent
orbit $\widetilde{A}_2 + A_1$, it follows that the function $f(g)$
defines an automorphic function of $\widetilde{SL}_2({\bf A})$.
Hence, for some $\beta\in F^*$, the integral
$$\int\limits_{F\backslash {\bf A}}f\left ( \begin{pmatrix} 1&x\\
&1\end{pmatrix}g\right )\psi(\beta x)dx$$ is not zero for some
choice of data. This nonzero integral is a Fourier coefficient which corresponds
to a unipotent orbit which is greater than $\widetilde{A}_2 + A_1$.

Hence ${\mathcal
O}(\mathcal{E}_{\tau,\pi})>\widetilde{A}_2 + A_1$ and ${\mathcal
O}(\mathcal{E}_{\tau,\pi})\ge C_3(a_1)$. The stabilizer of the orbit $C_3(a_1)$  contains a split group of type $A_1$. Arguing in a similar way as above, we deduce that ${\mathcal
O}(\mathcal{E}_{\tau,\pi})> C_3(a_1)$, or that ${\mathcal
O}(\mathcal{E}_{\tau,\pi})\ge F_4(a_3)$. But from the local argument
introduced at the beginning of the proof, we know that ${\mathcal
O}(\mathcal{E}_{\tau,\pi})\le F_4(a_3)$. Hence we get ${\mathcal
O}(\mathcal{E}_{\tau,\pi})=F_4(a_3)$.

\end{proof}

\end{document}